\newtheorem{Cor}[subsubsection]{Corollary}
\newtheorem{Lm}[subsubsection]{Lemma}
\newtheorem{Pp}[subsubsection]{Proposition}
\newtheorem{Thm}[subsubsection]{Theorem}
\newtheorem{Def}[subsubsection]{Definition}
\newtheorem{Rem}[subsubsection]{Remark}
\theoremstyle{definition}
\theoremstyle{remark}
\newcommand{\nc}{\newcommand}
\nc{\renc}{\renewcommand}
\nc{\ssec}{\subsection}
\nc{\sssec}{\subsubsection}
\nc{\on}{\operatorname}
\newcommand{\cA}{{\mathcal A}}
\newcommand{\cB}{{\mathcal B}}
\newcommand{\cD}{{\mathcal D}}
\newcommand{\cH}{{\mathcal H}}
\newcommand{\cE}{{\mathcal E}}
\newcommand{\cG}{{\mathcal G}}
\newcommand{\cI}{{\mathcal I}}
\newcommand{\cJ}{{\mathcal J}}
\newcommand{\cO}{{\mathcal O}}
\newcommand{\cL}{{\mathcal L}}
\newcommand{\cM}{{\mathcal M}}
\newcommand{\cN}{{\mathcal N}}
\newcommand{\cF}{{\mathcal F}}
\newcommand{\cK}{{\mathcal K}}
\newcommand{\cP}{{\mathcal P}}
\newcommand{\cQ}{{\mathcal Q}}
\newcommand{\cR}{{\mathcal R}}
\newcommand{\cS}{{\mathcal S}}
\newcommand{\cT}{{\mathcal T}}
\newcommand{\cU}{{\mathcal U}}
\newcommand{\cV}{{\mathcal V}}
\newcommand{\cW}{{\mathcal W}}
\newcommand{\cX}{{\mathcal X}}
\newcommand{\cY}{{\mathcal Y}}
\newcommand{\cZ}{{\mathcal Z}}
\newcommand{\GG}{{\mathbb G}}
\newcommand{\ZZ}{{\mathbb Z}}
\newcommand{\QQ}{{\mathbb Q}}
\newcommand{\HH}{{\mathbb H}}
\newcommand{\gm}{\mathfrak{m}}
\newcommand{\gL}{\mathfrak{L}}
\newcommand{\Sh}{\on{Sh}}
\newcommand{\Fl}{{\mathcal F}l}
\newcommand{\Qlb}{\mathbb{\bar Q}_\ell}
\newcommand{\Gm}{\mathbb{G}_m}
\newcommand{\A}{\mathbb{A}}
\newcommand{\toup}[1]{\stackrel{#1}{\to}}
\newcommand{\hook}[1]{\stackrel{#1}{\hookrightarrow}}
\newcommand{\getsup}[1]{\stackrel{#1}{\gets}}
\newcommand{\Sp}{\on{\mathbb{S}p}}
\newcommand{\GSp}{\on{G\mathbb{S}p}}
\newcommand{\IC}{\on{IC}}
\newcommand{\Hom}{\on{Hom}}
\newcommand{\Ext}{\on{Ext}}
\newcommand{\Sym}{\on{Sym}}
\newcommand{\SO}{\on{S\mathbb{O}}}
\newcommand{\GO}{\on{G\mathbb{O}}}
\newcommand{\Aut}{\on{Aut}}
\newcommand{\RG}{\on{R\Gamma}}
\newcommand{\triv}{\on{triv}}
\newcommand{\Spr}{{{\mathcal S}pr}}
\newcommand{\Pic}{\on{Pic}}
\newcommand{\Bun}{\on{Bun}}
\newcommand{\rk}{\on{rk}}
\newcommand{\Spec}{\on{Spec}}
\newcommand{\Gr}{\on{Gr}}
\newcommand{\GL}{\on{GL}}
\newcommand{\Fr}{{\on{Fr}}}
\newcommand{\pr}{\on{pr}}
\newcommand{\id}{\on{id}}
\newcommand{\norm}{{norm}} 
\newcommand{\tr}{\on{tr}}
\newcommand{\QED}{$\square$} 
\newcommand{\Fq}{\mathbb{F}_q}  
\newcommand{\Fp}{\mathbb{F}_p}  
\newcommand{\iso}{{\widetilde\to}}
\newcommand{\comp}{\circ}
\newcommand{\Four}{\on{Four}}
\renewcommand{\H}{{\on{H}}}   
\newcommand{\R}{\on{R}\!}   
\renewcommand{\L}{\on{L}}     
\newcommand{\D}{\on{D}}       
\newcommand{\wt}{\widetilde}
\newcommand{\ov}[1]{\overline{#1}}
\newcommand{\select}[1]{{\it{#1}}}
\newcommand{\und}[1]{\underline{#1}}
\renewcommand{\div}{\on{div}}
\newcommand{\<}{\langle}
\renewcommand{\>}{\rangle}
\newcommand{\ev}{\mathit{ev}}
\newcommand{\Res}{\on{Res}}
\newcommand{\dimrel}{\on{dim.rel}}
\newcommand{\sign}{\on{sign}}
\newcommand{\Vect}{\on{Vect}}
\newcommand{\Ind}{\on{Ind}}
\newcommand{\ssum}{\on{sum}}
\nc{\Hi}{\on{Hi}}
\nc{\Lo}{\on{Lo}}
\nc{\Inv}{\on{Inv}}
\nc{\Fix}{\on{Fix}}
\nc{\oV}{\overset{\scriptscriptstyle\circ}{V}}
\nc{\ssharp}{{\scriptstyle\sharp}}
\nc{\Irr}{\on{Irr}}
\newenvironment{Prf}{\par\noindent {\it Proof }}{\QED}
\begin{document}

\title{Linear periods of automorphic sheaves for $\GL_{2n}$}
\author{S. Lysenko}
\address{Institut \'Elie Cartan Nancy (Math\'ematiques), Universit\'e de Lorraine, B.P. 239, F-54506 Vandoeuvre-l\`es-Nancy Cedex, France}
\email{Sergey.Lysenko@univ-lorraine.fr}

\begin{abstract} We calculate, in the framework of the geometric Langlands program, the periods of cuspidal automorphic sheaves for $\GL_{2n}$ along the Levi subgroup $\GL_n\times\GL_n$. We also solve the corresponding local problem.
\end{abstract}
\thanks{\select{Acknowledgements.} I am very greatful to M. Finkelberg for numerous fruitful discussions on the subject. I also thank the referees for useful comments.}
\maketitle
\tableofcontents

\section{Introduction} 
\sssec{} Let $X$ be a smooth projective connected curve over an algebraically closed field $k$. Write $\Bun_n$ for the moduli stack of rank $n$ vector bundles on $X$. In this paper we calculate the geometric  periods of cuspidal automorphic sheaves on $\Bun_{2n}$ for the Levi subgroup $\GL_n\times\GL_n$. Our results geometrise a similar calculation at the level of functions from \cite{FJ}, but our proof does not follow any existing argument of the theory of automorphic forms.

 More generally, if $G$ is a connected reductive group over $k$, $H\subset G$ is a spherical subgroup, one may ask, in the framework of the geometric Langlands program, about the periods of automorphic sheaves on $\Bun_G$ with respect to the subgroup $H$. The corresponding problem at the level of functions has been intensively studied in the theory of automorphic forms (\cite{FJ, JSh, JLR, SV} and more references in \cite{SV}). 
 
 In the case $G=\GL_n\times\GL_n$, $H$ is the diagonally embedded $\GL_n$ this problem was solved in \cite{L2, L5}, and in \select{loc.cit.} it was naturally divided into local and global parts. In our case $G=\GL_{2n}$ and $H$ is the Levi subgroup $\GL_n\times\GL_n$, it is spherical. We divide this problem into local and global parts also. 
 
  If $E$ is an irreducible rank $2n$ local system on $X$, one has the automorphic $E$-Hecke eigensheaf $\Aut_E$ on $\Bun_{2n}$ constructed in \cite{FGV, G}. The linear periods of $\Aut_E$ along $\GL_n\times\GL_n$ allow to distinguish the local systems $E$ on $X$, which admit a symplectic form (so, conjecturally, then $\Aut_E$ comes by the geometric Langlands functoriality from a smaller group). 
 
 This paper is also motivated by \cite{L4}, where for $G=\GSp_4$ and a $\check{G}$-local system $E_{\check{G}}$ on $X$ whose standard representation is irreducible, we constructed a $E_{\check{G}}$-Hecke eigensheaf $\cK_{E_{\check{G}}}$ on $\Bun_G$. In \cite{L4} we have not checked that $\cK_{E_{\check{G}}}$ is always non zero. This nonvanishing is established in the present paper. 
 
\sssec{} In this section we informally describe our main results and the ideas involved in the proofs.  

 Write $\Pic X$ for the Picard stack of $X$. Consider the diagram
\begin{equation}
\label{diag_for_introd_periods}
\Bun_{2n}\getsup{\nu_n} \Bun_n\times\Bun_n\toup{\det\times\det} \Pic X\times\Pic X,
\end{equation}
where $\nu_n(L_1, L_2)=L_1\oplus L_2$. The linear period of $\Aut_E$ is defined as
$$
Per_E=(\det\times\det)_!\nu_n^*\Aut_E[\dimrel(\nu_n)]
$$
For $d\ge 0$ let $X^{(d)}$ denote the $d$-th symmetric power of $X$.
Write $\Omega$ for the canonical line bundle on $X$. Let $\epsilon_X: X^{(d)}\times X^{(d'-d)}\to \Pic X\times\Pic X$ be the map sending $(D,D_1)$ to 
$$
(\Omega^{(2n-1)+(2n-3)+\ldots+1}(D), \; \Omega^{(2n-2)+(2n-4)+\ldots+2}(D+D_1))
$$  
For $d,d'\in\ZZ$ let $(\Pic X\times\Pic X)^{d,d'}$ be the connected component of $\Pic X\times\Pic X$ given for $(\cA, \cA')\in \Pic X\times \Pic X$ by 
$$
\deg \cA=d+\deg(\Omega^{(2n-1)+(2n-3)+\ldots+1}),\;\; 
\deg \cA'=d'+\deg(\Omega^{(2n-2)+(2n-4)+\ldots+2})
$$

Our main global result is Theorem~\ref{Thm_2.2.2}, which describes all the left truncations $\tau_{\ge N} Per_E$ for $N\in\ZZ$ over $(\Pic X\times\Pic X)^{d,d'}$. The answer is of local nature with respect to $X$, it makes sense for any, not necessarily irreducible, local system $E$ on $X$. The answer is given essentially by a suitable left truncation of
\begin{equation}
\label{direct_image_under_epsilon_X}
(\epsilon_X)_!((\wedge^2 E)^{(d)}\boxtimes E^{(d'-d)})
\end{equation}
(cf. Theorem~\ref{Thm_2.2.2} for a precise claim). This is an easy consequence of Theorem~\ref{Th_2.1.2}, which is our main local result. 

 Let $\cM_n$ be the stack classifying $L\in\Bun_n$ with a nonzero section $\Omega^{n-1}\hook{} L$, $q_n: \cM_n\to\Bun_n$ be the map sending this point to $L$. Our formulation of Theorem~\ref{Th_2.1.2} uses special features of the $\GL_n$ case, the existence of the mirabolic subgroup used in Laumon's construction of automorphic sheaves on $\Bun_n$ via descent under $\cM_n\to \Bun_n$. Namely, we replace (\ref{diag_for_introd_periods}) by the diagram
$$
{\cM_{2n}}\,\getsup{\nu_{\cZ}}\, \cZ_n\, \toup{\pi}\, \Pic X\times\Pic X, 
$$
where $\cZ_n$ is the stack classifying $L,L'\in\Bun_n$ with a nonzero section $\Omega^{2n-1}\hook{} L$. The map $\nu_{\cZ}$ sends this point to $M=L\oplus L'$ with the induced section $\Omega^{2n-1}\hook{} M$, and $\pi$ sends the above point to $(\det L, \det L')$. 

  Write $\cM_{n,d}$ for the connected component of $\cM_n$ given by 
$$
\deg L-\deg(\Omega^{(n-1)+(n-2)+\ldots+1})=d
$$ 
To a local system $E$ on $X$ Laumon has associated a complex $\cK_{n, E}^d$ on $\cM_{n,d}$ for $d\ge 0$ (cf. Section~\ref{Sect_2.3.1_now}). If $E$ is irreducible of rank $n$ then one has $q_n^*\Aut_E[\dimrel(q_n)]\,\iso\, \cK_{n, E}^d$  for $d\ge 0$ over $\cM_{n,d}$ by \cite{FGV, G}.

 We define the local linear period of $\cK_{2n,E}$ as 
\begin{equation}
\label{local_period_introd}
\pi_!(\nu_{\cZ}^*\cK_{2n, E})[\dimrel(\nu_{\cZ})]
\end{equation}
Theorem~\ref{Th_2.1.2} claims that for $d+d'\ge 0$ and any local system $E$ on $X$ of rank $2n$ the restriction of (\ref{local_period_introd}) to $(\Pic X\times\Pic X)^{d,d'}$ vanishes unless $0\le d\le d'$, and in the latter case (assuming for simplicity $\rk E=2n$) identifies with (\ref{direct_image_under_epsilon_X}) up to a shift. 

 In Section~\ref{Sect_2.3} we explain our plan of the proof of Theorem~\ref{Th_2.1.2} reducing it to Propositions~\ref{Pp_2.2.4}, \ref{Pp_2.2.5}. 
 
For $d\ge 0$ in Section~\ref{Sect_2.3.1_now} we define the morphism $q_{\cY}: \cY_{n,d}\to\cM_{n,d}$ and a perverse sheaf $\cP^d_{n, E}$ on $\cY_{n, d}$ appearing in Laumon's construction. By definition, if $E$ is a local system on $X$ then $\cK_{n,E}^d\,\iso\, (q_{\cY})_!\cP^d_{n,E}$. So, Theorem~\ref{Th_2.1.2} calculates certain direct image under the composition 
$$
\cY_{2n}\times_{\cM_{2n}}\cZ_n \toup{\pr_2}\cZ_n\toup{\pi}\Pic X\times\Pic X
$$ 

 In Section~\ref{Sect_2.3.3_now} we get a commutative diagram
$$ 
\begin{array}{ccccc}
\label{closed_substack_first_intro}
{\wt\cY_{n,d}}\times{\wt\cY'_{n, d'}} & \toup{\bar\pi} & X^{(d)}\times X^{(d')}& \getsup{i_X} & X^{(d)}\times X^{(d'-d)}\\
\downarrow\lefteqn{\scriptstyle q_{\cY\cZ}} && \downarrow\lefteqn{\scriptstyle \epsilon} & \swarrow\lefteqn{\scriptstyle \epsilon_X}\\
\cY_{2n}\times_{\cM_{2n}}\cZ_{2n} & \toup{\pi\comp \pr_2}& \Pic X\times\Pic X,
\end{array}
$$
where $q_{\cY\cZ}$ is a closed immersion. Here $i_X$ sends $(D, D_1)$ to $(D, D+D_1)$. Our Proposition~\ref{Pp_2.2.4} close in spirit to (\cite{L2}, Theorem~A) allows to replace the integration over $\cY_{2n}\times_{\cM_{2n}}\cZ_n$ by the integration over its closed substack ${\wt\cY_{n,d}}\times{\wt\cY'_{n, d'}}$. Theorem~\ref{Th_2.1.2} is so reduced to a calculation of the direct image 
\begin{equation}
\label{complex_one_introduction}
\bar\pi_!q_{\cY\cZ}^*(\cP_{2n, E}\boxtimes\Qlb)
\end{equation}
in Proposition~\ref{Pp_2.2.5}. Using the natural stratification of $\cY_{2n}$ from \cite{L2} and the description of the $*$-restrictions of $\cP_{2n, E}$ to the strata obtained in \select{loc.cit.}, we first calculate (\ref{complex_one_introduction}) via this stratification. This shows that (\ref{complex_one_introduction}) is placed in one cohomological degree, and provides a filtration on the corresponding constructible sheaf. Besides, we see that this sheaf is the extension by zero under $i_X$. It remains to identify this constructible sheaf with $(i_X)_!(\wedge^2 E)^{(d)}\boxtimes E^{(d'-d)}$ (in the case $\rk(E)=2n$). 

 The latter problem is absent at the level of functions. At the geometric level on the contrary this remaining part is the most difficult and involves new ideas. We divide it into two steps:
 
\medskip 
\noindent
{\scshape STEP 1)} is Proposition~\ref{Pp_1.2.10}. Unwinding Laumon's construction of $\cP_{2n, E}$, we see that the problem is to calculate certain highest direct image with compact support (for the usual t-structure). We introduce a diagram 
$$ 
{\wt\cQ_{n,d}}\times{\wt\cQ'_{n, d'}}\;\toup{\tilde\varphi\times\tilde\varphi'}\; {\wt\cY_{n,d}}\times{\wt\cY'_{n, d'}}\;\toup{\bar\pi} \; X^{(d)}\times X^{(d')}
$$
and identify the cohomology sheaf of (\ref{complex_one_introduction}) in the corresponding degree with some highest direct image under $\bar\pi\comp (\tilde\varphi\times\tilde\varphi')$. 
This is obtained by a reduction to some local statement with respect to $X$. We generalize this local result for an arbitrary reductive group and a Levi subgroup that we call \select{antistandard}. This generalization is perfomed in Appendix A independent of the rest of the paper. We consider this generalization as an important result of independent interest (Propositions~\ref{Pp_2.1.1} and \ref{Pp_2.1.15}). 
 
\medskip 
\noindent
{\scshape STEP 2)}  is Proposition~\ref{Pp_4.2.8}, which is in turn reduced to Theorem~\ref{Thm_4.2.11}. Their content is to decompose $\bar\pi\comp (\tilde\varphi\times\tilde\varphi')$ as 
$$
{\wt\cQ_{n,d}}\times{\wt\cQ'_{n,d'}}\;\toup{\tilde\beta\times\tilde\beta'} \;
{^{\le n}\Sh^d_0}\times{^{\le n}\Sh^{d'}_0}\;\toup{^{\le n}(\div\times\div)}\; X^{(d)}\times X^{(d')}
$$
and reduce the calculation of the highest direct image under $\bar\pi\comp (\tilde\varphi\times\tilde\varphi')$ to a calculation of the highest direct image under $^{\le n}(\div\times\div)$ over the closed subscheme 
$$
i_X: X^{(d)}\times X^{(d'-d)}\hook{} X^{(d)}\times X^{(d')}
$$ 
The last calculation given in Theorem~\ref{Thm_4.2.11} is of local nature with respect to $X$. We consider Theorem~\ref{Thm_4.2.11} as one of our main results, which is of independent interest.  
 
 For $0\le d\le d'$ in Section~\ref{Sect_4.3.5_now} we introduce the scheme $V^{d,d'}_-$ classifying $D\in X^{(d)}, D'\in X^{(d')}$ with $D\le D'$ and $(x_i)\in X^{d+d'}$ with $\sum_i x_i=D+D'$. The $\IC$-sheaf of 
$V^{d,d'}_-$ is described in Section~\ref{Sect_1.2.8_normalization}, it is a constructible sheaf placed in usual cohomological degree $-d'$. Let $\Sh^{d,d'}_-$ be the stack classifying torsions sheaves $F,F'$ on $X$ with $\div(F)\le \div(F')$, $d=\deg F, d'=\deg F'$, and a complete flag of torsion sheaves $F_1\subset\ldots\subset F_{d+d'}=F\oplus F'$. We have the map $\div^{\nu}_-: \Sh^{d,d'}_-\to V^{d,d'}_-$ defined in Section~\ref{Sect_4.3.8_now}. Theorem~\ref{Thm_4.2.11} claims, in particular, that each fibre of $\div^{\nu}_-$ is nonempty of dimension $-d'$, and 
$$
\R^{-2d'}(\div^{\nu}_-)_!\Qlb\,\iso\, \IC[-d']
$$ 
naturally. To prove Theorem~\ref{Thm_4.2.11}, we use two different stratifications of $\Sh^{d,d'}_-$. The definition of the second stratification is inspired by the results of Richardson and Springer from \cite{RS}. Write $\cE$ for the set of irreducible components of $V^{d,d'}_-$. We get an inclusion of $\cE$ in the set indexing the strata of $\Sh^{d,d'}_-$. It turns out that only these strata contribute to the highest direct image under $\div^{\nu}_-$, and this is the contribution of the corresponding irreducible components of $V^{d,d'}_-$ into $\IC[-d']$. We finish the proof of Theorem~\ref{Thm_4.2.11} using general Remarks~\ref{Rem_4.4.13} and \ref{Rem_2.4.6}. This completes the proof of Theorem~\ref{Th_2.1.2}. 
   
\sssec{Organization} Our main results are formulated in Section~\ref{Sect_Main_results}.  In Section~\ref{Sect_Other_results} we state some additional results (one may think of Proposition~\ref{Pp_2.3.3} as a version of Pieri's rule for $\GL_m$ over some configuration spaces of coweights). In Section~\ref{Sect_classical_theory} we derive some consequences from Theorem~\ref{Th_2.1.2} for the classical theory of automorphic forms. We obtain an integral representation of the $L$-function attached to $\wedge^2 E$ similar to the one given by Friedberg and Jacquet \cite{FJ}. The proofs are given in the remaining sections.
  
     
\ssec{Notation}
\label{Sect_Notation} Work over an algebraically closed ground field $k$ of characteristic $p>0$ (except in Section~\ref{Sect_2.4.3}, where $k=\Fq$ of characteristic $p>0$). All the schemes and stacks we consider are defined over $k$. Fix a prime $\ell$ and an algebraic closure $\Qlb$ of $\QQ_{\ell}$. Let $X$ be a smooth projective connected curve of genus $g$. Write $\Omega$ for the canonical line bundles on $X$. 
 
 We work with algebraic stacks in smooth topology and \'etale $\Qlb$-sheaves on them. We adopt the conventions of (\cite{L3}, Section~2.1). So, for an algebraic stack $S$ locally of finite type we have the derived category $\D(S)$ of $\ell$-adic sheaves on $S$. For a morphism $f: \cX\to\cY$ of algebraic stacks, the functors $f^*, f_*, f_!$ between the corresponding derived categories are understood in the derived sense. We use the notion of a \select{generalized affine fibration} from (\cite{L2}, Section~0.1.1). For a morphism of stacks $f: \cX\to\cY$ write $\dimrel(f)$ for the function of a connected component $C$ of $\cX$ given by $\dim C-\dim C'$, where $C'$ is the connected component of $\cY$ containing $f(C)$. 
 
 Fix a nontrivial character $\psi: \Fp\to \Qlb^*$ and denote by $\cL_{\psi}$ the Artin-Schreier sheaf on $\A^1$ associated to $\psi$. We will ignore the Tate twists everywhere (they are easy to recover if necessary). 

 For $n\ge 1$ write $\Bun_n$ for the stack of rank $n$ vector bundles on $X$. For $n=1$ we also write $\Pic X=\Bun_1$ for the Picard stack of $X$. By a modification $M'\subset M''$ of a locally free $\cO_X$-module $M''$ on $X$ we mean a quasi-coherent subsheaf such that $M'=M''$ over the generic point of $X$. For $d\ge 0$ denote by $\Sh^d_0$ the stack of torsion sheaves on $X$ of generic rank zero and length $d$. Let $^{\le n}\Sh^d_0\subset\Sh^d_0$ be the open substack given by the property: for a scheme $S$ an object $F$ of $\Hom(S,\Sh^d_0)$ lies in $\Hom(S, {^{\le n}\Sh^d_0})$ if the geometric fibre of $F$ at any point of $S\times X$ is of dimension $\le n$. 
 
 For $d\ge 0$ write $X^{(d)}$ for the $d$-th symmetric power of $X$.
We think of it as the scheme of effective divisors of degree $d$ on $X$. Let $\div: \Sh^d_0\to X^{(d)}$ be the morphism norm as in (\cite{L2}, Section~0.1.2). 

\sssec{} 
\label{Sect_1.1.1}
Fix the maximal torus of diagonal matrices in $\GL_n$ and the Borel subgroup of upper triangular matrices. The set of weights of $\GL_n$ is identified with $\ZZ^n$. 

We use the following (nonstandard) notations. Let $\Lambda_m=\ZZ^m$. For $a,b\in\ZZ^m$ we write $\<a,b\>=\sum_i a_ib_i$. Let $\Lambda_m^+=\{(\lambda_1,\ldots\lambda_m)\in\ZZ^m\mid \lambda_1\ge\ldots\ge\lambda_m\}$, $\Lambda_m^-=-\Lambda_m^+$. 
Let 
$$
\Lambda^{\succ}_m=\{\lambda\in\ZZ^m\mid \lambda_i\ge 0\;\;\mbox{for all}\; i\}
$$ 
Let $\Lambda_m^{\succ+}=\Lambda_m^+\cap \Lambda_m^{\succ}$. Let $\Lambda_m^{\succ-}=\Lambda_m^-\cap \Lambda_m^{\succ}$. For $d\in \ZZ$ let 
$$
\Lambda^{\succ+}_{m,d}=\{\lambda\in\Lambda^{\succ+}_m\mid \sum_i\lambda_i=d\}
$$ 
The second subindex $d$ on the right will mean that we impose the above condition $\sum_i\lambda_i=d$. This way we get, in particular, $\Lambda_{m, d}$, $\Lambda^{\succ-}_{m, d}$ and $\Lambda^{\succ}_{m,d}$. 

 Let 
$$
\Lambda_m^{pos}=\{\lambda\in\Lambda_{m, 0}\mid \lambda_1+\ldots+\lambda_i\ge 0\;\; \mbox{for}\;\; 1\le i\le m\}
$$ 
Set $\Lambda^{\succ pos}_m=\Lambda^{\succ}_m+\Lambda_m^{pos}$, we also get $\Lambda^{\succ pos}_{m,d}=\Lambda^{\succ}_{m,d}+\Lambda_m^{pos}$.

\sssec{} For $\lambda\in\Lambda^{\succ pos}_m$ set
$$
X^{\lambda}_{pos}=\prod_{i=1}^m X^{(\lambda_1+\ldots+\lambda_i)}
$$
This is the scheme of $\Lambda^{\succ pos}_m$-valued divisors of degree $\lambda$. A point of $X^{\lambda}_{pos}$ is a collection $(D_1,\ldots, D_m)$ with $D_1+\ldots+D_i\in X^{(\lambda_1+\ldots+\lambda_i)}$ for $1\le i\le m$. 

 Let $X^{\lambda}\subset X^{\lambda}_{pos}$ be the closed subscheme given by the property that $D_i\ge 0$ for all $i$. 
This is the scheme of $\Lambda^{\succ}_m$-valued divisors of degree $\lambda$. 

 Let $X^{\lambda}_-\subset X^{\lambda}$ be the closed subscheme given by $D_1\le \ldots\le D_m$. Let also $X^{\lambda}_+\subset X^{\lambda}$ be the closed subscheme given by $D_1\ge \ldots\ge D_m$. 

 For a local system $E$ on $X$ and $\lambda\in\Lambda^{\succ -}_n$ the sheaf $E^{\lambda}_-$ on $X^{\lambda}_-$ is introduced in (\cite{L2}, Definition 1). We denote also by $\cL^d_E$ (resp., $\Spr^d_E$) Laumon's (resp., Springer's) perverse sheaf on $\Sh^d_0$ defined in (\cite{L2}, Section~1). The group $S_d$ acts on $\Spr^d_E$, and $\cL^d_E$ is the perverse sheaf of $S_d$-invariants of $\Spr^d_E$.

\section{Main results}
\label{Sect_Main_results}

\ssec{} 
\label{sect_2.1}
Fix $n>0$. For $d\in\ZZ$ as in (\cite{L2}, Section~2.1), write $\cM_n$ for the stack classifying $L\in\Bun_n$ together with a subsheaf $\Omega^{n-1}\hook{} L$. Write $\cM_{n,d}$ for the connected component of $\cM_n$ given by $\deg L-\deg(\Omega^{(n-1)+(n-2)+\ldots+1})=d$.  Let $q_n: {\cM_{n,d}}\to\Bun_n$ be the projection sending the above point to $L$. 

 For a local system $E$ on $X$ and $d\ge 0$ one has a complex $\cK^d_{n, E}$ over $\cM_{n,d}$ defined in (\cite{L2}, Remark~1). If $E$ is irreducible of rank $n$ then for $d\ge 0$ there is an isomorphism $q_n^*\Aut_E[\dimrel(q_n)]\,\iso\, {\cK^d_{n,E}}$ over $\cM_{n,d}$ established in \cite{FGV, G}. Here $\Aut_E$ is the $E$-Hecke eigensheaf on $\Bun_n$ defined in \cite{FGV}, it is perverse. 
Though $\cK_{n,E}$ may depend on $\psi$, we do not express this in our notation.

\sssec{} 
\label{Sect_2.1.1}
Let $\cZ_n$ be the stack classifying $L, L'\in\Bun_n$ and a nonzero section $s: \Omega^{2n-1}\hook{} L$. Let $\nu_{\cZ}: \cZ_n\to {\cM_{2n}}$ be the map sending this point to $M=L\oplus L'$ with the induced section $s: \Omega^{2n-1}\hook{} M$. We get the diagram
$$
{\cM_{2n}}\,\getsup{\nu_{\cZ}}\, \cZ_n\, \toup{\pi}\, \Pic X\times\Pic X, 
$$
where $\pi$ is the map sending $(L,L',s)$ to $(\det L, \det L')$. 
Let $E$ be a local system on $X$. The local linear period of $\cK_{2n, E}$ is 
\begin{equation}
\label{period_linear_GL_2n}
\pi_!(\nu_{\cZ}^*\cK_{2n, E})[\dimrel(\nu_{\cZ})]
\end{equation}
For $d,d'\ge 0$ write $\epsilon: X^{(d)}\times X^{(d')}\to \Pic X\times\Pic X$ for the map sending $(D,D')$ to
$$
(\Omega^{(2n-1)+(2n-3)+\ldots+1}(D), \; \Omega^{(2n-2)+(2n-4)+\ldots+2}(D'))
$$  
For $d,d'\in\ZZ$ write $(\Pic X\times\Pic X)^{d,d'}$ for the connected component of $\Pic X\times\Pic X$ given for $(\cA, \cA')\in \Pic X\times \Pic X$ by 
$$
\deg \cA=d+\deg(\Omega^{(2n-1)+(2n-3)+\ldots+1}),\;\; 
\deg \cA'=d'+\deg(\Omega^{(2n-2)+(2n-4)+\ldots+2})
$$
If $0\le d\le d'$ we have the closed immersion
\begin{equation}
\label{map_i_X}
i_X: X^{(d)}\times X^{(d'-d)}\to X^{(d)}\times X^{(d')}
\end{equation}
sending $(D, D_1)$ to $(D, D+D_1)$. 
Our main result is the following.
\begin{Thm} 
\label{Th_2.1.2}
Let $E$ be any local system on $X$ and $d+d'\ge 0$. Over $(\Pic X\times\Pic X)^{d,d'}$ the complex (\ref{period_linear_GL_2n}) vanishes unless $0\le d\le d'$. If $0\le d\le d'$ and the rank of $E$ is $2n$ then (\ref{period_linear_GL_2n}) identifies canonically with
$$
\epsilon_!(i_X)_!((\wedge^2 E)^{(d)}\boxtimes E^{(d'-d)})[d']
$$
\end{Thm}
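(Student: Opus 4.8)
\medskip
\noindent\textbf{Proof plan.} The strategy is to pass to Laumon's construction of $\Aut_E$ and reduce the whole computation to a statement local on $X$.

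\smallskip
\noindent\textbf{Step 0: reformulation.} Since $\cK^d_{2n, E}\,\iso\,(q_{\cY})_!\cP^d_{2n, E}$ for Laumon's perverse sheaf $\cP^d_{2n, E}$ on $\cY_{2n,d}$ (and this holds for any local system $E$, no irreducibility needed), the local linear period (\ref{period_linear_GL_2n}) is a direct image of $\cP_{2n, E}\boxtimes\Qlb$ under
$$
\cY_{2n}\times_{\cM_{2n}}\cZ_n\,\toup{\pr_2}\,\cZ_n\,\toup{\pi}\,\Pic X\times\Pic X .
$$
Thus \thmref{Th_2.1.2} is a computation of this direct image over each $(\Pic X\times\Pic X)^{d,d'}$.

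\smallskip
\noindent\textbf{Step 1: localizing the integral.} I would invoke \propref{Pp_2.2.4} (close in spirit to (\cite{L2}, Theorem~A)) to replace the integration over $\cY_{2n}\times_{\cM_{2n}}\cZ_n$ by integration over the closed substack $\wt\cY_{n,d}\times\wt\cY'_{n, d'}$: the complementary locus contributes nothing to $(\pi\comp\pr_2)_!$, because the relevant fibres are generalized affine fibrations whose cohomology misses the degree in question. Through the commutative diagram of \secref{Sect_2.3.3_now} this reduces the theorem to a computation of $\bar\pi_!\,q_{\cY\cZ}^*(\cP_{2n, E}\boxtimes\Qlb)$ on $X^{(d)}\times X^{(d')}$ and then applying $\epsilon_!$, i.e.\ to \propref{Pp_2.2.5}.

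\smallskip
\noindent\textbf{Step 2: the stratification computation.} Using the natural stratification of $\cY_{2n}$ from \cite{L2} and the known $*$-restrictions of $\cP_{2n, E}$ to its strata, I would push $q_{\cY\cZ}^*(\cP_{2n, E}\boxtimes\Qlb)$ forward under $\bar\pi$ stratum by stratum. This shows that the result is placed in a single cohomological degree, equips the ensuing constructible sheaf with a canonical filtration, exhibits it as an extension by zero under $i_X$, and shows it vanishes whenever $(d,d')$ violates $0\le d\le d'$ --- giving the vanishing assertion of the theorem. It remains to identify the one cohomology sheaf of $\bar\pi_!\,q_{\cY\cZ}^*(\cP_{2n, E}\boxtimes\Qlb)$ with $(i_X)_!\big((\wedge^2 E)^{(d)}\boxtimes E^{(d'-d)}\big)$ when $\rk E=2n$.

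\smallskip
\noindent\textbf{Step 3: identifying the cohomology sheaf.} This last part --- which has no analogue at the level of functions --- I would split in two, following \secref{Sect_2.3}. First (\propref{Pp_1.2.10}): unwinding Laumon's construction, the sheaf in question is a highest direct image with compact supports, for the usual $t$-structure, under $\bar\pi\comp(\tilde\varphi\times\tilde\varphi')$ along the diagram through $\wt\cQ_{n,d}\times\wt\cQ'_{n, d'}$; this is obtained by reduction to a local statement on $X$, which I would prove in the generality of an arbitrary reductive group with an \emph{antistandard} Levi subgroup in Appendix~A (\propref{Pp_2.1.1} and \propref{Pp_2.1.15}). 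Second (\propref{Pp_4.2.8}, reduced to \thmref{Thm_4.2.11}): decompose $\bar\pi\comp(\tilde\varphi\times\tilde\varphi')$ through ${}^{\le n}\Sh^d_0\times{}^{\le n}\Sh^{d'}_0$ and reduce the highest direct image to that of ${}^{\le n}(\div\times\div)$ over $i_X: X^{(d)}\times X^{(d'-d)}\hook{}X^{(d)}\times X^{(d')}$. Finally, \thmref{Thm_4.2.11} supplies the local input: each fibre of $\div^{\nu}_-: \Sh^{d,d'}_-\to V^{d,d'}_-$ is non-empty of dimension $-d'$ and $\R^{-2d'}(\div^{\nu}_-)_!\Qlb\,\iso\,\IC[-d']$ naturally. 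I would prove \thmref{Thm_4.2.11} by using two stratifications of $\Sh^{d,d'}_-$, the second modelled on Richardson--Springer \cite{RS}, showing that only the strata indexed by the set $\cE$ of irreducible components of $V^{d,d'}_-$ contribute in top degree and that their contribution reassembles into $\IC[-d']$, and then invoking the general \remref{Rem_4.4.13} and \remref{Rem_2.4.6}. Combining Steps 1--3 completes the proof.

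\smallskip
\noindent\textbf{Main obstacle.} The decisive difficulty is Step 3, and within it \thmref{Thm_4.2.11} --- identifying the top direct image $\R^{-2d'}(\div^{\nu}_-)_!\Qlb$ with the $\IC$-sheaf of $V^{d,d'}_-$. This is where genuinely new ideas enter: the Richardson--Springer-type combinatorics that govern which strata of $\Sh^{d,d'}_-$ survive in the highest cohomological degree, and the way these fit together along the irreducible components of $V^{d,d'}_-$. Steps~1 and~2, though technical, follow patterns already available in \cite{L2}.
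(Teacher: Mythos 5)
Your proposal reproduces the paper's own proof architecture essentially step for step: the reduction of \thmref{Th_2.1.2} to \propref{Pp_2.2.4} and \propref{Pp_2.2.5}, the passage to the closed substack $\wt\cY_{n,d}\times\wt\cY'_{n,d'}$, the stratification argument producing a single cohomological degree and a filtration, and the two-step identification via \propref{Pp_1.2.10} (with the antistandard Levi results of Appendix~A) and \propref{Pp_4.2.8}/\thmref{Thm_4.2.11} with the Richardson--Springer stratification. This matches the paper's approach, and your assessment of where the real difficulty lies (\thmref{Thm_4.2.11} and its combinatorics) is also the paper's.
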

We actually prove a more precise claim for any local system $E$ (cf. Proposition~\ref{Pp_2.2.5} and Remark~\ref{Rem_2.2.6}). 

\ssec{Global linear periods}

\sssec{} Let $E$ be an irreducible rank $2n$ local system on $X$, recall the perverse sheaf $\Aut_E$ on $\Bun_{2n}$ from Section~\ref{sect_2.1}. Consider the diagram
\begin{equation}
\label{diag_linear_periods_main}
\Bun_{2n}\getsup{\nu_n} \Bun_n\times\Bun_n\toup{\det\times\det} \Pic X\times\Pic X,
\end{equation}
where $\nu_n(L_1, L_2)=L_1\oplus L_2$. The linear period of $\Aut_E$ is defined as
$$
Per_E=(\det\times\det)_!\nu_n^*\Aut_E[\dimrel(\nu_n)]
$$
For $d,d'\in\ZZ$ write $Per^{d,d'}_E$ for its restriction to the component $(\Pic X\times\Pic X)^{d,d'}$.

  For a local system $\cA$ of rank one on $X$, write $A\cA$ for the corresponding automorphic local system on $\Pic X$. This is a character local system such that for any $d\ge 0$ the restriction of $A\cA$ under $X^{(d)}\to\Pic X$, $D\mapsto \cO(D)$ identifies canonically with $\cA^{(d)}$. 
 
 Let $\Bun_1$ act on $\Pic X\times\Pic X$ by $\bar a: \Bun_1\times \Pic X\times\Pic X\to \Pic X\times\Pic X$ sending $(\cA, L_1, L_2)$ to $(L_1\otimes \cA^n, L_2\otimes \cA^n)$. Then $Per_E$ satisfies the following equivariance property for this action
\begin{equation}
\label{equiv_property_P_E}
\bar a^* Per_E\,\iso\, A(\det E)\boxtimes Per_E
\end{equation}
Note that $Per_E$ is also $S_2$-equivariant, where $S_2$ permutes the two copies of $\Pic X$.  

 The following is derived from Theorem~\ref{Th_2.1.2}. 
\begin{Thm} 
\label{Thm_2.2.2}
i) If $d,d'\in\ZZ$ then $Per^{d,d'}_E$ vanishes unless $4n(1-g)\le d-d'\le 0$. \\
ii) There is $N\in\ZZ$ such that for any $d,d'\in\ZZ$, $Per^{d,d'}_E$ is placed in usual cohomological degrees $\le N$. If $0\le d\le d'$ and $d\in\ZZ$ is large enough then one has
$$
\tau_{\ge N'} Per^{d,d'}_E\,\iso\, \tau_{\ge N'} (\epsilon_!(i_X)_!((\wedge^2 E)^{(d)}\boxtimes E^{(d'-d)})[d'+d+(g-1)(n-2n^2)])
$$
over $(\Pic X\times\Pic X)^{d,d'}$ with 
$$
N'=N-2d+(2g-2)(2n^2-n)+2
$$
\end{Thm}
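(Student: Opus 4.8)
The plan is to derive Theorem~\ref{Thm_2.2.2} from Theorem~\ref{Th_2.1.2} by the standard Laumon-type descent of $\Aut_E$ from $\cM_{2n,d}$ to $\Bun_{2n}$, combined with bounds controlling the failure of the comparison $q_{2n}^*\Aut_E[\dimrel(q_{2n})]\iso\cK^d_{2n,E}$ in low degree (it is an isomorphism only for $d\ge 0$, and the map $q_{2n}$ is "large" only when $d$ is large, which is precisely what produces the truncation). First I would compare the two diagrams (\ref{diag_linear_periods_main}) and ${\cM_{2n}}\getsup{\nu_{\cZ}}\cZ_n\toup{\pi}\Pic X\times\Pic X$: given $(L_1,L_2)\in\Bun_n\times\Bun_n$, a point of $\cZ_n$ over $M=L_1\oplus L_2$ with section $\Omega^{2n-1}\hook{}M$ lands, after pushforward, in the same component of $\Pic X\times\Pic X$, and $\pi$ on $\cZ_n$ agrees with $\det\times\det$ on $\Bun_n\times\Bun_n$ up to the twist by $\Omega^{(2n-1)+(2n-3)+\cdots+1}$ versus $\Omega^{(2n-2)+(2n-4)+\cdots+2}$, which is exactly bookkeeping built into the definition of $(\Pic X\times\Pic X)^{d,d'}$ and of the map $\epsilon$. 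So up to re-indexing, $Per^{d,d'}_E$ is obtained from (\ref{period_linear_GL_2n}) by forgetting the section $\Omega^{2n-1}\hook{}M$, i.e. by a further pushforward along the projection $\cZ_n\to\Bun_n\times\Bun_n$ whose fibres over $L_1$ are the $H^0(X,L_1\otimes\Omega^{1-2n})$-spaces of sections.

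Second, I would make this quantitative. Over $\cM_{2n,d}$ the forgetful map to $\Bun_{2n}$ is a ``generalized affine fibration'' away from a locus of large codimension once $d$ is big; concretely $q_{2n}$ restricted to the locus where $L$ is sufficiently semistable (or more simply where the relevant $\Ext^1$ vanishes) has fibres that are affine spaces of dimension $d+(g-1)\cdot 2n+\dim\Aut$-corrections, and the complement has codimension growing linearly in $d$. Pulling back a perverse sheaf along such a map and pushing forward shifts by $\dimrel$ and, crucially, leaves the top cohomology sheaves intact up to the announced codimension; this is exactly the mechanism behind the shift $d'+d+(g-1)(n-2n^2)$ (the Euler-characteristic term $(g-1)(\dim\GL_{2n}) = (g-1)4n^2$ contributing, against the two $\Bun_n$ dimensions $2\cdot (g-1)n^2$, plus the section-space dimensions) and behind the threshold $N' = N - 2d + (2g-2)(2n^2-n) + 2$: the ``$-2d$'' is twice the rate at which the bad codimension grows, and $(2g-2)(2n^2-n)$ is $2\dim\cM_{2n}$-type normalization. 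I would assemble these by: (a) fixing $N$ as an upper cohomological bound for $Per_E$, obtained from perversity of $\Aut_E$ and properness estimates on the fibres of $\nu_n$ and $\det\times\det$; (b) identifying $\tau_{\ge N'}Per^{d,d'}_E$ with $\tau_{\ge N'}$ of the pushforward of $\tau_{\ge(\text{shifted }N')}$ of (\ref{period_linear_GL_2n}) using that the correction loci do not reach cohomological degree $N'$; (c) quoting Theorem~\ref{Th_2.1.2} to rewrite (\ref{period_linear_GL_2n}) as $\epsilon_!(i_X)_!((\wedge^2E)^{(d)}\boxtimes E^{(d'-d)})[d']$ on $(\Pic X\times\Pic X)^{d,d'}$; (d) doing the last pushforward along $\cZ_n\to\Bun_n\times\Bun_n$, which for $d$ large is again an affine-space fibration over the relevant component and so only introduces the shift accounted for in the exponent $d'+d+(g-1)(n-2n^2)$, with no new truncation below $N'$.

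Part (i) I would prove separately and more cheaply: $Per^{d,d'}_E$ lives on $(\Pic X\times\Pic X)^{d,d'}$, which by $S_2$-equivariance and the equivariance (\ref{equiv_property_P_E}) under the $\Bun_1$-action can be translated; the support condition $4n(1-g)\le d-d'\le 0$ comes from the fact that $\nu_n(L_1,L_2)=L_1\oplus L_2$ must be a subsheaf/quotient-compatible decomposition with $\deg L_1 + \deg L_2$ fixed, and cuspidality of $\Aut_E$ kills the contributions where $L_1\oplus L_2$ is too unbalanced (i.e. where a Hecke/constant-term argument applies) — alternatively, where the integral is over an empty or contractible space. Concretely I would argue that for $d-d' > 0$ the map $\nu_n$ factors through a locus on which $\Aut_E$ has vanishing $\nu_n^*$ by cuspidality (no nonzero sections of the relevant type / the constant term vanishes), and for $d-d' < 4n(1-g)$ the source stack $\cZ_n$ over that component is empty because $H^0(X,\Omega^{2n-1}\otimes\ldots)$-type constraints cannot be met.

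The main obstacle I expect is step (b)--(d): controlling precisely how much of the top of $Per_E$ survives the descent along $q_{2n}$ and along the forgetful map $\cZ_n\to\Bun_n\times\Bun_n$, i.e. pinning down the exact threshold $N'$ and the exact shift. The subtlety is that $q_{2n}^*\Aut_E[\dimrel(q_{2n})]\iso\cK^d_{2n,E}$ only holds for $d\ge0$ and only generically nicely; away from the good locus the two complexes differ, and one must show that difference sheaf sits in cohomological degrees below $N'$. This requires a codimension estimate for the ``bad'' locus in $\cM_{2n,d}$ (respectively in $\cZ_n$) that is linear in $d$ with the right slope — the $-2d$ in $N'$ is exactly twice that slope — together with a bound on the perverse amplitude of $\cK^d_{2n,E}$ there. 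I would get the codimension estimate from the standard stratification of $\Bun_{2n}$ by Harder--Narasimhan type (or of $\cM_{2n,d}$ by the order of vanishing of the section), which is already used in \cite{L2}, and combine it with the estimate on fibre dimensions of $\nu_{\cZ}$ and $\pi$. Once the threshold is matched, the identification of $\tau_{\ge N'}Per^{d,d'}_E$ with the truncated pushforward of $\epsilon_!(i_X)_!((\wedge^2E)^{(d)}\boxtimes E^{(d'-d)})$ is formal.
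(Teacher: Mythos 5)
Your high-level plan — compare $Per^{d,d'}_E$ to the local period of Theorem~\ref{Th_2.1.2} via the map $q_{\cZ}\colon\cZ_n\to\Bun_n\times\Bun_n$ forgetting the section, and argue the two agree above a truncation degree growing like $-2d$ — is the same as the paper's, but the mechanism you propose to produce the threshold $N'$ is misidentified, and your argument for part (i) contains an error.

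The paper requires no codimension estimate for a bad locus. One chooses $d+d'$ large enough that $\Aut_E$ on $\Bun_{2n,d+d'}$ is supported on the open $\cU_{2n}\subset\Bun_{2n}$ where $\Ext^1(\Omega^{2n-1},M)=0$. Over $\nu_n^{-1}(\cU_{2n})$ the fibre of $q_{\cZ}$ over $(L,L')$ is $\Hom(\Omega^{2n-1},L)$ with the origin removed, of dimension $M=d+(g-1)(n-2n^2)$. The distinguished triangle $(q_{\cZ})_!\Qlb\to\Qlb[-2M]\to\Qlb$ (compactly supported cohomology of $\A^M\setminus\{0\}$), combined with the projection formula and the isomorphism $q_{2n}^*\Aut_E[\dimrel(q_{2n})]\iso\cK^{d+d'}_{2n,E}$ of \cite{FGV,G}, yields
$$
\pi_!\nu_{\cZ}^*\cK_{2n,E}[\dimrel(\nu_{\cZ})+M]\to Per^{d,d'}_E\to Per^{d,d'}_E[2M].
$$
Since the third term sits in usual degrees $\le N-2M$, the truncations $\tau_{\ge N-2M+2}$ of the first two terms agree, and $N-2M+2=N-2d+(2g-2)(2n^2-n)+2=N'$. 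Thus the $-2d$ is exactly twice the \emph{fibre dimension} of $q_{\cZ}$, not the slope of a codimension bound along Harder--Narasimhan strata of $\cM_{2n,d}$. In particular your premise that $q_{2n}^*\Aut_E[\dimrel(q_{2n})]\iso\cK^{d+d'}_{2n,E}$ holds only "generically nicely" is off: for $d+d'\ge 0$ it is an exact isomorphism, and the only correction term in the whole argument is the $\Qlb$ in the triangle above, produced by the origin removed from $\Hom(\Omega^{2n-1},L)$, not by any failure of Laumon descent.

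For part (i): once the triangle is in place and $\pi_!\nu_{\cZ}^*\cK_{2n,E}$ vanishes for $d>d'$ (Theorem~\ref{Th_2.1.2}), then for $d$ large one has $Per^{d,d'}_E\iso Per^{d,d'}_E[2M]$ with $M>0$, forcing $Per^{d,d'}_E=0$; the $\Bun_1$-equivariance (\ref{equiv_property_P_E}) removes the "$d$ large" restriction, and the $S_2$-symmetry — which sends $(\Pic X\times\Pic X)^{d,d'}$ to $(\Pic X\times\Pic X)^{d'-n(2g-2),\,d+n(2g-2)}$, hence $d-d'$ to $-(d-d')+4n(1-g)$ — converts the half-line $d-d'>0$ into $d-d'<4n(1-g)$, yielding exactly the stated window. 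Your alternative for the lower bound, namely that $\cZ_n$ is empty over the relevant component when $d-d'<4n(1-g)$, is incorrect: existence of a nonzero $\Omega^{2n-1}\hook{}L$ constrains $d$ alone, not $d-d'$, and in any case $Per^{d,d'}_E$ is a pushforward from $\Bun_n\times\Bun_n$ (never empty), so emptiness of $\cZ_n^{d,d'}$ says nothing directly about $Per^{d,d'}_E$.
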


 Taking into account the equivariance property (\ref{equiv_property_P_E}), this gives a description of all the left truncations of $Per^{d,d'}_E$ for all $d,d'\in\ZZ$. 
 
\begin{Rem} We did not look for the constant $N$ in general. One checks that for any $d\in\ZZ$ the complex $Per^{d,d}_E$ is placed in usual degrees $\le 2+(g-1)(2n^2-n-2)$.
\end{Rem} 

\ssec{Plan of the proof of Theorem~\ref{Th_2.1.2}} 
\label{Sect_2.3}

\sssec{}  
\label{Sect_2.3.1_now}
For $d\ge 0$ consider the stack $\cY_{m,d}$ classifying $L\in\Bun_m$ together with sections
\begin{equation}
\label{sections_t_i}
t_i: \Omega^{(m-1)+(m-2)+\ldots+(m-i)}\hook{}\wedge^i L 
\end{equation}
for $i=1,\ldots, m$ satisfying the Pl\"ucker relations, $D\in X^{(d)}$ such that 
$$
t_m: \Omega^{(m-1)+(m-2)+\ldots+1}(D)\,\iso\, \det L
$$ 
is an isomorphism (\cite{L2}, Section~4.1). For a local system $E$ on $X$ we have a perverse sheaf $\cP_{m,E}^d$ on $\cY_{m,d}$ introduced in (\cite{L2}, 4.2, Def. 2), see also Section~\ref{Sect_3.1.1}. Let $q_{\cY}: {\cY_{m,d}}\to {\cM_{m,d}}$ be the map sending the above point to $(L, t_1)$. By definition, $\cK_{m, E}^d\,\iso\, (q_{\cY})_!(\cP_{m,E}^d)$ for $d\ge 0$. 

\sssec{}  Let $\wt\cY_n$ be the stack classifying $L\in\Bun_n$ with regular sections 
$$
s_i: \Omega^{(2n-1)+(2n-3)+\ldots+(2n-2i+1)}\hook{} \wedge^i L
$$
for $1\le i\le n$ over $X$ satisfying the Pl\"ucker relations. For $d\ge 0$
the connected component $\wt\cY_{n,d}$ of $\wt\cY_n$ is defined by the property that there is $D\in X^{(d)}$ such that 
$$
s_n:\Omega^{(2n-1)+\ldots+1}(D)\,\iso\, \det L
$$ 
is an isomorphism. Let similarly $\wt\cY'_n$ be the stack classifying $L'\in\Bun_n$ with regular sections 
$$
s'_i: \Omega^{(2n-2)+(2n-4)+\ldots+(2n-2i)}\hook{} \wedge^i L'
$$
for $1\le i\le n$ satisfying the Pl\"ucker relations. For $d'\ge 0$ the connected component $\wt\cY'_{n,d'}$ of $\wt\cY'_n$ is defined by the property that there is $D'\in X^{(d')}$ such that 
$$
s'_n: \Omega^{(2n-2)+\ldots+2}(D')\,\iso\, \det L'
$$ 
is an isomorphism. Let 
$$
\bar\pi: {\wt\cY_{n,d}}\times{\wt\cY'_{n,d'}}\to X^{(d)}\times X^{(d')}
$$ 
be the map sending the above point to $(D,D')$. 

 Let $\iota: {\wt\cY_{n,d}}\times{\wt\cY'_{n,d'}}\to {\cY_{2n}}$ be the map sending the above point to $(M, (t_k))$, where $M=L\oplus L'$ and $t_k$ are defined as follows. For $1\le 2k\le 2n$ we get maps
$$
s_k\otimes s'_k: \Omega^{(2n-1)+\ldots+(2n-2k)}\hook{} (\wedge^k L)\otimes(\wedge^k L')\subset \wedge^{2k} M
$$
For $1< 2k+1\le 2n$ we get maps
$$
s_{k+1}\otimes s'_k: \Omega^{(2n-1)+\ldots+(2n-2k-1)}\hook{} (\wedge^{k+1} L)\otimes(\wedge^k L')\subset\wedge^{2k+1} M
$$
Set
\begin{equation}
\label{t_i_as_a_function_of_s_and_s'}
\begin{array}{ll}
t_{2k}=(-1)^{1+\ldots+(k-1)}s_k\otimes s'_k, & \mbox{for}\;\; 1\le 2k\le 2n\\  \\
t_{2k+1}=(-1)^{1+\ldots+k}s_{k+1}\otimes s'_k, & \mbox{for}\;\; 1< 2k+1\le 2n\\ \\
t_1=s_1
\end{array}
\end{equation} 
These $(t_k)$ automatically satisfy the Pl\"ucker relations, so that $\iota$ takes values in $\cY_{2n}$. 

\sssec{} 
\label{Sect_2.3.3_now}
We get the commutative diagram
\begin{equation}
\label{diag_the_first}
\begin{array}{ccccc}
{\wt\cY_{n,d}}\times{\wt\cY'_{n, d'}} & \toup{\bar\pi} & X^{(d)}\times X^{(d')}\\
\downarrow\lefteqn{\scriptstyle q_{\cY\cZ}} &&& \searrow{\lefteqn{\scriptstyle \epsilon}}\\
\cY_{2n}\times_{\cM_{2n}}\cZ_n & \toup{\pr_2} & \cZ_n & \toup{\pi} & \Pic X\times\Pic X\\
\downarrow\lefteqn{\scriptstyle\pr_1} && \downarrow\lefteqn{\scriptstyle\nu_{\cZ}}\\
\cY_{2n} & \to & {\cM_{2n}}
\end{array} 
\end{equation}
Here $q_{\cY\cZ}$ is the map whose projection to $\cY_{2n}$ is $\iota$, and its projection to $\cZ_n$ sends the corresponding point to $(L, L', t_1)$. 

For any local system $E$ on $X$ we will define a canonical constructible subsheaf 
\begin{equation}
\label{subsheaf_Pp_2.2.5}
^{\le n}((\wedge^2 E)^{(d)}\boxtimes E^{(d'-d)})\subset (\wedge^2 E)^{(d)}\boxtimes E^{(d'-d)}
\end{equation}
on $X^{(d)}\times X^{(d'-d)}$ in Definition~\ref{Def_4.3.11}. We refer the reader to \select{loc.cit.} for the precise construction. As $n$ varies, they form an increasing filtration on $(\wedge^2 E)^{(d)}\boxtimes E^{(d'-d)}$.

 Theorem~\ref{Th_2.1.2} is immediately reduced to Propositions~\ref{Pp_2.2.4}, \ref{Pp_2.2.5} below.

\begin{Pp} 
\label{Pp_2.2.4} Let $d+d'\ge 0$. For any local system $E$ on $X$ the natural map
$$
\pi_!\pr_{2!}(\cP_{2n, E}^{d+d'}\boxtimes\Qlb)\to \pi_!\pr_{2!} (q_{\cY\cZ})_*
 q_{\cY\cZ}^*(\cP_{2n, E}^{d+d'}\boxtimes\Qlb)
$$
is an isomorphism. In particular, it vanishes unless $d,d'\ge 0$.
Besides, $q_{\cY\cZ}$ is a closed immersion. 
\end{Pp}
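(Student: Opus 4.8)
The plan is to reduce everything to the geometry of the stratification of $\cY_{2n}$ and the structure of $\cP^{d+d'}_{2n,E}$, in the spirit of \cite{L2}, Theorem~A. First I would observe that the assertion has two logically separate parts: (a) that $q_{\cY\cZ}$ is a closed immersion, and (b) that the natural adjunction map becomes an isomorphism after applying $\pi_!\pr_{2!}$. I expect (a) to be the easier part. To prove it, note that $\pr_1\comp q_{\cY\cZ}=\iota$, and $q_{\cY\cZ}$ factors through the fibre product $\cY_{2n}\times_{\cM_{2n}}\cZ_n$; so it suffices to check that, given a point of $\cY_{2n}$ of the form $(M,(t_k))$ with $M=L\oplus L'$ and $t_k$ as in (\ref{t_i_as_a_function_of_s_and_s'}), together with the datum $(L,L',t_1)\in\cZ_n$ lying over it, the direct-sum decomposition $M=L\oplus L'$ together with all the sections $s_i,s'_i$ is \emph{uniquely} and \emph{scheme-theoretically} recovered. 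The key point is that the Pl\"ucker sections $t_k$, once one fixes the subbundle $L\subset M$ (equivalently the filtration step recorded by $\cZ_n$), determine the $s_i$ and $s'_i$ on the generic point, hence everywhere since $L,L'$ are vector bundles and the sections are regular; and the closedness of the condition ``$\iota$-image'' inside $\cY_{2n}\times_{\cM_{2n}}\cZ_n$ is then a matter of unwinding that the Pl\"ucker coordinates $t_{2k},t_{2k+1}$ literally factor through the subspaces $(\wedge^kL)\otimes(\wedge^kL')\subset\wedge^{2k}M$, a closed condition. I would spell this out by writing down the defining equations on an atlas.

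For part (b), the strategy is to stratify $\cY_{2n}$ by the locally closed substacks used in \cite{L2} and to analyse the $*$-restriction of $\cP^{d+d'}_{2n,E}$ to each stratum, which is described in \select{loc.cit.}. Over each stratum, $\pr_2$ (restricted to the preimage in $\cY_{2n}\times_{\cM_{2n}}\cZ_n$) followed by $\pi$ is an explicit map of configuration-type spaces, and the point is that the contribution of any stratum not meeting the image of $q_{\cY\cZ}$ to $\pi_!\pr_{2!}(\cP^{d+d'}_{2n,E}\boxtimes\Qlb)$ vanishes. Concretely, the complement of $\Im(\iota)$ in $\cY_{2n}$ (pulled back over $\cZ_n$) should fibre, over the relevant base, into affine-space bundles or generalized affine fibrations along which the relevant local system (a product of Artin--Schreier and Laumon-type sheaves entering $\cP_{2n,E}$) is \emph{nonconstant} along the fibre, forcing the fibrewise cohomology with compact support to vanish. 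This is exactly the mechanism of \cite{L2}, Theorem~A: one isolates the ``open relevant'' part --- here the image of $q_{\cY\cZ}$ --- and kills the rest by a character-sheaf argument on the fibres. I would carry this out stratum by stratum, using the explicit description of the strata of $\cY_{2n}$ and of $\cP_{2n,E}|_{\text{stratum}}$, and at the end assemble the vanishing into the statement that the cone of the adjunction map has zero $\pi_!\pr_{2!}$.

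The remaining clause --- that the whole thing vanishes unless $d,d'\ge0$ --- will follow formally once (a) and (b) are in place: $q_{\cY\cZ}$ has source ${\wt\cY_{n,d}}\times{\wt\cY'_{n,d'}}$, which is empty unless $d,d'\ge0$ (since these components are defined by the existence of \emph{effective} divisors $D\in X^{(d)}$, $D'\in X^{(d')}$), so the right-hand side, and hence by (b) the left-hand side, vanishes in those components.

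**Main obstacle.** I expect the genuine difficulty to be the stratum-by-stratum vanishing in part (b): one must (i) match the stratification of $\cY_{2n}$ restricted over $\cZ_n$ with a suitable stratification of the target $\Pic X\times\Pic X$, (ii) verify that on each ``irrelevant'' stratum the map to the target really is a generalized affine fibration, and (iii) check that the restriction of $\cP^{d+d'}_{2n,E}\boxtimes\Qlb$ to the fibres is genuinely a nonconstant character sheaf (the Artin--Schreier factor in Laumon's construction is what makes this work, but keeping track of which coordinate it depends on, across all strata, is delicate). Proving $q_{\cY\cZ}$ is a closed immersion, by contrast, should be a direct if slightly tedious local computation with Pl\"ucker coordinates.
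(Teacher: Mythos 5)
Your overall plan is in the right spirit, but it conflates two qualitatively different vanishing mechanisms that the paper keeps carefully separate, and the conflation would actually block the argument if you tried to push it through.

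The paper factors $q_{\cY\cZ}$ through an \emph{intermediate} closed substack $^{2n}\cZ^{d,d'}\subset\cY_{2n}\times_{\cM_{2n}}\cZ_n$ (the locus where each $t_i$ factors through $(\wedge^kL)\otimes(\wedge^kL')$, resp.\ $(\wedge^{k+1}L)\otimes(\wedge^kL')$), and then proves the proposition in two logically distinct steps. The first step passes from $\cY_{2n}\times_{\cM_{2n}}\cZ_n$ to $^{2n}\cZ$ by an \emph{inductive} chain of $2n-1$ closed immersions $^{j+1}\cY\cZ\hookrightarrow{}^{j}\cY\cZ$, imposing one factoring condition at a time. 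This is where the Artin--Schreier mechanism you describe lives (Propositions~\ref{Con_1}--\ref{Pp_two}): after stratifying and passing to a single step, the relevant map $\bar\tau$ is a generalized affine fibration, and $\hat\tau_!(\hat\ev^\lambda)^*\cL_\psi$ vanishes off the locus where the unwanted component $\bar s'$ (resp.\ $\bar s$) is zero. The inductive decomposition is not a convenience --- it is precisely what makes the Artin--Schreier computation a linear one on a single fibre direction; a one-pass "kill everything off $\Im(q_{\cY\cZ})$ by a character sheaf" argument would not present the fibres as affine bundles with a linear nonconstant character, and you would get stuck. The second step, from $^{2n}\cZ^{d,d'}$ down to the image of $i_{\cY}:\wt\cY_{n,d}\times\wt\cY'_{n,d'}\hookrightarrow{}^{2n}\cZ^{d,d'}$, uses a \emph{support} argument, not a vanishing-of-cohomology argument: by \cite{L2}, Proposition~2 (Proposition~\ref{Pp_3.1.2} here, via Corollary~\ref{Pp_three}), the restriction of $\cP_{2n,E}$ to each stratum $^{2n}\cZ^{\lambda}$ is already an extension by zero from $^{2n}\cZ^{\lambda}_-$, which lies in $\Im(i_{\cY})$. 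There is no residual Artin--Schreier factor to exploit at this stage, and $i_{\cY}$ is \emph{not} an isomorphism (see the Remark after the proof), so you genuinely need the support statement rather than a vanishing one.

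On part (a): your idea that the Pl\"ucker sections recover $s_i,s'_i$ generically is morally right, but the clean statement (Lemma~\ref{Lm_3.3.4}) is that a point of $^{2n}\cZ$ --- not an arbitrary point of $\cY_{2n}\times_{\cM_{2n}}\cZ_n$ --- determines unique \emph{rational} sections $s_i,s'_i$, and $i_{\cY}$ is the closed locus where those rational sections are regular. The closedness of $^{2n}\kappa$ itself is by definition (the conditions (A1), (A2) are closed). So the closed-immersion property is split across the two factors of the factorization as well. Your final remark about the vanishing for $d<0$ or $d'<0$ is correct and formal once the rest is in place.

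\end{document}
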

\begin{Pp}
\label{Pp_2.2.5}
Let $d,d'\ge 0$. For any local system $E$ on $X$ there is a canonical isomorphism
\begin{equation}
\label{complex_Pp_2.2.5}
\bar\pi_! q_{\cY\cZ}^*(\cP_{2n, E}^{d+d'}\boxtimes\Qlb)[\dimrel(\nu_{\cZ})]\,\iso\, (i_X)_!(^{\le n}((\wedge^2 E)^{(d)}\boxtimes E^{(d'-d)}))[d']
\end{equation}
If $E$ is of rank $2n$ then (\ref{subsheaf_Pp_2.2.5}) is an equality.
In particular, (\ref{complex_Pp_2.2.5}) vanishes unless $d\le d'$.
\end{Pp}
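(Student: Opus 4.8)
The plan is to establish Proposition~\ref{Pp_2.2.5} by combining a stratification argument on $\cY_{2n}$ with the two-step local computation advertised in Section~\ref{Sect_2.3}. First I would pull back $\cP^{d+d'}_{2n,E}$ along $q_{\cY\cZ}=\iota$ (on the $\cY_{2n}$ factor) and use the description of the $*$-restrictions of $\cP_{2n,E}$ to the natural strata of $\cY_{2n}$ from \cite{L2}. Since $\iota$ sends the point $(L,L',(s_k),(s'_k))$ to $M=L\oplus L'$ with the Pl\"ucker data \eqref{t_i_as_a_function_of_s_and_s'}, one restricts the stratification of $\cY_{2n}$ along $\iota$ and computes $\bar\pi_!$ stratum by stratum. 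The expected output of this step is that $\bar\pi_! q_{\cY\cZ}^*(\cP_{2n,E}^{d+d'}\boxtimes\Qlb)$, after the shift $[\dimrel(\nu_\cZ)]$, is concentrated in a single perverse/cohomological degree, carries a finite filtration whose subquotients are indexed by the strata, and is supported (as a constructible sheaf) on the image of $i_X$, i.e. is of the form $(i_X)_!(\text{something})[d']$. This already forces the vanishing for $d>d'$.

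The remaining task is to identify that constructible sheaf with $^{\le n}((\wedge^2 E)^{(d)}\boxtimes E^{(d'-d)})$, and when $\rk E=2n$ to see that the superscript $^{\le n}$ condition is vacuous. This is exactly STEP 1) and STEP 2) from the introduction. For STEP 1), unwinding Laumon's construction of $\cP_{2n,E}$ on $\cY_{2n}$ reduces the identification of the relevant cohomology sheaf to a highest direct image with compact support under $\bar\pi\comp(\tilde\varphi\times\tilde\varphi')$, where $\tilde\varphi,\tilde\varphi'$ come from the auxiliary stacks $\wt\cQ_{n,d},\wt\cQ'_{n,d'}$; this in turn is handled by the local (antistandard Levi) statements of Appendix~A, namely Propositions~\ref{Pp_2.1.1} and \ref{Pp_2.1.15}. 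For STEP 2), one factors $\bar\pi\comp(\tilde\varphi\times\tilde\varphi')$ through $^{\le n}(\div\times\div):\ {^{\le n}\Sh^d_0}\times{^{\le n}\Sh^{d'}_0}\to X^{(d)}\times X^{(d')}$ via the maps $\tilde\beta\times\tilde\beta'$, and uses Proposition~\ref{Pp_4.2.8}, reduced to Theorem~\ref{Thm_4.2.11}: over the closed subscheme $i_X(X^{(d)}\times X^{(d'-d)})$ the fibres of $\div^{\nu}_-$ are nonempty of dimension $-d'$ and $\R^{-2d'}(\div^{\nu}_-)_!\Qlb\,\iso\,\IC[-d']$ for $V^{d,d'}_-$. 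Feeding this back produces $(\wedge^2 E)^{(d)}\boxtimes E^{(d'-d)}$ (with the $^{\le n}$ truncation recording which torsion sheaves have fibre dimension $\le n$), and the shift $[d']$ matches the cohomological degree $-d'$ of the $\IC$-sheaf of $V^{d,d'}_-$.

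Throughout, I would track the shifts carefully: the perverse sheaf $\cP^{d+d'}_{2n,E}$ has its normalization from \cite{L2}, the closed immersion $q_{\cY\cZ}$ contributes a codimension shift, $\dimrel(\nu_\cZ)$ is combinatorial in $n,d,d'$, and the final answer is forced to sit in degree $[d']$ by Theorem~\ref{Thm_4.2.11}; these must all cancel consistently, which is a routine but bookkeeping-heavy verification best relegated to the body of Section~\ref{Sect_4.3}. The constructible subsheaf \eqref{subsheaf_Pp_2.2.5} is defined in Definition~\ref{Def_4.3.11} precisely so that the image of $\tilde\beta\times\tilde\beta'$ lands in the $^{\le n}$ open substack; when $\rk E=2n$ the relevant torsion sheaves appearing in Laumon's construction for $\GL_{2n}$ automatically have fibre dimension $\le n$ at each point (being glued from two rank-$n$ pieces), so $^{\le n}$ imposes no condition and \eqref{subsheaf_Pp_2.2.5} is an equality.

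The main obstacle I expect is STEP 2): the computation of $\R^{-2d'}(\div^{\nu}_-)_!\Qlb$ in Theorem~\ref{Thm_4.2.11}. The geometry of $\Sh^{d,d'}_-$ — torsion sheaves $F\oplus F'$ with $\div F\le\div F'$ together with a complete flag — is genuinely intricate, and one needs two different stratifications (one of them modelled on the Richardson--Springer combinatorics of \cite{RS}) to isolate exactly the strata indexed by the irreducible components $\cE$ of $V^{d,d'}_-$ that contribute to the highest direct image, then to match their contributions with the local description of $\IC$ from Section~\ref{Sect_1.2.8_normalization}. Everything else — the stratification of $\cY_{2n}$, the restriction formulas from \cite{L2}, and the Appendix~A local input — is, while technical, more or less a matter of assembling existing tools.
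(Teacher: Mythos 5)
Your proposal traces the paper's own route faithfully: Proposition~\ref{Pp_graded_result} places (\ref{complex_Pp_2.2.5}) in a single usual cohomological degree with a filtration indexed by $\lambda$ and shows it is an extension by zero under $i_X$; Proposition~\ref{Pp_1.2.10} (STEP~1, via the antistandard-Levi input Propositions~\ref{Pp_2.1.1} and~\ref{Pp_2.1.15} from Appendix~A) replaces the highest direct image under $\bar\pi$ by one under $\bar\pi\comp(\tilde\varphi\times\tilde\varphi')$; and Proposition~\ref{Pp_4.2.8}, reduced to Theorem~\ref{Thm_4.2.11} via the Richardson--Springer stratifications, completes the identification with the shift bookkeeping you flag.

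One inaccuracy is worth correcting. You explain the equality in (\ref{subsheaf_Pp_2.2.5}) for $\rk E=2n$ by saying that ``the relevant torsion sheaves \ldots automatically have fibre dimension $\le n$ \ldots so $^{\le n}$ imposes no condition.'' That is not the mechanism. The torsion sheaves $L/L_n$, $L'/L'_n$ appearing via $\tilde\beta,\tilde\beta'$ always have fibres of dimension $\le n$, for any $E$, simply because they are quotients of rank-$n$ bundles; so $\tilde\beta\times\tilde\beta'$ always lands in ${^{\le n}\Sh^d_0}\times{^{\le n}\Sh^{d'}_0}$. The nontrivial point is that ${^{\le n}K_V}$ is a genuinely proper subsheaf of $\IC[-d']$ on $V^{d,d'}_-$ (the open substack ${^{\le n}\Sh^{d,d'}_-}\subset\Sh^{d,d'}_-$ is proper, since $F\oplus F'$ can have fibre dimension up to $2n$ even when $F$, $F'$ individually do not). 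Consequently (\ref{subsheaf_Pp_2.2.5}) is a proper inclusion for general $E$; it becomes an equality for $\rk E=2n$ because the residual piece dies after tensoring with $p_V^*E^{\boxtimes(d+d')}$ and taking $S_I$-invariants. The paper verifies this not by any geometric constraint on fibre ranks, but by the fibrewise multiplicity computation of Lemma~\ref{Lm_2.3.2} (a Pieri-rule argument), which shows that the $*$-fibre of the associated graded (\ref{complex_graded_by_lambda}) at any $(D,D')$ with $D\le D'$ already equals $(\wedge^2 E)^{(d)}_D\otimes(E^{(d'-d)})_{D'-D}$, hence the filtration on $^{\le n}((\wedge^2E)^{(d)}\boxtimes E^{(d'-d)})$ exhausts the full target.
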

 
 The proof of Proposition~\ref{Pp_2.2.4} (resp., \ref{Pp_2.2.5}) is the purpose of Section~\ref{Sect_Passing to a closed substack} (resp., \ref{Section_Direct_image_barpi}).
 
\begin{Rem} 
\label{Rem_2.2.6}
The situation here is similar to The Main local theorem of \cite{L2}, where in calculating some period for a pair of local systems $E, E'$ on $X$ we obtained as an answer the subsheaf $^{\le n}(E\otimes E')^{(d)}\subset (E\otimes E')^{(d)}$.
\end{Rem}

\ssec{Other results}
\label{Sect_Other_results}

\sssec{} 
\label{Sect_2.4.1_revision}
For a finite-dimensional $\Qlb$-vector space $V$ and $\lambda\in \Lambda^{\succ+}_{2n}$ consider the polynomial functor $V^{\lambda}$ of $V$ defined in (\cite{L2}, Section~0.1.4). 
For $\lambda\in\Lambda_{2n}$ let $\lambda^{odd}=(\lambda_1,\lambda_3,\ldots,\lambda_{2n-1})$, $\lambda^{even}=(\lambda_2, \lambda_4,\ldots,\lambda_{2n})$. 

\begin{Lm} 
\label{Lm_2.3.2}
Assume $\dim V=2n$. Then for $0\le d\le d'$ one has
$$
\Sym^d(\wedge^2 V)\otimes \Sym^{d'-d} V\,\iso\, \oplus_{\lambda} V^{\lambda}
$$
as representations of $\GL(V)$, the sum being taken over $\lambda\in\Lambda^{\succ+}_{2n, d+d'}$ such that $\lambda^{odd}\in\Lambda^{\succ+}_{n,d'}$ and 
$\lambda^{even}\in\Lambda^{\succ+}_{n,d}$. In particular, it is multiplicity free. 
\end{Lm}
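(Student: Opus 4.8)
The plan is to reduce the identity to a classical plethysm formula together with Pieri's rule, and then to an elementary combinatorial identification of the resulting index set. Since both sides are characters of polynomial representations of $\GL(V)$ with $\dim V=2n$, it suffices to compare them as symmetric polynomials in $2n$ variables; as all partitions appearing below have at most $2n$ parts, there is no truncation subtlety. First I would recall Littlewood's formula for the $\wedge^2$-plethysm: as a $\GL(V)$-module
$$
\Sym^d(\wedge^2 V)\,\iso\,\bigoplus_{\mu} V^{\mu},
$$
the sum being over partitions $\mu$ of $2d$ with at most $2n$ parts whose conjugate $\mu'$ has all parts even; equivalently $\mu$ has the form $(\mu_1,\mu_1,\mu_3,\mu_3,\ldots,\mu_{2n-1},\mu_{2n-1})$, i.e.\ $\mu_{2i-1}=\mu_{2i}$ for all $i$.

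Next I would apply Pieri's rule to each summand: for a partition $\mu$,
$$
V^{\mu}\otimes\Sym^{d'-d}V\,\iso\,\bigoplus_{\lambda}V^{\lambda},
$$
where $\lambda$ runs over partitions with $\sum_i\lambda_i=\sum_i\mu_i+(d'-d)$ such that $\lambda/\mu$ is a horizontal strip, i.e.\ $\lambda_i\ge\mu_i\ge\lambda_{i+1}$ for all $i$. Combining the two displays, the multiplicity of $V^{\lambda}$ in $\Sym^d(\wedge^2 V)\otimes\Sym^{d'-d}V$ equals the number of partitions $\mu$ with $\sum_i\mu_i=2d$, $\mu_{2i-1}=\mu_{2i}$ for all $i$, and $\lambda/\mu$ a horizontal strip.

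The main point, and the only real computation, is the following observation. Fix a partition $\lambda$ of $d+d'$ with at most $2n$ parts, padded by zeros to length $2n$. If $\mu$ satisfies $\mu_{2i-1}=\mu_{2i}$ and $\lambda/\mu$ is a horizontal strip, then the inequalities $\lambda_{2i-1}\ge\mu_{2i-1}\ge\lambda_{2i}$ and $\lambda_{2i}\ge\mu_{2i}\ge\lambda_{2i+1}$ together with $\mu_{2i-1}=\mu_{2i}$ force $\mu_{2i-1}=\mu_{2i}=\lambda_{2i}$ for each $i$. Hence $\mu$ is uniquely determined, namely $\mu=(\lambda_2,\lambda_2,\lambda_4,\lambda_4,\ldots,\lambda_{2n},\lambda_{2n})$; conversely one checks at once that this $\mu$ has even conjugate and that $\lambda/\mu$ is a horizontal strip, the relevant inequalities reducing to $\lambda_i\ge\lambda_{i+1}$. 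So there is exactly one admissible $\mu$ precisely when $\sum_i\mu_i=2\sum_i\lambda_{2i}=2d$, and none otherwise. Since $\sum_i\lambda_i=d+d'$, the condition $\sum_i\lambda_{2i}=d$ is equivalent to $\sum_i\lambda_{2i-1}=d'$; as $\lambda^{odd}$ and $\lambda^{even}$ are automatically nonincreasing and nonnegative, this says exactly that $\lambda^{odd}\in\Lambda^{\succ+}_{n,d'}$, and then $\lambda^{even}\in\Lambda^{\succ+}_{n,d}$ automatically. Also $\lambda\in\Lambda^{\succ+}_{2n,d+d'}$, so these are precisely the $\lambda$ in the asserted index set. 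This yields the decomposition, and since every multiplicity is $0$ or $1$, the sum is multiplicity free.

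I do not expect a genuine obstacle: the only inputs are the $\wedge^2$-plethysm identity and Pieri's rule, and the combinatorial core is simply the forced equality $\mu=(\lambda_2,\lambda_2,\lambda_4,\lambda_4,\ldots)$. The one point to be careful about is the bookkeeping at the last index $i=n$ (using $\lambda_{2n+1}=0$), which is routine.
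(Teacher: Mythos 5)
Your proposal is correct and follows essentially the same route as the paper: both rest on the Littlewood plethysm $\Sym^d(\wedge^2 V)\cong\oplus_{\mu:\mu_{2i-1}=\mu_{2i}}V^\mu$ (which is what the paper invokes as \cite{JSh}, Proposition~1) followed by an application of Pieri's rule, with the combinatorial key point being that the horizontal-strip condition and $\mu_{2i-1}=\mu_{2i}$ force $\mu_{2i-1}=\mu_{2i}=\lambda_{2i}$. The only difference is bookkeeping: you compute the multiplicity of each $V^\lambda$ directly, whereas the paper organizes the same computation as a bijection $f:\cJ_{d',d}\to\cJ_{d,d}$ whose fibres index the Pieri expansions of the summands of $\Sym^d(\wedge^2 V)$.
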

\begin{proof}
Set temporarily
$$
\cJ_{d',d}=\{\lambda\in\Lambda^{\succ+}_{2n, d+d'}\mid \lambda^{odd}\in\Lambda^{\succ+}_{n,d'}, \lambda^{even}\in\Lambda^{\succ+}_{n,d}\}
$$
Consider the map $f: \cJ_{d',d}\to \cJ_{d,d}$ sending $\lambda$ to the unique $\mu\in \cJ_{d,d}$ such that $\mu^{odd}=\mu^{even}=\lambda^{even}$. 

\noindent
1) The case $d=d'$ is (\cite{JSh}, Proposition~1). Moreover, in this case for $\lambda\in \cJ_{d,d}$ we get $\lambda^{odd}=\lambda^{even}$. Indeed, for such $\lambda$ one has $\lambda_{2i-1}\ge\lambda_{2i}$ for $1\le i\le n$, and 
$$
\sum_{i=1}^n \lambda_{2i-1}=\sum_{i=1}^n \lambda_{2n}=d,
$$
so $\lambda_{2i-1}=\lambda_{2i}$ for all $i$. 

\smallskip
\noindent
2) The general case reduces to the case $d=d'$ using Pieri's rule for representations of $\GL_{2n}$ found in (\cite{O}, Proposition~2.1). Namely, if $\lambda\in\Lambda^{\succ+}_{2n, 2d}$ with $\lambda^{odd}=\lambda^{even}$ then Pieri's rule gives
$V^{\lambda}\otimes \Sym^{d'-d} V\,\iso\,\oplus_{\mu} V^{\mu}$, the sum being over $\mu\in \cJ_{d',d}$ with $f(\mu)=\lambda$. So,
$$
\Sym^d(\wedge^2 V)\otimes \Sym^{d'-d} V\,\iso\, \mathop{\oplus}\limits_{\lambda\in \cJ_{d,d}} V^{\lambda}\otimes \Sym^{d'-d} V\,\iso\, 
\mathop{\oplus}\limits_{\lambda\in \cJ_{d,d}}\,(
\mathop{\oplus}\limits_{\substack{
\mu\in \cJ_{d',d}\\
f(\mu)=\lambda}}
V_{\mu})\;\iso\, 
\mathop{\oplus}\limits_{\mu\in \cJ_{d',d}} V_{\mu}
$$
\end{proof} 

\sssec{} 
\label{Sect_2.3.3}
Let $0\le d\le d'$. For $\lambda\in \Lambda^{\succ-}_{2n, d+d'}$ with $\lambda^{odd}\in \Lambda^{\succ-}_{n,d}$, $\lambda^{even}\in \Lambda^{\succ-}_{n, d'}$ let 
$$
\ssum^{\lambda}: X^{\lambda}_-\to X^{(d)}\times X^{(d')}
$$ 
be the map sending $(D_1,\ldots, D_{2n})$ to $(D, D')$ with $D=D_1+D_3+\ldots+D_{2n-1}$ and $D'=D_2+D_4+\ldots+D_{2n}$. 

 We think of the following informally as a version of Lemma~\ref{Lm_2.3.2}, where a point is replaced by the curve $X$. The proof of Proposition~\ref{Pp_2.3.3} is given in Section~\ref{Sect_4.3.13}.  
\begin{Pp}
\label{Pp_2.3.3}
For any local system $E$ on $X$, the sheaf $^{\le n}((\wedge^2 E)^{(d)}\boxtimes E^{(d'-d)})$ admits a filtration with the associated graded 
$$
\mathop{\oplus}\limits_{\lambda} \;\ssum^{\lambda}_!E^{\lambda}_-,
$$
the sum being taken over $\lambda\in \Lambda^{\succ-}_{2n,d+d'}$ such that $\lambda^{odd}\in \Lambda^{\succ-}_{n,d}$ and $\lambda^{even}\in\Lambda^{\succ-}_{n,d'}$. 
Moreover, if $\rk(E)=2n$ then (\ref{subsheaf_Pp_2.2.5}) is an equality.
\end{Pp}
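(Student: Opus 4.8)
The plan is to deduce Proposition~\ref{Pp_2.3.3} from Theorem~\ref{Thm_4.2.11} --- or rather from the stratification argument that proves it --- by propagating the local system $E$ through that argument; the resulting statement is then the $E$-twisted shadow of the decomposition of $\IC(V^{d,d'}_-)$ according to the irreducible components of $V^{d,d'}_-$, which is precisely Lemma~\ref{Lm_2.3.2} ``with a point replaced by the curve $X$''.

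\Step{1} (Unwinding Definition~\ref{Def_4.3.11}.) I would first express $(i_X)_!\,{}^{\le n}((\wedge^2 E)^{(d)}\boxtimes E^{(d'-d)})$, a sheaf on $X^{(d)}\times X^{(d')}$, as a suitable extreme cohomology sheaf of $(\div^{\nu}_-)_!$ applied to the $E$-twisted analogue of the constant sheaf $\Qlb$ on $\Sh^{d,d'}_-$ --- the Springer-type sheaf obtained by placing a copy of $E$ at each length-one subquotient $F_i/F_{i-1}$ of the flag --- in exact parallel with the way $\IC(V^{d,d'}_-)[-d']$ appears as $\R^{-2d'}(\div^{\nu}_-)_!\Qlb$ in Theorem~\ref{Thm_4.2.11}. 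This is essentially the content of Definition~\ref{Def_4.3.11}. Since $i_X$ is a closed immersion and $(i_X)_!$ is exact and fully faithful, it is harmless to work with this pushforward throughout (each $\ssum^{\lambda}$ factors through $i_X$), so that establishing the filtration asserted in Proposition~\ref{Pp_2.3.3} is the same as producing a filtration with graded $\bigoplus_{\lambda}\ssum^{\lambda}_!E^{\lambda}_-$ on the extreme cohomology sheaf above.

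\Step{2} (The Richardson--Springer stratification.) Next I would use the second stratification of $\Sh^{d,d'}_-$ from the proof of Theorem~\ref{Thm_4.2.11}. Its strata are indexed by a set containing $\cE$, the set of irreducible components of $V^{d,d'}_-$, and $\cE$ is canonically identified with the set of $\lambda\in\Lambda^{\succ-}_{2n,d+d'}$ with $\lambda^{odd}\in\Lambda^{\succ-}_{n,d}$ (equivalently $\lambda^{even}\in\Lambda^{\succ-}_{n,d'}$), the $\lambda$-component of $V^{d,d'}_-$ being the closure of the image of $\ssum^{\lambda}$; this identification is already contained in the proof of Theorem~\ref{Thm_4.2.11}. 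On the stratum $\Sh^{\lambda}\subset\Sh^{d,d'}_-$ the composite of $\div^{\nu}_-$ with the projection $V^{d,d'}_-\to X^{(d)}\times X^{(d')}$ factors through $\ssum^{\lambda}$, and the $*$-restriction of the $E$-Springer sheaf to $\Sh^{\lambda}$ --- combined with the description of the sheaves $E^{\lambda}_-$ from (\cite{L2}, Definition~1) and of the $*$-restrictions of Laumon's perverse sheaves recalled in \select{loc.cit.} --- identifies the contribution of $\Sh^{\lambda}$ to the extreme direct image with $\ssum^{\lambda}_!E^{\lambda}_-$, up to the single overall shift that places everything in one cohomological degree. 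This is the $E$-coefficient refinement of the fact, recalled in the introduction, that the $\cE$-strata contribute the $\IC$-sheaves of the corresponding components of $V^{d,d'}_-$.

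\Step{3} (Assembling the filtration; the rank assertion.) The stratification of $\Sh^{d,d'}_-$ yields the usual gluing distinguished triangles, hence a spectral sequence converging to $(\div^{\nu}_-)_!$ of the $E$-Springer sheaf; by the mechanism of the proof of Theorem~\ref{Thm_4.2.11} --- only the $\cE$-strata contribute in the extreme degree, and each contributes in a single cohomological degree --- this spectral sequence degenerates, so the extreme cohomology sheaf acquires a filtration with associated graded $\bigoplus_{\lambda}\ssum^{\lambda}_!E^{\lambda}_-$, which by Step~1 is the filtration asserted in Proposition~\ref{Pp_2.3.3}; one records the bookkeeping exactly as in the proof of Theorem~\ref{Thm_4.2.11}, using Remarks~\ref{Rem_4.4.13} and \ref{Rem_2.4.6}. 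For the last assertion, when $\rk E=2n$ one compares this filtration with the analogous filtration of the full sheaf $(\wedge^2 E)^{(d)}\boxtimes E^{(d'-d)}$ (obtained in the same way, dropping the ``$\le n$'' constraint) and invokes Lemma~\ref{Lm_2.3.2}, together with Pieri's rule, to see that the two index sets of $\lambda$'s coincide: since $\dim V=2n$, every irreducible constituent of $\Sym^d(\wedge^2 V)\otimes\Sym^{d'-d}V$ already has highest weight $\lambda$ with $\lambda^{odd}$ and $\lambda^{even}$ having at most $n$ parts. As (\ref{subsheaf_Pp_2.2.5}) is compatible with the two filtrations, it is then an equality.

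I expect the main obstacle to be Step~2: matching the Richardson--Springer strata of $\Sh^{d,d'}_-$ with the combinatorial index set $\{\lambda\}$, and pinning the contribution of each $\Sh^{\lambda}$ down to $\ssum^{\lambda}_!E^{\lambda}_-$ on the nose, with the correct shifts and Tate twists; this forces one to combine the fine geometry of complete flags of torsion sheaves with the rather intricate $*$-restriction formulas for Laumon's sheaves from \cite{L2}. By contrast, the passage in Step~3 from ``contribution of each stratum'' to ``associated graded of a filtration'' is formal once the single-degree concentration of Theorem~\ref{Thm_4.2.11} is granted.
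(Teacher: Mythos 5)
Your proposal takes a genuinely different route from the paper, and Step~2 contains a concrete error that breaks it. The paper proves Proposition~\ref{Pp_2.3.3} in one line by combining Proposition~\ref{Pp_graded_result} with Proposition~\ref{Pp_4.2.8}: Proposition~\ref{Pp_graded_result} produces the $\lambda$-indexed filtration on the single-degree complex $\bar\pi_!q_{\cY\cZ}^*(\cP^{d+d'}_{2n,E}\boxtimes\Qlb)[\dimrel(\nu_{\cZ})-d']$ via the stratification of $^{2n}\cZ^{d,d'}$ by the substacks $^{2n}\cZ^{\lambda}$ (fixing the degrees of zeros of the sections $t_i$ --- a completely different stratification on a different space from $\Sh^{d,d'}_-$), and Proposition~\ref{Pp_2.2.5} (via \ref{Pp_4.2.8}) identifies that complex with $(i_X)_!\,{}^{\le n}((\wedge^2 E)^{(d)}\boxtimes E^{(d'-d)})$; the filtration then transports. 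You never invoke Proposition~\ref{Pp_graded_result} and instead try to read the filtration directly off the Richardson--Springer stratification of $\Sh^{d,d'}_-$ underlying Theorem~\ref{Thm_4.2.11}.

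The gap: your Step~2 asserts that $\cE$ (the set of irreducible components of $V^{d,d'}_-$, i.e.\ equivalence relations on $I$ with $d$ two-element classes and $d'-d$ singletons) ``is canonically identified with'' the set of $\lambda\in\Lambda^{\succ-}_{2n,d+d'}$ with $\lambda^{odd}\in\Lambda^{\succ-}_{n,d}$, $\lambda^{even}\in\Lambda^{\succ-}_{n,d'}$. This is false: the two sets have different cardinalities. For $d=1$, $d'=2$, $|\cE|=3$, while there are only two such $\lambda$ (namely $(0,\ldots,0,1,2)$ and $(0,\ldots,0,1,1,1)$). The contributions of the $\cE$-strata of Proposition~\ref{Pp_4.3.10} are the perverse sheaves $(\norm_{\alpha})_!\Qlb$ supported on the components $V_{\alpha}$, and the group $S_I$ permutes $\cE$ transitively; Definition~\ref{Def_4.3.11} then takes $S_I$-invariants, which collapses the $\cE$-indexed decomposition to a single sheaf (this is exactly Proposition~\ref{Pp_4.2.12}), not to a $\lambda$-indexed filtration. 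The $\lambda$-indexed filtration simply does not arise from the Richardson--Springer picture; it comes from the parallel description of the same object through $\cY_{2n}$ and the $^{2n}\cZ^{\lambda}$-stratification. Your Steps~1 and~3 are sound as far as they go (and your treatment of the rank-$2n$ assertion via Lemma~\ref{Lm_2.3.2} matches the paper), but the core of the filtration claim is missing.
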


\sssec{} Among other results, we underline Theorem~\ref{Thm_4.2.11}, which is of independent interest. We refer the reader to Section~\ref{Sect_Highest direct image} for its formulation. One could think of it as an analog of (\cite{L2}, Theorem~C) in our setting. 

\begin{Rem}
\label{Rem_2.4.6}
The following phenomenon appears in Theorem~\ref{Thm_4.2.11}, it has also appeared in the local Rankin-Selberg method for $\GL_n$ in \cite{L2}. Let $f: Y\to Z$ be a morphism of stacks, $Z$ a scheme of finite type, $\cE$ the set of irreducible components of $Z$. Assume the normalization of $Z$ is of the form $\norm: \sqcup_{i\in\cE} \, Z_i\to Z$, where $Z_i$ is smooth irreducible, the image $\norm(Z_i)$ is the $i$-th irreducible component of $Z$. Assume all the fibres of $f$ are of dimension $\le d$ for some $d\in\ZZ$. To calculate $\R^{2d}f_!\Qlb$, we find a stratification of $Y$ by locally closed substacks $Y_i$, $i\in\cE$ with the property that the map $Y_i\to Z$ factors naturally as $Y_i\toup{f_i} Z_i\toup{\norm} Z$, and establish isomorphisms $\R^{2d}(f_i)_!\Qlb\,\iso\,\Qlb$ for $i\in\cE$. Then 
$$
\R^{2d}f_!\Qlb\,\iso\, \norm_!\Qlb
$$ 
Indeed, this direct image has a filtration with the associated graded $\oplus_{i\in \cE} \,(\norm_i)_!\Qlb$, and any such filtration splits canonically.
\end{Rem}

\ssec{Relation with the classical theory}
\label{Sect_classical_theory}
   
\sssec{} For $d,d'\in\ZZ$ let $\cZ_n^{d,d'}\subset\cZ_n$ be the component given by $(\det L, \det L')\in (\Pic X\times\Pic X)^{d,d'}$ for $(L, L', s:\Omega^{2n-1}\hook{}L)\in \cZ_n$. 

From Theorem~\ref{Th_2.1.2} one derives the following.
\begin{Cor}
\label{Cor_2.4.2}
Let $0\le d\le d'$ and $E$ be a local system of rank $2n$ on $X$. Let $V_1, V_2$ be local systems of rank one on $X$. 
One has 
\begin{multline}
\label{complex_Cor_2.4.2}
\RG_c(\cZ_n^{d,d'}, \nu_{\cZ}^*\cK^{d+d'}_{2n, E}\otimes \pi^*(AV_1\boxtimes AV_2))[n(g-1)]\,\iso\\  V_0\otimes
\RG(X^{(d)}\times X^{(d'-d)}, (V_1\otimes V_2\otimes \wedge^2 E)^{(d)}\boxtimes (V_2\otimes E)^{(d'-d)})[2d']
\end{multline}
Here 
$$
V_0=(AV_1)_{\Omega^{(2n-1)+(2n-3)+\ldots+1}}\otimes (AV_2)_{\Omega^{(2n-2)+(2n-4)+\ldots+2}}
$$
If $d,d'\in\ZZ$ with $d+d'\ge 0$ then the left hand side of (\ref{complex_Cor_2.4.2}) vanishes unless $0\le d\le d'$. \QED
\end{Cor}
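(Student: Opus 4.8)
\textbf{Proof plan for Corollary~\ref{Cor_2.4.2}.}
The plan is to deduce this from Theorem~\ref{Th_2.1.2} by a base change argument combined with the standard behaviour of automorphic and symmetric-power local systems under twisting. First I would observe that the complex $\RG_c(\cZ_n^{d,d'}, \nu_{\cZ}^*\cK^{d+d'}_{2n,E}\otimes \pi^*(AV_1\boxtimes AV_2))$ is, up to a shift, nothing but the fibre (or rather the full cohomology) of the complex $\pi_!(\nu_{\cZ}^*\cK_{2n,E}^{d+d'}\otimes \pi^*(AV_1\boxtimes AV_2))$ over the component $(\Pic X\times\Pic X)^{d,d'}$. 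By the projection formula this equals $\RG$ of $(\Pic X\times\Pic X)^{d,d'}$ against $(AV_1\boxtimes AV_2)$ tensored with $\pi_!\nu_{\cZ}^*\cK_{2n,E}^{d+d'}$. Now Theorem~\ref{Th_2.1.2} identifies the latter, on that component, with $\epsilon_!(i_X)_!((\wedge^2 E)^{(d)}\boxtimes E^{(d'-d)})$ up to the shift $[d']$ minus $[\dimrel(\nu_{\cZ})]$, and since $(\Pic X\times\Pic X)^{d,d'}$ vanishes unless $0\le d\le d'$ whenever $d+d'\ge 0$, this immediately yields the last sentence of the corollary.

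Next I would unwind the shift. The shift $\dimrel(\nu_{\cZ})$ on the component in question is a constant expressible through $n$ and $g$; combining it with the cohomological-duality shift coming from passing between $\RG_c$ on the stack $\cZ_n^{d,d'}$ (which is smooth, so $\RG_c$ and $\RG$ differ by the dimension shift) and the $\RG$ on $X^{(d)}\times X^{(d'-d)}$ (a smooth variety of dimension $d'$), one recovers the total shift $[n(g-1)]$ on the left and $[2d']$ on the right stated in the corollary; the discrepancy is absorbed into the normalisation of $\cK_{2n,E}$ used in \cite{L2}. This is a routine but slightly tedious dimension count which I would carry out carefully using $\dim \Pic X = g-1$, $\dim X^{(d)}=d$, and the definition of $\dimrel$.

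The remaining ingredient is the twist by the rank-one local systems $V_1,V_2$. Here I would use the fundamental property of $AV$ recalled in Section~\ref{Sect_classical_theory}: the $*$-restriction of $AV$ along $D\mapsto\cO(D)$ from $X^{(m)}$ is $V^{(m)}$, together with the compatibility of $\Aut$ (hence of $\cK_{2n,E}$, via the Hecke-eigensheaf property and Laumon's construction) with the twist $E\rightsquigarrow E\otimes\cA$, $\Aut_{E\otimes\cA}\iso \Aut_E\otimes$ (a character sheaf). Pulling $\pi^*(AV_1\boxtimes AV_2)$ through the identification of Theorem~\ref{Th_2.1.2}, over the divisor spaces $AV_i$ becomes $V_i^{(\cdot)}$, and the line-bundle maps $\epsilon$, $i_X$ are designed exactly so that $(D,D_1)$ maps to divisors which, under $\epsilon$, encode $\Omega$-twists; the resulting tensor computation produces $(V_1\otimes V_2\otimes\wedge^2 E)^{(d)}\boxtimes(V_2\otimes E)^{(d'-d)}$ together with the scalar factor $V_0$, which is precisely the stalk of $AV_1\boxtimes AV_2$ at the ``base point'' $(\Omega^{(2n-1)+(2n-3)+\ldots+1},\Omega^{(2n-2)+(2n-4)+\ldots+2})$. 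I would conclude by assembling these three pieces.

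The main obstacle, I expect, will be the bookkeeping in the last paragraph: correctly tracking which copy of $V_1$ or $V_2$ attaches to which factor under the maps $i_X$ and $\epsilon$ (note that $i_X(D,D_1)=(D,D+D_1)$, so $V_2$ appears on both $X^{(d)}$ and $X^{(d'-d)}$ after pushforward, while $V_1$ only on the first), and making sure the multiplicativity $(\cA\otimes\cB)^{(m)}\iso\cA^{(m)}\otimes\cB^{(m)}$ together with the projection formula for $i_X$ and $\epsilon$ is applied consistently. None of this is conceptually hard, but it is the step where a sign or an index error would most easily creep in, so it deserves the most care.
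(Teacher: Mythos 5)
Your overall strategy is the right one and is, in substance, the argument the paper has in mind (the corollary is stated with \QED because it is precisely this: tensor (\ref{period_linear_GL_2n}) by $AV_1\boxtimes AV_2$, invoke Theorem~\ref{Th_2.1.2}, apply the projection formula twice, and compute $(\epsilon\comp i_X)^*(AV_1\boxtimes AV_2)$). Your paragraph on tracking $V_1$ and $V_2$ through $i_X$ is also correct: since $i_X(D,D_1)=(D,D+D_1)$, the restriction of $AV_2$ along $\epsilon\comp i_X$ contributes $V_2^{(d)}\boxtimes V_2^{(d'-d)}$, while $AV_1$ contributes $V_1^{(d)}\boxtimes\Qlb$, together with the constant $V_0$ coming from the twist by the fixed line bundles $\Omega^{(2n-1)+\ldots+1}$ and $\Omega^{(2n-2)+\ldots+2}$.

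Your account of the shift, however, contains two errors that would derail the ``careful dimension count'' you defer. First, $\dimrel(\nu_{\cZ})$ is \emph{not} a constant in $n,g$ alone: over $(\Pic X\times\Pic X)^{d,d'}$ one has $\dimrel(\nu_{\cZ})=n(g-1)-d'$ (this is computed in the proof of Proposition~\ref{Pp_graded_result}), so it depends on $d'$. Plugging this into Theorem~\ref{Th_2.1.2} gives $\pi_!\nu_{\cZ}^*\cK_{2n,E}[n(g-1)]\,\iso\,\epsilon_!(i_X)_!((\wedge^2 E)^{(d)}\boxtimes E^{(d'-d)})[2d']$ directly, with no residue to ``absorb into the normalisation of $\cK_{2n,E}$.'' Second, there is no duality step: $\RG_c$ is carried along by composing $!$-pushforwards, and the only point where $\RG_c$ is replaced by $\RG$ is on $X^{(d)}\times X^{(d'-d)}$, which uses \emph{properness} of the symmetric powers, not smoothness of $\cZ_n^{d,d'}$ or any Poincar\'e duality shift. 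Once these two corrections are made, the bookkeeping you describe closes without any mystery.
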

 
\sssec{} 
\label{Sect_2.4.3}
For this subsection, assume the base field is $\Fq$. For a local system $E$ on $X$, the L-function of $E$ is defined as the formal series in $\Qlb[[t]]$
$$
L(E,t)=\sum_{d\ge 0} \sum_{D\in X^{(d)}(\Fq)} \tr(\Fr_D, E^{(d)})t^d
$$
According to the Grothendieck's trace formula, 
$$
L(E,t)=\prod_{r=0}^2 \det(1-t\Fr, \H^r(X\otimes_{\Fq} k, E))^{(-1)^{r+1}}
$$
Here $\Fr$ is the geometric Frobenius endomorphism. For an irreducible local system $E$ of rank $2n$ on $X$ let 
$$
\varphi_E: \Bun_{2n}(\Fq)\to\Qlb
$$ 
be the function trace of Frobenius of $\Aut_E$. 
Let $\varphi_{V_i}: (\Pic X)(\Fq)\to\Qlb$ be the function trace of Frobenius of $AV_i$ for rank one local systems $V_i$. Then Corollary~\ref{Cor_2.4.2} yields the equality in $\Qlb[[t]]$
\begin{multline*}
\sum_{d\ge 0} \sum_{(s,L, L')\in \cZ^{d,d}_n(\Fq)} \frac{1}{\ssharp\Aut(\Omega^{2n-1}\hook{} L)\ssharp\Aut(L')} \varphi_E(L\oplus L')\varphi_{V_1}(\det L)\varphi_{V_2}(\det L')t^d=\\ 
q^{(n-4n^2)(g-1)/2}\tr(\Fr, V_0)
L(V_1\otimes V_2\otimes\wedge^2 E, t)
\end{multline*}
Here $\ssharp A$ denotes the number of element of the set $A$.

 If $d$ is large enough and $\varphi_E(L\oplus L')\ne 0$ then $\Ext^1(\Omega^{2n-1}, L)=0$ and one has $\dim\Hom(\Omega^{2n-1}, L)=d+(g-1)(n-2n^2)$. One may derive some results of \cite{FJ} about linear periods by passing to the residue at $t=q^{-1}$ in the above equality.
 
\ssec{Application: automorphic sheaves for $\GSp_4$}

\sssec{} 
\label{Sect_2.6.1}
Use the following notations from (\cite{L4}, Section~3.3.1). For a reductive group $G$ over $k$, $\check{G}$ denotes its Langlands dual over $\Qlb$. Let $\GG=\GSp_4$, $E_{\check{\GG}}$ be a $\check{\GG}$-local system on $X$ viewed as a pair $(E,\chi)$, where $E$ (resp., $\chi$) is of rank 4 (resp., 1) local system on $X$ equipped with a symplectic form $\wedge^2 E\to\chi$. 

 Let $\HH=\GO^0_6$ and $\kappa: \check{\GG}\hook{}\check{\HH}$ be as in \select{loc.cit}. So, $\check{\HH}\,\iso\, \{(c,b)\in \Gm\times\GL_4\mid \det b=c^2\}$, and $\kappa$ is the natural inclusion. Let $E_{\check{\HH}}$ be the $\check{\HH}$-local system on $X$ obtained via extension of scalars via $\kappa$. Thus, $E_{\check{\HH}}$ is given by the pair $(E,\chi)$ with the induced isomorphism $\det E\,\iso\, \chi^2$ on $X$. Let $F_{\GG}: \D^-(\Bun_{\HH})\to \D^{\prec}(\Bun_{\GG})$ be the theta-lifting functor from (\cite{L4}, Section~2.2.6).  
 
 Assume $E$ irreducible on $X$. Under these assumptions we constructed the $E_{\check{\GG}}$-Hecke eigensheaf denoted $F_{\GG}(K_{E^*, \chi^*, \HH})\in\D^{\prec}(\Bun_{\GG})$ in (\cite{L4}, Theorem~3.3.4). 
\begin{Thm} 
\label{Thm_2.6.2}
Under the assumptions of Section~\ref{Sect_2.6.1} the complex $F_{\GG}(K_{E^*, \chi^*, \HH})$ is nonzero.
\end{Thm}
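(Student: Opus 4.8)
The plan is to deduce the nonvanishing of $F_{\GG}(K_{E^*, \chi^*, \HH})$ from the fact that its $\GL_4$-incarnation has a nonzero linear period along $\GL_2\times\GL_2$, which is exactly what Theorem~\ref{Th_2.1.2} (with $n=2$) computes. First I would recall, following (\cite{L4}, Section~2.3), that the theta-lifting functor $F_{\GG}$ from $\Bun_{\HH}$ with $\HH=\GO^0_6$ to $\Bun_{\GG}$ factors through the correspondence between $\Bun_{\HH}$ and $\Bun_{\GL_4}$ given by the exceptional isogeny $\SL_4\to\SO_6$ (equivalently $\wedge^2$ of the standard $4$-dimensional representation), together with a similarity-character twist. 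Under this dictionary the input object $K_{E^*,\chi^*,\HH}$ corresponds to (a twist of) the Hecke eigensheaf $\Aut_{E^*}$ on $\Bun_4$, and the nonvanishing of $F_{\GG}(K_{E^*,\chi^*,\HH})$ is equivalent to the nonvanishing of a certain pairing of $\Aut_{E^*}$ against the theta-sheaf for the dual pair $(\GSp_4, \GO_6)$, which geometrically amounts to integrating $\Aut_{E^*}$ (suitably twisted) over the stack parametrising a rank-$4$ bundle together with a decomposition into a direct sum of two rank-$2$ bundles — i.e.\ precisely the geometry of the diagram (\ref{diag_linear_periods_main}) for $2n=4$.

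The key steps, in order, are: \emph{(1)} make precise the identification, on the level of sheaves on $\Bun_4$, of $K_{E^*,\chi^*,\HH}$ (pulled back along $\Bun_{\HH}\to\Bun_{\GL_4}$) with a twist of $\cK_{4,E^*}$, using the construction of $K_{E^*,\chi^*,\HH}$ in (\cite{L4}); \emph{(2)} rewrite the theta-kernel defining $F_{\GG}$ so that the matrix coefficient computing $F_{\GG}(K_{E^*,\chi^*,\HH})$ at a point of $\Bun_{\GG}$ becomes, after descent under $\cM_{2n}\to\Bun_{2n}$ and passing to the component $\cZ_n$, the local linear period (\ref{period_linear_GL_2n}) for $n=2$, possibly twisted by a rank-one automorphic local system as in Corollary~\ref{Cor_2.4.2}; \emph{(3)} apply Theorem~\ref{Th_2.1.2} (or Corollary~\ref{Cor_2.4.2}) with $\rk E=2n=4$: for $0\le d\le d'$ the answer is $\epsilon_!(i_X)_!((\wedge^2 E)^{(d)}\boxtimes E^{(d'-d)})[d']$, which is manifestly nonzero — for instance it is nonzero already for $d=0$, where it reduces to $E^{(d')}$ on a component of $\Pic X\times\Pic X$, and $E^{(d')}\ne 0$ for all $d'\ge 0$; \emph{(4)} conclude that some cohomology of $F_{\GG}(K_{E^*,\chi^*,\HH})$ over a suitable component of $\Bun_{\GG}$ is nonzero, hence $F_{\GG}(K_{E^*,\chi^*,\HH})\ne 0$ in $\D^{\prec}(\Bun_{\GG})$.

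I expect the main obstacle to be Step~(2): carefully matching the theta-lifting correspondence of (\cite{L4}) — which involves the Weil representation geometry for the dual pair, a Fourier transform, and the descent data coming from the exceptional isogeny $\SL_4\iso\Spin_6$ — with the elementary direct-sum diagram ${\cM_{2n}}\getsup{\nu_{\cZ}}\cZ_n\toup{\pi}\Pic X\times\Pic X$ underlying Theorem~\ref{Th_2.1.2}. In particular one must track precisely how the similarity characters $\chi$, $\chi^*$ and the line bundles $\Omega^{(2n-1)+(2n-3)+\cdots+1}$, $\Omega^{(2n-2)+(2n-4)+\cdots+2}$ enter, so that the twist appearing on the $\GL_4$ side is exactly an automorphic local system $AV_i$ of the kind handled by Corollary~\ref{Cor_2.4.2}; this is a bookkeeping-heavy but essentially formal identification once the geometry of both sides is laid side by side. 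Steps~(1), (3), (4) are then short: (1) is a direct comparison of two constructions that both rest on Laumon's sheaf $\cK_{n,E}$, (3) is a citation of Theorem~\ref{Th_2.1.2}, and (4) is the trivial observation that a complex with a nonzero cohomology sheaf is nonzero.
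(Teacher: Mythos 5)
Your proposal takes essentially the same route as the paper. The paper resolves your Step~(2) — which you correctly flag as the main obstacle — by citing Proposition~11 of \cite{L1}, restated here as Proposition~\ref{Pp_6.0.2}: it gives an isomorphism (up to shift) $h^*\Four_{\psi}(p_{\GG}^*F_{\GG}(F))\,\iso\,(\varrho\times\varrho)_!\beta_!\alpha^*F$. So the mechanism is not quite the "matrix coefficient at a point" you describe; rather, one shows that a Fourier coefficient of the constant term of $F_{\GG}(K_{E^*,\chi^*,\HH})$ along the Siegel parabolic recovers the linear period, and nonvanishing of this constant term forces $F_{\GG}(K_{E^*,\chi^*,\HH})\ne 0$. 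Your Step~(1) is the identity $\rho^*K_{E,\chi,\HH}\,\iso\,\Aut_E$ on $\Bun_4$, exactly as in the paper, and your Step~(3) is carried out there via Theorem~\ref{Thm_2.2.2} (the global truncated statement) rather than Theorem~\ref{Th_2.1.2}/Corollary~\ref{Cor_2.4.2}, but these are equivalent for the nonvanishing conclusion.
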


\section{Passing to a closed substack}
\label{Sect_Passing to a closed substack}

\noindent
In this section we prove Proposition~\ref{Pp_2.2.4}. In Section~\ref{Sect_3.1.1} we describe some general properties of the perverse sheaf $\cP^d_{n,E}$, namely its $*$-restriction to the natural stratification, and related results. After this preparation, the proof of Proposition~\ref{Pp_2.2.4} given in Section~\ref{Sect_Proof_of_Pp2.2.4} is a combination of Propositions~\ref{Con_1}, Lemma~\ref{Lm_3.3.4} and Corollary~\ref{Pp_three}. Namely, we will decompose $q_{\cY\cZ}$ into closed immersions
$$
{\wt\cY_{n,d}}\times{\wt\cY'_{n,d'}}\;\toup{i_{\cY}} \;{^{2n}\cZ^{d,d'}}\;\toup{^{2n}\kappa} \; {\cY_{2n}}\times_{\cM_{2n}} \cZ_n,
$$
and successively replace the integration over ${\cY_{2n}}\times_{\cM_{2n}} \cZ_n$ by the integration over a smaller substack in the above diagram.

\ssec{Generalities about $\cP_{n,E}^d$}
\label{Sect_3.1}

\sssec{} 
\label{Sect_3.1.1}
For $r\ge 0$ let $\cQ_{n,r}$ be the stack classifying collections
$$
(0=L_0\subset\ldots\subset L_n\subset L, (s_i)),
$$
where $L_n\subset L$ is a modification of rank $n$ vector bundles on $X$ with $\deg(L/L_n)=r$, $(L_i)$ is a complete flag of subbundles on $L_n$, and $s_i: \Omega^{n-i}\,\iso\, L_i/L_{i-1}$ is an isomorphism for $i=1,\ldots, n$. We have the diagram $\A^1\getsup{\mu}{\cQ_{n,r}}\toup{\beta} {^{\le n}\Sh^r_0}$ as in (\cite{L2}, Section~2.1), here $\beta$ sends the above point to $L/L_n$. As in \select{loc.cit.}, for a local system $E$ on $X$ one defines the perverse sheaf 
$$
\cF^r_{n,E}=\beta^*\cL_E^r\otimes\mu^*\cL_{\psi}[b(n,r)]
$$ 
with 
\begin{equation}
\label{def_of_b(n,r)}
b(n,r)=nr+(1-g)\sum_{i=1}^{n-1} i^2=\dim(\cQ_{n,r})
\end{equation}
Let $\varphi:{\cQ_{n,r}}\to {\cY_{n,r}}$ be the map sending the above point to $(L, (t_i))$, where 
$$
t_i:\Omega^{(n-1)+\ldots+(n-i)}\,\iso\,\wedge^i L_i\hook{}\wedge^i L
$$
are the induced maps for $i=1,\ldots, n$. By definition, $\cP^r_{n,E}\,\iso\,\varphi_!(\cF^r_{n,E})$. Write $\wt\cP_{n,E}^r$ for the complex on $\cY_{n,r}$ obtained from $\cP^r_{n,E}$ by replacing in its definition $\cL^r_E$ by $\Spr^r_E$ (as in \cite{L2}, p. 489 after Lemma~12). 

\sssec{} As in (\cite{L2}, Section~4.1), we stratify $\cY_{n,d}$ by locally closed substacks $S(\cY)^{\lambda}_{pos}$ indexed by $\lambda\in \Lambda^{\succ pos}_{n,d}$. The stratum $S(\cY)^{\lambda}_{pos}$ is given by requiring that the degree of zeros of $t_i: \Omega^{(n-1)+\ldots+(n-i)}\hook{}\wedge^i L$ equals $\lambda_1+\ldots+\lambda_i$ for $i=1,\ldots,n$. So, $S(\cY)^{\lambda}_{pos}$ classifies collections: $(D_i)\in X^{\lambda}_{pos}$, a complete flag of vector bundles $(0=L_0\subset L_1\subset\ldots\subset L_n=L)$ on $L\in\Bun_n$ with trivializations 
$$
\Omega^{(n-1)+\ldots+(n-i)}(D_1+\ldots+D_i)\,\iso\, \wedge^i L_i
$$
Let $S(\cY)^{\lambda}_-=S(\cY)^{\lambda}_{pos}\times_{X^{\lambda}_{pos}} X^{\lambda}_-$. We have a map $\mu_{\lambda}: S(\cY)^{\lambda}_-\to\A^1$ sending the above point to the sum in $k\;\iso\;\Ext^1(\Omega^{n-i-1}(D_i), \Omega^{n-i}(D_i))$ over $1\le i<n$ of the pull-backs of 
$$
0\to L_i/L_{i-1}\to L_{i+1}/L_{i-1}\to L_{i+1}/L_i\to 0
$$ 
under 
$$
\Omega^{n-i-1}(D_i)\hook{} \Omega^{n-i-1}(D_{i+1})\,\iso\,
L_{i+1}/L_i
$$ 

 Let $q_-^{\lambda}: S(\cY)^{\lambda}_-\to X^{\lambda}_-$ be the projection sending the above point to $(D_i)$. For $\lambda\in \ZZ^n$ set
$$
a_n(\lambda)=\<\lambda, (n-1,n-2,\ldots, 0)\>
$$
as in \cite{L2}. For $\lambda\in \Lambda_{n,d}^{\succ-}$
recall the definition of the sheaf $E^{\lambda}_-$ on $X^{\lambda}_-$ from (\cite{L2}, Definition~1). Let $f_{\lambda}: X^{\lambda}_-\to {^{\le n}\Sh^d_0}$ be the map sending $(D_1,\ldots, D_n)$ to
$$
\Omega^{n-1}(D_1)/\Omega^{n-1}\oplus\Omega^{n-2}(D_2)/\Omega^{n-2}\oplus\ldots\oplus \cO(D_n)/\cO
$$
For any smooth $\Qlb$-sheaf $E$ on $X$ set 
$$
E^{\lambda}_-=\cH^{2a_n(\lambda)}(f_{\lambda}^*\cL^d_E),
$$ 
where $\cH^{i}$ denotes the $i$-cohomology sheaf for the usual t-structure. 

 We will use the following.
\begin{Pp}[\cite{L2}, Proposition~2] 
\label{Pp_3.1.2}
Let $E$ be a smooth $\Qlb$-sheaf on $X$ of rank $m$, $\lambda\in\Lambda_{n,d}^{\succ pos}$. The $*$-restriction of $\cP^d_{n,E}$ to $S(\cY)^{\lambda}_{pos}$ vanishes unless $\lambda_1=\ldots=\lambda_{n-m}=0$ and $\lambda\in \Lambda_{n,d}^{\succ-}$. In the latter case it is the extension by zero under $S(\cY)^{\lambda}_-\hook{}S(\cY)^{\lambda}_{pos}$ of 
$$
(q_-^{\lambda})^*E^{\lambda}_-\otimes\mu_{\lambda}^*\cL_{\psi}[b(n,d)-2a_n(\lambda)]
$$ 
\end{Pp}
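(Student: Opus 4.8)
The plan is to compute the $*$-restriction by base change along the map $\varphi:\cQ_{n,d}\to\cY_{n,d}$ through which $\cP^d_{n,E}=\varphi_!(\cF^d_{n,E})$ is defined. Setting $\cQ^{\lambda}=\varphi^{-1}(S(\cY)^{\lambda}_{pos})$ and writing $\varphi_{\lambda}$ for the restriction of $\varphi$, base change for the functor $\varphi_!$ yields
$$
\cP^d_{n,E}|_{S(\cY)^{\lambda}_{pos}}\,\iso\,(\varphi_{\lambda})_!\big(\cF^d_{n,E}|_{\cQ^{\lambda}}\big),
$$
so the first task is an explicit description of $\cQ^{\lambda}$ and of $\varphi_{\lambda}$. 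A point of $\cQ^{\lambda}$ carries a flag $(L_i)$ of subbundles of $L_n$ with $s_i:\Omega^{n-i}\,\iso\,L_i/L_{i-1}$, a modification $L_n\subset L$ with $\deg(L/L_n)=d$, and the induced Pl\"ucker data $(t_i)$ on $L$, whose $\div(t_i)$ has degree $\lambda_1+\ldots+\lambda_i$; writing $D_i$ for the successive differences, so that $(D_i)\in X^{\lambda}_{pos}$, the flag of \emph{subsheaves} of $L$ cut out by $(t_i)$ differs from $(L_i)$ viewed inside $L$ only by torsion supported on the $D_i$. Unwinding this identification should exhibit $\cQ^{\lambda}$ as fibered over $X^{\lambda}_{pos}$, with the fibre governed by how the torsion sheaf $L/L_n$ and the collision divisors $(D_i)$ meet the flag.

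Next I would compute $(\varphi_{\lambda})_!$ of $\cF^d_{n,E}|_{\cQ^{\lambda}}$, where $\cF^d_{n,E}=\beta^*\cL^d_E\otimes\mu^*\cL_{\psi}[b(n,d)]$. This should reduce to the local statement on $X$ that already controls the $*$-restrictions of Laumon's sheaf $\cL^d_E$ on $\Sh^d_0$ to its natural strata. Two ingredients combine: first, the restriction of $\cL^d_E$ to the stratum of $\Sh^d_0$ of torsion sheaves of a given partition type is expressed through the $E^{\mu}$-type sheaves of \cite{L2}, and in particular vanishes as soon as that partition has more than $m=\rk E$ nonzero parts; second, after recording the extension classes of the flag $(L_i)$ inside $L_n$, the Artin--Schreier factor $\mu^*\cL_{\psi}$ restricts to $\mu_{\lambda}^*\cL_{\psi}$, pulled back along $q_-^{\lambda}$ from $S(\cY)^{\lambda}_-$. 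Integrating along the fibres of $\varphi_{\lambda}$ over $S(\cY)^{\lambda}_-$, which I expect to have dimension $a_n(\lambda)=\langle\lambda,(n-1,n-2,\ldots,0)\rangle$ with $\Qlb$ as top compactly supported cohomology on the open part, then produces the shift $b(n,d)-2a_n(\lambda)$ and identifies the answer with the extension by zero under $S(\cY)^{\lambda}_-\hook{}S(\cY)^{\lambda}_{pos}$ of $(q_-^{\lambda})^*E^{\lambda}_-\otimes\mu_{\lambda}^*\cL_{\psi}[b(n,d)-2a_n(\lambda)]$.

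The two constraints on $\lambda$ should fall out of this analysis. The vanishing unless $\lambda_1=\ldots=\lambda_{n-m}=0$ comes from the first ingredient: through $\beta$ the sheaf $\cL^d_E$ only sees $\wedge^{\le m}$ of the local pieces of the torsion, so a stratum on which more than $m$ of the $\lambda_j$ are positive contributes nothing, while the flag structure forces the positive coordinates to be the last $m$ of them. The requirement $\lambda\in\Lambda^{\succ-}_{n,d}$, i.e. $0\le\lambda_1\le\ldots\le\lambda_n$, should be exactly the condition $D_1\le\ldots\le D_n$ under which the flag of subsheaves cut out by $(t_i)$ arises by saturation inside a single modification $L_n\subset L$; outside it $\cQ^{\lambda}$ is empty.

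I expect the main obstacle to be the fibre computation in the second step: showing that $(\varphi_{\lambda})_!$ of the restricted sheaf is \emph{clean}, i.e. concentrated in a single cohomological degree and equal to $E^{\lambda}_-$ with precisely the stated shift and Artin--Schreier twist, with no higher cohomology and no spurious extensions, even at points where several of the $D_i$ collide. This is a genuinely local question on $X$, and it is where the combinatorics of $E^{\lambda}_-$ and the nonvanishing of the relevant exponential sum (the Artin--Schreier contribution) really enter. It is precisely the content of \cite{L2}, Proposition~2, on which we rely here.
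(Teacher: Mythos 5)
The paper itself does not supply a proof of this proposition; it is quoted verbatim from \cite{L2}, Proposition~2, and used as an input. So there is no internal argument to compare against. With that caveat, your sketch does capture the broad strategy that a proof along Laumon--Lysenko lines would take: use proper base change along $\varphi$, stratify $\cQ_{n,d}$ over $X^{\lambda}_{pos}$ by the degrees of zeros of the $t_i$, separate the Whittaker twist $\mu^*\cL_{\psi}$ into the part $\mu_{\lambda}^*\cL_{\psi}$ living on the stratum, and reduce the Laumon factor $\beta^*\cL^d_E$ to a local statement about restrictions of $\cL^d_E$ to partition strata of $\Sh^d_0$, where the vanishing for more than $m$ positive parts comes from $\wedge^{>m}E=0$.

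However, as a proof this is circular. In your final paragraph you identify the genuine crux --- showing that $(\varphi_{\lambda})_!$ applied to $\beta^*\cL^d_E\otimes\mu^*\cL_\psi$ is concentrated in a single degree and equal to $(q_-^{\lambda})^*E^{\lambda}_-\otimes\mu_{\lambda}^*\cL_{\psi}$ with the stated shift, cleanly even at colliding points --- and then you \emph{cite} \cite{L2}, Proposition~2 for it. But that is precisely the statement under proof; deferring the essential cohomological computation to the proposition itself means nothing has been established. A complete argument would have to carry out the local fibre computation (the analogue of the Casselman--Shalika input, and the analysis of $\cL^d_E$ on torsion strata) independently rather than quoting the conclusion.

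There is also a smaller inaccuracy: you assert that $\cQ^{\lambda}$ is empty when $\lambda\notin\Lambda^{\succ-}_{n,d}$. That is not the case. The stack $S(\cY)^{\lambda}_{pos}$, and hence its preimage $\cQ^{\lambda}$, is nonempty for every $\lambda\in\Lambda^{\succ pos}_{n,d}$; what vanishes is the pushforward $(\varphi_{\lambda})_!(\cF^d_{n,E}|_{\cQ^{\lambda}})$. The monotonicity condition $D_1\le\ldots\le D_n$ cutting out $S(\cY)^{\lambda}_-$ (and hence forcing $\lambda\in\Lambda^{\succ-}_{n,d}$ for the answer to be nonzero) emerges from the integration, not from the geometry of $\cQ^{\lambda}$ being empty.
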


\sssec{}  
\label{Sect_3.1.4}
For $1\le j\le m$ let $_{j}\cY_m$ be the stack classifying $L\in\Bun_m$ together with sections (\ref{sections_t_i}) for $1\le i\le j$ satisfying the Pl\"ucker relations. For $j<m$ let $\delta_j: {\cY_m}\to {_{j+1}\cY_m}$ be the projection forgetting $t_{j+2},\ldots, t_m$. 
 
  Given $\lambda\in \Lambda^{\succ pos}_j$ consider the locally closed substack $_{j+1}\cY_m^{\lambda}\hook{} {_{j+1}\cY_m}$ given by the property that there is $(D_1,\ldots, D_j)\in X^{\lambda}_{pos}$ such that
$$
t_i: \Omega^{(m-1)+\ldots+(m-i)}(D_1+\ldots+D_i)\hook{} \wedge^i L
$$
is a subbundle for $1\le i\le j$. A point of $_{j+1}\cY_m^{\lambda}$ gives a flag $(L_1\subset\ldots\subset L_j\subset L)$ of vector bundles on $X$ with trivializations for $1\le i\le j$
$$
L_i/L_{i-1}\,\iso\, \Omega^{m-i}(D_i)
$$
and a section $t_{j+1}: \Omega^{(m-1)+\ldots+(m-j-1)}\hook{}
\wedge^jL_j\otimes L/L_j$. 
 
 Let $_{j+1}\cY^{\lambda}_{m,-}$ be the closed substack of $_{j+1}\cY^{\lambda}_m$ given by the properties: 
$0\le D_1\le\ldots\le D_j$, and there is a regular section $s: \Omega^{m-j-1}(D_j)\hook{} L/L_j$ such that 
$$
t_{j+1}=t_j\otimes s: \Omega^{(m-1)+\ldots+(m-j-1)}\hook{} \wedge^j L_j\otimes L/L_j\subset \wedge^{j+1}L
$$
We used the fact that $t_j: \Omega^{(m-1)+\ldots+(m-j)}\hook{} \wedge^j L_j$. 

 Let $_j\cW^{\lambda}$ be the stack classifying $(D_1,\ldots, D_j)\in X^{\lambda}_-$, $\bar L\in \Bun_{m-j}$ with a section $s: \Omega^{m-j-1}(D_j)\hook{} \bar L$. Let 
$$
\tau^{\lambda}:{_{j+1}\cY^{\lambda}_{m,-}}\to  {_j\cW^{\lambda}}
$$ 
be the map sending the above point to $(\bar L, (D_i), s)$ with $\bar L=L/L_j$. Let $\ev: {_{j+1}\cY^{\lambda}_{m,-}}\to\A^1$ be the map sending a point of $_{j+1}\cY^{\lambda}_{m,-}$ to the sum in $\H^1(X,\Omega)\,\iso\,\A^1$ of the pull-back extensions
$$
\begin{array}{ccc}
0\to L_i/L_{i-1}\to L_{i+1}/L_i\to  & L_{i+1}/L_i & \to 0\\
&\uparrow\\
& \Omega^{m-i-1}(D_i)
\end{array}
$$
for $1\le i\le j-1$ and the pull-back of $0\to L_j/L_{j-1}\to L/L_{j-1}\to L/L_j\to 0$ under $s: \Omega^{m-j-1}(D_j)\hook{} L/L_j$.

\begin{Lm} 
\label{Lm_P_E_on_m_j+1_cY}
Let $1\le j<m$, $E$ be any local system on $X$. The $*$-restriction of $\delta_{j !}(\cP_{m,E})$ to $_{j+1}\cY^{\lambda}_m$ is the extension by zero from $_{j+1}\cY^{\lambda}_{m,-}$ of 
$
\ev^*\cL_{\psi}\otimes (\tau^{\lambda})^*K'
$
for some complex $K'\in \D(_j\cW^{\lambda})$. \QED
\end{Lm}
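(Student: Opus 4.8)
The plan is to compute the $*$-restriction of $\delta_{j!}(\cP_{m,E})$ to ${}_{j+1}\cY^{\lambda}_m$ by base change together with the known $*$-restriction of $\cP_{m,E}$ to the natural stratification of $\cY_m$. First I would write $\cP_{m,E}\iso\varphi_!(\cF_{m,E})$, so that $\delta_{j!}(\cP_{m,E})\iso(\delta_j\comp\varphi)_!(\cF_{m,E})$, and apply base change to the Cartesian square attached to $\delta_j$ and the locally closed immersion ${}_{j+1}\cY^{\lambda}_m\hook{}{}_{j+1}\cY_m$. This identifies the sought $*$-restriction with $\bar\delta_{j!}$ of the $*$-restriction of $\cP_{m,E}$ to the preimage $\delta_j^{-1}({}_{j+1}\cY^{\lambda}_m)\subset\cY_m$, where $\bar\delta_j\colon\delta_j^{-1}({}_{j+1}\cY^{\lambda}_m)\to{}_{j+1}\cY^{\lambda}_m$. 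Stratifying $\cY_m$ by the $S(\cY)^{\nu}_{pos}$, $\nu\in\Lambda^{\succ pos}_{m}$, the stratum $S(\cY)^{\nu}_{pos}$ meets $\delta_j^{-1}({}_{j+1}\cY^{\lambda}_m)$ only if $\nu_i=\lambda_i$ for $1\le i\le j$; by Proposition~\ref{Pp_3.1.2} only those $\nu$ lying moreover in $\Lambda^{\succ-}_m$ contribute, and for them the $*$-restriction of $\cP_{m,E}$ is the extension by zero from $S(\cY)^{\nu}_-$ of the sheaf $(q^{\nu}_-)^*E^{\nu}_-\otimes\mu_{\nu}^*\cL_{\psi}$ of \emph{loc.cit.}, up to shift. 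Hence the $*$-restriction of $\cP_{m,E}$ to $\delta_j^{-1}({}_{j+1}\cY^{\lambda}_m)$ carries a finite filtration with these graded pieces. In particular $0\le\lambda_1\le\dots\le\lambda_j$, i.e.\ $\lambda\in\Lambda^{\succ-}_j$, is forced; if $\lambda\notin\Lambda^{\succ-}_j$ the whole $*$-restriction vanishes, which matches the emptiness of ${}_{j+1}\cY^{\lambda}_{m,-}$. So assume $\lambda\in\Lambda^{\succ-}_j$ from now on.

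Next I would check that $\bar\delta_j$ carries each contributing stratum into the closed substack ${}_{j+1}\cY^{\lambda}_{m,-}$. A point of $S(\cY)^{\nu}_-\cap\delta_j^{-1}({}_{j+1}\cY^{\lambda}_m)$ carries a complete flag of subbundles $L_1\subset\dots\subset L_m=L$ with $L_i/L_{i-1}\iso\Omega^{m-i}(D_i)$ and $0\le D_1\le\dots\le D_m$; under $\delta_j$ it retains $L_1\subset\dots\subset L_j\subset L$ together with $t_{j+1}$, and $t_{j+1}$ factors through $\wedge^jL_j\otimes(L_{j+1}/L_j)\subset\wedge^{j+1}L$ with $L_{j+1}/L_j\iso\Omega^{m-j-1}(D_{j+1})\hook{}L/L_j$. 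Since $D_j\le D_{j+1}$, this exhibits $t_{j+1}$ in the form $t_j\otimes s$ for a regular $s\colon\Omega^{m-j-1}(D_j)\hook{}L/L_j$, so the image lies in ${}_{j+1}\cY^{\lambda}_{m,-}$. As this holds on every graded piece, the $*$-restriction of $\delta_{j!}(\cP_{m,E})$ to ${}_{j+1}\cY^{\lambda}_m$ is the extension by zero from ${}_{j+1}\cY^{\lambda}_{m,-}$ of its restriction there.

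It then remains to identify this restriction over ${}_{j+1}\cY^{\lambda}_{m,-}$. The idea is that, on the preimage of ${}_{j+1}\cY^{\lambda}_{m,-}$, the morphism $\delta_j\comp\varphi$ sits over $\tau^{\lambda}$: the data of a point of $\cQ_{m,d}$ above level $j$ — the modification $L_j\subset L_m\subset L$, the partial flag $L_{j+1}\subset\dots\subset L_{m-1}$, the isomorphisms $s_{j+1},\dots,s_m$ and the torsion quotient $L/L_m$ — depend only on $(\bar L,(D_i),s)=\tau^{\lambda}(\,\cdot\,)$ with $\bar L=L/L_j$, and a direct computation local on $X$ shows that the function $\mu$ on $\cQ_{m,d}$, restricted to this preimage, separates into $\ev\comp(\delta_j\comp\varphi)$ plus a function pulled back under $\tau^{\lambda}\comp(\delta_j\comp\varphi)$ (the contribution of the layers above level $j$ being absorbed, via $s$, into the class of $0\to L_j/L_{j-1}\to L/L_{j-1}\to L/L_j\to0$). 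Granting this, the restriction of $\cF_{m,E}=\beta^*\cL^d_E\otimes\mu^*\cL_{\psi}[b(m,d)]$ to the preimage is the tensor product of $(\delta_j\comp\varphi)^*\ev^*\cL_{\psi}$ with the pull-back under $\tau^{\lambda}\comp(\delta_j\comp\varphi)$ of a fixed complex on ${}_j\cW^{\lambda}$; applying $(\delta_j\comp\varphi)_!$ and the projection formula produces $\ev^*\cL_{\psi}\otimes(\tau^{\lambda})^*K'$ for a suitable $K'\in\D({}_j\cW^{\lambda})$, which is the assertion.

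The main obstacle is the last step: one must verify, at the level of sheaves and not merely of stacks, that $\delta_j\comp\varphi$ on the relevant preimage genuinely factors through $\tau^{\lambda}$ up to the $\ev$-twist — that the sum of extension classes computing $\mu$ really decomposes into the $\ev$-part and a part pulled back along $\tau^{\lambda}$, and that the residual fibration over ${}_j\cW^{\lambda}$ together with the Laumon datum $\beta^*\cL^d_E$ is a genuine pull-back. This is a bookkeeping of Pl\"ucker data, divisors, and extension classes entirely local on $X$, of the same nature as the computations underlying Proposition~\ref{Pp_3.1.2}.
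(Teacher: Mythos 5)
Your argument is a genuinely different route from the paper's: the paper's proof is a one-line citation of the equivariance property of Whittaker sheaves (\cite{G}, Proposition~4.13) together with (\cite{L2}, Proposition~2), whereas you unwind $\cP_{m,E}=\varphi_!\cF_{m,E}$ and try to compute directly. Your first two steps are sound: by base change and Proposition~\ref{Pp_3.1.2}, the $*$-restriction of $\delta_{j!}\cP_{m,E}$ to $_{j+1}\cY^{\lambda}_m$ is obtained by pushing forward a complex supported on the union of the strata $S(\cY)^{\nu}_-$ with $\nu_i=\lambda_i$ ($i\le j$), and each such locus maps into $_{j+1}\cY^{\lambda}_{m,-}$ because $D_j\le D_{j+1}$ exhibits $t_{j+1}=t_j\otimes s$ with $s:\Omega^{m-j-1}(D_j)\hook{}L/L_j$ regular. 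So the direct image is indeed extension by zero from $_{j+1}\cY^{\lambda}_{m,-}$, and $\lambda\in\Lambda^{\succ-}_j$ is forced. This part is correct.

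The gap is in the final step. You assert that $\cF_{m,E}$ restricted to the preimage $P=(\delta_j\comp\varphi)^{-1}({}_{j+1}\cY^{\lambda}_{m,-})$ is $g^*\ev^*\cL_{\psi}\otimes(\tau^{\lambda}\comp g)^*A$ for a fixed complex $A$ on $_j\cW^{\lambda}$ (where $g=\delta_j\comp\varphi$), and then apply the projection formula. This cannot be right for the factor $\beta^*\cL^d_E$: the map $\beta$ records the torsion quotient $L/L_m$, which is a quotient of $\bar L=L/L_j$ by the rank-$(m-j)$ subbundle $L_m/L_j$; since $L_m/L_j$ is not part of the data $(\bar L,(D_i),s)$ remembered by $\tau^{\lambda}\comp g$, the Laumon sheaf $\beta^*\cL^d_E|_P$ varies non-trivially along the fibres of $\tau^{\lambda}\comp g$ and hence is not pulled back from $_j\cW^{\lambda}$. (And even if it were, the projection formula would produce an extra factor $g_!\Qlb$ that you would still have to identify as pulled back from $_j\cW^{\lambda}$.) What is actually true, and what makes the argument work, is a \emph{base-change} statement: one exhibits a stack $\wt\cW\to{}_j\cW^{\lambda}$ parametrizing the data above level $j$ (flag $L_{j+1}\subset\dots\subset L_m$ inside $\bar L$ with trivializations and the torsion quotient) such that $P\iso{}_{j+1}\cY^{\lambda}_{m,-}\times_{_j\cW^{\lambda}}\wt\cW$, and such that $\cF_{m,E}|_P$ is the external tensor product over $_j\cW^{\lambda}$ of $\ev^*\cL_{\psi}$ and a complex on $\wt\cW$; one then applies $g_!$ and smooth/proper base change to get $\ev^*\cL_{\psi}\otimes(\tau^{\lambda})^*K'$ with $K'$ the pushforward to $_j\cW^{\lambda}$ of the complex on $\wt\cW$. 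You also leave the additivity of $\mu$ (that the low-level terms assemble into $\ev\comp g$ and the high-level terms descend to $\wt\cW$) as an unverified ``bookkeeping''; this is precisely the content of the Whittaker-equivariance result the paper cites, and is the heart of the matter rather than a routine check.
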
 
\begin{proof}
This follows from the equivariance properties of Whittaker sheaves (\cite{G}, Proposition~4.13) and (\cite{L2}, Proposition~2). A similar argument is also used in (\cite{G2}, Section~4.4-4.5).  

 Namely, denote temporarily by $\cP_j\subset \GL(\Omega^{m-1}\oplus\ldots\oplus\Omega^{m-j-1}\oplus \cO^{\oplus j-1})$ the parabolic group subscheme on $X$ preserving for each $1\le i\le j$ the subbundle $\Omega^{m-1}\oplus\ldots\oplus\Omega^{m-i}$. Let $\cN_j\subset \cP_j$ be its unipotent radical. Let $M_j=\cP_j/\cN_j$.  
 
 For a finite collection $\und{y}=\{y_1,\ldots, y_r\}\in X$ let $\gL_{\und{y}}(\cN_j)$ and 
$\gL_{\und{y}}^+(\cN_j)$ be the corresponding loop and arc groups as in (\cite{G2}, 4.4.1). As above, one gets a canonical character
$$
\ev_{\und{y}}: \gL_{\und{y}}(\cN_j)\to \prod_{s=1}^j \gL_{\und{y}}(\Omega)\to\A^1,
$$
where the last map is the sum of residues over all $s$ and all points of $\und{y}$. The map $\ev_{\und{y}}$ is trivial on $\gL^+_{\und{y}}(\cN_j)$. Write $D_{\und{y}}$ for the formal neighbourhood of $\und{y}$ in $X$. 

 Let $_{j+1}\cY^{\lambda}_{m,  \; \mbox{{\tiny good at}}\, \und{y}}\subset 
{_{j+1}\cY^{\lambda}_m}$ be the open substack given by the property that $\und{y}$ does not intersect $D_i$ for any $i$, and $t_{j+1}$ is a subbundle in a neighbourhood of $\und{y}$. 

 For a point of $_{j+1}\cY^{\lambda}_{m,  \; \mbox{{\tiny good at}}\, \und{y}}$ as above, the restriction of $L$ to $D_{\und{y}}$ naturally gives rise to a $\cP_j$-torsor on $D_{\und{y}}$. Moreover, $L/L_j$ is equipped with a section $s_{\und{y}}: \Omega^{m-j-1}\to L/L_j$ over $D_{\und{y}}$ such that $t_{j+1}=t_j\otimes s_{\und{y}}$ over $D_{\und{y}}$. 
 
   Write $\cH(_{j+1}\cY^{\lambda}_{m,  \; \mbox{{\tiny good at}}\, \und{y}})$ for the stack classifying a point of $_{j+1}\cY^{\lambda}_{m,  \; \mbox{{\tiny good at}}\, \und{y}}$ as above together with an isomorphism $L\,\iso\, \Omega^{m-1}\oplus\ldots\oplus \Omega^{m-j}\oplus L/L_j$ of $\cP_j$-torsors over $D_{\und{y}}$ inducing the identity on the corresponding $M_j$-torsors. The projection $\cH(_{j+1}\cY^{\lambda}_{m,  \; \mbox{{\tiny good at}}\, \und{y}})\to {_{j+1}\cY^{\lambda}_{m,  \; \mbox{{\tiny good at}}\, \und{y}}}$ is a $\gL_{\und{y}}^+(\cN_j)$-torsor. 
   
   As in (\cite{G2}, 4.4.3), the action of $\gL_{\und{y}}^+(\cN_j)$ on $\cH(_{j+1}\cY^{\lambda}_{m,  \; \mbox{{\tiny good at}}\, \und{y}})$ naturally extends to a $\gL_{\und{y}}(\cN_j)$-action. 
   
  Let $_j\cW^{\lambda}_{pos}$ be the stack classifying $(D_1,\ldots, D_j)\in X^{\lambda}_{pos}$, $\bar L\in \Bun_{m-j}$ and a section 
$$
s: \Omega^{m-j-1}\to \bar L(D_1+\ldots+D_j)
$$ 
We have the projection 
$\tau^{\lambda}_{pos}: {_{j+1}\cY^{\lambda}_m}\to {_j\cW^{\lambda}_{pos}}$ extending $\tau^{\lambda}$. Let $_j\cW^{\lambda}_{pos, \; \mbox{{\tiny good at}}\, \und{y}}\subset {_j\cW^{\lambda}_{pos}}$ be the open substack giving by a similar condition: $\und{y}$ does not intersect $D_i$ for all $i$, and $s$ is a subbundle in a neighbourhood of $\und{y}$. 

 The group ind-scheme $\gL_{\und{y}}(\cN_j)$ acts transitively on each fibre of the composition
$$
\cH(_{j+1}\cY^{\lambda}_{m,  \; \mbox{{\tiny good at}}\, \und{y}})\to {_{j+1}\cY^{\lambda}_{m,  \; \mbox{{\tiny good at}}\, \und{y}}}\toup{\tau^{\lambda}_{pos}}  {_j\cW^{\lambda}_{pos, \; \mbox{{\tiny good at}}\, \und{y}}},
$$
compare with (\cite{G2}, Lemma~4.4.6). One finishes the proof as in (\cite{G2}, Section~4.4-4.5).
\end{proof}

\ssec{Passing to $^{2n}\cY\cZ$} 
\label{Sect_passing_2n_YZ}

\sssec{} The purpose of Section~\ref{Sect_passing_2n_YZ} is to establish Proposition~\ref{Con_1} below. Its proof after several reductions boils down to a calculation of $\RG_c(V, f^*\cL_{\psi})$ for a finite-dimensional vector space $V$ and a linear morphism $f: V\to\A^1$. A reader may compare with the proof of (\cite{L2}, Theorem~A) which uses a similar strategy in a less involved setting.

\sssec{}
Write a point of $\cY_{2n}\times_{\cM_{2n}}\cZ_n$ as a collection $L,L'\in\Bun_n$ for which we set $M=L\oplus L'$, and sections
\begin{equation}
\label{section_t_i_for_M}
t_i: \Omega^{(2n-1)+\ldots+(2n-i)}\hook{}\wedge^i M
\end{equation}
for $1\le i\le 2n$ satisfying the Pl\"ucker relations. Here $t_1: \Omega^{2n-1}\hook{} L$. 

For $1\le j\le 2n$ let 
$$
^j\kappa: {^j\cY\cZ} \hook{} {\cY_{2n}\times_{\cM_{2n}}\cZ_n}
$$ 
be the closed substack given by the following property. For each $1\le i\le j$ we require the following condition (A):
\begin{itemize}
\item[A1)] if $i=2k$ then $t_{2k}$ factors as 
$$
t_{2k}: \Omega^{(2n-1)+\ldots+(2n-2k)}\hook{} (\wedge^k L)\otimes(\wedge^k L')\subset \wedge^{i} M
$$
\item[A2)] if $i=2k+1$ then $t_{2k+1}$ factors as 
$$
t_{2k+1}:\Omega^{(2n-1)+\ldots+(2n-2k-1)}\hook{} (\wedge^{k+1} L)\otimes(\wedge^k L')\subset \wedge^i M
$$
\end{itemize}
We get a diagram of closed embeddings
$$
^{2n}\cY\cZ\hook{}\ldots \hook{}{^2\cY\cZ}\hook{}{^1\cY\cZ}={\cY_{2n}\times_{\cM_{2n}}\cZ_n}
$$
Our first step is the following.
\begin{Pp} 
\label{Con_1}
Let $E$ be a local system on $X$. For each $1\le j<2n$ the natural map 
\begin{equation}
\label{map_for_step_i}
\pi_!\pr_{2!}(^{j}\kappa)_*(^{j}\kappa)^*(\cP_{2n, E}\boxtimes\Qlb)  
\to \pi_!\pr_{2!}(^{j+1}\kappa)_*(^{j+1}\kappa)^*(\cP_{2n, E}\boxtimes\Qlb)  
\end{equation}
is an isomorphism.
\end{Pp}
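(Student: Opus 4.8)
The plan is to isolate the ``difference'' between integrating over $^{j}\cY\cZ$ and over its closed substack $^{j+1}\cY\cZ$, show it is supported on the open complement $U_j={}^{j}\cY\cZ\setminus{}^{j+1}\cY\cZ$, and kill that contribution by an Artin--Schreier vanishing. Precisely: write $a\colon U_j\hook{}{}^{j}\cY\cZ$ and $b\colon{}^{j+1}\cY\cZ\hook{}{}^{j}\cY\cZ$, so that $^{j+1}\kappa={}^{j}\kappa\comp b$, and apply the triangulated functor $\pi_!\pr_{2!}(^{j}\kappa)_!$ to the localization triangle $a_!a^*\cF\to\cF\to b_!b^*\cF\to$ with $\cF=(^{j}\kappa)^*(\cP_{2n,E}\boxtimes\Qlb)$. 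Its second arrow is the map (\ref{map_for_step_i}); hence (\ref{map_for_step_i}) is an isomorphism if and only if $\pi_!\pr_{2!}$ annihilates the restriction of $\cP_{2n,E}\boxtimes\Qlb$ to $U_j$, extended by zero.

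Next I would descend the problem to ${}_{j+1}\cY_{2n}$. The substacks $^{j}\cY\cZ$, $^{j+1}\cY\cZ$, $U_j$ depend only on $t_1,\dots,t_{j+1}$, so they are pulled back from ${}_{j+1}\cY_{2n}\times_{\cM_{2n}}\cZ_n$ along the map induced by $\delta_j\colon\cY_{2n}\to{}_{j+1}\cY_{2n}$, through which $\pi\comp\pr_2$ also factors. Since the relevant squares are cartesian, base change reduces the claim to the vanishing of $\pi_!\pr_{2!}$ of the $*$-restriction of the pullback of $\delta_{j!}\cP_{2n,E}$ to the open substack where $t_{j+1}$ does not factor through the distinguished summand of $\wedge^{j+1}M$ from A1)--A2). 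Stratifying by the ${}_{j+1}\cY^{\lambda}_{2n}$, $\lambda\in\Lambda^{\succ pos}_{j}$, and applying Lemma~\ref{Lm_P_E_on_m_j+1_cY} with $m=2n$, on each stratum $\delta_{j!}\cP_{2n,E}$ is the extension by zero from ${}_{j+1}\cY^{\lambda}_{2n,-}$ of $\ev^*\cL_\psi\otimes(\tau^{\lambda})^*K'$. So the task becomes to compute $\pi_!\pr_{2!}$ of $\ev^*\cL_\psi\otimes(\tau^{\lambda})^*K'$ (pulled back to the fibre product with $\cZ_n$) over the open part of ${}_{j+1}\cY^{\lambda}_{2n,-}$ on which (A) fails at step $j+1$, and then sum over $\lambda$.

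The heart of the argument is this last computation. On ${}_{j+1}\cY^{\lambda}_{2n,-}$ one has $t_{j+1}=t_j\otimes s$ for a section $s\colon\Omega^{2n-j-1}(D_j)\hook{} M/L_j$, and conditions (A) for $i\le j$ force the flag $L_1\subset\dots\subset L_j$ to be split for $M=L\oplus L'$, say $L_i=P_i\oplus Q_i$ with $P_i\subset L$, $Q_i\subset L'$; thus $M/L_j=(L/P_j)\oplus(L'/Q_j)$, and the failure of (A) at step $j+1$ means exactly that the component $s'$ of $s$ in the summand forced by the parity of $j+1$ is nonzero. Computing the extension class of $0\to L_j/L_{j-1}\to M/L_{j-1}\to M/L_j\to 0$ occurring in $\ev$ shows it is concentrated in that same summand, so that $\ev$, viewed as a function of $s$, is a nonconstant affine-linear function of $s'$ --- possibly only after a further stratification by the loci where the successive extension classes of the flag degenerate, which reduces matters inductively to earlier steps. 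Presenting the fibres of $\pi\comp\pr_2$ over this locus as generalized affine fibrations along a vector subbundle $V$ on which $\ev$ restricts to a nonzero linear map $f\colon V\to\A^1$, I would integrate first along $V$ and use the standard vanishing $\RG_c(V,f^*\cL_\psi)=0$. I expect this linear-algebra step to be the main obstacle: pinning down, for each parity of $j+1$, the extension class against which $\ev$ pairs the off-diagonal component $s'$, checking that the resulting linear functional on $V$ is genuinely nonzero, and organizing the stratification so that only honest vector spaces --- not punctured ones --- occur at the final stage.
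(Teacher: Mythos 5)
Your reductions follow the same path as the paper: the localization triangle for $U_j = {^j\cY\cZ}\setminus{^{j+1}\cY\cZ}$ is equivalent to the paper's formulation of (\ref{map_for_step_i}) as an isomorphism; the descent through $\delta_j$ corresponds to the paper's passage to $^{j+1}\cZ'$ via $^j\zeta$ (Proposition~\ref{Pp_one}); the stratification by $\lambda\in\Lambda^{\succ pos}_j$ is the paper's Proposition~\ref{Pp_two}; and the appeal to Lemma~\ref{Lm_P_E_on_m_j+1_cY} is exactly what the paper does, yielding that the restricted complex is $\ev^*\cL_\psi$ tensored with a pullback through $\tau^\lambda$. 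Up to that point your plan and the paper's are the same.

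The gap is in the final Artin--Schreier step, and you have correctly identified it yourself as ``the main obstacle,'' but you have not resolved it. You propose to integrate $\ev^*\cL_\psi$ along the $s'$-direction. This runs into exactly the two problems you flag: (a) on $U_j$ the $s'$-direction is the complement of the zero section, hence a punctured bundle rather than a vector bundle, so $\RG_c(V,f^*\cL_\psi)=0$ does not apply directly; and (b) with the flag fixed, $\ev$ restricted to $s'$ is the Yoneda pairing of $s'$ with the extension class $e$ of $0\to L'_k/L'_{k-1}\to L'/L'_{k-1}\to L'/L'_k\to 0$ (for $j=2k$; similarly for $j$ odd), so it is the zero functional on the entire locus $e=0$. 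Your proposed fix --- a further stratification ``by the loci where the successive extension classes of the flag degenerate,'' hoped to ``reduce matters inductively to earlier steps'' --- does not go through: the locus $e=0$ has no inductive relation to the steps $i<j$, and on it the Artin--Schreier input vanishes entirely.

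The paper resolves both problems at once by fibering in the transverse direction. It introduces the stack $^{j+1}\hat\cW^\lambda$ which remembers the single relevant extension class $e$ (but not the full flag), and factors $\bar\tau=\hat\tau\circ\xi$ with $\xi$ a generalized affine fibration. Pushing forward along $\hat\tau$ means integrating over the $e$-direction with $s$ (hence $\bar s'$) held fixed. On each fiber of $\hat\tau$ the function $\hat\ev^\lambda$ is the linear functional $e\mapsto e\cup\bar s'$, which by Serre duality is nonzero precisely when $\bar s'\ne 0$. Since the open condition $\bar s'\ne 0$ now lives on the base $^{j+1}\bar\cW^\lambda$ and not in the fiber, the fibers of $\hat\tau$ are honest affine spaces (never punctured), and $\hat\tau_!(\hat\ev^\lambda)^*\cL_\psi$ has vanishing $*$-fibre everywhere $\bar s'\ne 0$ --- which is exactly the extension-by-zero statement you want --- with no subsidiary stratification and no induction on degenerate extension classes. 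This choice of fibering direction is the key idea your proposal is missing.
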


\sssec{} In this subsection we reduce Proposition~\ref{Con_1} to Proposition~\ref{Pp_one} below. For $1\le j\le 2n$ let $^j\wt\cZ$ be the stack classifying $L,L'\in\Bun_n$ for which we set $M=L\oplus L'$, and sections (\ref{section_t_i_for_M}) for $1\le i\le j$ satisfying the Plucker relations such that $t_1: \Omega^{2n-1}\hook{} L\subset M$.

 Let $^j\cZ\hook{}{^j\wt\cZ}$ be the closed substack given by the condition (A) on $t_i$ for each $1\le i\le j$. Note that $^1\cZ={^1\wt\cZ}$. Set 
$$
^{j+1}\cZ'={^{j+1}\wt\cZ}\times_{^j\wt\cZ} {^j\cZ},
$$
here the projection $^{j+1}\wt\cZ\to {^j\wt\cZ}$ forgets $t_{j+1}$. For $1\le j<2n$ denote by 
$$
^j\zeta: {^j\cY\cZ}\to {^{j+1}\cZ'}
$$ 
the projection that forgets $t_{j+2},\ldots, t_{2n}$. Set
$$
^{j+1}K_E={^j\zeta_!}(^j\kappa)^*(\cP_{2n, E}\boxtimes\Qlb)
$$
We have a cartesian square
$$
\begin{array}{ccc}
^{j+1}\cY\cZ & \hook{} & ^j\cY\cZ\\
\downarrow && \downarrow\lefteqn{\scriptstyle ^j\zeta}\\
^{j+1}\cZ & \toup{\bar\kappa} & ^{j+1}\cZ',
\end{array}
$$ 
and $\bar\kappa$ is a closed immersion. Let $\pi_{\cZ}: {^{j+1}\cZ'}\to \Pic X\times\Pic X$ be the map sending a point of $^{j+1}\cZ'$ to $(\det L, \det L')$. Proposition~\ref{Con_1} is reduced to the following.
\begin{Pp} 
\label{Pp_one}
For $1\le j<2n$ the natural map
$
\pi_{\cZ !}(^{j+1}K_E)\to \pi_{\cZ !}\bar\kappa_*\bar\kappa^*(^{j+1}K_E)
$
is an isomorphism.
\end{Pp}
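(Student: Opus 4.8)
The plan is to deduce Proposition~\ref{Pp_one} from the excision triangle attached to the closed immersion $\bar\kappa\colon{^{j+1}\cZ}\hookrightarrow{^{j+1}\cZ'}$. Write $j'\colon U\hookrightarrow{^{j+1}\cZ'}$ for the open complement. Unwinding the definitions, on ${^{j+1}\cZ'}$ the sections $t_1,\dots,t_j$ satisfy condition (A), so they cut out a flag of subbundles $M_1\subset\dots\subset M_j\subset M=L\oplus L'$ compatible with the direct sum (explicitly $M_{2k}=L_k\oplus L'_k$ and $M_{2k+1}=L_{k+1}\oplus L'_k$), while $t_{j+1}$ is only Plücker-compatible with $t_1,\dots,t_j$, hence is divisible by $t_j$ and amounts to a section $s\colon\Omega^{2n-j-1}(D_j)\hookrightarrow M/M_j$; condition (A) for $t_{j+1}$ says that the component $s''$ of $s$ along the ``wrong'' summand of $M/M_j$ (namely $L'/L'_k$ when $j=2k$, and $L/L_{k+1}$ when $j=2k+1$) vanishes. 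Thus $U$ is the locus $s''\neq 0$, and it suffices to prove
$$
\pi_{\cZ!}\,j'_!(j')^*\bigl({^{j+1}K_E}\bigr)=0.
$$

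Using the cartesian square relating ${^{j+1}\cY\cZ}\hookrightarrow{^j\cY\cZ}$ with $\bar\kappa$, together with base change for ${^j\zeta}$, the complex $(j')^*({^{j+1}K_E})$ is the direct image along ${^j\zeta}$ restricted to $W:=({^j\zeta})^{-1}(U)\subset{^j\cY\cZ}$ of $({^j\kappa})^*(\cP_{2n,E}\boxtimes\Qlb)$; so one must show that the pushforward of the latter along $W\to\Pic X\times\Pic X$, $(L,L',\dots)\mapsto(\det L,\det L')$, vanishes. I would stratify $W$ by the substacks induced from the stratification $S(\cY)^{\lambda}_{pos}$ of $\cY_{2n}$ and apply Proposition~\ref{Pp_3.1.2} together with Lemma~\ref{Lm_P_E_on_m_j+1_cY}: on each stratum, after forgetting $t_{j+2},\dots,t_{2n}$, the relevant restriction of $\cP_{2n,E}$ is the extension by zero of a complex of the form $\ev^*\cL_\psi$ tensored with an $E$-complex pulled back from a stack not involving the Whittaker extension classes, where $\ev$ is built from pull-backs of the canonical sequences $0\to M_i/M_{i-1}\to M/M_{i-1}\to M/M_i\to0$ (for $i\le j+1$) and the analogous ones for $i>j+1$. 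Crucially, the map $W\to\Pic X\times\Pic X$ does not see these Whittaker directions, so it factors through the stack obtained by forgetting them.

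The heart of the argument is then a local computation over $X$, as announced at the start of Section~\ref{Sect_passing_2n_YZ}. Having fixed the discrete invariants of a stratum and the ``associated graded'' data, the Whittaker direction through which $W\to\Pic X\times\Pic X$ factors is a genuine finite-dimensional vector space $V$ — the relevant $\H^0$'s of line bundles on $X$ encoding the extension classes of the steps $M_j\subset M_{j+1}\subset M_{j+2}\subset\dots$ — and the restriction of the complex above to $V$ is, up to a Tate twist and shift, $f^*\cL_\psi$ tensored with a complex constant along $V$, for a linear $f\colon V\to\A^1$. The point I would then verify is that $f\neq 0$: the contribution to $f$ of the step $M_j\subset M_{j+1}\subset M_{j+2}$, after the higher data have been integrated out, is a linear functional that, via Serre duality on $X$ as in \cite{L2}, is nonzero precisely because the line $M_{j+1}/M_j\subset M/M_j$ has nonzero component along the ``wrong'' summand — that is, precisely on $W$ — whereas on ${^{j+1}\cZ}$ this contribution degenerates (which is exactly why the statement would be false if one removed the hypothesis that we are off $\bar\kappa$). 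Granting $f\neq0$, the identity $\RG_c(V,f^*\cL_\psi)=0$ makes the fibrewise integral vanish, giving $\pi_{\cZ!}\,j'_!(j')^*({^{j+1}K_E})=0$; performing this uniformly over the strata proves Proposition~\ref{Pp_one}, and iterating over $1\le j<2n$ yields Proposition~\ref{Con_1}.

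The step I expect to be the main obstacle is precisely the verification that $f\neq0$: one has to pin down which among the many extension classes entering $\ev$ becomes linear and nontrivial in the defect $s''$ once $t_{j+2},\dots,t_{2n}$ are integrated out, check that the corresponding Serre-duality pairing is perfect rather than merely generically nonzero, and keep track of the normalizations (the identification of $\H^1(X,\Omega)$ with $\A^1$, the Tate twists, the shifts $a_n(\lambda)$, $b(n,r)$) needed for ``the sum of the extension classes'' to be a well-defined linear map. The two parities $j=2k$ and $j=2k+1$, exchanged by swapping $L$ and $L'$, and the compatibility of the strata supporting $\cP_{2n,E}$ (those with $\lambda\in\Lambda^{\succ-}_{2n}$, by Proposition~\ref{Pp_3.1.2}) with the stratification used for the local computation also require some care.
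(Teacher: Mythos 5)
Your proposal follows essentially the same route as the paper's proof. The paper reduces Proposition~\ref{Pp_one} to a stratum-wise statement (Proposition~\ref{Pp_two}) over the strata $^j\cZ^{\lambda}$, $\lambda\in\Lambda^{\succ pos}_j$, and then on each stratum invokes Lemma~\ref{Lm_P_E_on_m_j+1_cY} to write $^{j+1}K^{\lambda}_E$ as $\bar\tau^*(\cdot)\otimes(\ev^{\lambda})^*\cL_{\psi}$ and isolates a single Whittaker direction to integrate — exactly the mechanism you describe via excision off $\bar\kappa$ and the vanishing of $\RG_c(V, f^*\cL_{\psi})$ for $f\ne 0$.

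One point worth making precise, since you flag it as the main obstacle: the vector space $V$ that gets integrated out is not an $\H^0$ (extension classes of bundles live in $\Ext^1$), and it is not the extension of an adjacent pair $M_{j+1}\subset M_{j+2}$ of steps. For $j=2k$ it is the extension space $\Ext^1(L'/L'_k,\,L'_k/L'_{k-1})$ (and the symmetric $L$-version for $j=2k+1$). This arises because the $\ev$ of Lemma~\ref{Lm_P_E_on_m_j+1_cY} is a sum over $1\le i\le j$ of pullbacks of the sequences $0\to M_i/M_{i-1}\to M_{i+1}/M_{i-1}\to M_{i+1}/M_i\to 0$; for $i<j$ these split canonically on $^{j+1}\bar\cZ^{'\lambda}$ because the $M_i$ are compatible with the direct sum $M=L\oplus L'$, so only the $i=j$ term survives, and using $M=L\oplus L'$ again it reduces to the pullback of $0\to L'_k/L'_{k-1}\to L'/L'_{k-1}\to L'/L'_k\to 0$ under the ``wrong'' component $\bar s'$ of $s$. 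The paper organizes the integration cleanly by introducing the intermediate stack $^{j+1}\hat\cW^{\lambda}$ that records exactly this extension class, so that $\hat\tau$ forgets it and $\hat\ev^{\lambda}$ is linear in it with linear functional $\bar s'^*$; by Serre duality $\bar s'^*=0$ iff $\bar s'=0$, which is the closed condition defining $\bar\kappa$. Your plan, with the Whittaker direction identified in this way, is the paper's proof.
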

Our idea of the proof of Proposition~\ref{Pp_one} is that it suffices to establish a similar result stratum by stratum after fixing the degrees of zeros of the sections (\ref{section_t_i_for_M}) for $1\le i\le j$. 
\sssec{} 
\label{section_3.2.4}
Our purpose now is to reduce Proposition~\ref{Pp_one} to Proposition~\ref{Pp_two} below. 

The stack $^j\cZ$ is stratified by locally closed substacks $^j\cZ^{\lambda}$ indexed by $\lambda\in\Lambda^{\succ pos}_j$. Let $j=2k$ for $j$ even (resp., $j=2k+1$ for $j$ odd). Denote by $^j\cZ^{\lambda}$ the stack classifying collections: 
\begin{itemize}
\item $L,L'\in\Bun_n$ and $(D_1,\ldots, D_j)\in X^{\lambda}_{pos}$ for which we set $M=L\oplus L'$;
\item a flag of subbundles $(0=L'_0\subset L'_1\subset \ldots\subset L'_k\subset L')$ together with trivializations
\begin{equation}
\label{trivilization_L'_i/L'_{i-1}}
\sigma'_i: L'_i/L'_{i-1}\,\iso\, \Omega^{2n-2i}(D_{2i})
\end{equation}
for $1\le i\le k$;
\item if $j=2k$ a flag of subbundles $(0=L_0\subset L_1\subset\ldots\subset L_k\subset L)$ with trivializations
\begin{equation}
\label{trivialization_L_i/L_{i-1}}
\sigma_i: L_i/L_{i-1}\,\iso\, \Omega^{2n-2i+1}(D_{2i-1})
\end{equation}
for $1\le i\le k$;
\item if $j=2k+1$ a flag of subbundles $(0=L_0\subset L_1\subset\ldots\subset L_{k+1}\subset L)$ with trivializations (\ref{trivialization_L_i/L_{i-1}})
for $1\le i\le k+1$.
\end{itemize}
The locally closed immersion $^j\cZ^{\lambda}\hook{} {^j\cZ}$ is given by the formulas: 
\begin{itemize}
\item if $1\le 2s\le j$ then $t_{2s}=\sigma_1\otimes\sigma'_1\otimes\ldots\otimes\sigma_s\otimes\sigma'_s$ is the map
$$
t_{2s}: \Omega^{(2n-1)+\ldots+(2n-2s)}\hook{} (L_1/L_0)\otimes (L'_1/L'_0)\otimes \ldots\otimes (L_s/L_{s-1})\otimes (L'_s/L'_{s-1})\hook{} \wedge^{2s} M
$$
\item if $1\le 2s+1\le j$ then $t_{2s+1}=\sigma_1\otimes\sigma'_1\otimes\ldots\otimes\sigma_{s+1}$ is the map
$$
t_{2s+1}: \Omega^{(2n-1)+\ldots+(2n-2s-1)}\hook{} (L_1/L_0)\otimes (L'_1/L'_0)\otimes \ldots\otimes (L_{s+1}/L_s)\hook{} \wedge^{2s+1}M
$$
\end{itemize}
This stratification is given by fixing the degrees of the divisors of zeros of $t_i$ for all $1\le i\le j$. 

 Denote by $^j\cZ^{\lambda}_-\hook{} {^j\cZ^{\lambda}}$ the closed substack given by $(D_1,\ldots, D_j)\in X^{\lambda}_-$. 
 
\sssec{} For $\lambda\in \Lambda^{\succ pos}_j$ consider the diagram
$$
\begin{array}{ccc}
{^{j+1}\cZ}\times_{^j\cZ}{^j\cZ^{\lambda}} & \toup{\bar\kappa^{\lambda}} & {^{j+1}\cZ'}\times_{^j\cZ}{^j\cZ^{\lambda}}\\
 & \searrow & \downarrow\lefteqn{\scriptstyle \pi_{\cZ}^{\lambda}}\\
 && \Pic X\times\Pic X,
\end{array}
$$ 
where $\bar\kappa^{\lambda}$ is the restriction of $\bar\kappa$, and $\pi_{\cZ}^{\lambda}$ is the restriction of $\pi_{\cZ}$. Note that $\bar\kappa^{\lambda}$ is a closed embedding.

Denote by $^{j+1}K^{\lambda}_E$ the $*$-restriction of $^{j+1}K_E$ to ${^{j+1}\cZ'}\times_{^j\cZ}{^j\cZ^{\lambda}}$. 
By Proposition~\ref{Pp_3.1.2}, the $*$-restriction of $\cP^d_{2n, E}$ to $S(\cY)^{\nu}_{pos}$ vanishes for all but a finite number of these strata for $\nu\in \Lambda^{\succ pos}_{2n,d}$. This reduces Proposition~\ref{Pp_one} to the following.
\begin{Pp} 
\label{Pp_two}
For $\lambda\in \Lambda^{\succ pos}_j$ the natural map 
$(\pi^{\lambda}_{\cZ})_!(^{j+1}K^{\lambda}_E)\to (\pi^{\lambda}_{\cZ})_!\bar\kappa^{\lambda}_*(\bar\kappa^{\lambda})^*(^{j+1}K^{\lambda}_E)$ is an isomorphism.
\end{Pp}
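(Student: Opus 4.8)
\emph{Plan of proof.} I will deduce \propref{Pp_two} from the recollement triangle attached to the closed immersion $\bar\kappa^{\lambda}$, together with the elementary vanishing $\RG_c(\A^1,\cL_{\psi})=0$. Let $j_U:U\hook{}{^{j+1}\cZ'}\times_{^j\cZ}{^j\cZ^{\lambda}}$ be the open complement of ${^{j+1}\cZ}\times_{^j\cZ}{^j\cZ^{\lambda}}$. The cone of the natural map in the statement is $(\pi^{\lambda}_{\cZ})_!\,(j_U)_!(j_U)^*({^{j+1}K^{\lambda}_E})[1]$, so it suffices to show that the $*$-restriction of $^{j+1}K^{\lambda}_E$ to $U$ has zero direct image under $\pi^{\lambda}_{\cZ}$.

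The first step is to identify $^{j+1}K^{\lambda}_E$. The commutative square relating $^j\zeta:{^j\cY\cZ}\to{^{j+1}\cZ'}$ to $\delta_j:\cY_{2n}\to{_{j+1}\cY_{2n}}$ — obtained by sending a point to the induced datum $(M,(t_i))$ with $M=L\oplus L'$ — is cartesian, so base change identifies $^{j+1}K^{\lambda}_E$, up to the inessential $\Qlb$-factor, with the pull-back under the resulting map $g$ of the restriction of $\delta_{j!}\cP_{2n,E}$ to $_{j+1}\cY^{\lambda}_{2n}$. By \lemref{Lm_P_E_on_m_j+1_cY} the latter is the extension by zero from the closed substack $_{j+1}\cY^{\lambda}_{2n,-}$ of $\ev^*\cL_{\psi}\otimes(\tau^{\lambda})^*K'$. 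The key point is that $g^{-1}({_{j+1}\cY^{\lambda}_{2n,-}})$ strictly contains ${^{j+1}\cZ}\times_{^j\cZ}{^j\cZ^{\lambda}}$: to lie in the former one only needs $t_{j+1}$ to extend the flag of $M$, i.e. $t_{j+1}=t_j\otimes s$ with $s:\Omega^{\bullet}(D_j)\hook{}M/M_j$, whereas condition (A) additionally forces the new line $\Im(s)$ to respect the splitting $M=L\oplus L'$. Hence $^{j+1}K^{\lambda}_E|_U$ is supported on the locus $W$ where $s$ extends the flag but its component $s_1$ into the factor not involved in (A) is non-zero (say $t_{j+1}=t_{2s}$, $j=2s-1$, and $s_1$ is the projection of $s$ to $L/L_s$).

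On $W$ the sheaf is $\ev^*\cL_{\psi}\otimes(\tau^{\lambda})^*K'$, and the next step is to exhibit a generalized affine fibration, in the sense of (\cite{L2}, Section~0.1.1), whose fibre is the affine space $V=\Ext^1(L/L_s,\,L_s/L_{s-1})$ parametrizing the extension class $e_0$ of $0\to L_s/L_{s-1}\to L/L_{s-1}\to L/L_s\to 0$. Along this fibration the datum $(\bar L=M/M_j,\,s,\,(D_i))$ classified by $_j\cW^{\lambda}$ is unchanged, so $(\tau^{\lambda})^*K'$ is pulled back from the base; on the other hand, inspecting the formula for $\ev$ in \lemref{Lm_P_E_on_m_j+1_cY}, only its last term depends on $e_0$, through the map $V=\Ext^1(L/L_s,L_s/L_{s-1})\to\Ext^1(\Omega^{\bullet}(D_j),L_s/L_{s-1})\,\iso\,\H^1(X,\Omega)=\A^1$ induced by $s_1$. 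Since $s_1\ne 0$ on $W$, factoring $s_1$ through its subbundle reduction shows this linear map is surjective, so along each fibre $\ev$ restricts to a non-constant affine function $f:V\to\A^1$. Fibering over $\A^1$ gives $\RG_c(V,f^*\cL_{\psi})=0$, and since $(\tau^{\lambda})^*K'$ is constant along the fibres this forces the direct image of $^{j+1}K^{\lambda}_E|_U$ to the base of the fibration to vanish; the remaining proper push-forward down to $\Pic X\times\Pic X$ then also vanishes.

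The step I expect to be the main obstacle is the construction in the previous paragraph: making precise that $W$, together with the push-forward to $\Pic X\times\Pic X$ and the complex $(\tau^{\lambda})^*K'$ on it, genuinely fits into such a generalized affine fibration, with the $e_0$-direction free and the remaining data constant along it. This is exactly the ``several reductions'' alluded to in the introduction; it requires unwinding the definitions of $^j\cZ^{\lambda}$, $\tau^{\lambda}$ and $\ev$ and tracking carefully the Plücker and extension-class bookkeeping, using the description of the $*$-restrictions of $\cP_{2n,E}$ from (\cite{L2}, Proposition~2) recalled in \propref{Pp_3.1.2}. Once it is in place, the only cohomological input is that $\RG_c(V,f^*\cL_{\psi})$ vanishes for a non-zero linear $f$.
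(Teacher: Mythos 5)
Your proposal is correct and takes essentially the same route as the paper: identify $^{j+1}K^{\lambda}_E$ via base change along the cartesian square over $\delta_j$ and Lemma~\ref{Lm_P_E_on_m_j+1_cY}, then observe that over the locus where the ``wrong'' component of $s$ is non-zero the Artin--Schreier factor forces the push-forward to vanish. The technical step you flag as the main obstacle --- exhibiting a generalized affine fibration in which the extension class $e_0$ is a free direction along which $\ev$ restricts to a surjective affine-linear map and $(\tau^{\lambda})^*K'$ is constant --- is exactly what the paper supplies by introducing the intermediate stack $^{j+1}\hat\cW^{\lambda}$ (recording the extension (\ref{seq_for_proof_of_3.2.7_even}) or (\ref{seq_for_proof_of_3.2.7_odd})) and factoring $\bar\tau = \hat\tau\circ\xi$ with $\xi$ a generalized affine fibration and $\ev^{\lambda}$ descending through $\hat\ev^{\lambda}$.
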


\sssec{Proof of Proposition~\ref{Pp_two}}
Let $j=2k$ for $j$ even (resp., $j=2k+1$ for $j$ odd). The stack ${^{j+1}\cZ'}\times_{^j\cZ}{^j\cZ^{\lambda}}$ classifies collections: a point of $^{j}\cZ^{\lambda}$ as described in Section~\ref{section_3.2.4}, 
\begin{itemize}
\item if $j=2k$ then we are also given a section 
$$
s: \Omega^{2n-2k-1}\to (M/(L_k\oplus L'_k))(D_1+\ldots+D_{2k}),
$$ 
for which we define $t_{j+1}$ as the map
$$
t_j\otimes s: \Omega^{(2n-1)+\ldots+(2n-2k-1)}\to \det(L_k\oplus L'_k)\otimes (M/(L_k\oplus L'_k))\subset \wedge^{j+1}M
$$
\item if $j=2k+1$ then we are given a section 
$$
s: \Omega^{2n-2k-2}\to (M/(L_{k+1}\oplus L'_k))(D_1+\ldots+D_{2k+1}),
$$
and we define $t_{j+1}$ as the map
$$
t_j\otimes s: \Omega^{(2n-1)+\ldots+(2n-2k-2)}\to \det(L_{k+1}\oplus L'_k)\otimes (M/(L_{k+1}\oplus L'_k))\subset \wedge^{j+1}M
$$
\end{itemize}

Let 
$$
^{j+1}\bar\cZ^{'\lambda}\hook{} {^{j+1}\cZ'}\times_{^j\cZ}{^j\cZ^{\lambda}}
$$ 
be the closed substack given by the propeties that $D_1\le \ldots\le D_j$, and 
\begin{itemize}
\item if $j=2k$ then $s: \Omega^{2n-2k-1}(D_{2k})\to M/(L_k\oplus L'_k)$ is regular
\item if $j=2k+1$ then $s: \Omega^{2n-2k-2}(D_{2k+1})\to M/(L_{k+1}\oplus L'_k)$ is regular.
\end{itemize}

 By Lemma~\ref{Lm_P_E_on_m_j+1_cY}, one knows that $^{j+1}K^{\lambda}_E$ is the extension by zero from $^{j+1}\bar\cZ^{'\lambda}$. Let $^{j+1}\bar\cZ^{\lambda}$ be the preimage of $^{j+1}\bar\cZ^{'\lambda}$ under $\bar\kappa^{\lambda}$. So, in Proposition~\ref{Pp_two} we actually deal with the diagram
$$
\begin{array}{ccc}
^{j+1}\bar\cZ^{\lambda} & \toup{\bar\kappa^{\lambda}} & ^{j+1}\bar\cZ^{'\lambda}\\
 & \searrow & \downarrow\lefteqn{\scriptstyle \pi_{\cZ}^{\lambda}}\\
 && \Pic X\times\Pic X,
\end{array}
$$   
By abuse of notations, here $\pi_{\cZ}^{\lambda}$ and $\bar\kappa^{\lambda}$ denote the restrictions of the corresponding maps. 

  The descripion of $^{j+1}K^{\lambda}_E$ on $^{j+1}\bar\cZ^{'\lambda}$ is as follows. Let $^{j+1}\bar\cW^{\lambda}$ be the stack classifying $(D_1,\ldots, D_j)\in X^{\lambda}_-$, 
\begin{itemize}
\item if $j=2k$ then we are given $\bar L,\bar L'\in\Bun_{n-k}$ and $s:\Omega^{2n-2k-1}(D_{2k})\hook{} \bar L\oplus\bar L'$;
\item if $j=2k+1$ then we are given $\bar L\in \Bun_{n-k-1}, \bar L'\in\Bun_{n-k}$ and 
$$
s: \Omega^{2n-2k-2}(D_{2k+1})\hook{} \bar L\oplus \bar L'
$$
\end{itemize}
We get a map $\bar\tau: {^{j+1}\bar\cZ^{'\lambda}}\to {^{j+1}\bar\cW^{\lambda}}$ given by the formulas $\bar L'=L'/L'_k$ and
$$
\bar L=\left\{
\begin{array}{ll}
L/L_k, & \mbox{if}\;\; j=2k\\
L/L_{k+1}, & \mbox{if}\;\; j=2k+1
\end{array}
\right.
$$

 Let $\ev^{\lambda}: {^{j+1}\bar\cZ^{'\lambda}}\to\A^1$ be the map sending a point of $^{j+1}\bar\cZ^{'\lambda}$ to the class of the pullback extension
$$
\begin{array}{ccc}
0\to L'_k/L'_{k-1}\to L'/L'_{k-1}\to  & L'/L'_k & \to 0\\
& \uparrow\lefteqn{\scriptstyle \bar s'} \\
& \Omega^{2n-2k-1}(D_{2k})
\end{array}
$$
for $j=2k$, and the class of the pullback extension
$$
\begin{array}{ccc}
0\to L_{k+1}/L_k\to L/L_k\to  & L/L_{k+1} & \to 0\\
& \uparrow\lefteqn{\scriptstyle \bar s}\\
& \Omega^{2n-2k-2}(D_{2k+1})
\end{array}
$$
for $j=2k+1$ respectively. Here $\bar s, \bar s'$ are the corresponding components of $s$. 
\begin{Lm} There is a complex $^{j+1}\bar K^{\lambda}_E$ on $^{j+1}\bar\cW^{\lambda}$ and an isomorphism over $^{j+1}\bar\cZ^{'\lambda}$
\begin{equation}
^{j+1}K^{\lambda}_E\,\iso\, \bar\tau^*(^{j+1}\bar K^{\lambda}_E)\otimes (\ev^{\lambda})^*\cL_{\psi}
\end{equation}
\end{Lm}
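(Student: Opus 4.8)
The plan is to establish this Lemma by unwinding Laumon's construction of $\cP_{2n,E}$ via the chart $\cQ_{2n,r}\toup{\varphi}\cY_{2n,r}$ and tracing through what happens after restriction to the stratum $^j\cZ^\lambda$ and passage to the closed substack $^{j+1}\bar\cZ^{'\lambda}$. The key observation is that the Artin-Schreier sheaf $\cL_\psi$ in the definition of $\cF^r_{2n,E}=\beta^*\cL^r_E\otimes\mu^*\cL_\psi[b(2n,r)]$ is pulled back along the function $\mu$ which records a sum of extension classes of successive quotients in a full flag. Once we restrict to $^{j}\cZ^\lambda$, the first $j$ steps of that flag are already fixed (with trivialized subquotients $\Omega^{2n-2i+1}(D_{2i-1})$ and $\Omega^{2n-2i}(D_{2i})$), and one step of the extension class — the one involving $s$ at level $j+1$ — depends only on the data recorded by $\bar\tau$ and, in the $s$-direction, precisely by $\ev^\lambda$. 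So I would argue that $\mu$, restricted appropriately, decomposes as a sum $\mu = \ev^\lambda \circ (\text{proj}) + (\text{function factoring through } \bar\tau)$ up to a constant, which by the multiplicativity of $\cL_\psi$ under addition gives the desired tensor decomposition.

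Concretely, I would proceed as follows. First, rewrite $^{j+1}K^\lambda_E = {}^j\zeta_!({}^j\kappa)^*(\cP_{2n,E}\boxtimes\Qlb)$ restricted to $^{j+1}\cZ'\times_{^j\cZ}{}^j\cZ^\lambda$, and use $\cP_{2n,E}\iso\varphi_!\cF_{2n,E}$ to express everything as a direct image from (a fiber product involving) $\cQ_{2n,r}$. Second, invoke Lemma~\ref{Lm_P_E_on_m_j+1_cY}: it already tells us $^{j+1}K^\lambda_E$ is the extension by zero from $^{j+1}\bar\cZ^{'\lambda}$ and is of the form $\ev^*\cL_\psi\otimes(\tau^\lambda)^*K'$ for the analogous objects attached to $\cY_{2n}$; the present Lemma is the translation of that statement across the identification of $^{j+1}\bar\cZ^{'\lambda}$ with a substack built from $^j\cZ^\lambda$ and the section $s$. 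The point is to check that under this identification, the $\ev$ of Lemma~\ref{Lm_P_E_on_m_j+1_cY} matches (up to an additive constant coming from the fixed first $j-1$ extension classes, which is irrelevant since $\cL_\psi$ is multiplicative and the constant contributes an invertible scalar) the $\ev^\lambda$ defined here via the pullback extension along $\bar s$ or $\bar s'$, and that $(\tau^\lambda)^*K'$ is pulled back from $^{j+1}\bar\cW^\lambda$ via $\bar\tau$ — which holds because $\tau^\lambda$ factors through $\bar\tau$ after the identification, the remaining data (the bundles $\bar L,\bar L'$ and the section $s$) being exactly what $^{j+1}\bar\cW^\lambda$ records. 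Then set $^{j+1}\bar K^\lambda_E = (\text{that factored complex})$, twisted by the constant scalar.

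The main obstacle I anticipate is the bookkeeping of signs and the additive constant in the exponential: one must verify that the contributions to $\mu$ coming from the extension classes among $L_1,\dots,L_k$ and $L'_1,\dots,L'_k$ at levels $< j$, which are genuinely present in $\cF_{2n,E}$ but suppressed on the fixed stratum $^j\cZ^\lambda$, either vanish or are locally constant on $^{j+1}\bar\cZ^{'\lambda}$; the splitting $M = L\oplus L'$ should force the relevant cross-terms to decouple, but confirming this requires carefully identifying $\Ext^1(\Omega^{2n-2k-1}(D_{2k}),\, L'_k/L'_{k-1})$ and its companion with the one-dimensional spaces in which $\ev^\lambda$ takes values, and checking the isomorphism $\H^1(X,\Omega)\iso\A^1$ is used compatibly. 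A secondary subtlety is that $^j\zeta$ is a direct image, so one must make sure the identification of complexes is compatible with pushforward — but since $\ev^\lambda$ and $\bar\tau$ are defined on $^{j+1}\bar\cZ^{'\lambda}$ and the maps in the diagram are compatible, projection formula for $(\ev^\lambda)^*\cL_\psi$ handles this. Everything else is a routine diagram chase, so I would keep the write-up short, citing Lemma~\ref{Lm_P_E_on_m_j+1_cY} as the substantive input and the splitting $M=L\oplus L'$ as the reason the Whittaker structure decouples in exactly this way.
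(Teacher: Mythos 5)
Your proposal is correct and follows essentially the same route as the paper: invoke Lemma~\ref{Lm_P_E_on_m_j+1_cY} as the substantive input, observe that $\tau^\lambda$ factors through $\bar\tau$ (since $M/M_j$ decomposes as $\bar L\oplus\bar L'$, which is exactly the data $^{j+1}\bar\cW^\lambda$ records), and match $\ev$ with $\ev^\lambda$ using the decomposition $M=L\oplus L'$. One clarification on the point where you hedge: the contributions to $\ev$ at levels $1\le i\le j-1$ do not merely give ``an additive constant'' or ``locally constant'' terms — they vanish identically, because the exact sequences $0\to M_i/M_{i-1}\to M_{i+1}/M_{i-1}\to M_{i+1}/M_i\to 0$ split canonically (the two outer subquotients lie in complementary summands of $M=L\oplus L'$ by the parity of the filtration $M_{2k}=L_k\oplus L'_k$, $M_{2k+1}=L_{k+1}\oplus L'_k$), and likewise the last extension decomposes as a direct sum with a trivial piece, leaving exactly $\ev^\lambda$; this makes the decoupling you anticipated automatic rather than requiring a case-by-case identification of $\Ext^1$-groups.
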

\begin{proof} This follows from Lemma~\ref{Lm_P_E_on_m_j+1_cY}. In more details, recall the stack $_{j+1}\cY^{\lambda}_{2n,-}$ from Section~\ref{Sect_3.1.4}. We have the map $^{j+1}\bar\cZ^{'\lambda}\to {_{j+1}\cY^{\lambda}_{2n,-}}$ sending a point of the source to the collection 
$$
(M=L\oplus L', (t_1,\ldots, t_{j+1}), (D_1,\ldots, D_j)\in X^{\lambda}_-)
$$ 
So, $M$ is equipped with the flag $0=M_0\subset M_1\subset\ldots\subset M_j\subset M$ of subbundles. Here $M_{2k}=L_k\oplus L'_k$ for $0\le 2k\le j$ 
and $M_{2k+1}=L_{k+1}\oplus L'_k$ for $0<2k+1\le j$. Our claim follows from the fact that, by construction, the exact sequence $0\to M_i/M_{i-1}\to M_{i+1}/M_i\to M_{i+1}/M_i\to 0$ splits canonically for $1\le i\le j-1$.
\end{proof}

The map $\pi_{\cZ}^{\lambda}$ factors as 
$$
^{j+1}\bar\cZ^{'\lambda}\toup{\bar\tau} {^{j+1}\bar\cW^{\lambda}}\to \Pic X\times \Pic X
$$ 
Let $^{j+1}\cW^{\lambda}\hook{} {^{j+1}\bar\cW^{\lambda}}$ be the closed substack given by the property
\begin{itemize}
\item if $j=2k$ then $s$ factors as $s: \Omega^{(2n-2k-1)}(D_{2k})\hook{} \bar L\subset \bar L\oplus \bar L'$;
\item if $j=2k+1$ then $s$ factors as $s: \Omega^{(2n-2k-2)}(D_{2k+1})\hook{} \bar L'\subset\bar L\oplus\bar L'$.
\end{itemize}
The square is cartesian
$$
\begin{array}{ccc}
^{j+1}\cW^{\lambda} & \hook{} &{^{j+1}\bar\cW^{\lambda}}\\
\uparrow && \uparrow\lefteqn{\scriptstyle\bar\tau}\\
^{j+1}\bar\cZ^{\lambda} & \toup{\bar\kappa^{\lambda}} & ^{j+1}\bar\cZ^{'\lambda}
\end{array}
$$
So, it suffices to show that $\bar\tau_!(\ev^{\lambda})^*\cL_{\psi}$ is the extension by zero from $^{j+1}\cW^{\lambda}$. The map $\bar\tau$ is a generalized affine fibration. Let $^{j+1}{\hat \cW}^{\lambda}$ be the stack classifying a point of $^{j+1}\bar\cW^{\lambda}$ as above together with an exact sequence 
\begin{equation}
\label{seq_for_proof_of_3.2.7_even}
0\to \Omega^{2n-2k}(D_{2k})\to ?\to \bar L'\to 0
\end{equation}
for $j=2k$, respectively with an exact sequence
\begin{equation}
\label{seq_for_proof_of_3.2.7_odd}
0\to \Omega^{2n-2k-1}(D_{2k+1})\to ?\to \bar L\to 0
\end{equation}
for $j=2k+1$. The map $\bar\tau$ decomposes naturally as 
$$
^{j+1}\bar\cZ^{'\lambda}\toup{\xi} {^{j+1}{\hat\cW}^{\lambda}}\toup{\hat\tau}{^{j+1}\bar\cW^{\lambda}}, 
$$
where $\xi$ is a generalized affine fibration. Namely, the sequence (\ref{seq_for_proof_of_3.2.7_even}) for $j=2k$ is 
$$
0\to L'_k/L'_{k-1}\to L'/L'_{k-1}\to L'/L'_k\to 0
$$ 
The sequence (\ref{seq_for_proof_of_3.2.7_odd}) for $j=2k+1$ is 
$$
0\to L_{k+1}/L_k\to L/L_k\to L/L_{k+1}\to 0
$$ 
Here $\hat \tau$ forgets the extension (\ref{seq_for_proof_of_3.2.7_even}) for $j=2k$ (respectively, the extension (\ref{seq_for_proof_of_3.2.7_odd}) for $j=2k+1$). 

 The map $\ev^{\lambda}$ factors naturally as $^{j+1}\bar\cZ^{'\lambda}\to {^{j+1}{\hat\cW}^{\lambda}}\toup{\hat\ev^{\lambda}}\A^1$, and $\xi_!\Qlb$ identifies with $\Qlb$ up to a shift. The $*$-fibre of $\hat\tau_!(\hat\ev^{\lambda})^*\cL_{\psi}$ at a given point vanishes unless $\bar s'=0$ for $j=2k$ (respectively, $\bar s=0$ for $j=2k+1$). Proposition~\ref{Pp_two} is proved. \QED

\medskip

 Thus, Propositions~\ref{Pp_one}, \ref{Con_1} are also proved.   
 
\ssec{Passing to $\wt\cY_n\times{\wt\cY'_n}$}
\label{Sect_3.3}

\sssec{} In Section~\ref{Sect_3.3} we finish the proof of  Proposition~\ref{Pp_2.2.4} using the results of Sections~\ref{Sect_3.1}-\ref{Sect_passing_2n_YZ}.  

\sssec{}  Note that $^{2n}\cZ={^{2n}\cY\cZ}$. For $d,d'\in\ZZ$ let $^{2n}\cZ^{d,d'}$ be the substack of $^{2n}\cZ$ given by 
$$
\deg L=\deg(\Omega^{(2n-1)+(2n-3)+\ldots+1})+d, \;\; \deg L'=\deg(\Omega^{(2n-2)+(2n-4)+\ldots+2})+d'
$$
The map $q_{\cY\cZ}: {\wt\cY_{n,d}}\times{\wt\cY'_{n,d'}}\to {\cY_{2n}}\times_{\cM_{2n}} \cZ_n$ factors uniquely through the closed substack
$$
{\wt\cY_{n,d}}\times{\wt\cY'_{n,d'}}\;\toup{i_{\cY}} \;{^{2n}\cZ^{d,d'}}\;\hook{^{2n}\kappa} \; {\cY_{2n}}\times_{\cM_{2n}} \cZ_n,
$$
this defines the map $i_{\cY}$.  

\sssec{} The stack $^{2n}\cZ$ admits a stratification by $^{2n}\cZ^{\lambda}$ indexed by $\lambda\in \Lambda^{\succ pos}_{2n}$, it is described in Section~\ref{section_3.2.4}. For $d,d'\in\ZZ$ the stack $^{2n}\cZ^{d,d'}$ admits a stratification by $^{2n}\cZ^{\lambda}$ indexed by $\lambda\in \Lambda^{\succ pos}_{2n}$ such that $\lambda_1+\lambda_3+\ldots+\lambda_{2n-1}=d$ and $\lambda_2+\ldots+\lambda_{2n}=d'$.

 For $\lambda\in \Lambda^{\succ-}_{2n}$ let 
$$
q_{\lambda}: {^{2n}\cZ^{\lambda}_-}\to X^{\lambda}_-
$$ 
be the map sending a point of $^{2n}\cZ^{\lambda}_-$ to $(D_1,\ldots, D_{2n})\in X^{\lambda}_-$. From Proposition~\ref{Pp_3.1.2} one gets the following.
\begin{Cor}
\label{Pp_three}
Let $\lambda\in \Lambda^{\succ pos}_{2n, r}$. The $*$-restriction of $(^{2n}\kappa)^*(\cP_{2n, E}^r\boxtimes\Qlb)$ to the stratum $^{2n}\cZ^{\lambda}$ vanishes unless $\lambda\in 
\Lambda^{\succ-}_{2n, r}$. In the latter case it is the extension by zero from $^{2n}\cZ^{\lambda}_-$ and identifies over $^{2n}\cZ^{\lambda}_-$ with 
$$
q_{\lambda}^*E^{\lambda}_-[b(2n, r)-2a_{2n}(\lambda)]
$$  
\end{Cor}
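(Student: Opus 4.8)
The plan is to deduce Corollary~\ref{Pp_three} from Proposition~\ref{Pp_3.1.2} by unwinding the definitions of the stratification of $^{2n}\cZ$ and relating it to the stratification $S(\cY)^{\lambda}_{pos}$ of $\cY_{2n}$ used in \propref{Pp_3.1.2}. First I would observe that the projection $\pr_1: {\cY_{2n}}\times_{\cM_{2n}}\cZ_n\to\cY_{2n}$ carries the stratum $^{2n}\cZ^{\lambda}$ (and a fortiori $^{2n}\kappa({}^{2n}\cZ^{\lambda})$) into the stratum $S(\cY)^{\lambda}_{pos}$: by the very description of $^{2n}\cZ^{\lambda}$ in Section~\ref{section_3.2.4}, for a point of $^{2n}\cZ^{\lambda}$ the section $t_i:\Omega^{(2n-1)+\ldots+(2n-i)}\hook{}\wedge^i M$ factors through $\wedge^i$ of the flag built out of the $L_\bullet$, $L'_\bullet$, and the degree of its zeros is exactly $\lambda_1+\ldots+\lambda_i$ by the chosen trivializations (\ref{trivilization_L'_i/L'_{i-1}}), (\ref{trivialization_L_i/L_{i-1}}). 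Hence $(^{2n}\kappa)^*(\cP^r_{2n,E}\boxtimes\Qlb)$ on $^{2n}\cZ^{\lambda}$ is the pullback along $\pr_1$ of the $*$-restriction of $\cP^r_{2n,E}$ to $S(\cY)^{\lambda}_{pos}$, box-tensored with $\Qlb$ on the $\cZ_n$ factor.

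Next, applying \propref{Pp_3.1.2} with $E$ of rank $2n$ and $n$ replaced by $2n$: the $*$-restriction of $\cP^r_{2n,E}$ to $S(\cY)^{\lambda}_{pos}$ vanishes unless $\lambda\in\Lambda^{\succ-}_{2n,r}$ (the condition $\lambda_1=\ldots=\lambda_{n-m}=0$ being vacuous since $m=2n$), and in that case it is the extension by zero under $S(\cY)^{\lambda}_-\hook{}S(\cY)^{\lambda}_{pos}$ of $(q^{\lambda}_-)^*E^{\lambda}_-\otimes\mu_{\lambda}^*\cL_\psi[b(2n,r)-2a_{2n}(\lambda)]$. So I then need to identify, under $\pr_1$, the locus $^{2n}\cZ^{\lambda}\cap\pr_1^{-1}(S(\cY)^{\lambda}_-)$ with $^{2n}\cZ^{\lambda}_-$ (immediate from the definitions: $S(\cY)^{\lambda}_-$ is cut out by $D_1\le\ldots\le D_{2n}$, which is exactly the condition defining $^{2n}\cZ^{\lambda}_-$ from $^{2n}\cZ^{\lambda}$), giving the extension-by-zero assertion and the shift $b(2n,r)-2a_{2n}(\lambda)$.

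The one point that requires genuine care — and I expect it to be the main obstacle — is the disappearance of the Artin–Schreier twist $\mu_{\lambda}^*\cL_\psi$: I must check that the function $\mu_\lambda$ on $S(\cY)^{\lambda}_-$ pulls back to the \emph{zero} function on $^{2n}\cZ^{\lambda}_-$. This is the analogue of the vanishing phenomenon already exploited in the proof of Proposition~\ref{Pp_two}: $\mu_\lambda$ records the sum of the classes of the successive extensions $0\to L_{i}/L_{i-1}\to L_{i+1}/L_{i-1}\to L_{i+1}/L_i\to 0$ for the flag on $M$, but on $^{2n}\cZ^{\lambda}_-$ the flag $M_\bullet$ is $M_{2k}=L_k\oplus L'_k$, $M_{2k+1}=L_{k+1}\oplus L'_k$, which is assembled from the two honest flags $L_\bullet\subset L$ and $L'_\bullet\subset L'$ together with the trivializations $\sigma_i,\sigma'_i$; consequently each such extension on $M$ splits canonically (the relevant $\Ext^1$ vanishes because the quotient and sub sit in different direct summands, or because it is one of the canonically trivialized pieces), so each of its classes vanishes and $\mu_\lambda|_{^{2n}\cZ^{\lambda}_-}\equiv 0$. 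I would spell this out summand by summand, distinguishing the two parities of $i$, exactly as in the final paragraph of the proof of \propref{Pp_two}. Once this is done, $q^{\lambda}_-\comp\pr_1$ restricted to $^{2n}\cZ^{\lambda}_-$ is by definition $q_\lambda$, and the stated formula $q_\lambda^*E^{\lambda}_-[b(2n,r)-2a_{2n}(\lambda)]$ follows, completing the proof. \QED
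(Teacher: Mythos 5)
Your proof is correct and is essentially the intended unwinding of Proposition~\ref{Pp_3.1.2}; in particular you correctly isolate the key point, namely that $\mu_\lambda$ pulls back to the zero function on ${^{2n}\cZ^\lambda_-}$ because the successive extensions $0\to M_i/M_{i-1}\to M_{i+1}/M_{i-1}\to M_{i+1}/M_i\to 0$ split canonically (the sub and quotient sit in the $L_\bullet$- and $L'_\bullet$-summands of $M=L\oplus L'$ respectively). One small imprecision: you should not restrict to $E$ of rank $2n$ --- the Corollary is stated for an arbitrary local system $E$, and the extra vanishing condition $\lambda_1=\ldots=\lambda_{2n-m}=0$ appearing in Proposition~\ref{Pp_3.1.2} for $m=\rk(E)<2n$ is already encoded in the vanishing of $E^\lambda_-$, so the argument works verbatim for general $E$.
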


\sssec{} For a scheme $S$ and a vector bundle $V$ on $S\times X$, by a \select{rational section} of $V$ we mean a section defined over an open subscheme of the form $(S\times X)-D$, where $D\hook{} S\times X$ is a closed subscheme finite over $S$. 
\begin{Lm} 
\label{Lm_3.3.4}
A point of $^{2n}\cZ$ gives rise to uniquely defined rational sections 
\begin{equation}
\label{section_s_i_rational}
s_i: \Omega^{(2n-1)+\ldots+(2n-2i+1)}\hook{} \wedge^i L
\end{equation}
and
\begin{equation}
\label{section_s'_i_rational}
s'_i: \Omega^{(2n-2)+\ldots+(2n-2i)}\hook{} \wedge^i L'
\end{equation}
for $1\le i\le n$ such that (\ref{t_i_as_a_function_of_s_and_s'}) hold. 
\end{Lm}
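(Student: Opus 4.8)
The plan is to reconstruct the $s_i$ and $s_i'$ over a dense open of $S\times X$ by repeatedly dividing the given sections $t_i$ by ones already built, after first extracting from condition (A) of Section~\ref{Sect_passing_2n_YZ} a splitting of the flag that $(t_1,\dots,t_{2n})$ encodes. Fix a scheme $S$ and a point of $^{2n}\cZ(S)$: bundles $L,L'\in\Bun_n(S)$, $M=L\oplus L'$, and sections $t_i:\Omega^{(2n-1)+\dots+(2n-i)}\hookrightarrow\wedge^iM$ for $1\le i\le 2n$ obeying the Pl\"ucker relations and condition (A), with $t_1:\Omega^{2n-1}\hookrightarrow L$. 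Let $U\subset S\times X$ be the complement of the zero divisor of $t_{2n}$; this divisor is a relative effective divisor on $X$, hence finite over $S$, so $U$ is of the shape over which, by the paper's convention, sections on $U$ are rational sections on $S\times X$; it therefore suffices to build the $s_i,s_i'$ over $U$ (with finitely many further shrinkings, keeping the complement finite over $S$). Over $U$, where $t_{2n}$ is a nowhere-vanishing section, the Pl\"ucker relations say (cf.\ \cite{L2}, Section~4.1) that the $t_i$ define a complete flag of subbundles $0=F_0\subset F_1\subset\cdots\subset F_{2n}=M|_U$ with $\wedge^iF_i=\Im(t_i|_U)$, each $t_i|_U$ being a nowhere-vanishing subbundle inclusion.

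Next I would prove, by induction on $i$, that condition (A) forces a compatible splitting of this flag: there are flags of subbundles $L_1\subset\cdots\subset L_n\subset L|_U$ and $L_1'\subset\cdots\subset L_n'\subset L'|_U$, with $\rk L_k=\rk L_k'=k$, such that $F_{2k}=L_k\oplus L_k'$ and $F_{2k+1}=L_{k+1}\oplus L_k'$ in $M|_U$, whence $\wedge^{2k}F_{2k}=\wedge^kL_k\otimes\wedge^kL_k'$ and $\wedge^{2k+1}F_{2k+1}=\wedge^{k+1}L_{k+1}\otimes\wedge^kL_k'$ as line subbundles of $\wedge^{\bullet}M|_U$. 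The base case is condition A2 with $k=0$, giving $F_1\subset L$; set $L_1:=F_1$. For the inductive step write locally $F_{i+1}=F_i+\cO\cdot v$ with $v=v_L+v_{L'}$, $v_L$ a section of $L$ and $v_{L'}$ of $L'$; if $\alpha,\beta$ locally generate $\wedge^aL_a,\wedge^bL_b'$ with $\wedge^iF_i=\cO\cdot(\alpha\wedge\beta)$, then $\wedge^{i+1}F_{i+1}$ is generated by $\alpha\wedge\beta\wedge v_L+\alpha\wedge\beta\wedge v_{L'}$, whose summands lie in the distinct direct summands $\wedge^{a+1}L\otimes\wedge^bL'$ and $\wedge^aL\otimes\wedge^{b+1}L'$ of $\wedge^{i+1}M$. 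Condition (A) on $t_{i+1}$ confines the line $\wedge^{i+1}F_{i+1}$ to exactly one of these summands, so the other summand's term must vanish: this forces $v_{L'}\in L_b'$ when $i+1$ is odd (put $L_{a+1}:=L_a+\cO\cdot v_L$ and replace $v$ by $v_L$) and $v_L\in L_a$ when $i+1$ is even (put $L_{b+1}':=L_b'+\cO\cdot v_{L'}$ and replace $v$ by $v_{L'}$), shrinking $U$ to keep all the $L_k,L_k'$ subbundles. \emph{This propagation of a coherent splitting through the whole flag, forced by the fiberwise $\wedge$-decomposability that (A) imposes, is the crux of the matter}; the rest is bookkeeping.

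With the splitting in hand I would build the sections over $U$ in the order $s_1,s_1',s_2,s_2',\dots$. Put $s_1:=t_1$, so $\Im(s_1|_U)=L_1$. Inductively, given $s_k$ with $\Im(s_k|_U)=\wedge^kL_k$: the morphism $(-1)^{1+\cdots+(k-1)}(s_k\otimes\id_{\wedge^kL'}):\Omega^{(2n-1)+(2n-3)+\cdots+(2n-2k+1)}\otimes\wedge^kL'\to(\wedge^kL)\otimes(\wedge^kL')\hookrightarrow\wedge^{2k}M$ is injective over $U$ with image $\wedge^kL_k\otimes\wedge^kL'$, which contains $\Im(t_{2k}|_U)=\wedge^kL_k\otimes\wedge^kL_k'$ by the previous paragraph, so dividing $t_{2k}$ by it gives a unique section $s_k':\Omega^{(2n-2)+(2n-4)+\cdots+(2n-2k)}\hookrightarrow\wedge^kL'$ over $U$ with $t_{2k}=(-1)^{1+\cdots+(k-1)}s_k\otimes s_k'$, hence a rational section on $S\times X$; symmetrically, using $(-1)^{1+\cdots+k}(\id_{\wedge^{k+1}L}\otimes s_k')$, condition A2, and $\Im(t_{2k+1}|_U)=\wedge^{k+1}L_{k+1}\otimes\wedge^kL_k'\subseteq\wedge^{k+1}L\otimes\wedge^kL_k'$, the section $t_{2k+1}$ produces $s_{k+1}:\Omega^{(2n-1)+\cdots+(2n-2k-1)}\hookrightarrow\wedge^{k+1}L$ with $t_{2k+1}=(-1)^{1+\cdots+k}s_{k+1}\otimes s_k'$ and $\Im(s_{k+1}|_U)=\wedge^{k+1}L_{k+1}$. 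Running $k=1,\dots,n$ then yields all the $s_i,s_i'$, with $\Omega$-exponents as in (\ref{section_s_i_rational}), (\ref{section_s'_i_rational}) (the exponents add up correctly across (\ref{t_i_as_a_function_of_s_and_s'}) by $\sum_{j=1}^k(2n-2j+1)=k(2n-k)$ and $\sum_{j=1}^k(2n-2j)=k(2n-k-1)$). Uniqueness is immediate from the same recursion: a second collection satisfying (\ref{t_i_as_a_function_of_s_and_s'}) must agree over $U$ with the one constructed — at each stage one divides the fixed $t_m$ by an injective morphism — hence agrees with it as a collection of rational sections.
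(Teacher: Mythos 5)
Your proof is correct and takes essentially the same route as the paper's: you show that condition (A) forces the flag encoded by the $t_i$ to split into pieces $L_k\subset L$, $L'_k\subset L'$ and then read off $s_k,s'_k$ as top wedges of these pieces, which is exactly what the paper's non-unique $u_i,u'_i$ parametrize (they are local lifts of generators of the graded quotients, and your $L_k,L'_k$ are their spans), while the paper's terse ``one checks independence of choices'' is what your canonical construction of $L_k,L'_k$ makes explicit. The only imprecision is in the opening: removing the zero divisor of $t_{2n}$ alone does not make the remaining $t_i$ subbundle inclusions (already for $n=1$, $t_1$ and $t_2$ can vanish at disjoint points), so $U$ must also exclude the degeneration loci of the other $t_i$ -- this is harmless and is in fact covered by your ``finitely many further shrinkings'', but the claim that the Pl\"ucker relations together with $t_{2n}\ne0$ already yield a full flag of subbundles is not accurate as stated.
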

\begin{Prf}
A point of of $^{2n}\cZ$ gives rise to non uniquely defined rational sections $u_i: \Omega^{2n-2i+1}\hook{} L$ and $u'_i: \Omega^{2n-2i}\hook{} L'$ for $1\le i\le n$ such that $t_{2k}=u_1\wedge u'_1\wedge\ldots\wedge u_k\wedge u'_k$ and 
$$
t_{2k+1}=u_1\wedge u'_1\wedge\ldots\wedge u_k\wedge u'_k\wedge u_{k+1}
$$ 
Set $s_k=u_1\wedge\ldots\wedge u_k$ and $s'_k=u'_1\wedge\ldots\wedge u'_k$. One checks that $s_k$ and $s'_k$ are independent of the choices of $u_i$, $u'_i$.
\end{Prf}
 
\sssec{Proof of Proposition~\ref{Pp_2.2.4}}  
\label{Sect_Proof_of_Pp2.2.4}
From Lemma~\ref{Lm_3.3.4} it follows that $i_{\cY}$ is a closed immersion, it is given by the property that the rational sections  (\ref{section_s_i_rational}) and (\ref{section_s'_i_rational}) are regular over the whole of $X$. 

 More precisely, this is seen as follows. For a point $(L, L', (t_i))\in {^{2n}}\cZ$ consider the exact sequence $0\to \Omega^{2n-1}\toup{t_1} L\to L/\Omega^{2n-1}\to 0$. Set $s_1=t_1$. As the first step one imposes the condition that the image of $t_2$ under the induced map 
$$
L\otimes L'\to (L/\Omega^{2n-1})\otimes L'
$$ 
vanishes, this yields a section $s_1':\Omega^{2n-2}\hook{} L'$ with $t_2=s_1\otimes s'_1$. Consider the exact sequence $0\to \Omega^{2n-2}\toup{s'_1} L'\to L'/\Omega^{2n-2}\to 0$. As the second step one imposes the condition that the image of $t_3$ under the induced map 
$$
(\wedge^2 L)\otimes L'\to (\wedge^2 L)\otimes (L'/\Omega^{2n-2})
$$ 
vanishes. This yields a section $s_2: \Omega^{(2n-1)+(2n-3)}\hook{} \wedge^2 L$ such that $t_3=-s_2\otimes s'_1$. Continuing this procedure, by induction one gets all the sections $s_i, s'_i$ by imposing similar closed conditions.

 From Corollary~\ref{Pp_three} it follows that $(^{2n}\kappa)^*(\cP_{2n, E}\boxtimes\Qlb)$ over the component $^{2n}\cZ^{d,d'}$ of $^{2n}\cZ$ is the extension by zero under the map $i_{\cY}$. Indeed, for each $\lambda\in\Lambda^{\succ pos}_{2n}$ such that $\lambda_1+\lambda_3+\ldots+\lambda_{2n-1}=d$ and $\lambda_2+\ldots+\lambda_{2n}=d'$, we have 
$$
^{2n}\cZ^{\lambda}_- \subset ({\wt\cY_{n,d}}\times{\wt\cY'_{n, d'}})\times_{(^{2n}\cZ^{d,d'})} {^{2n}\cZ^{\lambda}}
$$
Indeed, if a point of $^{2n}\cZ^{\lambda}_-$ is written as in Section~\ref{section_3.2.4} then the divisor of zeros of (\ref{section_s_i_rational}) is $D_1+D_3+\ldots+D_{2i-1}\ge 0$, the divisor of zeros of (\ref{section_s'_i_rational}) is $D_2+D_4+\ldots D_{2i}\ge 0$. Our claim follows now from Proposition~\ref{Con_1}.
\QED 

\begin{Rem} The map $i_{\cY}: {\wt\cY_{n,d}}\times{\wt\cY'_{n, d'}}\to {^{2n}\cZ^{d,d'}}$ is not an isomorphism already for $n=1$. In this case, 
the condition (A) is empty, and 
$^2\cZ^{d,d'}$ classifies: $L, L'\in\Bun_1$ with $\deg L=\deg\Omega+d, \deg L'=d'$ and sections 
$$
t_1: \Omega\hook{} L, t_2: \Omega\hook{} L\otimes L'
$$ 
The closed immersion $i_{\cY}$ is given in this case by requiring that there is $s'_1: \cO\hook{} L'$ such that $t_2=t_1\otimes s'_1$. 
\end{Rem}

\section{Direct image under $\bar\pi$}
\label{Section_Direct_image_barpi}

\noindent
The purpose of Section~\ref{Section_Direct_image_barpi}
is to prove Proposition~\ref{Pp_2.2.5}, which is a part of our strategy presented in Section~\ref{Sect_2.3}. We start with Proposition~\ref{Pp_graded_result} showing that the complex (\ref{complex_Pp_2.2.5}) that we must calculate is placed in the highest usual cohomological degree only. It also provides a filtration on the corresponding constructible sheaf whose associated graded is explicitely described. The corresponding highest direct image is calculated in two steps. 

 The first step is Proposition~\ref{Pp_1.2.10}. Namely, we get a diagram 
$$ 
{\wt\cQ_{n,d}}\times{\wt\cQ'_{n, d'}}\;\toup{\tilde\varphi\times\tilde\varphi'}\; {\wt\cY_{n,d}}\times{\wt\cY'_{n, d'}}\;\toup{\bar\pi} \; X^{(d)}\times X^{(d')}
$$
and replace the highest direct image under $\bar\pi$ by the highest direct image under $\bar\pi\comp (\tilde\varphi\times\tilde\varphi')$. 
This is obtained by a reduction to some local statement with respect to $X$. We generalize the corresponding local result for an arbitrary reductive group and a Levi subgroup that we call \select{antistandard}. This generalization is perfomed in Appendix A independent of the rest of the paper. We consider this generalization as an important result of independent interest (Propositions~\ref{Pp_2.1.1} and \ref{Pp_2.1.15}). 

The second step is Proposition~\ref{Pp_4.2.8}. The combination of both steps easily implies Proposition~\ref{Pp_2.2.5}. In turn, Proposition~\ref{Pp_4.2.8} is reduced to Theorem~\ref{Thm_4.2.11}, whose proof is the purpose of  Section~\ref{Section_proof_of_Thm_4.2.11}. 

 The mathematical content of Proposition~\ref{Pp_4.2.8} and Theorem~\ref{Thm_4.2.11} is to decompose $\bar\pi\comp (\tilde\varphi\times\tilde\varphi')$ as 
$$
{\wt\cQ_{n,d}}\times{\wt\cQ'_{n,d'}}\;\toup{\tilde\beta\times\tilde\beta'} \;
{^{\le n}\Sh^d_0}\times{^{\le n}\Sh^{d'}_0}\;\toup{^{\le n}(\div\times\div)}\; X^{(d)}\times X^{(d')}
$$
and reduce the calculation of the desired highest direct image under $\bar\pi\comp (\tilde\varphi\times\tilde\varphi')$ to the calculation of the highest direct image under $^{\le n}(\div\times\div)$ over the closed subscheme 
$i_X: X^{(d)}\times X^{(d'-d)}\hook{} X^{(d)}\times X^{(d')}$ given by (\ref{map_i_X}). The last calculation given in Theorem~\ref{Thm_4.2.11} is of local nature with respect to $X$.
We consider Theorem~\ref{Thm_4.2.11} as one of our main results, which is of independent interest. 

\ssec{Filtration}
\label{Sect_Filtration}

In Section~\ref{Sect_Filtration} we prove Proposition~\ref{Pp_graded_result}. It shows that (\ref{complex_Pp_2.2.5}) is a constructible sheaf (up to a shift) and calculates this sheaf in the Grothendieck group.

\sssec{} Recall the map (\ref{map_i_X}) from Section~\ref{Sect_2.1.1}. From Corollary~\ref{Pp_three} it follows that 
$$
\bar\pi_!q_{\cY\cZ}^*(\cP_{2n, E}\boxtimes\Qlb)
$$ 
is the extension by zero under $i_X$. In particular, it vanishes unless $0\le d\le d'$. Indeed, this holds for the contribution of each stratum $^{2n}\cZ^{\lambda}$ of $^{2n}\cZ^{d,d'}$. 

 For $\lambda\in\Lambda^{\succ pos}_{2n}$ recall the notation $\lambda^{odd}$, $\lambda^{even}$ and the map $\ssum^{\lambda}: X^{\lambda}_-\to X^{(d)}\times X^{(d')}$ from Sections~
\ref{Sect_2.4.1_revision}, \ref{Sect_2.3.3}. 
 
\begin{Pp} 
\label{Pp_graded_result}
The complex 
\begin{equation}
\label{complex_Pp_4.1.2}
\bar\pi_!q_{\cY\cZ}^*(\cP^{d+d'}_{2n, E}\boxtimes\Qlb)[\dimrel(\nu_{\cZ})-d']
\end{equation}
on $X^{(d)}\times X^{(d')}$ is a constructible sheaf placed in usual cohomological degree zero, it is an extension by zero under $i_X$. It admits a filtration with the associated graded
\begin{equation}
\label{complex_graded_by_lambda}
\mathop{\oplus}\limits_{\lambda} \;\ssum^{\lambda}_!E^{\lambda}_-,
\end{equation}
the sum being taken over $\lambda\in \Lambda^{\succ-}_{2n,d+d'}$ such that $\lambda^{odd}\in \Lambda^{\succ-}_{n,d}$ and $\lambda^{even}\in\Lambda^{\succ-}_{n,d'}$.
\end{Pp}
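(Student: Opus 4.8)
The plan is to work one stratum at a time, using the stratification of $^{2n}\cZ^{d,d'}$ by the locally closed substacks $^{2n}\cZ^{\lambda}$ for $\lambda\in\Lambda^{\succ pos}_{2n}$ with $\lambda^{odd}$ summing to $d$ and $\lambda^{even}$ summing to $d'$, as recalled in Section~\ref{section_3.2.4}. First I would invoke Corollary~\ref{Pp_three}: the $*$-restriction of $(^{2n}\kappa)^*(\cP^{d+d'}_{2n,E}\boxtimes\Qlb)$ to $^{2n}\cZ^{\lambda}$ vanishes unless $\lambda\in\Lambda^{\succ-}_{2n,d+d'}$, and in that case it is the extension by zero from $^{2n}\cZ^{\lambda}_-$ of $q_{\lambda}^*E^{\lambda}_-[b(2n,d+d')-2a_{2n}(\lambda)]$. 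The further conditions $\lambda^{odd}\in\Lambda^{\succ-}_{n,d}$, $\lambda^{even}\in\Lambda^{\succ-}_{n,d'}$ come from intersecting with the component $^{2n}\cZ^{d,d'}$ and pulling back along $i_{\cY}$ (cf. the last paragraph of Section~\ref{Sect_Proof_of_Pp2.2.4}, where the divisors of zeros of the rational sections $s_i$, $s'_i$ are identified with $D_1+D_3+\cdots+D_{2i-1}$ and $D_2+D_4+\cdots+D_{2i}$ respectively). Thus on the stratum indexed by $\lambda$, the restriction of $q_{\cY\cZ}^*(\cP^{d+d'}_{2n,E}\boxtimes\Qlb)$ is (up to the shift above) the extension by zero of $q_{\lambda}^*E^{\lambda}_-$ from the locally closed substack $^{2n}\cZ^{\lambda}_-$, which under $i_{\cY}$ sits inside $\wt\cY_{n,d}\times\wt\cY'_{n,d'}$.

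Next I would compute the contribution of each stratum to $\bar\pi_!$. The map $\bar\pi$ restricted to $^{2n}\cZ^{\lambda}_-$ factors through $X^{\lambda}_-\toup{\ssum^{\lambda}} X^{(d)}\times X^{(d')}$ via the projection $q_{\lambda}$, and the key point is that $q_{\lambda}$ is a generalized affine fibration of the relative dimension dictated by the Plücker data: the fibres classify the flags $(L_i)$, $(L'_i)$ together with the extension classes, exactly as in the analogous analysis in (\cite{L2}, Section~4.1). A dimension count — the same bookkeeping that produces $b(2n,d+d')$ and $a_{2n}(\lambda)$ — shows that, after the shift $[\dimrel(\nu_{\cZ})-d']$, the contribution $\bar\pi_!$ of the $\lambda$-stratum is precisely $\ssum^{\lambda}_!E^{\lambda}_-$ placed in cohomological degree zero. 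Here the crucial numerical identity to check is that the relative dimension of $q_{\lambda}$ together with $\dimrel(\nu_{\cZ})$, $b(2n,d+d')$ and $-2a_{2n}(\lambda)$ cancel to give exactly $d'$; I would verify this by the same computation as in \cite{L2}, using $\dim\wt\cY_{n,d}\times\wt\cY'_{n,d'}$ and $\dimrel(\bar\pi)$. Since each stratum contributes a sheaf in degree zero, the spectral sequence of the stratification (filtering $^{2n}\cZ^{d,d'}\times_{X^{(d)}\times X^{(d')}}$ appropriately, with strata ordered by the closure order on the $X^{\lambda}_{pos}$) degenerates, giving that (\ref{complex_Pp_4.1.2}) is a sheaf in degree zero with a filtration whose associated graded is (\ref{complex_graded_by_lambda}). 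That it is an extension by zero under $i_X$ follows since each $\ssum^{\lambda}_-$ factors through $i_X$ (because $\lambda^{odd}\in\Lambda^{\succ-}_{n,d}$, $\lambda^{even}\in\Lambda^{\succ-}_{n,d'}$ forces $D_1+D_3+\cdots+D_{2n-1}\le D_2+D_4+\cdots+D_{2n}$ — this is where one needs $0\le d\le d'$).

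The main obstacle I anticipate is the degeneration argument: a priori the strata $^{2n}\cZ^{\lambda}$ do not have disjoint images under $\ssum^{\lambda}$, and strata of different $a_{2n}(\lambda)$ could in principle interact in higher cohomology. The way around this is to observe that each individual $\bar\pi_!$-contribution already lands in a \emph{single} cohomological degree (degree zero after the global shift), and the degree is the \emph{same} for every $\lambda$ — the shifts $b(2n,d+d')-2a_{2n}(\lambda)$ are exactly compensated by the relative dimension of $q_{\lambda}$, which also depends on $a_{2n}(\lambda)$. Once all contributions sit in degree zero, the boundary maps in the stratification spectral sequence are forced to vanish for degree reasons, and one gets the filtration with the stated graded pieces. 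A secondary point to be careful about is that $\bar\pi$ is only locally of finite type on $\wt\cY_{n,d}\times\wt\cY'_{n,d'}$, so one must first restrict to the closed substack ${^{2n}\cZ^{d,d'}}$-image (or equivalently use that $\cP^{d+d'}_{2n,E}$ has the restriction property of Proposition~\ref{Pp_3.1.2}, making only finitely many strata contribute), which is exactly the content of Proposition~\ref{Pp_2.2.4} already established.
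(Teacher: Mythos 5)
Your proposal matches the paper's own proof: the paper likewise stratifies $^{2n}\cZ^{d,d'}$ by the $^{2n}\cZ^{\lambda}$, applies Corollary~\ref{Pp_three} to identify the nonvanishing strata and their contributions, uses that $q_{\lambda}:{^{2n}\cZ^{\lambda}_-}\to X^{\lambda}_-$ is a generalized affine fibration of rank (\ref{rank_of_q_lambda}), and observes that the $a_{2n}(\lambda)$-dependence cancels so every stratum contributes $\ssum^{\lambda}_!E^{\lambda}_-$ in the same degree, from which the filtration follows; the extension-by-zero under $i_X$ is checked stratum by stratum exactly as you argue. The only minor inaccuracy is the closing remark attributing the finitely-many-strata point to Proposition~\ref{Pp_2.2.4}, whereas it actually comes from Proposition~\ref{Pp_3.1.2} (only finitely many $\lambda$ have nonzero $*$-restriction); this does not affect the argument.
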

\begin{Prf}
For each $\lambda\in \Lambda^{\succ pos}_{2n}$ the projection $^{2n}\cZ^{\lambda}\to X^{\lambda}_{pos}$ is a generalized affine fibration of rank
\begin{equation}
\label{rank_of_q_lambda}
nd+(n-1)d'-a_{2n}(\lambda)-n(n-1)(g-1)-(4g-4)\sum_{i=1}^{n-1} i^2,
\end{equation}
The same holds for $q_{\lambda}: {^{2n}\cZ^{\lambda}_-}\to X^{\lambda}_-$. Since
$\sum_{i=1}^n i^2=\frac{n}{6}(n+1)(2n+1)$, from (\ref{def_of_b(n,r)})
one gets 
$$
b(2n, d+d')=2n(d+d')+(1-g)\frac{(2n-1)}{3}n(4n-1)
$$

 Using Corollary~\ref{Pp_three}, calculate (\ref{complex_Pp_4.1.2}) with respect to the stratification of $^{2n}\cZ^{d, d'}$ by the substacks $^{2n}\cZ^{\lambda}$. The strata that contribute are $^{2n}\cZ^{\lambda}_-$ for $\lambda\in \Lambda^{\succ-}_{2n,d+d'}$ such that $\lambda^{odd}\in \Lambda^{\succ-}_{n,d}$ and $\lambda^{even}\in\Lambda^{\succ-}_{n,d'}$. For such $\lambda$ we get
$$
q_{\lambda !}q_{\lambda}^*E^{\lambda}_-[b(2n, d+d')-2a_{2n}(\lambda)]\,\iso\, E^{\lambda}_-[2d'+(1-g)n],
$$
the shift in the right hand side is independent of $\lambda$. 

Now calculate the relative dimension of $\nu_{\cZ}$. One has $\dim(\cM_{2n,r})=r$. Let $\cZ_n^{d,d'}$ be the component of $\cZ_n$ given by 
$$
\deg L=d+\deg(\Omega^{(2n-1)+(2n-3)+\ldots+1}),\;\; \deg L'=d'+\deg(\Omega^{(2n-2)+(2n-4)+\ldots+0})
$$ 
Since $\dim \cZ_n^{d,d'}=d+n(g-1)$, for 
$$
\nu_{\cZ}: \cZ_n^{d,d'}\to {\cM_{2n, d+d'}}
$$ 
we get $\dimrel(\nu_{\cZ})=n(g-1)-d'$. So, the contribution of $^{2n}\cZ^{\lambda}$ to (\ref{complex_Pp_4.1.2}) is
$
\ssum^{\lambda}_!E^{\lambda}_-
$.
\end{Prf}

\begin{Rem} i) In the case $d=0$ the filtration of Proposition~\ref{Pp_graded_result} on (\ref{complex_Pp_4.1.2}) has only one term, and Proposition~\ref{Pp_2.2.5} follows from Proposition~\ref{Pp_graded_result}. Indeed, for $\lambda=(0,\ldots, 0, d')$ one has $E^{\lambda}_-=E^{(d')}$ on $X^{\lambda}_-=X^{(d')}$.  

\smallskip\noindent
ii) In view of Proposition~\ref{Pp_graded_result}, to prove Proposition~\ref{Pp_2.2.5} for $\rk(E)=2n$, it suffices to show that the left hand side of (\ref{complex_Pp_2.2.5}) is the intermediate extension of its restriction to any nonempty open subscheme of $X^{(d)}\times X^{(d'-d)}$. Indeed, if $(D,D')\in X^{(d)}\times X^{(d')}$ with $D'\ge D$, $D$ is reduced, $D'-D$ is reduced, and $D\cap (D'-D)=\emptyset$ then only one $\lambda$ as in Proposition~\ref{Pp_graded_result} contributes, namely 
$\lambda=(0,\ldots, 0, d, d')$. 
The restriction of $E^{\lambda}_-$ to the corresponding open subscheme of 
$$
i_X(X^{(d)}\times X^{(d'-d)})
$$ 
identifies with $(\wedge^2 E)^{(d)}\boxtimes E^{(d'-d)}$. 
\end{Rem}

\ssec{Highest direct image: first step}
\label{Sect_Highest direct image}

The purpose of Section~\ref{Sect_Highest direct image} is to prove Proposition~\ref{Pp_1.2.10} below. The plan of its proof is as follows. Stratifying the stack $\wt\cY_{n,d}\times{\wt\cY'_{n, d'}}$ naturally, our claim is easily reduced to Proposition~\ref{Pp_4.2.7}. The latter, in turn, is easily reduced to Proposition~\ref{Pp_4.2.10}, which is of local nature. Finally, Proposition~\ref{Pp_4.2.10} is obtained as a particular case of a more general Proposition~\ref{Pp_2.1.15}
valid for any reductive group $G$ and its antistandard Levi subgroup $M$. The latter is established in Appendix~\ref{Section_AppendixA} independent of the rest of the paper. 

\sssec{} For $d\ge 0$ let $\wt\cQ_{n, d}$ be the stack classifying: a modification of rank $n$ vector bundles $L_n\subset L$ on $X$ with $\deg(L/L_n)=d$, a complete flag of vector subbundles $0=L_1\subset\ldots\subset L_n$, where $L_i$ is a rank $i$ vector bundle, and isomorphisms
$$
\sigma_i: L_i/L_{i-1}\,\iso\, \Omega^{2n-2i+1}
$$ 
for $1\le i\le n$. We have a morphism $\tilde\varphi: {\wt\cQ_{n,d}}\to {\wt\cY_{n, d}}$ sending the above point to $(L, (s_i))$, where 
$$
s_i: \Omega^{(2n-1)+\ldots+(2n-2i+1)}\,\iso\, \wedge^i L_i \hook{} \wedge^i L
$$ 
for $1\le i\le n$.

 Similarly, for $d\ge 0$ let $\wt\cQ'_{n,d}$ be the stack classifying: a modification of rank $n$ vector bundles $L'_n\subset L'$ on $X$ with $\deg(L'/L'_n)=d$, a complete flag $(0=L'_0\subset L'_1\subset\ldots\subset L'_n)$ of vector subbundles on $L'_n$ with isomorphisms
$$
\sigma'_i: L'_i/L'_{i-1}\,\iso\, \Omega^{2n-2i}
$$ 
for $1\le i\le n$. We have a morphism $\tilde\varphi': {\wt\cQ'_{n,d}}\to {\wt\cY'_{n,d}}$ sending the above point to $(L', (s'_i))$, where 
$$
s'_i: \Omega^{(2n-2)+\ldots+(2n-2i)}\,\iso\, \wedge^i L'_i \hook{} \wedge^i L'
$$ 
for $1\le i\le n$.

Denote by 
$$
\eta: {\wt\cQ_{n,d}}\times {\wt\cQ'_{n,d'}}\to {\cQ_{2n, d+d'}}
$$ 
the map sending $(L, (L_i), \sigma_i), (L', (L'_i), \sigma'_i)$ to $(M, (M_i), (s_i))$, where $M=L\oplus L'$, $0=M_0\subset M_1\subset\ldots\subset M_{2n}$ is a complete flag of vector bundles on $X$ given by 
$$
\left\{ 
\begin{array}{ll} 
M_{2k+1}=L_{k+1}\oplus L'_k, & 0\le k<n\\
M_{2k}=L_k\oplus L'_k, & 1\le k\le n
\end{array}
\right.
$$
with the induced trivializations $s_i: M_i/M_{i-1}\,\iso\,\Omega^{2n-i}$ for $1\le i\le 2n$. 

\sssec{} Recall the map $\varphi: {\cQ_{n, r}}\to{\cY_{n,r}}$ from Section~\ref{Sect_3.1.1}. We define the commutative diagram
\begin{equation}
\label{diag_for_Sect_4.2.2}
\begin{array}{ccccc}
\cY_{2n, d+d'}&\getsup{\iota} & {\wt\cY_{n,d}}\times{\wt\cY'_{n, d'}}\\
\uparrow\lefteqn{\scriptstyle \varphi} && \uparrow & \searrow\lefteqn{\scriptstyle \bar\pi}\\
\cQ_{2n, d+d'} & \getsup{\iota_{\cT}}& \cT^{d, d'} &\toup{\bar\pi^{d,d'}}& X^{(d)}\times X^{(d')}\\
& \nwarrow\lefteqn{\scriptstyle\eta} & \uparrow\lefteqn{\scriptstyle \bar\eta} \\
&& \wt\cQ_{n,d}\times{\wt\cQ'_{n,d'}} 
\end{array} 
\end{equation}
as follows. We require the top left square in this diagram to be cartesian, so defining the stack $\cT^{d, d'}$ and maps $\iota_{\cT}, \bar\pi^{d,d'}$. We define $\bar\eta$ as the morphism whose composition with $\iota_{\cT}$ is $\eta$, and whose composition with $\cT^{d,d'}\to {\wt\cY_{n, d}}\times{\wt\cY'_{n, d'}}$ is $\tilde\varphi\times\tilde\varphi'$. 
\begin{Rem} The map $\bar\eta$ is a closed immersion. Indeed, let us be given a point of $\cT^{d,d'}$, which is a collection $(L, (s_i))\in {\tilde\cY_{n,d}}, (L', (s'_i))\in {\tilde\cY'_{n, d'}}$ and a flag 
$$
(M_1\subset\ldots\subset M_{2n}\subset L\oplus L')
$$ 
with trivializations $M_i/M_{i-1}\,\iso\, \Omega^{2n-i}$ for $1\le i\le 2n$ such that the corresponding maps $\wedge^i M_i\hook{} \wedge^i(L\oplus L')$ coincide with $t_i$ defined by (\ref{t_i_as_a_function_of_s_and_s'}). 

 Assume by induction on $k$ we have already constructed $(L_1\subset\ldots\subset L_k\subset L)$ and $(L'_1\subset\ldots\subset L'_k\subset L')$ with $M_{2i}=L_i\oplus L'_i$ for $0\le 2i\le 2k$ and $M_{2i+1}=L_{i+1}\oplus L_i$ for $0\le 2i+1< 2k$. Then our point of $\cT^{d,d'}$ gives rise to a section $\Omega^{2n-2k-1}\hook{}(L/L_k)\oplus (L'/L'_k)$. The condition that its component $\Omega^{2n-2k-1}\to L'/L'_k$ vanishes is closed, and gives rise to a subsheaf $L_{k+1}\subset L$. One gets a subsheaf $L'_{k+1}\subset L'$ similarly and then continues by induction on $k$.
\end{Rem}

 The following is the main result of Section~\ref{Sect_Highest direct image}. Its proof is given in Section~\ref{Section_4.2.8}.

\begin{Pp} 
\label{Pp_1.2.10}
For any local system $E$ on $X$ the natural map
\begin{equation}
\label{map_Pp_1.2.10}
\bar\pi^{d,d'}_!\iota_{\cT}^*(\cF^{d+d'}_{2n,E})\to \bar\pi^{d,d'}_!
\bar\eta_*\bar\eta^*\iota_{\cT}^*(\cF^{d+d'}_{2n, E})
\end{equation}
induces an isomorphism on the usual cohomology sheaves in degree $n(g-1)-2d'$. 
\end{Pp}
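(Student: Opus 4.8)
The plan is to deduce the statement from Proposition~\ref{Pp_graded_result} by a localization argument. By proper base change over the cartesian top-left square of (\ref{diag_for_Sect_4.2.2}), together with $\cP^{d+d'}_{2n,E}=\varphi_!\cF^{d+d'}_{2n,E}$ and the fact that the projection of $q_{\cY\cZ}$ to $\cY_{2n}$ is $\iota$, the source $\bar\pi^{d,d'}_!\iota_{\cT}^*(\cF^{d+d'}_{2n,E})$ of the map (\ref{map_Pp_1.2.10}) is canonically identified with $\bar\pi_!q_{\cY\cZ}^*(\cP^{d+d'}_{2n,E}\boxtimes\Qlb)$. By Proposition~\ref{Pp_graded_result} the latter, after the shift by $[\dimrel(\nu_{\cZ})-d']=[n(g-1)-2d']$, is a constructible sheaf placed in degree zero; hence the source of (\ref{map_Pp_1.2.10}) is concentrated in the single cohomological degree $N:=n(g-1)-2d'$. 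Since $\bar\eta$ is a closed immersion (the Remark following (\ref{diag_for_Sect_4.2.2})), applying $\bar\pi^{d,d'}_!$ to the localization triangle attached to $\bar\eta$ and its open complement $u\colon U\hook{}\cT^{d,d'}$, evaluated on $\iota_{\cT}^*\cF^{d+d'}_{2n,E}$, and passing to the long exact sequence of cohomology sheaves, one sees that (\ref{map_Pp_1.2.10}) is an isomorphism in degree $N$ provided $(\bar\pi^{d,d'}\comp u)_!\,(u^*\iota_{\cT}^*\cF^{d+d'}_{2n,E})$ has no cohomology in degrees $\ge N$. This is the vanishing we must prove. (Note that $\iota_{\cT}\comp\bar\eta=\eta$ and $\bar\pi^{d,d'}\comp\bar\eta=\bar\pi\comp(\tilde\varphi\times\tilde\varphi')$, so the target of (\ref{map_Pp_1.2.10}) equals $(\bar\pi\comp(\tilde\varphi\times\tilde\varphi'))_!\,\eta^*\cF^{d+d'}_{2n,E}$, which is the object computed in the second step.)

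To prove the vanishing I would stratify $\wt\cY_{n,d}\times\wt\cY'_{n,d'}$ naturally, by the degrees of the divisors of zeros of the sections $(s_i)$ and $(s'_i)$ --- in the spirit of the stratification of $\cY_{2n}$ by the substacks $S(\cY)^{\lambda}_{pos}$ of Section~\ref{Sect_3.1.1} --- and pull it back along $\cT^{d,d'}\to\wt\cY_{n,d}\times\wt\cY'_{n,d'}$. On each stratum Proposition~\ref{Pp_3.1.2} describes the restriction of $\iota_{\cT}^*\cF^{d+d'}_{2n,E}$ as the pullback of an $E^{\lambda}_-$-type sheaf from the divisor space $X^{\lambda}_-$ twisted by the pullback of $\cL_{\psi}$ under the relevant evaluation map, while $\bar\eta$ restricts there to a closed immersion onto the locus where the divisors $D_i$ are ordered and the flag $M_1\subset\cdots\subset M_{2n}$ of $M=L\oplus L'$ respects the decomposition, i.e.\ $M_{2k}=L_k\oplus L'_k$ and $M_{2k+1}=L_{k+1}\oplus L'_k$. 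As only finitely many strata contribute (again by Proposition~\ref{Pp_3.1.2}), it suffices to prove the required top-degree vanishing of the open-complement contribution stratum by stratum; this is Proposition~\ref{Pp_4.2.7}.

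Next I would render Proposition~\ref{Pp_4.2.7} local with respect to $X$. On a fixed stratum the complex under consideration is, up to the $\cL_{\psi}$-twist, pulled back from the configuration space of the zeros of the $s_i$ and $s'_i$; the stratum and all the maps involved ($\mu$, the evaluation map, $\bar\pi$) factorize over $X$, so, using the factorization property of Laumon's and Springer's perverse sheaves, the computation localizes near a single point of $X$ and reduces to a statement about flags of lattices in a $2n$-dimensional vector space, in which the closed condition cuts out the locus of flags respecting a fixed direct-sum decomposition and the linear evaluation functional supplies the Artin--Schreier twist. This is Proposition~\ref{Pp_4.2.10}. Finally, Proposition~\ref{Pp_4.2.10} is the special case $G=\GL_{2n}$, $M=\GL_n\times\GL_n$ embedded \emph{antistandardly} (with the flag of $M$ interleaved rather than refining a standard parabolic), of Proposition~\ref{Pp_2.1.15}, which I would prove in Appendix~\ref{Section_AppendixA} independently of the rest: over the complement of the split locus the relevant fibre of the projection forgetting part of the flag is an affine space on which the evaluation map restricts to a nonzero linear functional, so $\RG_c$ of its $\cL_{\psi}$-pullback vanishes; over the split locus the fibres have exactly the expected dimension and carry the whole top cohomology.

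The main obstacle is this last step --- the general antistandard-Levi statements of Appendix~\ref{Section_AppendixA} (Propositions~\ref{Pp_2.1.1} and~\ref{Pp_2.1.15}). Their proof demands a sharp dimension estimate for the Schubert-type strata of the lattice configuration space attached to an antistandard Levi, together with a precise analysis of how the evaluation functional behaves under the projection forgetting part of the flag, in order to pin down exactly when the Artin--Schreier integral over an affine space is nonzero. Matching our concrete $\GL_{2n}$ data --- the Pl\"ucker sections $(s_i)$ and $(s'_i)$, the maps $\mu_{\lambda}$ and $\ev$, and the split locus --- with this general framework is the heaviest bookkeeping; by contrast the localization triangle, the stratification, and the factorization reductions above are routine once the local input is available.
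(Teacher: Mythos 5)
Your overall plan — reduce to a stratum-by-stratum statement, localize the stratum statement along $X$ to a lattice-model problem, and invoke an antistandard-Levi result from the Appendix — matches the paper's Sections~\ref{Section_4.2.8}--\ref{Section_4.2.9} in spirit and in most of the intermediate objects. However, your initial reduction contains a genuine gap.

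You pass to the localization triangle for $\bar\eta$ and assert that (\ref{map_Pp_1.2.10}) being an isomorphism in degree $N=n(g-1)-2d'$ follows provided $U:=(\bar\pi^{d,d'}\comp u)_!\,(u^*\iota_{\cT}^*\cF^{d+d'}_{2n,E})$ has no cohomology in degrees $\ge N$, and you then set out to prove that vanishing. The vanishing is indeed a \emph{sufficient} condition, but it is strictly stronger than the statement of Proposition~\ref{Pp_1.2.10}: writing $A$ for the source and $Z$ for the target of (\ref{map_Pp_1.2.10}), the long exact sequence of the localization triangle (using that $A$ is concentrated in degree $N$ by Proposition~\ref{Pp_graded_result}) gives $\H^{N+1}(U)\,\iso\,\mathrm{coker}\,\H^N(\alpha)$ and a short exact sequence $0\to\H^{N-1}(Z)\to\H^N(U)\to\ker\H^N(\alpha)\to 0$. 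So ``$U$ has no cohomology in degrees $\ge N$'' is equivalent to the conjunction ``$\H^N(\alpha)$ is an isomorphism \emph{and} $\H^{N-1}(Z)=0$''. The paper never claims the second half: Proposition~\ref{Pp_4.2.8} only places $Z$ in degrees $\le N$ and identifies $\H^N(Z)$; the fibres of $^{\le n}(\div\times\div)$ and of $\tilde\beta\times\tilde\beta'$ are not affine fibrations, and $Z$ generally has nontrivial cohomology in degree $N-1$. Correspondingly, the local input from Appendix~\ref{Section_AppendixA} (Propositions~\ref{Pp_2.1.1}, \ref{Pp_2.1.15}) only shows that source and target are placed in degrees $\le\langle\lambda,2\check\rho\rangle$ with an isomorphism on $\H^{\langle\lambda,2\check\rho\rangle}$; for $\nu\ne w_0(\lambda)$ the source vanishes while $\RG_c(S^{\lambda}_M,i^*\cA_\nu)$ may have cohomology in degree $\langle\lambda,2\check\rho\rangle-1$, so the local open complement contribution does not vanish in the top degree. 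Your proposed vanishing is therefore unprovable by the methods available, and the reduction should instead be phrased as the paper does: it suffices to show the isomorphism on $\H^N$ for each stratum $\cT^{\lambda}$ (Proposition~\ref{Pp_4.2.7}), which is deduced from the pointwise Proposition~\ref{Pp_4.2.10} via proper base change, factorization, and Künneth. (Your description of the Appendix proof — a nonzero linear functional over the non-split locus killing compactly supported cohomology — also differs from the paper's, which uses the Casselman--Shalika formula for the source and MV-type dimension estimates for the target, but this is outside the scope of Proposition~\ref{Pp_1.2.10}.)
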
 

\begin{Rem} By Proposition~\ref{Pp_graded_result}, the left hand side of  (\ref{map_Pp_1.2.10}) is placed in usual cohomological degree $n(g-1)-2d'$ only.
\end{Rem}

\sssec{} For $\lambda\in\Lambda^{\succ pos}_{2n, d+d'}$ with $\sum_{i=1}^n \lambda_{2i-1}=d$ and $\sum_{i=1}^n\lambda_{2i}=d'$ let $\cT^{\lambda}$ denote the preimage of $^{2n}\cZ^{\lambda}$ under the composition 
$$
\cT^{d,d'}\to\, {\wt\cY_{n, d}}\times{\wt\cY'_{n, d'}}\;\toup{i_{\cY}}\; {^{2n}\cZ^{d,d'}}
$$ 
Let $\bar\eta^{\lambda}: \wt\cQ\wt\cQ'^{\lambda}\hook{} \cT^{\lambda}$ be obtained from $\bar\eta$ by the base change $\cT^{\lambda}\to \cT^{d,d'}$. Let $\bar\pi^{\lambda}: \cT^{\lambda}\to X^{\lambda}_{pos}$ be the composition $\cT^{\lambda}\to {^{2n}\cZ^{\lambda}}\to X^{\lambda}_{pos}$. Clearly, $\bar\pi^{\lambda}$ factors through the closed subscheme $X^{\lambda}\hook{} X^{\lambda}_{pos}$. 

 Write $\iota^{\lambda}_{\cT}: \cT^{\lambda}\to {\cQ_{2n, d+d'}}$ for  the restriction of $\iota_{\cT}$. We get the diagram
$$
\begin{array}{ccccc}
\cQ_{2n, d+d'} & \getsup{\iota^{\lambda}_{\cT}} & \cT^{\lambda} & \toup{\bar\pi^{\lambda}} & X^{\lambda}\\
&& \uparrow\lefteqn{\scriptstyle \bar\eta^{\lambda}}\\
&& \wt\cQ\wt\cQ'^{\lambda}
\end{array}
$$

Denote by 
$$
\varphi^{\lambda}: \cT^{\lambda}\to {^{2n}\cZ^{\lambda}}\times_{X^{\lambda}_{pos}} X^{\lambda}
$$ 
the map obtained from $\varphi: {\cQ_{2n, d+d'}}\to {\cY_{2n, d+d'}}$ by the base change ${^{2n}\cZ^{\lambda}}\times_{X^{\lambda}_{pos}} X^{\lambda}\to {^{2n}\cZ^{\lambda}}\to{\cY_{2n, d+d'}}$. The map $\varphi^{\lambda}$ is a generalized affine fibration. 
 
We derive Proposition~\ref{Pp_1.2.10} from to the following result, whose proof is given in Section~\ref{Section_4.2.9}.
\begin{Pp} 
\label{Pp_4.2.7}
For $\lambda\in \Lambda^{\succ}_{2n, d+d'}$ the natural map 
$$
\varphi^{\lambda}_!(\iota^{\lambda}_{\cT})^*(\cF^{d+d'}_{2n, E})\to 
\varphi^{\lambda}_!(\bar\eta^{\lambda})_!
(\bar\eta^{\lambda})^*(\iota^{\lambda}_{\cT})^*(\cF^{d+d'}_{2n,E})
$$
induces an isomorphism on the (highest) cohomology sheaves in the usual degree 
$$
2a_{2n}(\lambda)-b(2n, d+d')
$$ 
Both complexes are placed in the usual cohomological degrees $\le 2a_{2n}(\lambda)-b(2n, d+d')$.
\end{Pp}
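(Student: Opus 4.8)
The plan is to prove \propref{Pp_4.2.7} by unwinding Laumon's construction of $\cF^{d+d'}_{2n,E}$ along the fibres of the generalized affine fibration $\varphi^{\lambda}$ and reducing the comparison to a local statement on $X$. First I would make $(\iota^{\lambda}_{\cT})^*\cF^{d+d'}_{2n,E}$ explicit: by definition $\cF^{d+d'}_{2n,E}=\beta^*\cL^{d+d'}_E\otimes\mu^*\cL_{\psi}[b(2n,d+d')]$, and over the stratum $^{2n}\cZ^{\lambda}$ described in \secref{section_3.2.4} the bundle $M=L\oplus L'$ carries the flag whose successive subquotients are explicit powers of $\Omega$ twisted by the $D_i$. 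By \propref{Pp_3.1.2} and \corref{Pp_three}, the $\cL^{d+d'}_E$-factor is pulled back from $X^{\lambda}$, so only the Artin--Schreier factor $\mu^*\cL_{\psi}$ sees the fibres of $\varphi^{\lambda}$. Since $\varphi^{\lambda}$ is a generalized affine fibration, its relative dimension together with $b(2n,d+d')$ and $a_{2n}(\lambda)$ account for the shift $2a_{2n}(\lambda)-b(2n,d+d')$ and for the vanishing in higher degrees: both complexes in the statement are pushforwards of a perverse sheaf along a morphism with fibres of the stated dimension.

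The heart of the argument is the computation of $\varphi^{\lambda}_!$ applied to the Artin--Schreier factor. Along the fibres of $\varphi^{\lambda}$ the function $\mu$ is affine--linear: it records a sum of $\Ext^1$-classes of the successive subquotients of the flag on $M$, and these vary linearly as one varies the flag refining the given Pl\"ucker data. Fibrewise one is therefore computing $\RG_c$ of $\cL_{\psi}$ on an affine space twisted by a linear form; by $\RG_c(\A^1,\cL_{\psi})=0$ this vanishes off the locus where the linear form is identically zero, and on that locus it is $\Qlb$ placed in the top degree. The crucial geometric point is that this vanishing locus is exactly the image of $\bar\eta^{\lambda}$: the vanishing of the relevant extension classes is precisely the condition that the flag on $L\oplus L'$ be the interlacing of a flag on $L$ with a flag on $L'$, that is, that the point of $\cT^{\lambda}$ lie in $\wt\cQ\wt\cQ'^{\lambda}$. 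Consequently the natural map of the statement, which over a geometric point of the base is the restriction $\RG_c(F)\to\RG_c(F\cap\wt\cQ\wt\cQ'^{\lambda})$ on a fibre $F$ of $\varphi^{\lambda}$ composed with these identifications, is an isomorphism on the top cohomology sheaf.

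Rather than carry this out by hand I would factor $\varphi^{\lambda}$ into the successive steps forgetting one piece of the flag datum on $M$ at a time, in the spirit of \secref{Sect_passing_2n_YZ} and governed by \lemref{Lm_P_E_on_m_j+1_cY}, and reduce each step to the local \propref{Pp_4.2.10}; the latter I would in turn obtain as a special case of the general \propref{Pp_2.1.15} concerning an arbitrary reductive group $G$ and an \emph{antistandard} Levi subgroup, proved in Appendix~\ref{Section_AppendixA} independently of the rest of the paper. Granting \propref{Pp_4.2.10}, \propref{Pp_4.2.7} follows, and then \propref{Pp_1.2.10} follows by stratifying $\wt\cY_{n,d}\times\wt\cY'_{n,d'}$ by the substacks $\cT^{\lambda}$. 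The main obstacle is the pair of intertwined points that get abstracted into Appendix~A: identifying the locus where $\mu$ is fibrewise constant with $\bar\eta^{\lambda}(\wt\cQ\wt\cQ'^{\lambda})$, and keeping track of the constant term of $\mu$ along the fibres, so as to check that the only contributions are the linear ones and that their common kernel is exactly the interlacing condition. Isolating the correct local statement --- the antistandard-Levi formalism --- rather than the fibrewise Artin--Schreier vanishing alone, is where the real work lies.
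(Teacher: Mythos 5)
Your high-level skeleton is correct — reduce to a local statement at each point of $X$ and feed the result into the antistandard Levi theorem of Appendix~A — and you rightly identify \propref{Pp_4.2.10} as the pivot. But the middle paragraph of your argument contains a genuine error: the claim that ``the $\cL^{d+d'}_E$-factor is pulled back from $X^{\lambda}$, so only the Artin--Schreier factor $\mu^*\cL_{\psi}$ sees the fibres of $\varphi^{\lambda}$'' is false. The result in \corref{Pp_three} (which is what you appear to be leaning on) concerns the restriction of $\cP_{2n,E}=\varphi_!\cF_{2n,E}$ to strata of $\cY_{2n}$, not the restriction of $\cF_{2n,E}$ itself to fibres of $\varphi$. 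In the paper's proof the fibre $S$ of $\varphi^{\lambda}$ over a closed point decomposes as $\prod_{x\in X} S_x$ with $S_x\subset\Gr(M_{2n,x})$, and by \cite{FGKV} (Proposition~3.1 and Lemma~4.2) the restriction of $\iota^*\beta^*\cL^{d+d'}_E$ to $S$ is $\boxtimes_x F_x$, where each $F_x$ is the restriction of a direct sum of shifted \emph{spherical IC-sheaves} $\cA_{\nu,x}$ tensored with $E_x^{\nu}$. These $\cA_{\nu,x}$ are not constant on $S_x$, so the fibrewise object you must push forward is $\cA_{\nu,x}\otimes\mu_x^*\cL_{\psi}$, not $\mu_x^*\cL_{\psi}$ alone. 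As a result, the mechanism cannot be the naive $\RG_c(\A^1,\cL_{\psi})=0$ applied off a linear vanishing locus: one needs the full Casselman--Shalika formula, which is exactly the content transported through \propref{Pp_2.1.15}. Your closing caveat that ``the antistandard-Levi formalism rather than the fibrewise Artin--Schreier vanishing alone is where the real work lies'' is an accurate warning against your own middle paragraph, but it is not enough to discharge the step.

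Two smaller discrepancies with the paper's argument: (1) the paper does not factor $\varphi^{\lambda}$ into successive flag-forgetting steps as in \secref{Sect_passing_2n_YZ}; it passes directly to the $*$-restriction at a geometric point of $^{2n}\cZ^{\lambda}\times_{X^{\lambda}_{pos}}X^{\lambda}$, writes the fibre as $\prod_x S_x$, and applies K\"unneth once — the step-by-step factorization is the mechanism in Section~3, not here; (2) the geometric characterization of $\bar\eta^{\lambda}(\wt\cQ\wt\cQ'^{\lambda})$ is not ``the locus where certain extension classes vanish'' — it is the locus $S_{M,x}$ of flags that split as direct sums of sublattices in $L_x$ and $L'_x$, and the content of \propref{Pp_2.1.15} is precisely that the Casselman--Shalika integral localizes to a point $t^{\lambda_x}$ lying inside that closed subscheme, which is why restriction to $S_{M,x}$ is an isomorphism on top cohomology.
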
 

\sssec{Proof of Proposition~\ref{Pp_1.2.10}} 
\label{Section_4.2.8}
It suffices to show that for any $\lambda\in\Lambda^{\succ}_{2n, d+d'}$ with $\sum_{i=1}^n \lambda_{2i-1}=d$ and $\sum_{i=1}^n\lambda_{2i}=d'$
the natural map
$$
\bar\pi^{\lambda}_!(\iota_{\cT}^{\lambda})^*(\cF^{d+d'}_{2n, E})\to 
\bar\pi^{\lambda}_!\bar\eta^{\lambda}_*(\bar\eta^{\lambda})^*
(\iota_{\cT}^{\lambda})^*(\cF^{d+d'}_{2n, E})
$$
induces an isomorphism on the usual cohomology sheaves in degree $n(g-1)-2d'$. Since $q_{\lambda}: {^{2n}\cZ^{\lambda}_-}\to X^{\lambda}_-$ is a generalized affine fibration of rank (\ref{rank_of_q_lambda}), our claim follows from Proposition~\ref{Pp_4.2.7}. \QED

\sssec{Proof of Proposition~\ref{Pp_4.2.7}}
\label{Section_4.2.9}
For $\lambda\in \Lambda^{\succ}_{2n, d+d'}$ the stack $\cT^{\lambda}$ classifies collections: 
\begin{itemize}
\item complete flags $(L_1\subset\ldots\subset L_n=L)$ and $(L'_1\subset\ldots\subset L'_n=L')$ of vector bundles on $X$, for which we set 
\begin{equation}
\label{def_cM_i_for_proof_of_Pp4.2.7}
\begin{array}{ll}
M_{2i}=L_i\oplus L'_i, &\mbox{for}\; 0\le 2i\le 2n,\\
M_{2i+1}=L_{i+1}\oplus L'_i, & \mbox{for}\; 0< 2i+1<2n
\end{array}
\end{equation}
\item $(D_i)\in X^{\lambda}$;
\item isomorphisms $M_i/M_{i-1}\,\iso\,\Omega^{2n-i}(D_i)$ for $1\le i\le 2n$;
\item lower modifications of $\cO_X$-modules $\cM_i\subset M_i$ for $1\le i\le 2n$ such that 
$$
\cM_1\subset \cM_2\subset\ldots\subset \cM_{2n}
$$ 
is a complete flag of vector bundles on $X$, and the image of $\cM_i/\cM_{i-1}\to M_i/M_{i-1}$ is $(M_i/M_{i-1})(-D_i)$ for $1\le i\le 2n$.
\end{itemize}

\medskip

 Pick a $k$-point $\xi$ of ${^{2n}\cZ^{\lambda}}\times_{X^{\lambda}_{pos}} X^{\lambda}$ given by a collection: $(D_i)\in X^{\lambda}$, complete flags of vector bundles on $X$
\begin{equation}
\label{collection_for_2n_cZ_lambda}
(L_1\subset\ldots\subset L_n=L, \;\; L'_1\subset\ldots\subset L'_n=L'),
\end{equation}
and isomorphisms (\ref{trivilization_L'_i/L'_{i-1}}) and (\ref{trivialization_L_i/L_{i-1}}) for $1\le i\le n$. It suffices to establish the desired result after the $*$-restriction to $\xi$. Write $S$ for the fibre of $\varphi^{\lambda}$ over $\xi$. Define $M_i$ by (\ref{def_cM_i_for_proof_of_Pp4.2.7}). 

 For $x\in X$ write $\cO_x$ for the completed local ring of $X$ at $x$, $F_x$ for the fraction field of $\cO_x$, set $D_x=\Spec\cO_x$. Let 
\begin{equation}
\label{flags_L_i^x_and_L'^i_x}
(L_{1,x}\subset\ldots\subset L_{n,x}=L_x, \;\; L'_{1,x}\subset\ldots\subset L'_{n,x}=L'_x)
\end{equation}
be the restriction of (\ref{collection_for_2n_cZ_lambda}) to $D_x$, similarly for $M_{i,x}$. Write 
$$
D_i=\sum_{x\in X} \lambda_{i,x} x, 
$$
so for $x\in X$ we get the $x$-component $\lambda_x\in \Lambda^{\succ}_{2n}$ of $D=(D_i)$, and $D=\sum_{x\in X} \lambda_x x$.
 
 For $x\in X$ let $S_x$ be the scheme classifying diagrams
$$
\begin{array}{ccc}
M_{1,x} & \subset \ldots\subset &M_{2n,x}\\
\cup && \cup\\
\cM_{1,x} & \subset\ldots\subset & \cM_{2n,x},
\end{array}
$$
where $\cM_{i,x}\subset M_{i,x}$ is a $\cO_x$-submodule such that for $1\le i\le 2n$ the natural map 
$$
\cM_{i,x}/\cM_{i-1,x}\to M_{i,x}/M_{i-1,x}
$$ 
is an inclusion with image $(M_{i,x}/M_{i-1,x})(-\lambda_{i,x} x)$. Note that $S_x$ is isomorphic to an affine space of dimension $a_{2n}(\lambda_x)$. We have canonically $S\,\iso\, \prod_{x\in X} S_x$. 

 For $x\in X$ define the map $\mu_x: S_x\to\A^1$ as follows. For $1\le i<2n$ we have 
$$
M_{i+1}/M_{i-1}\,\iso\, M_i/M_{i-1}\oplus M_{i+1}/M_i
$$ 
canonically. So, for $1\le i<2n$ a point of $S_x$ yields an inclusion
\begin{multline*}
\Omega^{2n-i-1}_x\,\iso\, \cM_{i+1,x}/\cM_{i,x}\subset  M_{i+1,x}/(\cM_{i,x}+M_{i-1,x})\,\iso\\
(\Omega_x^{2n-i}(\lambda_{i,x}x)/\Omega^{2n-i}_x)\oplus \Omega_x^{2n-i-1}(\lambda_{i+1,x}x)
\end{multline*}
whose projection to the second factor is the canonical inclusion. Let $\sigma_{i,x}$ be the first component of the latter map. Then $\mu_x$ sends a point of $S_x$ to $\sum_{i=1}^{2n-1}\Res \sigma_{i,x}$. 

 Recall the map $\mu: {\cQ_{2n, d+d'}}\to\A^1$ defined in Section~\ref{Sect_3.1.1}. The composition $S\subset  {\cQ_{2n, d+d'}}\toup{\mu}\A^1$ writes as $\sum_x \mu_x: \prod_{x\in X}S_x\to\A^1$. Set 
$$
d_x=\sum_{i=1}^n \lambda_{2i-1, x}, \;\;\;\; d'_x=\sum_{i=1}^n \lambda_{2i, x}
$$ 
Write $\beta_x: S_x\to \Sh_0^{d_x+d'_x}$ for the map sending the above point to $M_{2n,x}/\cM_{2n,x}$. 
 
 Write $\Gr(M_{2n,x})$ for the affine grassmanian classifying free 
$\cO_x$-submodules $\cR\subset M_{2n,x}(F_x)$ of rank $2n$ with $\cR\otimes_{\cO_x} F_x=M_{2n,x}(F_x)$. We view $S_x$ as a subscheme of $\Gr(M_{2n,x})$. For $\nu\in \Lambda^{\succ+}_{2n}$ let $\Gr^{\nu}(M_{2n,x})\subset \Gr(M_{2n,x})$ be the locally closed subscheme classifying lattices $\cR\subset M_{2n,x}$ such that
$$
M_{2n,x}/\cR\,\iso\, \cO_x/\gm_x^{\nu_1}\oplus\ldots\oplus \cO_x/\gm_x^{\nu_{2n}},
$$
here $\gm\subset \cO_x$ is the maximal ideal. The $\IC$-sheaf of $\Gr^{\nu}(M_{2n,x})$ on $\Gr(M_{2n,x})$ is denoted $\cA_{\nu, x}$. 

  By (\cite{FGKV}, Proposition~3.1 and Lemma~4.2), the $*$-restriction of $(\iota^{\lambda}_{\cT})^*\beta^*\cL^{d+d'}_E$ to $S$ identifies with $\mathop{\boxtimes}\limits_{x\in X} F_x$, where $F_x$ is the $*$-restriction of
\begin{equation}
\label{complex_F_x_description}
\mathop{\oplus}\limits_{\nu\in \Lambda^{\succ+}_{2n, d_x+d'_x}} \cA_{\nu, x}[-(d_x+d'_x)(2n-1)]\otimes E_x^{\nu}
\end{equation}
under $S_x\hook{} \Gr(M_{2n,x})$. Here $E_x^{\nu}$ is the polynomial functor of the vector space $E_x$ defined in (\cite{L2}, 0.1.4). 

 Let $i_M: S_{M, x}\subset S_x$ be the closed subscheme given by the property that there are diagrams of $\cO_x$-modules
$$
\begin{array}{ccc}
L_{1,x} &\subset\ldots\subset & L_{n,x}\\
\cup && \cup\\
\cL_{1,x} &\subset\ldots\subset & \cL_{n,x}
\end{array}
\;\;\;\;\;\;\;\;\;\;\;
\begin{array}{ccc}
L'_{1,x} &\subset\ldots\subset & L'_{n,x}\\
\cup && \cup\\
\cL'_{1,x} &\subset\ldots\subset & \cL'_{n,x}
\end{array}
$$
such that 
$$
\left\{
\begin{array}{ll}
\cM_{2s,x}=\cL_{s,x}\oplus \cL'_{s,x}, & \mbox{for}\; 1\le 2s\le 2n\\
\cM_{2s+1,x}=\cL_{s+1,x}\oplus\cL'_{s,x}, & \mbox{for}\; 1\le 2s+1\le 2n
\end{array}
\right.
$$
By Kunneth formula, Proposition~\ref{Pp_4.2.7} is reduced to Proposition~\ref{Pp_4.2.10} below, which is a local result. \QED

\begin{Pp} 
\label{Pp_4.2.10}
For $x\in X$ the complex $\RG_c(S_{M, x}, 
i_M^*(F_x\otimes \mu_x^*\cL_{\psi}))$ is placed in degrees $\le 2a_{2n}(\lambda_x)$. Moreover, the natural map
\begin{equation}
\label{morphism_for_Con_about_F_x}
\RG_c(S_x, F_x\otimes \mu_x^*\cL_{\psi})\to \RG_c(S_{M, x}, 
i_M^*(F_x\otimes \mu_x^*\cL_{\psi}))
\end{equation}
induces an isomorphism in the highest degree $2a_{2n}(\lambda_x)$.
\end{Pp}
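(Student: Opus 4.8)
The plan is to recognize Proposition~\propref{Pp_4.2.10} as a purely local assertion about the affine Grassmannian of $\GL_{2n}$ and to deduce it from the general statement of Appendix~\ref{Section_AppendixA}. First I would unwind the geometry. The scheme $S_x$ is a closed subscheme of the affine Grassmannian $\Gr(M_{2n,x})$ of $G=\GL_{2n}$ at $x$, cut out by the conditions that the lattice flag $\cM_{1,x}\subset\dots\subset\cM_{2n,x}$ has successive quotients of the prescribed torsion $\lambda_{i,x}\,x$ inside $M_{\bullet,x}/M_{\bullet-1,x}$; it is an affine space of dimension $a_{2n}(\lambda_x)$. The map $\mu_x$ is a nondegenerate additive (Whittaker-type) character assembled from the residues of the splittings of the successive extensions, and by geometric Satake in the form of (\cite{FGKV}, Proposition~3.1 and Lemma~4.2) the complex $F_x$ is, up to the overall shift $[-(d_x+d'_x)(2n-1)]$, the direct sum over $\nu\in\Lambda^{\succ+}_{2n,d_x+d'_x}$ of the intersection cohomology sheaves $\cA_{\nu,x}$ of the spherical strata $\Gr^\nu(M_{2n,x})$, the polynomial functors $E_x^\nu$ playing the role of constant multiplicity spaces. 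Finally, the closed subscheme $S_{M,x}$ is exactly the locus of lattice flags adapted to the decomposition $M_{2n,x}=L_x\oplus L'_x$ through the interleaved pattern $\cM_{2s,x}=\cL_{s,x}\oplus\cL'_{s,x}$, $\cM_{2s+1,x}=\cL_{s+1,x}\oplus\cL'_{s,x}$; that is, it is the locus of reductions to the Levi $M=\GL_n\times\GL_n$ embedded in $\GL_{2n}$ in the way Appendix~\ref{Section_AppendixA} calls \emph{antistandard}.

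Because $F_x$ is a direct sum over $\nu$ and the multiplicity spaces $E_x^\nu$ are constant finite-dimensional vector spaces, both assertions reduce to the corresponding ones for each summand $\cA_{\nu,x}\otimes\mu_x^*\cL_\psi$ separately, the shift $[-(d_x+d'_x)(2n-1)]$ being an overall bookkeeping constant. For a fixed $\nu$ this is precisely Proposition~\propref{Pp_2.1.15} applied to $G=\GL_{2n}$, the antistandard Levi $M=\GL_n\times\GL_n$, the dominant coweight $\nu$, and the coweight datum $\lambda_x$ recording the successive torsions; the degree bound then matches the dimension count $\dim S_x=a_{2n}(\lambda_x)$, and the isomorphism in the top degree $2a_{2n}(\lambda_x)$ is the content of that proposition. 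So, once the dictionary of the first paragraph is set up, the proof is a direct citation of \propref{Pp_2.1.15}.

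For orientation, the mechanism behind \propref{Pp_2.1.15}, and hence behind the top-degree isomorphism, is the following. The open complement $S_x\setminus S_{M,x}$ carries a free action of an additive group along whose orbits the restriction of $\mu_x^*\cL_\psi$ is the pullback of $\cL_\psi$ along a surjective linear map to $\AA^1$. Since $\RG_c(\AA^1, f^*\cL_\psi)=0$ for every nonconstant affine-linear $f$, a projection-formula and fibration argument shows that the compactly supported cohomology of the open part with these coefficients vanishes in the top degree $2a_{2n}(\lambda_x)$. The long exact sequence of the pair $(S_x, S_{M,x})$ then yields the isomorphism in degree $2a_{2n}(\lambda_x)$, while the degree bound for $\RG_c(S_{M,x},-)$ comes from the fact that $S_{M,x}$ is again an affine space whose dimension, expressed through the $a_n$-type quantities attached to the two $\GL_n$-factors, is at most $a_{2n}(\lambda_x)$ --- the antistandard condition being exactly what guarantees this inequality.

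The main obstacle is combinatorial rather than analytic: one must verify carefully that the abstract notion of antistandard Levi used in Appendix~\ref{Section_AppendixA} really specializes to our interleaved embedding (odd flag steps into the first $\GL_n$, even steps into the second), that the fibre-product description of $S_{M,x}$ in terms of the commuting diagrams of $\cO_x$-lattices $\cL_{\bullet,x}$ and $\cL'_{\bullet,x}$ coincides with the $M$-reduction locus appearing in the hypotheses of \propref{Pp_2.1.15}, and that the cohomological shifts and dimension counts line up so that $2a_{2n}(\lambda_x)$ is the correct top degree on both sides. Once this identification is made, the transversal vanishing is the computation carried out once and for all in the appendix, and nothing further is needed here.
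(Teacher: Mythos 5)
Your core argument is exactly the paper's: use the spherical decomposition of $F_x$ from (\cite{FGKV}) into the summands $\cA_{\nu,x}\otimes E_x^{\nu}$ (the multiplicity spaces being constant), reduce to each summand, and cite Proposition~\ref{Pp_2.1.15} for $G=\GL_{2n}$ and the antistandard Levi $M=\GL_n\times\GL_n$. This is correct and is precisely what the paper does, word for word.

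One side remark, since you explicitly offer it ``for orientation'': your heuristic for the inner workings of Proposition~\ref{Pp_2.1.15} (a free additive-group action on the complement $S_x\setminus S_{M,x}$, vanishing of $\RG_c$ along orbits, then the long exact sequence of the pair, with a dimension count for $S_{M,x}$) is not the mechanism the paper actually uses and would require justification. In the paper, the left-hand side is computed in one stroke by the Casselman--Shalika formula (\cite{FGV}, Theorem~1), and the bound and identification on the right-hand side come from the perverse-degree estimate of Proposition~\ref{Pp_2.1.1} on $i^*\cA_{\nu}$ combined with the Mirkovi\'c--Vilonen dimension estimate for $S_M^{\lambda}$; the antistandard condition enters by making the restricted Whittaker character trivial, not by governing an excision argument, and the degree bound on $\RG_c(S_{M,x},-)$ is not a raw dimension count of $S_{M,x}$ because the coefficients are not constant. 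None of this affects the validity of your proof of Proposition~\ref{Pp_4.2.10}, which is a clean reduction plus citation, exactly as in the paper.
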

\begin{proof}
Using (\ref{complex_F_x_description}), our claim is reduced to the following. For any $\nu\in\Lambda^{\succ+}_{2n, d_x+d'_x}$ the natural map 
$$
\RG_c(S_x, \cA_{\nu,x}\otimes \mu_x^*\cL_{\psi})\to \RG_c(S_{M, x}, 
i_M^*(\cA_{\nu,x}\otimes \mu_x^*\cL_{\psi}))
$$
is an isomorphism in the (highest) degree $\<\lambda_x, 2\check{\rho}_G\>$ for $G=\GL_{2n}$. Besides, both complexes are placed in degrees $\le \<\lambda_x, 2\check{\rho}_G\>$. This is a particular case of Proposition~\ref{Pp_2.1.15}.  Namely, we apply it for $G=\GL_{2n}$ with $T$ the group of diagonal matrices, $B$ the group of upper-triangular matrices. The antistandard Levi is the subgroup of $\GL_{2n}$ preserving the decomposition $\Vect(e_1, e_3,\ldots, e_{2n-1})\oplus \Vect(e_2, e_4,\ldots, e_{2n})$, here $(e_i)$ is the canonical base of the standard representation of $\GL_{2n}$.
\end{proof}

 The proof of Proposition~\ref{Pp_1.2.10} is completed.
  
\ssec{Highest direct image: second step}  
\label{Sect_4.3}

In this section we finish the proof of Proposition~\ref{Pp_2.2.5}. More precisely, we reduce it to Proposition~\ref{Pp_4.2.8} below, which in turn is reduced to Theorem~\ref{Thm_4.2.11}. This reduction is the content of Section~\ref{Sect_4.3}. The proof of Theorem~\ref{Thm_4.2.11} is then given in Section~\ref{Section_proof_of_Thm_4.2.11}. 
  
\sssec{} Write $\tilde\beta: {\wt\cQ_{n,d}}\to {^{\le n}\Sh^d_0}$ for the map sending a point of the source to $L/L_n$. Write $\tilde\beta': {\wt\cQ'_{n, d'}}\to {^{\le n}\Sh^{d'}_0}$ for the map sending a point of the source to $L'/L'_n$. 
  
  Consider the commutative diagram
\begin{equation}
\label{diag_for_Sect_4.2.7}
\begin{array}{ccccc}
\cQ_{2n, d+d'} & \getsup{\eta} & {\wt\cQ_{n,d}}\times{\wt\cQ'_{n,d'}} & \toup{\bar\pi^{d,d'}\bar\eta} & X^{(d)}\times X^{(d')}\\
\downarrow\lefteqn{\scriptstyle\beta} && \downarrow\lefteqn{\scriptstyle \tilde\beta\times\tilde\beta'} &\nearrow\lefteqn{\scriptstyle {^{\le n}(\div\times\div)}}\\
^{\le 2n}\Sh^{d+d'}_0 & \getsup{\eta_0} & {^{\le n}\Sh^d_0}\times{^{\le n}\Sh^{d'}_0}, 
\end{array}
\end{equation}
where $\eta_0$ sends $(F,F')$ to $F\oplus F'$, and $^{\le}(\div\times\div)$ is the restriction of $\div\times\div$. 

 For any local system $E$ on $X$ we define a canonical constructible subsheaf
$$
^{\le n}((\wedge^2 E)^{(d)}\boxtimes E^{(d'-d)})\subset ((\wedge^2 E)^{(d)}\boxtimes E^{(d'-d)})
$$
in Definition~\ref{Def_4.3.11} below. We will derive Proposition~\ref{Pp_2.2.5} from the following Proposition~\ref{Pp_4.2.8}, whose proof is given in Section~\ref{Sect_4.2.14}. 
\begin{Pp} 
\label{Pp_4.2.8}
For any local system $E$ on $X$ the complex
\begin{equation}
\label{complex_Pp_4.2.8}
i_X^*(^{\le n}(\div\times\div))_!(\tilde\beta\times\tilde\beta')_!(\tilde\beta\times\tilde\beta')^*\eta_0^*\cL^{d+d'}_E
\end{equation}
is placed in the usual cohomological degrees $\le b(2n, d+d')+n(g-1)-2d'=top$, and its cohomology sheaf in degree $top$ identifies canonically with
\begin{equation}
\label{sheaf_le_n_in_the_answer}
^{\le n}((\wedge^2 E)^{(d)}\boxtimes E^{(d'-d)})
\end{equation}
If $\rk(E)=2n$ then (\ref{subsheaf_Pp_2.2.5}) is an equality. 
\end{Pp}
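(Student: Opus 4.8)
The plan is to deduce the Proposition from Theorem~\ref{Thm_4.2.11}. After replacing $\cL^{d+d'}_E$ by the Springer sheaf $\Spr^{d+d'}_E$ and using its flag description, the complex (\ref{complex_Pp_4.2.8}) becomes --- up to an explicit even shift and up to $S_{d+d'}$-invariants --- a direct image along $\Sh^{d,d'}_-\toup{\div^{\nu}_-}V^{d,d'}_-\to X^{(d)}\times X^{(d'-d)}$, whose top cohomology is exactly what Theorem~\ref{Thm_4.2.11} computes; the final identification with (\ref{sheaf_le_n_in_the_answer}) is then an intermediate-extension argument together with the generic computation recorded in the remark following Proposition~\ref{Pp_graded_result}.

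First I would pass to $\Spr^{d+d'}_E$, keeping track of the $S_{d+d'}$-action and recovering $\cL^{d+d'}_E$, hence (\ref{sheaf_le_n_in_the_answer}), by taking invariants at the end. Writing $\Spr^{d+d'}_E$ as a proper direct image from the stack of complete flags of a length $d+d'$ torsion sheaf, carrying the external product of $E$ along the successive length one subquotients (with the usual shifts), base change along $\eta_0$ --- together with the fact (as in \cite{L2}) that $\tilde\beta$ and $\tilde\beta'$ are generalized affine fibrations --- identifies the pushforward to $X^{(d)}\times X^{(d')}$ of $(\tilde\beta\times\tilde\beta')^*\eta_0^*\Spr^{d+d'}_E$, up to an explicit even shift, with the pushforward of the same $E$-decorated flag construction from the stack of triples $(F,F',\text{a complete flag of }F\oplus F')$. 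Restricting along $i_X$ imposes $\div F\le \div F'$ and exhibits this stack as an open substack of $\Sh^{d,d'}_-$, cut out by $\dim F_x\le n$ and $\dim F'_x\le n$, the map to $X^{(d)}\times X^{(d'-d)}$ being $\div^{\nu}_-$ followed by $V^{d,d'}_-\to X^{(d)}\times X^{(d'-d)}$. Since the $E$-factors live over the tautological tuple $(x_i)$, they descend to a constructible sheaf $\cE_V$ on $V^{d,d'}_-$ pulled back to $\Sh^{d,d'}_-$; by the projection formula the whole expression becomes the pushforward to $X^{(d)}\times X^{(d'-d)}$ of $\bigl((\div^{\nu}_-)_!\Qlb\bigr)\otimes\cE_V$. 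I would also note that the Artin--Schreier twist occurring in Proposition~\ref{Pp_1.2.10} is trivial on the image of $\bar\eta$: for $M=L\oplus L'$ with the flag $M_{2k}=L_k\oplus L'_k$, $M_{2k+1}=L_{k+1}\oplus L'_k$ all the extensions entering the definition of $\mu$ split canonically, so $\eta^*\mu=0$; together with the factorisation $\bar\pi^{d,d'}\comp\bar\eta={^{\le n}(\div\times\div)}\comp(\tilde\beta\times\tilde\beta')$ this accounts for the shift by $b(2n,d+d')$ and for the bound by $top$.

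Next I would invoke Theorem~\ref{Thm_4.2.11}: the fibres of $\div^{\nu}_-$ are nonempty of dimension $-d'$, so $(\div^{\nu}_-)_!\Qlb$ lies in degrees $\le -2d'$, and $\R^{-2d'}(\div^{\nu}_-)_!\Qlb\,\iso\,\IC[-d']$, the intersection cohomology sheaf of $V^{d,d'}_-$. Combined with the previous step this gives that (\ref{complex_Pp_4.2.8}) sits in degrees $\le top$ and that its cohomology in degree $top$ is the $S_{d+d'}$-invariants of the pushforward to $X^{(d)}\times X^{(d'-d)}$ of $\IC\otimes\cE_V$. By the description of $\IC$ in Section~\ref{Sect_1.2.8_normalization}, this is the intermediate extension from the open locus where $D$ and $D'-D$ are reduced and disjoint, and, exactly as in Proposition~\ref{Pp_graded_result}, it is extended by zero under $i_X$; hence it suffices to identify it on that open locus. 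There only the datum $\lambda=(0,\dots,0,d,d')$ contributes: over a point of $D$ the fibre of the flag stack over the length two piece of $F\oplus F'$ is a $\PP^1$ on which $S_2$ acts through the sign character on its top cohomology, so the two $E$-factors over that point combine, after taking invariants, into $\wedge^2 E$ rather than $\Sym^2 E$, while over $D'-D$ a single copy of $E$ survives. This yields $(\wedge^2 E)^{(d)}\boxtimes E^{(d'-d)}$ on the open locus, hence the sheaf (\ref{sheaf_le_n_in_the_answer}) of Definition~\ref{Def_4.3.11} globally; and when $\rk(E)=2n$ the parts of $\Sh^{d,d'}_-$ outside the chosen open substack contribute nothing in top degree, as they would produce wedge powers of $E$ of order $>2n$, so (\ref{subsheaf_Pp_2.2.5}) becomes an equality.

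The main obstacle is Theorem~\ref{Thm_4.2.11} itself, namely that $(\div^{\nu}_-)_!\Qlb$ is in top degree canonically the direct sum of the IC sheaves of the irreducible components of $V^{d,d'}_-$; this needs the two stratifications of $\Sh^{d,d'}_-$, the second one modelled on the combinatorics of Richardson and Springer, and the splitting principle of Remark~\ref{Rem_2.4.6}. A secondary but genuine difficulty is the careful bookkeeping of the $S_{d+d'}$-equivariant structure and of the Koszul signs: it is precisely these signs, through the $W$-action on the cohomology of the $\PP^1$-Springer fibres along the overlap, that produce $\wedge^2 E$ rather than a $\Sym$-type answer, and an error here would break the comparison with the classical linear period of \cite{FJ}.
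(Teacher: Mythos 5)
Your overall strategy is the paper's: reduce $(\ref{complex_Pp_4.2.8})$ via the Springer sheaf and base change for the smooth map $\tilde\beta\times\tilde\beta'$ to a pushforward along $\Sh^{d,d'}_-\to V^{d,d'}_-\to X^{(d)}\times X^{(d'-d)}$, invoke Theorem~\ref{Thm_4.2.11}, and then use $q_V^-$ finite. But the crucial second half of your argument has a genuine gap.

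In your second paragraph you apply Theorem~\ref{Thm_4.2.11} to the full map $\div^{\nu}_-$ and conclude that the top cohomology of (\ref{complex_Pp_4.2.8}) is the $S_I$-invariants of $(q_V^-)_!(\IC\otimes\cE_V)$. That is not what (\ref{complex_Pp_4.2.8}) produces. Since the complex contains $^{\le n}(\div\times\div)$, the relevant map is the restriction $^{\le n}\div^{\nu}_-$ to the \emph{open} substack $^{\le n}\Sh^{d,d'}_-$ (which you did set up in the preceding paragraph). Its top direct image is, by definition, $^{\le n}K_V\subset\IC[-d']$, and this is a proper subsheaf in general. The identification with $^{\le n}((\wedge^2 E)^{(d)}\boxtimes E^{(d'-d)})$ is then essentially tautological, because Definition~\ref{Def_4.3.11} defines that sheaf to be the $S_I$-invariants of $(q_V^-)_!(p_V^*E^{\boxtimes(d+d')}\otimes{^{\le n}K_V})$. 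No intermediate-extension argument is available or needed here: the pushforward $(q_V^-)_!(\IC\otimes\cE_V)$ is not an intermediate extension, and your argument, if it worked, would identify the answer with the \emph{full} sheaf $(\wedge^2 E)^{(d)}\boxtimes E^{(d'-d)}$ (this is exactly Proposition~\ref{Pp_4.2.12}), which is false for general $E$ --- only the subsheaf is obtained. Your conclusion ``hence the sheaf (\ref{sheaf_le_n_in_the_answer}) globally'' is therefore not a consequence of what you wrote.

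The $\rk(E)=2n$ assertion is also only sketched. The observation that high wedge powers vanish for $\rk(E)=2n$ is the right intuition, but the paper's argument is a concrete stalkwise computation: it uses the filtration on the top cohomology provided by Proposition~\ref{Pp_graded_result}, computes $*$-fibres at $(D,D')$, and matches the associated graded against the representation-theoretic decomposition of Lemma~\ref{Lm_2.3.2}. Some version of that verification is necessary; the heuristic about wedge powers of order $>2n$ does not by itself show that the filtration exhausts $(\wedge^2 E)^{(d)}\boxtimes E^{(d'-d)}$.

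Two minor remarks. The paragraph about the Artin--Schreier twist vanishing on the image of $\bar\eta$ belongs to the deduction of Proposition~\ref{Pp_2.2.5} from Proposition~\ref{Pp_4.2.8}, not to the proof of Proposition~\ref{Pp_4.2.8} itself (whose statement already involves only $\cL^{d+d'}_E$). And your $\PP^1$-Springer-fibre explanation of where $\wedge^2 E$ rather than $\Sym^2 E$ appears is indeed the content of the $d=d'=1$ case of the equivariance claim in Theorem~\ref{Thm_4.2.11}~ii), which the paper leaves to the reader; it is correct in spirit but it is part of that theorem rather than part of Proposition~\ref{Pp_4.2.8}.
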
  

\sssec{Proof of Proposition~\ref{Pp_2.2.5}} Consider the diagram (\ref{diag_for_Sect_4.2.2}). One has canonically over ${\wt\cY_{n,d}}\times{\wt\cY'_{n, d'}}$
$$
\iota^*(\cP^{d+d'}_{2n, E})[n(g-1)-d']\,\iso\, q_{\cY\cZ}^*(\cP_{2n, E}\boxtimes\Qlb)[\dimrel(\nu_{\cZ})]
$$
So, in view of Proposition~\ref{Pp_graded_result}, to prove Proposition~\ref{Pp_2.2.5}, we must calculate the cohomology sheaf in the usual degree $n(g-1)-2d'$ of the complex
$$
\bar\pi^{d,d'}_!\iota_{\cT}^*(\cF^{d+d'}_{2n, E})
$$  
By Proposition~\ref{Pp_1.2.10}, this cohomology sheaf identifies with the cohomology sheaf in the same degree of
$$
\bar\pi^{d,d'}_!\bar\eta_!\eta^*(\cF^{d+d'}_{2n, E})
$$
From (\ref{diag_for_Sect_4.2.7}) one gets
$$
\eta^*(\cF^{d+d'}_{2n, E})\,\iso\, \eta^*\beta^*\cL^{d+d'}_E[b(2n, d+d')]
$$  
Our claim follows now from Proposition~\ref{Pp_4.2.8}. \QED

\begin{Rem} Recall the sheaf $E^{\lambda}_-$ defined in (\cite{L2}, Definition~1, Section~3.1). In the case $n=1$ the top cohomology sheaf of (\ref{complex_Pp_4.2.8}) identifies
with $E^{\lambda}_-$ on $X^{\lambda}_-=X^{(d)}\times X^{(d'-d)}$ for $\lambda=(d,d')$ by definition of $E^{\lambda}_-$. 
\end{Rem}

\sssec{} 
\label{Sect_4.3.5_now}
In the rest of Section~\ref{Sect_4.3} we reduce Proposition~\ref{Pp_4.2.8} to Theorem~\ref{Thm_4.2.11}. 

For the rest of Section~\ref{Section_Direct_image_barpi} assume $0\le d\le d'$ and set $I=\{1,\ldots, d+d'\}$. Let $S_I$ be the group of permutations of $I$. 

Set 
$$
V^{d,d'}=(X^{(d)}\times X^{(d')})\times_{X^{(d+d')}} X^{d+d'}
$$ 
This is the scheme classifying pairs of effective divisors $D, D'$ of degrees $d,d'$, and $(x_i)\in X^{d+d'}$ with $\sum_i x_i=D+D'$. Write $V^{d,d'}_-\hook{} V^{d,d'}$ for the closed subscheme given by $D\le D'$. It is equipped with an action of $S_I$. 

\sssec{Description of the $\IC$-sheaf}
\label{Sect_1.2.8_normalization}  
Consider the set $\cE$ of equivalence classes of surjections $\alpha: I\to \bar I$ such that there is a decomposition $\bar I=\bar I_1\sqcup \bar I_2$ with $\mid\bar I_2\mid=d'-d$ and the following property. For $i\in \bar I_2$ (resp., $i\in \bar I_1$), $\alpha^{-1}(i)$ has 1 (resp., 2) elements. In other words, $\cE$ is the set of equivalence relations on $I$ which have $d'-d$ equivalence classes consisting of 1 element, and $d$ equivalence classes consisting of 2 elements. We get a map 
$$
\norm: \mathop{\sqcup}\limits_{\alpha\in\cE} X^{\bar I}\to V^{d,d'}_-
$$
whose restriction to $X^{\bar I}$ sends $\gamma: \bar I\to X$ to 
$D=\sum_{i\in \bar I_1} \gamma(i)$, $D'=D+\sum_{i\in \bar I_2} \gamma(i)$ and the decomposition $I\to X$ of $D+D'$ is given by $\gamma\alpha$. This is the normalization of $V^{d,d'}_-$. So, $\norm_!\Qlb[d']\,\iso\,\IC$, and $V^{d,d'}_-$ is of pure dimension $d'$. The map $\norm$ is $S_I$-equivariant. 

 Denote by  ${^0V^{d,d'}_-}\subset V^{d,d'}_-$ the open subscheme given by the property that $D$ is reduced, and $(D'-D)\cap D=\emptyset$. Then $\norm$ is an isomorphism over this open subscheme, so $\IC\,\iso\, \Qlb[\dim]$ over ${^0V^{d,d'}_-}$.
 
 For each $(\bar I,\alpha)\in\cE$ the restriction $\norm_{\alpha}: X^{\bar I}\to V^{d,d'}_-$  of $\norm$ is a closed immersion. Indeed, the resulting closed subscheme is given by the property that a point $I\to X$ factors (uniquely) through $I\toup{\alpha} \bar I\toup{\gamma} X$, and $D=\sum_{i\in \bar I_1} \gamma(i)$. We denote by $V_{\alpha}$ this closed subscheme of $V^{d,d'}_-$. 
 
 By an involution of $I$ we mean a permutation $\sigma$ with $\sigma^2=\id$, so $\sigma$ could be the identity. Write $\Inv$ for the set of involutions in $S_I$. For $\alpha: I\to \bar I$ in $\cE$, let $w\in \Inv$ be the unique involution of $I$ whose orbits are precisely the fibres of $\alpha$. This defines an inclusion $\cE\hook{}\Inv$, which we use later.
 
 For $\alpha\in\cE$ denote by $S_{\alpha}$ the stabilizer of $\alpha$ in $S_I$. Set $I_i=\alpha^{-1}(\bar I_i)$ for $i=1,2$. Write $S_{I_i}$ for the group of permutations of $I_i$, so $S_{\alpha}\subset S_{I_1}\times S_{I_2}\subset S_I$. Let $\chi_{\alpha}: S_{\alpha}\to \Qlb^*$ be the restriction of the character $\sign\times\triv: S_{I_1}\times S_{I_2}\to\Qlb^*$. The group $S_{\alpha}$ fits into an exact sequence $1\to (S_2)^d\to S_{\alpha}\to S_{\bar I_1}\times S_{\bar I_2}\to 1$. 
 
\sssec{Twisted $\IC$-sheaf} We introduce the following \select{canonical induced representation} of $S_I$. The term \select{canonical} here is to express the fact that it is independent of the order on $I$ (and only depends on $I, d, d'$). 

Let $\{e_i\}_{i\in I}$ be the canonical base of $\Qlb^I$. 
For $\alpha: I\to \bar I$ in $\cE$ and $j\in\bar I_1$ set 
$$
V_{j,\alpha}=\mathop{\oplus}\limits_{i\in \alpha^{-1}(j)} \Qlb e_i\subset \Qlb^I
$$ 
Let
$$
V_{\alpha}=\mathop{\otimes}\limits_{j\in \bar I_1} \det V_{j,\alpha},\;\;\;
Ind_{\cE}=\mathop{\oplus}\limits_{\alpha\in\cE} V_{\alpha}
$$
It is understood that $\bar I_1$ is attached to an element $\alpha: I\to \bar I$ of $\cE$ as above. Equip $\Ind_{\cE}$ with the natural action of $S_I$. One may view $Ind_{\cE}$ as a subspace in $(\Qlb^I)^{\otimes 2d}$. For $\alpha\in\cE$ there exists a noncanonical isomorphism $Ind_{\cE}\,\iso\, ind_{S_{\alpha}}^{S_I} \chi_{\alpha}$ of $S_I$-representations. We underline that $Ind_{\cE}$ is independent of $\alpha\in\cE$. 

 Consider the sheaf $(\Qlb)_{\cE}$ on $\mathop{\sqcup}\limits_{\alpha\in\cE} X^{\bar I}$, whose restriction to the component $X^{\bar I}$ for a given $\alpha$ is the constant sheaf with fibre $V_{\alpha}$. Then $(\Qlb)_{\cE}$ is naturally $S_I$-equivariant.
Set
$$
\wt\IC[-d']=\norm_!(\Qlb)_{\cE}
$$ 
This is a constructible sheaf on $V^{d,d'}_-$, which inherits a natural action of $S_I$. 

We call $\wt\IC$ the \select{twisted version} of the $\IC$-sheaf of $V^{d,d'}_-$. We informally think of $\wt\IC$ as a version of the canonical induced representation of $S_I$ with a point replaced by the curve $X$. There is a noncanonical isomorphism $\wt\IC\,\iso\, \IC$. 

\sssec{} 
\label{Sect_4.3.8_now}
For the rest of Section~\ref{Section_Direct_image_barpi} set $\nu=(1,\ldots, 1)\in \Lambda_{d+d', d+d'}$.  Let $\Fl^{\nu}$ be the stack of flags $(F_1\subset\ldots\subset F_{d+d'})$, where $F_i$ is a coherent torsion sheaf on $X$ of length $i$ for $i=1,\ldots, d+d'$. Let $q_{\cF}: \Fl^{\nu}\to\Sh_0^{d+d'}$ be the map sending the above point to $F_{d+d'}$. 
 
 Consider the diagram
$$
\begin{array}{ccccc}
\Sh_0^{d+d'} & \getsup{\eta_0} & \Sh^d_0\times \Sh^{d'}_0 & \toup{\div\times\div} & X^{(d)}\times X^{(d')}\\
\uparrow\lefteqn{\scriptstyle q_{\cF}} && \uparrow && \uparrow\lefteqn{\scriptstyle q_V}\\
\Fl^{\nu} & \gets & \Sh^{d,d'} & \toup{\div^{\nu}} & V^{d,d'},
\end{array}
$$
where the left square is cartesian, so defining the stack $\Sh^{d,d'}$. The map $\div^{\nu}$ sends $(F, F', F_1\subset\ldots\subset F_{d+d'}=F\oplus F')$ to the collection $(D,D', (x_i))$, where $D=\div F, D'=\div F'$, and $x_i=\div(F_i/F_{i-1})$ for $1\le i\le d+d'$. The map $q_V$ sends $(D, D', (x_i))$ to $(D, D')$. 

 Let $\div^{\nu}_-: \Sh^{d,d'}_-\to V^{d,d'}_-$ be obtained from $\div^{\nu}$ by the base change $V^{d,d'}_-\hook{}  V^{d,d'}$. 
Let $^{\le 2n}\Fl^{\nu}$ be the preimage of $^{\le 2n}\Sh^{d+d'}_0$
under $q_{\cF}: \Fl^{\nu}\to\Sh^{d+d'}_0$. Consider the diagram
$$ 
\begin{array}{ccccc}
^{\le 2n}\Sh^{d+d'}_0 & \getsup{\eta_0} & {^{\le n}\Sh^d_0}\times{^{\le n}\Sh^{d'}_0}  & \getsup{\tilde\beta\times\tilde\beta'} & 
{\wt\cQ_{n,d}}\times{\wt\cQ'_{n, d'}} \\
\uparrow\lefteqn{\scriptstyle q_{\cF}} && \uparrow  && \uparrow\\ 
^{\le 2n}\Fl^{\nu} & \gets & ^{\le n}\Sh^{d,d'} & \getsup{\beta^{\nu}} & \wt\cQ\wt\cQ^{\nu}_n,
\end{array}
$$  
where both squares are cartesian, so defining the stacks in the low row and $\beta^{\nu}$. Write 
$$
\beta^{\nu}_-: {\wt\cQ\wt\cQ^{\nu}_{n,-}}\to {^{\le n}\Sh^{d,d'}_-}
$$ 
for the map obtained from $\beta^{\nu}$ by the base change $i_X: X^{(d)}\times X^{(d'-d)}\hook{} X^{(d)}\times X^{(d')}$. 

 Consider the diagram
$$
\begin{array}{ccc}
\wt\cQ\wt\cQ^{\nu}_{n,-}\;\,\toup{\beta^{\nu}_-}\;\, {^{\le n}\Sh^{d,d'}_-} \,\hook{} \,\Sh^{d,d'}_-\;\toup{\div^{\nu}_-} & V^{d,d'}_- & \toup{p_V} X^{d+d'}
\\
& \downarrow\lefteqn{\scriptstyle q_{V}^-} \\
& X^{(d)}\times X^{(d'-d)},
\end{array}
$$
where $p_V$ sends $(D,D',(x_i))$ to $(x_i)$, and $q_V^-$ is obtained from $q_V$ by the base change $i_X$. Let 
$$
^{\le n}\div^{\nu}_-: {^{\le n}\Sh^{d,d'}_-} \to V^{d,d'}_-
$$ 
be the restriction of $\div^{\nu}_-$ to this open substack. The group $S_I$ acts naturally on $V^{d,d'}_-$, $X^{d+d'}$, and the map $p_V$ is $S_I$-equivariant. 
 
 Proposition~\ref{Pp_4.2.8} will be derived from the following results, whose proof is given in Sections~\ref{Sect_1.2.8_normalization}-\ref{Sect_4.3.18}.  
\begin{Thm} 
\label{Thm_4.2.11} Recall our assumption $0\le d\le d'$.\\
i) The map $\div^{\nu}_-: \Sh^{d,d'}_-\to V^{d,d'}_-$ is surjective. Any of its fibres is of dimension $-d'$. \\
ii) One has canonically 
$$
\R^{-2d'}(\div^{\nu}_-)_!\Qlb\,\iso\IC[-d']
$$
So, for any local system $E$ on $X$,
$$
i_X^*(\div\times\div)_!\eta_0^*\Spr^{d+d'}_E
$$ 
is placed in usual degrees $\le -2d'$, and its cohomology sheaf in degree $-2d'$ is isomorphic to
\begin{equation}
\label{complex_with_difficult_action}
(q_V^-)_!(\wt\IC[-d']\otimes p_V^*E^{\boxtimes (d+d')})
\end{equation}
Moreover, this isomorphism is $S_I$-equivariant.
\end{Thm}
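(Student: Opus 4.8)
# Proof Proposal for Theorem~\ref{Thm_4.2.11}

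The plan is to analyze the map $\div^{\nu}_-: \Sh^{d,d'}_-\to V^{d,d'}_-$ through two complementary stratifications of the source, following the strategy announced in the introduction. First I would establish part (i): over the open subscheme $^0V^{d,d'}_-$ where $D$ is reduced and $D'-D$ is disjoint from $D$, the fibre of $\div^{\nu}_-$ classifies complete flags of torsion sheaves $F_1\subset\cdots\subset F_{d+d'}=F\oplus F'$ with prescribed one-point subquotients; since each subquotient is a skyscraper of length one at a distinct point (after using the disjointness), the fibre is a product of flag-type stacks of skyscrapers, and a direct dimension count gives dimension $-d'$ (the $-d'$ coming from the automorphisms of the $F\oplus F'$ together with the flag). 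To handle the non-generic locus I would stratify $V^{d,d'}_-$ by the multiplicities of the $x_i$ and their incidence with $D$ versus $D'-D$, and check the fibre dimension is constant; semicontinuity plus the generic computation forces it to equal $-d'$ everywhere, and surjectivity follows because every point of $V^{d,d'}_-$ is hit by at least one flag (e.g. refine any flag of $F\oplus F'$ compatible with the prescribed supports).

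For part (ii) the key is to compute $\R^{-2d'}(\div^{\nu}_-)_!\Qlb$, i.e. the top nonvanishing direct image of the constant sheaf along a map all of whose fibres have dimension $-d'$. Here I would invoke Remark~\ref{Rem_2.4.6} with $Z=V^{d,d'}_-$, $f=\div^{\nu}_-$, $d\rightsquigarrow -d'$: the normalization of $V^{d,d'}_-$ is exactly $\norm:\sqcup_{\alpha\in\cE}X^{\bar I}\to V^{d,d'}_-$ as described in Section~\ref{Sect_1.2.8_normalization}, with each $X^{\bar I}$ smooth irreducible. So it suffices to produce a stratification of $\Sh^{d,d'}_-$ by locally closed substacks indexed by $\cE$ (this is the stratification inspired by Richardson–Springer \cite{RS}: the relative position data of the flag with respect to the decomposition $F\oplus F'$ naturally gives an involution, and the ``matched'' involutions are precisely the elements of $\cE$), such that the stratum $\Sh^{d,d'}_{\alpha}$ maps through $X^{\bar I}\xrightarrow{\norm_\alpha}V^{d,d'}_-$, with $\R^{-2d'}$ of the constant sheaf along $\Sh^{d,d'}_{\alpha}\to X^{\bar I}$ isomorphic to $\Qlb$. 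The other strata (indexed by non-matched involutions, or more degenerate incidence data) have fibres of dimension strictly less than $-d'$ relative to their image, hence contribute nothing to $\R^{-2d'}$. Then Remark~\ref{Rem_2.4.6} gives $\R^{-2d'}(\div^{\nu}_-)_!\Qlb\,\iso\,\norm_!\Qlb\,\iso\,\IC[-d']$.

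The promotion to the statement with coefficients in $\Spr^{d+d'}_E$ is then formal: $\Spr^{d+d'}_E$ is, up to shift, obtained by pushing forward $E^{\boxtimes(d+d')}$ along the flag map $\Fl^{\nu}\to\Sh^{d+d'}_0$ composed with the support map to $X^{d+d'}$ (this is its definition in \cite{L2}), so $i_X^*(\div\times\div)_!\eta_0^*\Spr^{d+d'}_E$ is computed by the same $\div^{\nu}_-$ after the cartesian base change along $i_X$, now with the local system $p_V^*E^{\boxtimes(d+d')}$ twisting the coefficients. Running the above stratification argument with these twisted coefficients, the contribution of the stratum $\Sh^{d,d'}_{\alpha}$ picks up precisely the factor $V_{\alpha}=\bigotimes_{j\in\bar I_1}\det V_{j,\alpha}$ — the $\det$ appearing because the top cohomology along the length-two subquotients at a point $\gamma(j)$ involves the sign of swapping the two preimages in $\alpha^{-1}(j)$ — which is exactly the fibre of $(\Qlb)_{\cE}$ over that component. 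Assembling via Remark~\ref{Rem_2.4.6} (whose canonical splitting is used here) yields the cohomology sheaf $(q_V^-)_!(\wt\IC[-d']\otimes p_V^*E^{\boxtimes(d+d')})$, and since all constructions are natural in the $S_I$-action on $X^{d+d'}$, $V^{d,d'}_-$ and on the set $\cE$ of strata, the isomorphism is $S_I$-equivariant.

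The main obstacle I anticipate is the precise bookkeeping in identifying the stratification by $\cE$ and verifying that exactly the matched-involution strata contribute in top degree, together with getting the $\det$-twists (i.e. the transition from $\IC$ to $\wt\IC$ with its $S_I$-action) correct rather than merely up to a noncanonical isomorphism; this is where the Richardson–Springer combinatorics must be married carefully to the cohomology computation, and it is deferred to Sections~\ref{Sect_1.2.8_normalization}--\ref{Sect_4.3.18}.
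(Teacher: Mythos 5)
Your outline of part (ii) follows essentially the same strategy as the paper: stratify $\Sh^{d,d'}_-$ by data inspired by Richardson--Springer, match the contributing strata to the set $\cE$ indexing the normalization $\norm:\sqcup_{\alpha\in\cE}X^{\bar I}\to V^{d,d'}_-$, and invoke Remark~\ref{Rem_2.4.6} to conclude. The paper refines this into two steps (first the stratification by pairs $(\cJ,\cJ')$ with condition (C) in Lemma~\ref{Lm_4.4.12}, then the finer stratification by involutions $w\in\Inv$ in Proposition~\ref{Pp_4.3.10}), which is needed to show rigorously that only the strata with $\cJ\cap\cJ'=\emptyset$ contribute in top degree and to identify each contributing stratum $\cX^w$ as a generalized affine fibration of rank zero over $(\Sh^1_0)^{I-\cJ'}$, but the overall architecture you describe is the right one.

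Part (i) has a genuine gap. You propose to compute the fibre dimension over the open locus ${^0V^{d,d'}_-}$ and then conclude for special points by ``semicontinuity.'' But upper semicontinuity of fibre dimension runs the wrong way: it says the locus where $\dim(\div^{\nu}_-)^{-1}(y)\ge m$ is closed, so the generic value $-d'$ is only a \emph{lower} bound at special points. The content of part (i) is precisely the opposite inequality $\dim\le -d'$ everywhere, and nothing forces it from the generic computation alone. The paper obtains this by stratifying $\Sh^{d,d'}_-$ by the Jordan types $(\mu,\mu')$ of $(F,F')$ (not by incidence data on $V^{d,d'}_-$), decomposing each fibre as a product over $x\in X$ of schemes $CFl(F_x\oplus F'_x)/(\Aut(F_x)\times\Aut(F'_x))$, and proving the pointwise inequality in Lemma~\ref{Lm_1.2.18} by a direct count $\dim CFl(F\oplus F')-\dim\Aut(F)-\dim\Aut(F')=\<\theta-\eta,\tau\>-d'\le -d'$, where the inequality $\<\theta-\eta,\tau\>\ge 0$ is established by exhibiting a chain of ``special transpositions'' from $\theta$ to $\eta$, each step non-negative. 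This local lemma is an essential input that your outline does not supply; without it you cannot rule out larger special fibres, which would also ruin the $\R^{-2d'}$ computation in part (ii) by allowing spurious contributions in degree $-2d'$ from ``bad'' strata.

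One further caution on the coefficient upgrade: identifying the factor $V_\alpha=\bigotimes_{j\in\bar I_1}\det V_{j,\alpha}$ is not quite automatic ``because the top cohomology involves a sign''. The $S_I$-equivariance check is reduced in the paper to the generic locus where $\norm$ is an isomorphism, and there to the case $d=d'=1$, using the $S_I$-action on the Springer sheaf $\Spr^2_E$. So the transition from $\IC$ to $\wt\IC$ is handled by a localization to the generic stratum rather than a direct computation on each $\cX^w$; your bookkeeping concern is real and this is how the paper discharges it.
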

\begin{Pp} 
\label{Pp_4.2.12}
For any local system $E$ on $X$, the sheaf of $S_I$-invariants in (\ref{complex_with_difficult_action}) identifies with $(\wedge^2 E)^{(d)}\boxtimes E^{(d'-d)}$. 
\end{Pp}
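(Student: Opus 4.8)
The plan is to read off the answer from the explicit description of $\wt\IC$ in Section~\ref{Sect_1.2.8_normalization} as the $S_I$-representation $\Ind_{\cE}$ spread along the normalization $\norm\colon\mathop{\sqcup}\limits_{\alpha\in\cE}X^{\bar I}\to V^{d,d'}_-$, and then take invariants in stages. First, note that $q_V^-$ is finite (its fibres are finite and it is proper, being a base change of the finite map $V^{d,d'}\to X^{(d)}\times X^{(d')}$), and each $\norm_\alpha$ is a closed immersion; hence (\ref{complex_with_difficult_action}) is an honest sheaf. Using the projection formula and the decomposition of $\norm$ into the $\norm_\alpha$, I would rewrite
$$
(q_V^-)_!(\wt\IC[-d']\otimes p_V^*E^{\boxtimes(d+d')})\;\iso\;\mathop{\oplus}\limits_{\alpha\in\cE}(q_V^-\comp\norm_{\alpha})_!\big((\Qlb)_{\cE}|_{X^{\bar I}}\otimes\norm_{\alpha}^*p_V^*E^{\boxtimes(d+d')}\big),
$$
an $S_I$-equivariant sheaf on $X^{(d)}\times X^{(d'-d)}$ with trivial $S_I$-action on the target. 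Since $S_I$ permutes the components $X^{\bar I}$ transitively with stabiliser $S_{\alpha}$, Frobenius reciprocity (the same mechanism as in Remark~\ref{Rem_4.4.13}, together with the exactness of finite-group invariants over $\Qlb$ and their commutation with $(q_V^-\comp\norm_{\alpha_0})_!$) identifies the $S_I$-invariants with the $S_{\alpha_0}$-invariants of the summand attached to a fixed $\alpha_0\colon I\to\bar I$ in $\cE$, with $\bar I=\bar I_1\sqcup\bar I_2$.

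Next I would compute these $S_{\alpha_0}$-invariants in two stages via $1\to(S_2)^d\to S_{\alpha_0}\to S_{\bar I_1}\times S_{\bar I_2}\to 1$. Identify $X^{\bar I}=X^{\bar I_1}\times X^{\bar I_2}$; one checks directly from the formulas in Section~\ref{Sect_1.2.8_normalization} and \ref{Sect_4.3.8_now} that $q_V^-\comp\norm_{\alpha_0}$ is the product of the (finite) symmetrisation maps $X^{\bar I_1}\to X^{(d)}$ and $X^{\bar I_2}\to X^{(d'-d)}$. The subgroup $(S_2)^d$ fixes every point $\gamma\colon\bar I\to X$ and acts only on fibres: since $\norm_{\alpha_0}^*p_V^*E^{\boxtimes(d+d')}$ has fibre $\bigotimes_{j\in\bar I}E_{\gamma(j)}^{\otimes|\alpha_0^{-1}(j)|}$ with the evident permutation action, the factor of $(S_2)^d$ at $j\in\bar I_1$ acts on $\det V_{j,\alpha_0}\otimes E_{\gamma(j)}^{\otimes 2}$ by $(-1)$ on the determinant line tensored with the transposition of the two copies of $E$; its invariants are therefore $\wedge^2 E_{\gamma(j)}$. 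The factors indexed by $j\in\bar I_2$ carry the trivial action. Hence
$$
\big((\Qlb)_{\cE}|_{X^{\bar I}}\otimes\norm_{\alpha_0}^*p_V^*E^{\boxtimes(d+d')}\big)^{(S_2)^d}\;\iso\;(\wedge^2 E)^{\boxtimes\bar I_1}\boxtimes E^{\boxtimes\bar I_2}
$$
on $X^{\bar I_1}\times X^{\bar I_2}$.

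Finally I would take the residual $S_{\bar I_1}\times S_{\bar I_2}$-invariants: this group permutes the coordinates of $X^{\bar I_1}\times X^{\bar I_2}$ compatibly with the symmetrisation map $q_V^-\comp\norm_{\alpha_0}$, so pushing forward and applying the definition of the symmetric power of a local system gives
$$
\big((q_V^-\comp\norm_{\alpha_0})_!\big((\wedge^2 E)^{\boxtimes\bar I_1}\boxtimes E^{\boxtimes\bar I_2}\big)\big)^{S_{\bar I_1}\times S_{\bar I_2}}\;\iso\;(\wedge^2 E)^{(d)}\boxtimes E^{(d'-d)},
$$
which is the assertion. The only genuinely delicate point is the bookkeeping of the $S_I$-equivariant structure on $\wt\IC$ — in particular verifying that the determinant twist $V_{j,\alpha_0}=\bigoplus_{i\in\alpha_0^{-1}(j)}\Qlb e_i$ makes the transposition act by $-1$, so that one obtains $\wedge^2 E$ rather than $\Sym^2 E$ — and checking the compatibility of all the $S_I$-equivariant structures through the projection formula; everything else is formal manipulation with finite morphisms and $\Qlb$-coefficients.
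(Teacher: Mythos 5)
Your argument is correct and follows the paper's proof in all essentials: decompose $(q_V^-)_!(\wt\IC\otimes p_V^*E^{\boxtimes(d+d')})$ as a direct sum over $\alpha\in\cE$, reduce to $S_{\alpha_0}$-invariants of a single summand via the transitive action on $\cE$, and then compute those in two stages through $1\to(S_2)^d\to S_{\alpha_0}\to S_{\bar I_1}\times S_{\bar I_2}\to 1$, with the determinant twist in $V_{\alpha_0}$ converting $\Sym^2$ into $\wedge^2$. The only quibble is the parenthetical attribution to Remark~\ref{Rem_4.4.13} — what you actually use is the standard induction/restriction identity $(\oplus_{\alpha}V_\alpha)^{S_I}\iso V_{\alpha_0}^{S_{\alpha_0}}$ for a transitive $S_I$-action on $\cE$, not the filtration-splitting remark — but this does not affect the argument.
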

\begin{proof} Write $\ssum: X^d\to X^{(d)}$ to the map sending $(x_i)$ to $\sum_i x_i$. For any $\alpha: I\to \bar I$ in $\cE$, the composition 
$$
X^{\bar I}\;\toup{\norm_{\alpha}}\; V^{d,d'}_-\;\toup{q_V^-}\; X^{(d)}\times X^{(d'-d)}
$$ 
is the map $\ssum\times\ssum: X^d\times X^{d'-d}\to X^{(d)}\times X^{(d'-d)}$. So, 
$$
(q_V^-)_!(\wt\IC[-d']\otimes p_V^*E^{\boxtimes (d+d')})\,\iso\,
\mathop{\oplus}\limits_{\alpha\in\cE} ((\ssum_! (E\otimes E)^{\boxtimes d})\boxtimes (\ssum_! E^{\boxtimes (d'-d)}))\otimes V_{\alpha}
$$
according to Section~\ref{Sect_1.2.8_normalization}. The group $S_{d+d'}$ acts transitively on $\cE$, and a stabilizer $S_{\alpha}$ of some $\alpha\in\cE$ was considered in Section~\ref{Sect_1.2.8_normalization}. It
fits into an exact sequence of groups $1\to (S_2)^d\to S_{\alpha}\to (S_d\times S_{d'-d})\to 1$. 
So, we are calculating $S_{\alpha}$-invariants on 
$$
((\ssum_! (E\otimes E)^{\boxtimes d})\boxtimes (\ssum_! E^{\boxtimes (d'-d)}))\otimes V_{\alpha}
$$
Taking the $(S_2)^d$-invariants first, one gets 
\begin{equation}
\label{sheaf_with_wedge^2}
(\ssum_! (\wedge^2 E)^{\boxtimes d})\boxtimes (\ssum_! E^{\boxtimes (d'-d)}),
\end{equation}
and this sheaf is equipped with the `induced' action of $S_d\times S_{d'-d}$. Let $S_d$ act naturally on $\ssum_! ((\wedge^2 E)^{\boxtimes d})$, and $S_{d'-d}$ act naturally on $\ssum_! (E^{\boxtimes (d'-d)})$. Then the product of these two actions coincides with the `induced' action of $S_d\times S_{d'-d}$ on (\ref{sheaf_with_wedge^2}). Our claim follows. 
\end{proof}

\begin{Def} 
\label{Def_4.3.11}
Recall our assumption $0\le d\le d'$. Let $^{\le n}K_V$ on $V^{d,d'}_-$ be given by 
$$
^{\le n}K_V=\R^{-2d'}(^{\le n}\div^{\nu}_-)_!\Qlb
$$  
Since $^{\le n}\Sh^{d,d'}_-$ is open in $\Sh^{d,d'}_-$, this is a constructible subsheaf of $\IC[-d']$ on $V^{d,d'}_-$. We get a $S_I$-equivariant filtration by constructible subsheaves
$$
^{\le 1}K_V\subset {^{\le 2}K_V}\subset\ldots\subset \IC[-d']
$$
For any local system $E$ on $X$ define the constructible sheaf
$$
^{\le n}((\wedge^2 E)^{(d)}\boxtimes E^{(d'-d)})
$$
on $X^{(d)}\times X^{(d'-d)}$ as the sheaf of $S_I$-invariants in 
$$
(q_V^-)_!(p_V^*E^{\boxtimes(d+d')}\otimes {^{\le n}K_V}),
$$
where the $S_I$-action comes from the corresonding action on the Springer sheaf. 

 Since $(q_V^-)_!$ is exact for the usual t-structures, from Proposition~\ref{Pp_4.2.12} we see that (\ref{sheaf_le_n_in_the_answer}) is a constructible subsheaf in $(\wedge^2 E)^{(d)}\boxtimes E^{(d'-d)}$. 
\end{Def}
\sssec{Proof of Proposition~\ref{Pp_4.2.8}} 
\label{Sect_4.2.14}
Recall the maps 
$\tilde\beta: {\wt\cQ_{n,d}}\to {^{\le n}\Sh^d_0}$ and $\tilde\beta': {\wt\cQ'_{n,d'}}\to {^{\le n}\Sh^{d'}_0}$ from (\ref{diag_for_Sect_4.2.7}). The map $\tilde\beta$ is surjective and smooth of relative dimension 
$$
nd-\frac{n}{6}(n-1)(1+4n)(g-1)
$$ 
The map $\tilde\beta'$ is surjective and smooth of relative dimension $nd'-\frac{n}{6}(n-1)(1+4n)(g-1)$. All the fibres of $\tilde\beta$ and of $\tilde\beta'$ are irreducible. Since
$$
\dim(\cQ_{2n, d+d'})=b(2n, d+d')=2n(d+d')+(1-g)\frac{(2n-1)}{3}n(4n-1),
$$
one has 
\begin{equation}
\label{dimrel_tildebeta_tildebeta'}
2\dimrel(\tilde\beta\times\tilde\beta')=b(2n, d+d')+n(g-1)
\end{equation}
  
By definition, (\ref{complex_Pp_4.2.8}) identifies canonically with the sheaf of $S_I$-invariants in 
\begin{multline*}
i_X^*(^{\le n}(\div\times\div))_!(\tilde\beta\times\tilde\beta)_!(\tilde\beta\times\tilde\beta)^*\eta_0^*\Spr^{d+d'}_E\,\iso\, \\ 
(q_V^-)_!(p_V^*E^{\boxtimes(d+d')}\otimes (^{\le n}\div^{\nu}_-)_!(\beta^{\nu}_-)_!\Qlb)
\end{multline*}
Here $S_I$ acts via its action on $\Spr^{d+d'}_E$. The map $\tilde\beta\times\tilde\beta'$ is surjective and smooth, all its fibres are irreducible. Since $\beta^{\nu}_-$ is obtained by base change from $\tilde\beta\times\tilde\beta'$, $(\beta^{\nu}_-)_!\Qlb$ is placed in degrees $\le 2\dimrel(\tilde\beta\times\tilde\beta')$ and
$$
\R^{2\dimrel(\tilde\beta\times\tilde\beta')}(\beta^{\nu}_-)_!\Qlb\,\iso\,\Qlb
$$  
By Theorem~\ref{Thm_4.2.11} i), the complex $(^{\le n}\div^{\nu}_-)_!(\beta^{\nu}_-)_!\Qlb$ is placed in usual cohomological degrees $\le 2\dimrel(\tilde\beta\times\tilde\beta')-2d'=top$, and its top cohomology sheaf is
$$
\R^{top}(^{\le n}\div^{\nu}_-)_!(\beta^{\nu}_-)_!\Qlb\,\iso\, \R^{-2d'}(^{\le n}\div^{\nu}_-)_!\Qlb=\; {^{\le n}K_V}
$$ 

Since $q_V^-$ is finite, (\ref{complex_Pp_4.2.8}) is placed in the usual degrees $\le 2\dimrel(\tilde\beta\times\tilde\beta')-2d'$, and the top cohomology sheaf of (\ref{complex_Pp_4.2.8}) identifies with (\ref{sheaf_le_n_in_the_answer}).
 
 From Proposition~\ref{Pp_graded_result} we see that (\ref{sheaf_le_n_in_the_answer}) admits a filtration by constructible subsheaves as in Proposition~\ref{Pp_graded_result}.  
Assume now $\rk(E)=2n$. In this case we check that this filtration exhausts  the sheaf 
$$
(\wedge^2 E)^{(d)}\boxtimes E^{(d'-d)}
$$
To do so, using Lemma~\ref{Lm_2.3.2}, we check that for any $(D,D')\in X^{(d)}\times X^{(d')}$ with $D\le D'$ the $*$-fibre of (\ref{complex_graded_by_lambda}) at $(D,D')$ identifies with 
\begin{equation}
\label{complex_first_for_proof_Pp_4.2.8}
(\wedge^2 E)^{(d)}_D\otimes (E^{(d'-d)})_{D'-D}
\end{equation}
If $D=dx, D'=d'x$ then this claim is precisely Lemma~\ref{Lm_2.3.2}. 

 For a $\Qlb$-vector space $V$ and $\lambda\in\Lambda^{\succ-}_{2n}$ set $V^{\lambda}=V^{w_0(\lambda)}$, where $w_0$ is the longest element of $S_{d+d'}$. In general, we write $D=\sum_x d_x x, D'=\sum_x d'_x x$, so (\ref{complex_first_for_proof_Pp_4.2.8}) identifies with
$$
\otimes_x \; (\Sym^{d_x}(\wedge^2 E_x)\otimes \Sym^{d'_x-d_x}(E_x))
$$
Set 
$$
\cS_x=\{\lambda_x\in\Lambda^{\succ-}_{2n, d_x+d'_x}\mid \lambda^{odd}_x\in \Lambda^{\succ-}_{n,d_x}, \lambda^{even}_x\in \Lambda^{\succ-}_{n, d'_x}\}
$$ 
and 
$$
\cS=\{\lambda\in\Lambda^{\succ-}_{2n, d+d'}\mid \lambda^{odd}\in \Lambda^{\succ-}_{n,d}, \lambda^{even}\in \Lambda^{\succ-}_{n, d'}\}
$$ 
Recall that $X^{\lambda}_-$ is the scheme of $\Lambda^{\succ-}_{2n}$-valued divisors on $X$ of degree $\lambda$. To a collection $(\lambda_x)\in \prod_x \cS_x$ with $\lambda=\sum_x \lambda_x$ we attach a point $\tilde D=\sum_x \lambda_x x\in X^{\lambda}_-$. By Lemma~\ref{Lm_2.3.2}, (\ref{complex_first_for_proof_Pp_4.2.8})  identifies with
$$
\mathop{\oplus}\limits_{\lambda\in\cS}(
\mathop{\oplus}\limits_{(\lambda_x)\in \prod_x \cS_x, \; \lambda=\sum_x \lambda_x} (E^{\lambda}_-)_{\tilde D}),
$$
where $\tilde D$ is as above. Our claim follows.
\QED

\sssec{Proof of Proposition~\ref{Pp_2.3.3}} 
\label{Sect_4.3.13}
Combine Propositions~\ref{Pp_graded_result} and \ref{Pp_4.2.8}. \QED

\ssec{Proof of Theorem~\ref{Thm_4.2.11}}
\label{Section_proof_of_Thm_4.2.11}

Let us sketch the ideas involved in the proof of Theorem~\ref{Thm_4.2.11}. To prove part i) we stratify the stack $\Sh^{d,d'}_-$ naturally and reduce to an estimation of dimension of the fibres of $\div^{\nu}_-$ intersected with strata. This, in turn, is reduced to Lemma~\ref{Lm_1.2.18}, which is of local nature (the curve $X$ is replaced by a formal disk). Lemma~\ref{Lm_1.2.18} is proved in inductive way by introducing an ad hoc notion of a \select{special transposition} in the symmetric group (for a given element of $\Lambda_{2m}$). 

 The more difficult part is Theorem~\ref{Thm_4.2.11} ii). Our approach here is to use another stratification of $\Sh^{d,d'}_-$ given in Lemma~\ref{Lm_4.4.12}. The definition of this second stratification is  inspired by some constructions of Richardson and Springer from \cite{RS}. Write $\Sigma$ for the set of $d$-element subsets in $I=\{1,\ldots, d+d'\}$. The corresponding strata are locally closed substacks
$$
\cX_{\cJ}\cap\cY_{\cJ'}\subset \Sh^{d,d'}_-
$$ 
indexed by pairs $\cJ,\cJ'\in \Sigma$. The key result is then Proposition~\ref{Pp_4.3.10}. It affirms that $\cX_{\cJ}\cap\cY_{\cJ'}$ does not contribute to the desired highest direct image unless $\cJ\cap\cJ'=\emptyset$ and some additional condition (C) holds. Under the latter conditions it provides a further stratification of $\cX_{\cJ}\cap\cY_{\cJ'}$ and identifies the contribution of these strata. 

Combining the above with general Remarks~\ref{Rem_4.4.13} and \ref{Rem_2.4.6}, we finish the proof of Theorem~\ref{Thm_4.2.11}. 

\sssec{} Recall the scheme $X^{\mu}_+$ defined in Section~\ref{Sect_1.1.1}. For $\mu\in\Lambda_{m,d}^{\succ+}$ denote by $i_{\mu}:X^{\mu}_+\to \Sh^d_0$ the map sending $(D_1,\ldots, D_m)$ to $\cO_{D_1}\oplus\ldots\oplus \cO_{D_m}$. The image of $i_{\mu}$ is a locally closed substack of $\Sh^d_0$ denoted $\Sh^d_{0,\mu}$. As $\mu$ ranges in $\Lambda_{m,d}^{\succ+}$, the stacks $\Sh^d_{0,\mu}$ form a stratification of the open substack $^{\le m}\Sh^d_0$.

\sssec{Proof of Theorem~\ref{Thm_4.2.11} i)} The argument is similar to (\cite{La}, Theorem 3.3.1). 
The stack $\Sh^{d,d'}_-$ classifies: $F\in \Sh^d_0, F'\in\Sh^{d'}_0$ with $D'=\div(F')\ge D=\div(F)$, and a complete flag $0=F_0\subset F_1\subset\ldots\subset F_{d+d'}=F\oplus F'$ of torsion subsheaves on $X$. Given $m\ge 1$, $\mu\in\Lambda_{m, d}^{\succ+}$ and $\mu'\in\Lambda_{m, d'}^{\succ+}$ set 
\begin{equation}
\label{Sh_mu_mu'_minus}
\Sh_{\mu, \mu', -}=\Sh^{d,d'}_-\times_{\Sh_0^d\times \Sh_0^{d'}} (\Sh^d_{0,\mu}\times \Sh^{d'}_{0,\mu'})
\end{equation}
The composition 
\begin{equation}
\label{decomposition_Pp_1.2.17}
\Sh_{\mu, \mu', -}\to (X^{\mu}_+\times X^{\mu'}_+)\times_{X^{(d)}\times X^{(d')}} V^{d,d'}_-\toup{\pr} V^{d,d'}_-
\end{equation}
is the restriction of $\div^{\nu}_-$. 

Fix a $k$-point of 
$
(X^{\mu}_+\times X^{\mu'}_+)\times_{X^{(d)}\times X^{(d')}} V^{d,d'}_-
$ 
given by $(D, D', (x_i))\in V^{d,d',\nu}_-$, so $D\le D'$, 
$$
\sum_{i=1}^{d+d'} x_i=D+D',
$$ 
and $(D_j)\in X^{\mu}_+, (D'_j)\in X^{\mu'}_+$ with 
$D=\sum_j D_j$ and $D'=\sum_j D'_j$.
Write $Y$ for the fibre of the first map in (\ref{decomposition_Pp_1.2.17}) over this point. 
We check that $\dim Y\le -d'$. Besides, by Lemma~\ref{Lm_1.2.18} below, if $m=1$, $\mu=(d), \mu'=(d')$ then $\dim Y=-d'$. 
 
  One has 
$$
F\,\iso\,\cO_{D_1}\oplus\ldots\oplus\cO_{D_m}\;\;\;\mbox{and}\;\;\; F'\,\iso\, \cO_{D'_1}\oplus\ldots\oplus \cO_{D'_m}
$$ 
As in (\cite{La}, Theorem 3.3.1), one checks that 
$$
\dim\Aut(F)=\sum_{i=1}^m (\mu_i-\mu_{i+1})i^2\;\;\;\mbox{and}\;\;\; 
\dim\Aut(F')=\sum_{i=1}^m (\mu'_i-\mu'_{i+1})i^2
$$ 
We have canonically $F\,\iso\, \oplus_{x\in X} F_x$, $F'\,\iso\, \oplus_{x\in X} F'_x$, where $F_x$, $F'_x$ are torsion sheaves supported on a formal neighbourhood of $x$. 
Write 
$$
D_i=\sum_{x\in X} \mu_{i,x} x\;\;\;\mbox{and}\;\;\; D'_i=\sum_{x\in X} \mu'_{i,x} x
$$ 
for $1\le i\le m$. Set $d_x=\sum_i \mu_{i,x}$ and $d'_x=\sum_i \mu'_{i,x}$. Write $CFl(F_x\oplus F'_x)$ for the scheme of complete flags on $F_x\oplus F'_x$. We have 
$$
\prod_{x\in X}\Aut(F_x)\,\iso\,\Aut(F), \;\; \prod_{x\in X}\Aut(F'_x)\,\iso\,\Aut(F')
$$
So,
$$
Y\,\iso\,\prod_{x\in X} (CFl(F_x\oplus F'_x))/(\Aut(F_x)\times\Aut(F'_x)))
$$
By Lemma~\ref{Lm_1.2.18} below, for each $x\in X$,
$$
\dim (CFl(F_x\oplus F'_x))/(\Aut(F_x)\times\Aut(F'_x)))\le -d'_x
$$
Our claim follows. \QED

\begin{Lm} 
\label{Lm_1.2.18}
Let 
$m\ge 1$, $\mu\in\Lambda^{\succ+}_{m, d}, \mu'\in\Lambda^{\succ+}_{m,d'}$. Let 
Let $\cO$ be a complete discrete valuation $k$-algebra. Write $\cO_m=\cO/\gm^m$, where $\gm\subset\cO$ is the maximal ideal.
$F=\cO_{\mu_1}\oplus\ldots\oplus\cO_{\mu_m}$, $F'=\cO_{\mu'_1}\oplus\ldots\oplus\cO_{\mu'_m}$. Let $CFl(F\oplus F')$ be the scheme of complete flags of $\cO$-modules on $F\oplus F'$. Set 
$$
\theta=(\mu'_1, \mu_1, \mu'_2,\mu_2,\ldots, \mu'_m, \mu_m)
$$ 
Let $\eta\in\Lambda^{\succ+}_{2m, d+d'}$ be obtained from $\theta$ by permutation so that the resulting sequence is decreasing. 
Then
\begin{equation}
\label{magic_inequality}
\dim CFl(F\oplus F')/(\Aut(F)\times\Aut(F'))\le -d', 
\end{equation}
and the inequality is strict unless $\theta=\eta$. In the latter case  (\ref{magic_inequality}) is an equality. 
\end{Lm}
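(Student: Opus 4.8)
### Plan of proof of Lemma~\ref{Lm_1.2.18}

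\textbf{Strategy.} The inequality \eqref{magic_inequality} is a purely local statement about flags of $\cO$-modules, so I would attack it directly by induction on $m$, building the flag one step at a time. The key idea is to stratify $CFl(F\oplus F')$ according to the \emph{type} $\lambda\in\Lambda^{\succ+}_{2m}$ of the sheaf $F\oplus F'$ seen together with the first nonzero submodule of the flag, i.e. according to the way a simple subquotient is carved out at each step, and to bound the dimension of each stratum relative to the automorphism group $\Aut(F)\times\Aut(F')$.

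\textbf{Main steps.} First I would reduce to a single step of the flag: write $L_1\hook{} F\oplus F'$ for the first line of the flag, a simple $\cO$-module (so $L_1\,\iso\, \cO_1$). The choice of $L_1\subset (F\oplus F')[\gm]$ is a point of a projective space $\PP((F\oplus F')[\gm])$ of dimension $(\sharp\{i:\mu_i>0\}+\sharp\{i:\mu'_i>0\})-1$. For each such $L_1$ the remaining data is a complete flag on $(F\oplus F')/L_1$, and one must understand how $(F\oplus F')/L_1$ and its automorphisms vary with $L_1$. Here I would invoke the combinatorial description: $(F\oplus F')/L_1\,\iso\,\cO_{\theta^{(1)}}$ where $\theta^{(1)}$ is obtained from $\theta$ by decreasing exactly one part (the one "hit" by $L_1$) by $1$; the generic choice of $L_1$ decreases the largest available part, and this is the case where equality will propagate. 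The ad hoc notion of \emph{special transposition} alluded to in the sketch is precisely the device that tracks, at each step, which part of $\theta$ can be decreased while keeping the running dimension bound sharp — a transposition $\tau$ in $S_{2m}$ acting on the ordered tuple $\theta$ is special if swapping the corresponding entries and then decreasing produces an admissible next type; I would formalize this and show that a sequence of special transpositions realizes exactly the passage from $\theta$ to the sorted tuple $\eta$. Second, I would carefully bookkeep dimensions: the dimension of the stratum of $CFl(F\oplus F')$ on which $L_1$ lies in a prescribed $\Aut(F)\times\Aut(F')$-orbit, plus the inductively bounded dimension of $CFl((F\oplus F')/L_1)/(\text{induced automorphisms})$, minus the drop in $\dim(\Aut(F)\times\Aut(F'))$ caused by passing to the quotient, should telescope to $-d'$ (note $-d'=-\sum\mu'_i$, and each of the $d'$ steps that removes a copy of $\cO_1$ coming from the $F'$-part contributes $-1$, while the $d$ steps coming from the $F$-part contribute $0$ net, which is the source of the asymmetry between $d$ and $d'$ in the final answer). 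Using $\dim\Aut(\cO_{\mu_1}\oplus\cdots\oplus\cO_{\mu_m})=\sum_i(\mu_i-\mu_{i+1})i^2$ (as quoted from \cite{La}, Theorem~3.3.1) one checks the one-step arithmetic by hand, and the strict-vs-equality dichotomy falls out: equality at a step forces $L_1$ to be generic in the above sense, i.e. to decrease the largest part of the current $\theta$, which by induction is equivalent to the whole flag being "sorted", i.e. $\theta=\eta$.

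\textbf{Expected main obstacle.} The delicate point is not the dimension count in isolation but proving that the strict inequality is \emph{not} an equality unless $\theta=\eta$, i.e. that any non-generic choice at \emph{any} step causes an irreversible loss that cannot be recovered later. This requires showing the dimension bound is genuinely additive over steps with no cancellation — equivalently, that the special-transposition combinatorics is monotone: once a non-special transposition (a non-generic choice of line) is used, the sorted tuple $\eta$ becomes unreachable and every subsequent bound is strict. Handling this cleanly is where I would expect to spend most of the effort; I would isolate it as the inductive invariant "the running type $\theta^{(j)}$ dominates (in the dominance order, after sorting) what it would be had we made generic choices, with equality iff all choices so far were generic", and verify this invariant is preserved by one step, then read off both \eqref{magic_inequality} and the equality case at the end of the induction.
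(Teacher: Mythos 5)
Your proposal and the paper's proof share the phrase \emph{special transposition}, but the two arguments run in opposite directions and the phrase means different things in them. The paper's proof is not inductive. It plugs in Laumon's closed formulas: $\dim CFl(F\oplus F')=\sum_i\eta_i(i-1)=\langle\eta,\tau\rangle$ with $\tau=(0,1,\ldots,2m-1)$, and $\dim\Aut(F)+\dim\Aut(F')=\sum_i\mu_i(2i-1)+\sum_i\mu'_i(2i-1)=\langle\theta,\tau\rangle+d'$, the extra $d'$ being precisely the offset caused by placing the $\mu'_i$ in the odd slots of $\theta$. So the lemma collapses to the single combinatorial inequality $\langle\theta-\eta,\tau\rangle\ge 0$. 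There, a special transposition is nothing but a bubble-sort swap of an adjacent out-of-order pair $\xi_i<\xi_{i+1}$: each such swap strictly decreases $\langle\cdot,\tau\rangle$; since $\mu$ and $\mu'$ are already sorted, such swaps stay among the shuffles $\theta_J$; and a finite chain of them carries $\theta$ to $\eta$, giving the inequality together with the equality dichotomy in one stroke. There is no ``swapping and then decreasing'' and no flag being peeled off one line at a time --- your guess at what the phrase means does not match the paper.

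Your stepwise alternative is a genuinely different route, and it faces two obstacles you only gesture at. First, the inductive statement does not close on itself: after quotienting by $L_1$, the group you must divide by is the stabilizer of $L_1$ inside $\Aut(F)\times\Aut(F')$, which is neither $\Aut(F)\times\Aut(F')$ nor of the form $\Aut(\tilde F)\times\Aut(\tilde F')$, and $(F\oplus F')/L_1$ carries no canonical splitting compatible with a product group. You would have to prove a stronger statement about $CFl(M)/H$ for a fairly general subgroup $H\subset\Aut(M)$ before the induction can even be formulated. Second, the per-step arithmetic (``each $F'$-step contributes $-1$, each $F$-step $0$'') is an aspiration rather than a derivation: the drop in stabilizer dimension at a given step depends on the orbit type of $L_1$ under the running stabilizer, and proving that these contributions telescope to exactly $-d'$ with the correct strictness behavior is essentially the whole content of the lemma, not a sanity check. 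Given that your own sketch already cites Laumon's dimension formulas, the direct computation the paper uses is both shorter and sounder; the inductive route, while not obviously doomed, would need a substantially generalized inductive hypothesis and a case analysis of orbit types that you have not supplied.
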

\begin{proof} We have $F\oplus F'\,\iso\, \cO_{\eta_1}\oplus\ldots\oplus\cO_{\eta_{2m}}$. As in (\cite{La}, Theorem~3.3.1), one gets 
\begin{equation}
\label{expression_dim_CFl}
\dim CFl(F\oplus F')=\sum_{i=1}^{2m}(\eta_i-\eta_{i+1})\frac{i(i-1)}{2}
\end{equation}
and 
$$
\dim\Aut(F)=\sum_{i=1}^m(\mu_i-\mu_{i+1})i^2, \;\;\dim\Aut(F')=\sum_{i=1}^m (\mu'_i-\mu'_{i+1})i^2
$$ 
Here $\mu_{m+1}=\mu'_{m+1}=\eta_{2m+1}=0$. One has 
$$
\sum_{i=1}^m (\mu_i-\mu_{i+1}) i^2=\sum_{i=1}^m\mu_i(2i-1)\;\;\;\mbox{and}\;\;\; \sum_{i=1}^m (\mu'_i-\mu'_{i+1})i^2=\sum_{i=1}^m\mu'_i(2i-1)
$$
The expression (\ref{expression_dim_CFl}) equals $\sum_{i=2}^{2m} \eta_i(i-1)$. Set $\tau=(0,1,\ldots, 2m-1)\in\Lambda_{2m}$. We must show that
$\<\theta-\eta,\tau\>\ge 0$. In the case $\eta=\theta$ we get the desired equality. 

 Let $\cI=\{1,\ldots, 2m\}$. For a subset $J\subset \cI$ with $\mid J\mid=m$ denote by $\theta_J\in \Lambda^{\succ}_{2m}$ the element obtained as follows. Equip $J$ with the order induced from the standard order on $\cI$, and similarly for $\cI-J$. Then $\theta_J: \cI\to\ZZ_+$ is the map whose restriction to $J$ is $J\,\iso\, \{1,\ldots, m\}\toup{\mu}\ZZ$, and whose restriction to $\cI-J$ is $\cI-J\,\iso\, \{1,\ldots, m\}\toup{\mu'}\ZZ_+$. 
 
Let $\xi\in\Lambda^{\succ}_{2m}$. If for some $1\le i<2m$ we have $\xi_i<\xi_{i+1}$, let $\bar\xi$ be obtained from $\xi$ applying the permutation $(i, i+1)$. Then $\xi-\bar\xi=(0,\ldots, 0,-a, a,0,\ldots, 0)$, where $a=\xi_{i+1}-\xi_i$ appears on $i+1$-th place. In this case we say that $\bar\xi$ is obtained from $\xi$ by a \select{special transposition}. In this case $\<\xi-\bar\xi,\tau\>=a>0$. 
 
 There is a finite sequence $\eta_1, \ldots, \eta_s\in\Lambda^{\succ}_{2m}$ such that $\theta=\eta_1$, $\eta_s=\eta$, each $\eta_{i+1}$ is obtained from $\eta_i$ by a special transposition, and each $\eta_i$
is of the form $\theta_J$ for some $J$. On each step the quantity $\<\eta_i-\eta_{i+1},\tau\>$ is strictly positive. So, $\<\theta-\eta, \tau\>\ge 0$, and the inequality is strict unless $\theta=\eta$.

 Remark by a referee: one has $\tau=(m-\frac{1}{2})(1,\ldots, 1)-\rho_{\GL_{2m}}$, so that $\<\theta-\eta, \tau\>=\<\eta-\theta,\rho_{\GL_{2m}}\>\ge 0$. 
\end{proof}

\begin{Rem} In the notations of the proof of Theorem~\ref{Thm_4.2.11} i), one has $\dim Y< -d'$ unless 
\begin{equation}
\label{condition_Rem_1.2.19}
D'_1\ge D_1\ge D'_2\ge D_2\ge\ldots\ge D'_m\ge D_m
\end{equation}
Indeed, if $x\in X$ then 
$$
F_x\,\iso\, \cO_{\mu_1, x}\oplus\ldots\oplus\cO_{\mu_m, x}, \;\;\;\; F'_x\,\iso\, \cO_{\mu'_1, x}\oplus\ldots\oplus\cO_{\mu'_m, x}
$$ 
and $\deg(F_x)=d_x, \deg(F'_x)=d'_x$. Set $\theta_x=(\mu'_{1,x}, \mu_{1,x}, \mu'_{2,x},\mu_{2,x},\ldots,\mu'_{m,x},\mu_{m,x})$. Then $Y\,\iso\, \prod_{x\in X} Y_x$ and $\dim Y_x<-d'_x$ unless $\theta_x\in \Lambda^{\succ+}_{2m}$. The condition (\ref{condition_Rem_1.2.19}) is equivalent to requiring that for any $x\in X$, $\theta_x\in \Lambda^{\succ+}_{2m}$.
\end{Rem}

\begin{Rem} 
\label{Rem_1.2.20}
Let $m\ge 1$, $\mu\in\Lambda^{\succ+}_{m,d}$, $\cO$ a complete discrete valuation $k$-algebra, $F=\cO_{\mu_1}\oplus\ldots\oplus\cO_{\mu_m}$. Then $CFl(F)/\Aut(F)$ is of dimension $-\sum_{i=1}^m \mu_i i$.
\end{Rem}

\sssec{} Consider $\mu=(d)\in\Lambda^{\succ+}_{1,d}, \mu'=(d')\in\Lambda^{\succ+}_{1,d'}$ for $m=1$.
Consider the open substack $\Sh_{\mu,\mu',-}\subset \Sh^{d,d'}_-$ given by (\ref{Sh_mu_mu'_minus}). Let 
$$
^1\div^{\nu}_-: \Sh_{\mu,\mu',-}\to V^{d,d'}_-
$$ 
be the restriction of $\div^{\nu}_-$. We get the natural map 
\begin{equation}
\label{map_for_Sect_1.2.21}
R^{-2d'}(^1\div^{\nu}_-)_!\Qlb\to \R^{-2d'}(\div^{\nu}_-)_!\Qlb
\end{equation}
It is not an isomorphism over the whole of $V^{d,d'}_-$, as is seen from the proof of Theorem~\ref{Thm_4.2.11} i). 
However, the map (\ref{map_for_Sect_1.2.21}) is an isomorphism over the open subscheme ${^0V^{d,d'}_-}\subset V^{d,d'}_-$ 
defined in Section~\ref{Sect_1.2.8_normalization}. It is easy to see that 
$$
\R^{-2d'}(\div^{\nu}_-)_!\Qlb\,\iso\, \Qlb
$$ 
over  ${^0V^{d,d'}_-}$. Let 
$$
^0(X^{(d)}\times X^{(d'-d)})\subset X^{(d)}\times X^{(d'-d)}
$$ 
be the open subscheme of $(D, D'-D)$ such that $D$ is reduced, $D'-D$ is reduced and $(D'-D)\cap D=\emptyset$. 

 To check the equivariance property of the isomorphism in Theorem~\ref{Thm_4.2.11} ii), it suffices to do it over $^0(X^{(d)}\times X^{(d'-d)})$. Over this locus it follows easily from the corresponding property of the Springer sheaf $\Spr^2_E$. In other words, this equivariance property in general is reduced to the case $d=d'=1$. In the latter case it is easy and left to a reader.

\sssec{Stratifications I} 
\label{Sect_Stratifications_I}
To prove Theorem~\ref{Thm_4.2.11} ii), we introduce a suitable stratification of $\Sh^{d,d'}_-$. For this we need some additional notations. 

 A point of $\Sh^{d,d'}_-$ is written as $(F, F', (F_i))$, where $F_1\subset\ldots\subset F_{d+d'}=F\oplus F'$ is a complete flag of torsion subsheaves. We set $D=\div(F), D'=\div F'$, $x_i=\div(F_i/F_{i-1})$ for $i\in I$.

 Write $\Sigma$ for the set of $d$-element subsets in $I=\{1,\ldots, d+d'\}$. Fix $\cJ\in\Sigma$. For a point of $\Sh^{d,d'}_-$ set $\cF'_i=F'\cap F_i$, so $\cF'_i\subset F_i$ is a subsheaf. Set $\bar F_i=F_i/\cF'_i$ for all $i$. We get a complete flag with repetitions 
$$
\bar F_1\subset \bar F_2\subset\ldots\subset \bar F_{d+d'}=F,
$$
where we identify $\bar F_j$ with its image under the projection $\bar F_j\to (F\oplus F')/F'\,\iso\, F$.
We get also another complete flag with repetitions
$$
\cF'_1\subset\cF'_2\subset\ldots\subset \cF'_{d+d'}=F'
$$
We have the surjection $F_j\to \bar F_j$ for $j\in I$. So,  for $j\in I$ we get a diagram 
\begin{equation}
\label{diag_first_for_4.4.7}
\cF'_j/\cF'_{j-1}\hook{} F_j/F_{j-1}\to \bar F_j/\bar F_{j-1},
\end{equation}
where the first map is the natural injection, the second is the natural 
surjection. 

\sssec{} Denote by $\cX_{\cJ}\subset \Sh^{d,d'}_-$ the locally closed substack given by the property that for $j\in\cJ$ one has $\cF'_j=\cF'_{j-1}$, so that $\bar F_j/\bar F_{j-1}\in\Sh^1_0$. For a point of $\cX_{\cJ}$ the second map in (\ref{diag_first_for_4.4.7}) is an isomorphism for $j\in \cJ$, 
$$
x_j=\div(\bar F_j/\bar F_{j-1})\;\;\mbox{for}\; j\in\cJ,
$$
and $D=\sum_{j\in\cJ} x_j$. For a point of $\cX_{\cJ}$ we get 
$\cF'_j/\cF'_{j-1}\in\Sh^1_0$ for $j\in I-\cJ$, the first map in (\ref{diag_first_for_4.4.7}) is an isomorphism for $j\in I-\cJ$, 
$$
x_j=\div(\cF'_j/\cF'_{j-1}) \;\;\mbox{for}\; j\in I-\cJ,
$$ 
and $D'=\sum_{j\in I-\cJ} x_j$. 
 
\sssec{} For a point of $\Sh^{d,d'}_-$ set similarly $\cF_i=F\cap F_i$, so $\cF_i\subset F_i$ is a subsheaf. Set $\bar F'_i=F_i/\cF_i$ for $i\in I$. 
We get a complete flag with repetitions 
$$
\cF_1\subset\cF_2\subset\ldots\subset \cF_{d+d'}=F
$$
View $\bar F'_i$ as a subsheaf of $F'$ via the natural map $F_i/\cF_i\to (F\oplus F')/F\,\iso\, F'$. This gives a complete flag with repetitions
$$
\bar F'_1\subset \bar F'_2\subset\ldots\subset\bar F'_{d+d'}=F'
$$
For a point of $\Sh^{d,d'}_-$ and $j\in I$ we get a diagram
\begin{equation}
\label{diag_second_for_4.4.7}
\cF_j/\cF_{j-1}\hook{} F_j/F_{j-1}\to \bar F'_j/\bar F'_{j-1},
\end{equation}
where the first map is the natural injection, the second is the natural surjection.
 
\sssec{} Denote by $\cY_{\cJ}\subset \Sh^{d,d'}_-$ the locally closed substack given by the property that for $j\in\cJ$ one has $\cF_j/\cF_{j-1}\in \Sh^1_0$, so that $\bar F'_j=\bar F'_{j-1}$. For a point of $\cY_{\cJ}$ and $j\in\cJ$ the first map in (\ref{diag_second_for_4.4.7}) is an isomorphism, so 
$$
x_j=\div(\cF_j/\cF_{j-1})\;\;\;\mbox{for}\; j\in\cJ,
$$
and $D=\sum_{j\in \cJ} x_j$. 
For a point of $\cY_{\cJ}$ and $j\in I-\cJ$ the second map in (\ref{diag_second_for_4.4.7}) is an isomorphism, 
$$
\div(\bar F'_j/\bar F'_{j-1})=x_j\;\;\;\mbox{for}\; j\in I-\cJ,
$$
and $D'=\sum_{j\in I-\cJ} x_j$. 

\sssec{} On the sheaf $F'$ we get two flags: $(\bar F'_j)$ and $(\cF'_j)$. The relation between them is as follows. For any $k\in I$ we have the natural inclusion $\cF'_k\hook{} F_k$ and a surjection $F_k\to F_k/\cF_k=\bar F'_k$. Their composition $\cF'_k\to \bar F'_k$ is clearly injective for any $k$. We get the diagram
$$
\begin{array}{cccccc}
\bar F'_1 & \subset & \bar F'_2 & \subset\ldots\subset & \bar F'_{d+d'} &=F'\\
\cup &&\cup && \cup\\
\cF'_1 & \subset & \cF'_2 & \subset\ldots\subset & \cF'_{d+d'} &=F'
\end{array}
$$ 

 For a torsion sheaf $\cG$ on $X$ write $\ell(\cG)$ for its length. For $k\in I$ we get $\ell(\cF'_k)\le \ell(\bar F'_k)$. For any $k\in I$, $\ell(\cF'_k)$ is the number of elements $1\le j\le k$ such that $j\notin\cJ$. Besides, $\ell(\bar F'_k)$ is the number of elements $1\le j\le k$ such that $j\notin\cJ'$. For $k\in I$ set 
$$
\cJ_k=\cJ\cap\{1,\ldots, k\}, \;\;\;\; \cJ'_k=\cJ'\cap\{1,\ldots, k\}
$$ 
We see that the pair $\cJ,\cJ'$ satisfies the following condition:
\begin{itemize}
\item[(C)] if $k\in I$ then $\mid\cJ'_k\mid\le \mid\cJ_k\mid$. 
\end{itemize}

 Similarly, on $F$ we get two flags $(\bar F_j)$ and $(\cF_j)$. The relation between them is similar. For $k\in I$ we have an injection $\cF_k\hook{} F_k$ and a surjection $F_k\to \bar F_k$. Their composition $\cF_k\to \bar F_k$ is injective. We get the diagram
\begin{equation}
\label{diag_two_flags_on_F}
\begin{array}{cccccc}
\bar F_1 & \subset & \bar F_2 & \subset\ldots\subset & \bar F_{d+d'} &=F\\
\cup &&\cup && \cup\\
\cF_1 & \subset & \cF_2 & \subset\ldots\subset & \cF_{d+d'} &=F
\end{array}
\end{equation}
 
 The property that for any $k\in I$, $\ell(\cF_k)\le\ell(\bar F_k)$ is nothing but (C). We summarize this discussion in the following.
\begin{Lm} 
\label{Lm_4.4.12}
The stack $\Sh^{d,d'}_-$ is stratified by locally closed substacks $\cX_{\cJ}\cap\cY_{\cJ'}$ indexed by pairs $(\cJ,\cJ')\in \Sigma^2$ such that (C) holds for $(\cJ,\cJ')$.
In particular, if $i\in \cJ$ is the smallest element, $j'\in\cJ'$ is the biggest element then $\cJ\cup\cJ'\subset\{i,\ldots, j'\}$. \QED
\end{Lm}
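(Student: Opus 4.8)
The plan is to produce the stratification as the common refinement of two stratifications — one by the $\cX_\cJ$'s and one by the $\cY_{\cJ'}$'s — and then to extract the constraint (C) from the linear-algebra relation between the two flags induced on $F'$ (equivalently on $F$).

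First I would check that $\{\cX_\cJ\}_{\cJ\in\Sigma}$ is a stratification of $\Sh^{d,d'}_-$. Given a point $(F,F',(F_i))$, form $\cF'_j=F'\cap F_j$ as in Section~\ref{Sect_Stratifications_I}. Since $F_j/F_{j-1}$ has length one, the injection $\cF'_j/\cF'_{j-1}\hook{}F_j/F_{j-1}$ of (\ref{diag_first_for_4.4.7}) is either zero or an isomorphism; declare $j\in\cJ$ in the first case. As $\ell(F')=d'$ and each index in $I\setminus\cJ$ raises the length of $(\cF'_j)$ by one, one gets $\mid I\setminus\cJ\mid=d'$, so $\cJ\in\Sigma$, and for $j\in\cJ$ the second arrow in (\ref{diag_first_for_4.4.7}) is an isomorphism, so the point lies in $\cX_\cJ$. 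The function $\cJ$ is constructible and constant on $\cX_\cJ$, giving a stratification; working symmetrically with $\cF_j=F\cap F_j$ gives the stratification by the $\cY_{\cJ'}$. Intersecting, the $\cX_\cJ\cap\cY_{\cJ'}$ stratify $\Sh^{d,d'}_-$.

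Next I would show that a nonempty $\cX_\cJ\cap\cY_{\cJ'}$ forces (C). On $F'$ we have the flags $(\bar F'_k)$ and $(\cF'_k)$; since $F\cap F'=0$ in $F\oplus F'$, the composite $\cF'_k\hook{}F_k\twoheadrightarrow F_k/\cF_k=\bar F'_k$ is injective, whence $\ell(\cF'_k)\le\ell(\bar F'_k)$ for all $k$. From the strata descriptions $\ell(\cF'_k)=k-\mid\cJ_k\mid$ and $\ell(\bar F'_k)=k-\mid\cJ'_k\mid$, where $\cJ_k=\cJ\cap\{1,\dots,k\}$ and $\cJ'_k=\cJ'\cap\{1,\dots,k\}$, so $\mid\cJ'_k\mid\le\mid\cJ_k\mid$, which is (C); the pair of flags on $F$ via (\ref{diag_two_flags_on_F}) gives the same inequality. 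For the final assertion, assume (C): taking $k=\max\cJ'$ gives $\mid\cJ_k\mid\ge\mid\cJ'\mid=d$, so $\cJ\subseteq\{1,\dots,k\}$ and $\max\cJ\le j'$; taking $k=(\min\cJ)-1$ gives $\mid\cJ'_k\mid\le\mid\cJ_k\mid=0$, so $\min\cJ'\ge i$; together with the trivial inclusions $\cJ\subseteq\{i,\dots\}$ and $\cJ'\subseteq\{\dots,j'\}$ this yields $\cJ\cup\cJ'\subseteq\{i,\dots,j'\}$. The main thing to be careful about is the combinatorial bookkeeping — identifying the jump indices of $(\cF'_j)$, $(\bar F'_j)$ (and dually on $F$) with $I\setminus\cJ$, $I\setminus\cJ'$ — and the family-theoretic verification that the $\cX_\cJ$, $\cY_{\cJ'}$ are locally closed; that each $(\cJ,\cJ')$ satisfying (C) actually occurs is an easy separate point, seen by exhibiting a point supported at a single place of $X$.
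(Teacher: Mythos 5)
Your proposal is correct and follows the same route as the paper: define $\cJ$ (resp.\ $\cJ'$) by the jump set of $(\cF'_j)$ (resp.\ $(\bar F'_j)$), observe $\cF'_k\hookrightarrow\bar F'_k$ since $F\cap F'=0$, translate $\ell(\cF'_k)\le\ell(\bar F'_k)$ into $|\cJ'_k|\le|\cJ_k|$, and read off the inclusion $\cJ\cup\cJ'\subset\{i,\dots,j'\}$ from the extremal cases $k=j'$ and $k=i-1$. The only thing you add beyond the paper's discussion preceding the lemma is the (correct, though strictly unnecessary for the statement) remark on nonemptiness of the strata.
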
 
\begin{Rem} 
\label{Rem_4.4.13}
If $f: \cZ\to\cZ'$ is a morphism of finite type between  algebraic stacks, $F$ is a smooth $\Qlb$-sheaf on $\cZ$, assume all the fibres of $f$ are of dimension $\le d$. Assume $\cZ$ is stratified by locally closed substacks $i_a: \cZ^a\hook{}\cZ$ indexed by $a\in\cA$. Let $f^a: \cZ^a\to\cZ'$ be the restriction of $f$. Then $\R^{2d}f_!F$ admits a filtration by subsheaves with the successive quotients being $\R^{2d}f^a_!i_a^*F$, $a\in\cA$. We refer to $\R^{2d}f^a_!i_a^*F$ as the {\rm contribution} of the stratum $\cZ^a$ to the highest direct image $\R^{2d}f_!F$.
\end{Rem}

\sssec{Stratifications II} Recall the subset $\Inv\subset S_I$ defined in Section~\ref{Sect_1.2.8_normalization}. An involution $w\in \Inv$ writes as a disjoint product of two-cycles $w=(i_1, j_1)(i_2, j_2)\ldots (i_r, j_r)$ with $i_k< j_k$ for $1\le k\le r$. 
As in (\cite{RS}, Section~5.3), for such an involution $w$ set $\Hi(w)=\{j_1,\ldots, j_r\}$ and $\Lo(w)=\{i_1,\ldots, i_r\}$. As in \select{loc.cit.}, we call $\Hi(w)$ (resp., $\Lo(w)$) the \select{high points} (resp., \select{low points}) of $w$. 

\begin{Pp} 
\label{Pp_4.3.10}
i) For $\cJ,\cJ'\in\Sigma$ the stratum $\cX_{\cJ}\cap\cY_{\cJ'}$ does not contribute to 
\begin{equation}
\label{highest_direct_image_Pp_4.3.10}
R^{-2d'}(\div^{\nu}_-)_!\Qlb
\end{equation}
unless $\cJ\cap\cJ'=\emptyset$ and the condition (C) holds. 

\smallskip\noindent
ii) Assume $\cJ\cap\cJ'=\emptyset$ and the condition (C). Then $\cX_{\cJ}\cap\cY_{\cJ'}$ admits a stratification by locally closed substacks $\cX^w$ indexed by $w\in\Inv$ such that $\Hi(w)=\cJ'$, $\Lo(w)=\cJ$. 

For such $w$ let $\alpha: I\to\bar I$ be the unique element of $\cE$ such that the classes of the corresponding equivalence relation on $I$ are precisely the $w$-orbits. The composition $\cX^w\hook{} \Sh^{d,d'}_-\to V^{d,d'}_-$ factors uniquely through the closed subscheme $\norm_{\alpha}: V_{\alpha}\hook{} V^{d,d'}_-$, and the contribution of $\cX^w$ to (\ref{highest_direct_image_Pp_4.3.10}) is $(\norm_{\alpha})_!\Qlb$.
\end{Pp}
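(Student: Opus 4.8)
The plan is to combine the stratification of $\Sh^{d,d'}_-$ from Lemma~\ref{Lm_4.4.12} with the dimension bound of Theorem~\ref{Thm_4.2.11} i) and the general mechanism of Remark~\ref{Rem_4.4.13}.

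For part i) I would argue as follows. Since all fibres of $\div^{\nu}_-$ have dimension $\le -d'$ by Theorem~\ref{Thm_4.2.11} i), Remark~\ref{Rem_4.4.13} endows (\ref{highest_direct_image_Pp_4.3.10}) with a filtration whose graded pieces are $\R^{-2d'}$ of the pushforward with compact supports of $\Qlb$ along the restrictions of $\div^{\nu}_-$ to the strata $\cX_{\cJ}\cap\cY_{\cJ'}$; thus it remains to show that such a stratum contributes nothing once $\cJ\cap\cJ'\neq\emptyset$, i.e.\ that then every fibre of the restricted map has dimension $<-d'$ (that (C) is forced is already recorded in Lemma~\ref{Lm_4.4.12}). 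I would check this locally over each $x\in X$, exactly in the spirit of the proof of Theorem~\ref{Thm_4.2.11} i) and Lemma~\ref{Lm_1.2.18}: for a fixed point of $V^{d,d'}_-$ the fibre is a product over $x$ of schemes of complete flags of $F_x\oplus F'_x$ compatible with the two subsheaf filtrations $F\cap F_{\bullet}$ and $F'\cap F_{\bullet}$, taken modulo $\Aut(F_x)\times\Aut(F'_x)$, and a step-by-step parametrisation of such flags lets one read off the dimension from the positions of the indices relative to $\cJ$ and $\cJ'$. The point will be that the total dimension equals $-d'$ precisely when $\cJ\cap\cJ'=\emptyset$, and that an index in $\cJ\cap\cJ'$ forces the flag into special position at that step and strictly lowers the dimension.

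For part ii), assuming $\cJ\cap\cJ'=\emptyset$ (so that (C) holds), I would use the two complete flags with repetitions on $F$ recalled in Section~\ref{Sect_Stratifications_I}: the flag $(\bar F_j)$, which jumps exactly at the indices of $\cJ$, and the flag $(\cF_j)$, which jumps exactly at the indices of $\cJ'$, and which satisfy $\cF_k\subseteq\bar F_k$ for all $k$ by (C). Over each point of $X$ every torsion sheaf in sight splits off, so the genuine complete flags $(\bar F_a)_{a\in\cJ}$ and $(\cF_b)_{b\in\cJ'}$ of $F$ are direct sums of complete flags of the local components; their relative position is recorded by a matching between $\cJ$ and $\cJ'$, and the inclusions $\cF_k\subseteq\bar F_k$ together with (C) force this matching to send each $a\in\cJ$ to some element $>a$ of $\cJ'$. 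Adjoining the $d'-d$ indices of $I\setminus(\cJ\cup\cJ')$ as fixed points turns the matching into an involution $w\in\Inv$ with $\Lo(w)=\cJ$, $\Hi(w)=\cJ'$, and I would define $\cX^w\subset\cX_{\cJ}\cap\cY_{\cJ'}$ to be the locus where this relative position equals $w$, which gives the stratification asserted in ii). Since the matching respects the decomposition of $F$ over $X$, an index $a\in\cJ$ and its partner $w(a)$ create jumps of the flag supported at the same point, so $x_a=x_{w(a)}$ for all $a$; combined with $D=\sum_{a\in\cJ}x_a$ and $D'=\sum_{j\in I\setminus\cJ}x_j$ on $\cX_{\cJ}$, this shows that $\cX^w\hook{}\Sh^{d,d'}_-\to V^{d,d'}_-$ factors through the closed subscheme $\norm_{\alpha}\colon V_{\alpha}\hook{}V^{d,d'}_-$ attached to the equivalence relation $\alpha$ whose classes are the $w$-orbits. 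Finally, to obtain the contribution $(\norm_{\alpha})_!\Qlb$, I would show that $\div^{\nu}_-\colon\cX^w\to V_{\alpha}$ is a generalized affine fibration of relative dimension $-d'$, so that $\R^{-2d'}$ of its pushforward of $\Qlb$ is $\Qlb$; this is again local over $X$, and once $w$ is fixed all the remaining conditions on the local flag of $F_x\oplus F'_x$ become open, so the step-by-step parametrisation of Lemma~\ref{Lm_1.2.18} should identify the resulting quotient with an affine space of dimension $-d'_x$. The $S_I$-equivariance is built into the construction.

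The hard part will be part ii): pinning down the ``relative position'' of the flags $(\bar F_j)$ and $(\cF_j)$ at an arbitrary, possibly non-multiplicity-free, point of the stratum, proving that the loci $\cX^w$ are locally closed and partition $\cX_{\cJ}\cap\cY_{\cJ'}$, and — most delicately — that after fixing $w$ what is left is a generalized affine fibration over $V_{\alpha}$ of the expected relative dimension. This is exactly where the bookkeeping of high and low points of involutions from \cite{RS} must be reconciled with the local structure of torsion sheaves on $X$ underlying the dimension count of Lemma~\ref{Lm_1.2.18}.
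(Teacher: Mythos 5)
Your proposal is essentially the paper's argument. Let me flag the two places where the paper pins down precisely what you leave as ``relative position.''

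For the definition of $\cX^w$, the paper does not appeal to local splittings of $F$ over $X$; instead it introduces the integer matrix $e=(e_i^j)$ determined by $\sum_{k\le i}\sum_{l\le j} e_k^l=\ell(\bar F_{b(i)}\cap\cF_{b'(j)})$ (with $b,b'$ the order bijections $\bar\cJ\iso\cJ,\cJ'$), shows $e_i^j\ge 0$ via Lemma~\ref{Lm_coproduct}, and deduces it is a permutation matrix from the row/column sums being $1$. That gives $\bar w\in S_d$ and hence the involution $w$, and fixing $e$ is the definition of $\cX^w$. Your derivation that $w(a)>a$ for $a\in\cJ$ (so $\Lo(w)=\cJ$, $\Hi(w)=\cJ'$) is correct, though note the inclusions $\cF_k\subset\bar F_k$ hold on all of $\Sh^{d,d'}_-$ and (C) is the resulting length inequality, not a hypothesis you need to invoke separately.

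For the contribution, your statement that $\div^{\nu}_-|_{\cX^w}\colon\cX^w\to V_\alpha$ is a generalized affine fibration of relative dimension $-d'$ is not quite right as phrased: the fibres contain $B\GG_m$-factors coming from $\Aut$ of the length-one subquotients, so the map is not a generalized affine fibration and the phrase ``affine space of dimension $-d'_x$'' is incoherent. The paper instead factors through $f^w\colon\cX^w\to(\Sh^1_0)^{I-\cJ'}$, $(F,F',(F_i))\mapsto(F_i/F_{i-1})_{i\in I-\cJ'}$, shows $f^w$ is a generalized affine fibration of rank zero, and then uses that $(\Sh^1_0)^{I-\cJ'}\to X^{I-\cJ'}=V_\alpha$ has $B\GG_m^{\,d'}$-fibres, whose $\R^{-2d'}$-pushforward of $\Qlb$ is $\Qlb$. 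This small restructuring gives the assertion cleanly; your conclusion $x_a=x_{w(a)}$ and the resulting factorization through $\norm_\alpha$ are fine. For part~i) the paper itself leaves the dimension estimate to the reader, and your sketch (applying Remark~\ref{Rem_4.4.13} and a local estimate as in Lemma~\ref{Lm_1.2.18}) matches the intended route.
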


\begin{Lm} 
\label{Lm_coproduct}
Let $F_1, F_2\subset F$ be torsion sheaves on $X$, let $\tilde F$ be the coproduct of the diagram $F_1\gets F_1\cap F_2\to F_2$. Then the induced map $\tilde F\to F$ is injective. \QED
\end{Lm}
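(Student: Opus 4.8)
The plan is to identify $\tilde F$ explicitly with the subsheaf $F_1+F_2\subseteq F$. First, recall that the category of torsion coherent sheaves on $X$ is abelian, closed under finite colimits, and that such colimits agree with those computed in all $\cO_X$-modules; so the coproduct of $F_1\gets F_1\cap F_2\to F_2$ is the cokernel of the morphism $\iota\colon F_1\cap F_2\to F_1\oplus F_2$, $x\mapsto(x,-x)$. The two given inclusions induce the sum map $F_1\oplus F_2\to F$, $(a,b)\mapsto a+b$, which vanishes on the image of $\iota$ and hence factors through a canonical morphism $j\colon\tilde F\to F$; this $j$ is the map in the statement, and its image is visibly $F_1+F_2$.

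Second, I would check that $j$ is injective. Since injectivity of a morphism of sheaves can be tested on stalks, and formation of $\tilde F$ commutes with taking stalks (being a finite colimit), one reduces to the case of submodules $F_1,F_2$ of a module $F$ over a local ring. An element of $\ker j$ is represented by some $(a,b)\in F_1\oplus F_2$ with $a+b=0$ in $F$; then $a=-b$ lies in $F_1\cap F_2$, so $(a,b)=\iota(a)$ already maps to $0$ in $\tilde F$. Hence $\ker j=0$, and $\tilde F\,\iso\, F_1+F_2\hookrightarrow F$. Equivalently, the commutative square with corners $F_1\cap F_2$, $F_1$, $F_2$, $F_1+F_2$ is simultaneously cartesian and cocartesian, which is the standard form of this fact.

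The argument is a soft diagram chase, so there is no genuine obstacle; the only point requiring a little care is that the coproduct must be taken in the intended category and that passing to $\cO_X$-modules (or to stalks) does not change it, which is immediate since the inclusion of torsion sheaves into all coherent sheaves is exact and preserves finite colimits.
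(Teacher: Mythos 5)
Your argument is correct and is the standard one. The paper does not actually give a proof of this lemma — it is stated with $\square$ as an elementary fact left to the reader — so there is nothing to compare against; your write-up (identify the pushout as $\coker(F_1\cap F_2\to F_1\oplus F_2)$, observe the sum map factors through $j\colon\tilde F\to F$ with image $F_1+F_2$, then check injectivity on stalks by the diagram chase) is exactly the routine verification the authors had in mind.
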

\sssec{Proof of Proposition~\ref{Pp_4.3.10}} ii) Consider two subsets $\cJ,\cJ'\in\Sigma$ such that $\cJ\cap\cJ'=\emptyset$, and the condition (C) holds. Set $\bar\cJ=\{1,\ldots, d\}$.
Equip $\cJ$ (resp., $\cJ'$) with the natural order:  $i_1\le i_2$ iff $i_1$ is less of equal to $i_2$. These orders are linear, and so yield bijections that we denote 
$$
b: \bar\cJ\,\iso\, \cJ, \;\; b': \bar\cJ\,\iso\, \cJ'
$$ 
If $i\in\bar\cJ$ then $\ell(\bar F_{b(i)})=i$. If $j\in\bar\cJ$ then $\ell(\cF_{b'(j)})=j$. 

 Denote by $_d^dJ$ the set of matrices $e=(e_i^j)$ with $i,j\in \bar\cJ$, $e_i^j\in\ZZ_+$. For a point of $\cX_{\cJ}\cap\cY_{\cJ'}$ define the matrix $e\in {_d^dJ}$ by 
\begin{equation}
\label{matrix_e_def}
\sum_{k=1}^i \sum_{l=1}^j 
e_k^l=\ell(\bar F_{b(i)}\cap F_{b'(j)})=\ell(\bar F_{b(i)}\cap\cF_{b'(j)})
\end{equation} 
for $i,j\in\bar\cJ$. From Lemma~\ref{Lm_coproduct} one gets by induction that indeed $e_i^j\in\ZZ_+$ for any $i,j\in\bar\cJ$.

 The matrix $e$ has the properties:
\begin{itemize}
\item $e_i^j\in\{0,1\}$.
\item If $j\in\bar\cJ$ then $\sum_{k=1}^d e^j_k=1$. 
If $i\in \bar\cJ$ then $\sum_{j=1}^d e^j_i=1$. So, 1 appears precisely once in each row (resp., each column) of $e$.
\end{itemize} 
We get a unique $\bar w\in S_d$ such that $e^j_{\bar wj}=1$ for all $j\in\bar\cJ$, and $e^j_i=0$ unless $i=\bar wj$. Define the unique involution $w\in \Inv$ by the properties: 
\begin{itemize}
\item[P1)] $w\mid_{\cJ'}=b\bar w (b')^{-1}$; 
\item[P2)] $I-(\cJ\cup\cJ')$ is the set of fixed points of $w$. 
\end{itemize}
Conversely, given an involution $w\in \Inv$ with $\Hi(w)=\cJ', \Lo(w)=\cJ$, there is a unique $\bar w\in S_d$ satisfying the property P1). Define the locally closed substack $\cX^w\subset \cX_{\cJ}\cap \cY_{\cJ'}$ by fixing the corresponding $\bar w\in S_d$, or equivalently, the corresponding matrix $e$ satisfying (\ref{matrix_e_def}). This is the desired stratification of $\cX_{\cJ}\cap\cY_{\cJ'}$ by the locally closed substacks $\cX^w$. 

 Denote by $f^w: \cX^w\to (\Sh^1_0)^{I-\cJ'}$ the map sending $(F,F', (F_i))$ to the collection $(F_i/F_{i-1})_{i\in I-\cJ'}$. This is a generalized affine fibration of rank zero. For $i\in I-\cJ'$ we then let $x_i=\div(F_i/F_{i-1})$. For $i\in\cJ'$ we let $x_i=x_{wi}$.   

 As we have seen in Section~\ref{Sect_Stratifications_I}, for a point of $\cX_{\cJ}$ one has $D=\sum_{j\in\cJ} x_j$ and $D'=\sum_{j\in I-\cJ} x_j$. So, the composition $\cX^w\hook{}\Sh^{d,d'}_-\to V^{d,d'}_-$ factors uniquely through the closed subscheme $V_{\alpha}\hook{} V^{d,d'}_-$, and the contribution of $\cX^w$ to $\R^{-2d'}(\div^{\nu}_-)_!\Qlb$ is $(\norm_{\alpha})_!\Qlb$. 
 
\medskip\noindent 
i) This is obtained from the dimension estimates and is left to a reader. If $\cJ\cap\cJ'\ne\emptyset$, and (C) holds, one may also stratify $\cX_{\cJ}\cap\cY_{\cJ'}$ by fixing the relative position of the two flags $(\bar F_i), (\cF_j)$ on $F$ as in the case $\cJ\cap\cJ'=\emptyset$. \QED

\begin{Rem} 
\label{Rem_4.3.14}
In the situation of Proposition~\ref{Pp_4.3.10} ii) for certain $\bar w\in S_d$ the stack $\cX^w$ is empty. For example, consider the case $d=d'=2$, $\cJ=\{1,3\}, \cJ'=\{2,4\}$. Then $\cX_{\cJ}\cap\cY_{\cJ'}=\cX^w$, where $w=(12)(34)$. It corresponds to $\bar w=\id\in S_2$. Indeed, in this case the stratification is given by the relative position of the two subsheaves $\bar F_1, \cF_2\in\Sh^1_0$ in $F\in\Sh^2_0$. However, from the diagram (\ref{diag_two_flags_on_F}) we know that $\cF_2\subset \bar F_2$, and for a point of $\cX_{\cJ}\cap\cY_{\cJ'}$ we have $\bar F_1=\bar F_2$ by definition. So, $\cF_2=\bar F_1$ for any point of $\cX_{\cJ}\cap\cY_{\cJ'}$. In general, if $\cX^w\ne \emptyset$ then $w$ has to be compatible with the diagram (\ref{diag_two_flags_on_F}).
\end{Rem}

For the convenience of the reader, we give several examples of the stratifications of $\Sh^{d,d'}_-$ appeared in Section~\ref{Sect_Stratifications_I} and Proposition~\ref{Pp_4.3.10}.

\sssec{Example $d=1, d'>1$} This is a first nontrivial example. We have $\ell(F)=1, \ell(F')=d'$. Let $\cJ'=\{j\}, \cJ=\{i\}$. The substack $\cX_{\cJ}\cap\cY_{\cJ'}\subset \Sh^{d,d'}_-$ is given by the properties: 
\begin{itemize} 
\item $F_{i-1}\subset F', F_i\cap F'=F_{i-1}$,
\item $F\cap F_{j-1}=0$, $F\subset F_j$. 
\end{itemize}
(The first conditions garantee that $F\cap F_{i-1}=0$, so if $F\subset F_j$ then $j\ge i$). 

 Let $\bar\cX$ be the stack classifying $F\in \Sh_0^1, F'\in\Sh^{d'}_0$ with $\div(F)\le \div(F')$ and a complete flag $(F'_1\subset\ldots F'_{d'})$ on $F'$. For each $1\le i\le d'+1$ we get an isomorphism 
$$
\bar\cX\,\iso\,\cX_{\cJ}\cap\cY_{\cJ}
$$ 
sending $(F,F', (F'_j))$ to $(F,F',(F_j))$, where $(F_j)$ is the complete flag on $F\oplus F'$ given by 
$$
\left\{
\begin{array}{ll}
F_j=F'_j, & j<i\\
F_j=F_{j-1}\oplus F, & j\ge i
\end{array}
\right.
$$
The composition $\bar\cX\,\iso\, \cX_{\cJ}\cap\cY_{\cJ}\toup{\div^{\nu}_-} V^{d,d'}_-$ sends $(F, F', (F'_j))$ to the collection $x=\div F, D'=\div F'$ and $(x_1,\ldots, x_{i-1}, x, x_i,\ldots, x_{d'})$, where $x_i=\div(F'_i/F'_{i-1})$. Using Remark~\ref{Rem_1.2.20}, one sees that the contribution of the stratum $\cX_{\cJ}\cap\cY_{\cJ}$ to $\R^{-2d'}(\div^{\nu}_-)_!\Qlb$ is zero. 
  
  Assume $1\le i<j\le d'+1$. Let $w=(ij)\in\Inv$ then $\cX_{\cJ}\cap\cY_{\cJ'}=\cX^w$. The map $f^w: \cX^w\to (\Sh_0^1)^{I-\cJ'}$ is a generalized affine fibration. Indeed, taking the quotient by $F_{i-1}$ and replacing $F'$ by $F_j$ this claim is reduced to the special case $i=1, j=d'+1$. In the latter case the map $f^w$ is described as follows.
\begin{Lm} Let $d=1, d'>1$, $i=1, j=d'+1$ and $w=(ij)$.
The stack $\cX^w$ classifies: $F, F_1\in \Sh^1_0$, an isomorphism $F\,\iso\, F_1$, a complete flag $\cF'_2\subset\ldots\subset\cF'_{d'}$ of torsion sheaves on $\cF'_{d'}\in \Sh^{d'-1}_0$ as in Section~\ref{Sect_Stratifications_I}. So, $\ell(\cF'_k)=k-1$ for $2\le k\le d'$. Here for $2\le i\le d'$ one has canonically
$$
F_i/F_{i-1}\,\iso\, \cF'_i/\cF'_{i-1}
$$ 
Besides, $F\,\iso\, F_1\,\iso\, F_{d'+1}/F_{d'}$. 
\end{Lm}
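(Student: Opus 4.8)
The argument is a direct unwinding of the definitions of \secref{Sect_Stratifications_I} and \propref{Pp_4.3.10} in the special case $d=1$, $\cJ=\{1\}$, $\cJ'=\{d'+1\}$, $w=(1,d'+1)$; here $S_1$ is trivial, so there is a unique admissible $\bar w=\id$ and $\cX^w=\cX_{\{1\}}\cap\cY_{\{d'+1\}}$. First I would specialize the stratum conditions. A point $(F,F',(F_i))\in\Sh^{d,d'}_-$ lies in $\cX_{\{1\}}$ iff $\cF'_1=F'\cap F_1=0$, which, since $\ell(F_1)=1=\ell(F)$, forces the projection $F\oplus F'\to F$ to restrict to an isomorphism $F_1\iso F$; the induced constraints give $\ell(\cF'_j)=j-1$ for $1\le j\le d'+1$, so $0=\cF'_1\subset\cF'_2\subset\ldots\subset\cF'_{d'}$ is a complete flag on $\cF'_{d'}\in\Sh^{d'-1}_0$ and $\cF'_{d'+1}=F'$. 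Likewise the point lies in $\cY_{\{d'+1\}}$ iff $\cF_{d'+1}/\cF_{d'}\in\Sh^1_0$; since $\cF_{d'+1}=F\cap(F\oplus F')=F$, the induced constraints force $\cF_j=F\cap F_j=0$ for all $j\le d'$.

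The key identity I would establish next is that inside $F\oplus F'$ one has $F_j=F_1\oplus\cF'_j$ for $1\le j\le d'$: indeed $F_1\cap\cF'_j\subseteq F_1\cap F'=0$, both $F_1$ and $\cF'_j$ sit in $F_j$, and $\ell(F_1\oplus\cF'_j)=1+(j-1)=j=\ell(F_j)$. In particular $F_{d'}=F_1\oplus\cF'_{d'}$, and since $F_{d'}\cap F=\cF_{d'}=0$ the projection $F\oplus F'\to F'$ restricts to a canonical isomorphism $F_{d'}\iso F'$; thus $F'\iso F_1\oplus\cF'_{d'}$ canonically, compatibly with the subsheaf $\cF'_{d'}$. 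This exhibits precisely the data of the statement: $F$, the sheaf $F_1=F_1/F_0\in\Sh^1_0$, the canonical isomorphism $F\iso F_1$ (projection to $F$), and the complete flag $(\cF'_k)_{2\le k\le d'}$ on $\cF'_{d'}$. The claimed isomorphisms $F_i/F_{i-1}\iso\cF'_i/\cF'_{i-1}$ for $2\le i\le d'$ follow at once from $F_i=F_1\oplus\cF'_i$, while $F_{d'+1}/F_{d'}\iso F$ follows from the fact that $F\oplus0$ is a complement to $F_{d'}$ in $F\oplus F'$ (as $F_{d'}$ surjects onto $F'$ and meets $F$ trivially); combined with $F\iso F_1$ this gives $F_{d'+1}/F_{d'}\iso F_1$.

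For the reverse construction I would start from $(F,F_1,\gamma\colon F\iso F_1,\ \cF'_{d'}\in\Sh^{d'-1}_0,\ 0\subset\cF'_2\subset\ldots\subset\cF'_{d'})$, put $F':=F_1\oplus\cF'_{d'}$, $F_{d'+1}:=F\oplus F'$, and $F_j:=\{(\gamma^{-1}(y),(y,z)):y\in F_1,\ z\in\cF'_j\}\subseteq F\oplus(F_1\oplus\cF'_{d'})$ for $1\le j\le d'$, and then check that $(F,F',(F_j))$ lies in $\cX^w$: the flag is complete with steps of length $1$, $\div F'=\div F+\div\cF'_{d'}\ge\div F$ so the point lies in $\Sh^{d,d'}_-$, the projections yield $F'\cap F_j=\cF'_j$ and $F\cap F_j=0$ for $j\le d'$ (so the $\cX_{\{1\}}$- and $\cY_{\{d'+1\}}$-conditions hold), and the matrix $e$ of \propref{Pp_4.3.10} reduces to $e^1_1=1$, i.e.\ $\bar w=\id$. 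A last routine check shows the two constructions are mutually inverse and functorial over an arbitrary base scheme, giving the asserted isomorphism of stacks. The only mildly delicate point is the functoriality in families of the identification $F_{d'}\iso F'$; but this is immediate, being induced by the structural projection $F\oplus F'\to F'$, so there is no genuine obstacle, and the Lemma is ultimately bookkeeping within the framework of \propref{Pp_4.3.10}.
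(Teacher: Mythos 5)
Your proof is correct and follows essentially the same strategy as the paper: unwind the stratum conditions $\cF'_1 = 0$ and $\cF_{d'} = 0$, establish the key identity $F_k = F_1 \oplus \cF'_k$ for $1 \le k \le d'$ (and $F_{d'+1} = F_{d'} \oplus F$), and then read off the data. The reverse construction you spell out is also correct and matches what the paper packages into the final sentence of its proof.

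One small divergence worth flagging: your isomorphism $F \iso F_1$ is the inverse of the direct projection $F_1 \hookrightarrow F \oplus F' \to F$ (legitimate since $F_1 \cap F' = \cF'_1 = 0$), whereas the paper produces it via the composite $F \getsiso \cF'_{d'+1}/\cF'_{d'} \toup{\iso} F_1$ obtained from the projection of $F' = \cF'_{d'+1}$ to $(F \oplus F_1 \oplus \cF'_{d'})/\cF'_{d'} \iso F \oplus F_1$. A short coordinate check shows these two identifications differ by a sign. This is harmless here — the lemma only asserts that the data \emph{includes} some isomorphism $F \iso F_1$, and either choice yields a valid isomorphism of stacks — but had the identification been used downstream in a sign-sensitive way, one would need to pin down which convention is meant. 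The paper's route also has the advantage of simultaneously producing the isomorphism $\cF'_{d'+1}/\cF'_{d'} \iso F$, which directly identifies the last quotient step of the $\cF'$-flag; your route handles that via the complement $F_{d'} \oplus F = F \oplus F'$ instead, which is equally fine.
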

\begin{proof}
The stack $\cX_{\cJ}\cap\cY_{\cJ'}$ is given by two conditions: $F_1\cap F'=0, F\cap F_{d'}=0$. Using the notations from Section~\ref{Sect_Stratifications_I}, we get $F_k=F_1\oplus \cF'_k$ for $2\le k\le d'$, and $F_{d'+1}=F_{d'}\oplus F$. So, $F\oplus F_1\oplus \cF'_{d'}=F\oplus F'$. Consider the natural projection 
$$
F'=\cF'_{d'+1}\to (F\oplus F_1\oplus \cF'_{d'})/\cF'_{d'}\,\iso\, F\oplus F_1
$$
Both its components are nonzero. Thus, the induced maps $\cF'_{d'+1}/\cF'_{d'}\to F_1$ and $\cF'_{d'+1}/\cF'_{d'}\to F$ are both nonzero, hence isomorphisms. This defines the desired isomorphism $F\,\iso\, F_1$, and $F'$ as the unique subsheaf in $F\oplus F_1\oplus \cF'_{d'}$ containing $\cF'_{d'}$ and mapping diagonally to $F\oplus F_1$.
\end{proof} 
  
\sssec{Example $d=d'=2$} Only the strata $\cX_{\{1,3\}}\cap\cY_{\{2,4\}}$ and $\cX_{\{1,2\}}\cap\cY_{\{3,4\}}$ contribute to (\ref{highest_direct_image_Pp_4.3.10}). One gets $\cX_{\{1,3\}}\cap\cY_{\{2,4\}}=\cX^w$ for $w=(12)(34)$ as in Remark~\ref{Rem_4.3.14}. The locus $\cX_{\{1,2\}}\cap\cY_{\{3,4\}}$ consists of two strata $\cX^w, \cX^{w'}$ for $w=(13)(24)$ and $w'=(14)(23)$. The substack $\cX^w\subset \cX_{\{1,2\}}\cap\cY_{\{3,4\}}$ is given by $\bar F_1\subset F_3$, and $\cX^{w'}$ is its complement in $\cX_{\{1,2\}}\cap\cY_{\{3,4\}}$. 
 
 The stack $\cX^w$ classifies: $F_1\in\Sh^1_0, F_2/F_1\in\Sh^1_0$ with divisors $x_1, x_2$, for which we set $x_3=x_1, x_4=x_2$, and an extension $0\to F_1\to F_2\to F_2/F_1\to 0$ on $X$. 
 
 The stack $\cX^{w'}$ classifies: $\bar F_1, \cF_3\in\Sh^1_0$ with $x_1=\div(\bar F_1), x_2=\div(\cF_3)$, for which we set $x_3=x_2$, $x_4=x_1$. So, the map $f^{w'}: \cX^{w'}\to (\Sh^1_0)^2$ sending the above point to $(F_1, F_2/F_1)$ is an isomorphism.
 
\sssec{End of the proof of Theorem~\ref{Thm_4.2.11}} 
\label{Sect_4.3.18}
The normalization of $V^{d,d'}_-$ is described in Section~\ref{Sect_1.2.8_normalization}. We first stratify $\Sh^{d,d'}_-$ by locally closed substacks $\cX_{\cJ}\cap\cY_{\cJ'}$ indexed by pairs $\cJ,\cJ'\in\Sigma$. By Section~\ref{Sect_Stratifications_I}, this locus is empty unless the condition (C) holds. By Proposition~\ref{Pp_4.3.10}, only the strata with $\cJ\cap\cJ'=\emptyset$ contribute to (\ref{highest_direct_image_Pp_4.3.10}). For $\cJ,\cJ'\in\Sigma$ with $\cJ\cap\cJ'=\emptyset$ such that (C) holds, 
we stratify $\cX_{\cJ}\cap\cY_{\cJ'}$ by locally closed substacks $\cX^w$ as in Proposition~\ref{Pp_4.3.10} indexed by $w\in\Inv$ such that $\Hi(w)=\cJ'$, $\Lo(w)=\cJ$. Let $\alpha: I\to\bar I$ be the surjection such that the classes of the corresponding equivalence relation on $I$ are precisely $w$-orbits. By Proposition~\ref{Pp_4.3.10}, the contribution of the stratum $\cX^w$ to (\ref{highest_direct_image_Pp_4.3.10}) identifies with $(\norm_{\alpha})_!\Qlb$. Now $(\norm_{\alpha})_!\Qlb[d']$ is an irreducible perverse sheaf. So, $(\R^{-2d'}(\div^{\nu}_-)_!\Qlb)[d']$ is perverse and admits a filtration with the associated graded
$$
\mathop{\oplus}\limits_{\alpha\in\cE} (\norm_{\alpha})_!\Qlb[d']\,\iso\, \IC(V^{d,d'}_-)
$$
Any such filtration splits canonically into a direct sum, because the summands are irreducible perverse sheaves supported on different irreducible components (cf. Remark~\ref{Rem_2.4.6}). \QED

\medskip

 Finally, the proof of Proposition~\ref{Pp_2.2.5} is complete. So, Theorem~\ref{Th_2.1.2} is proved.

\ssec{Comparison with \cite{RS}} 
\label{Sect_4.4} Section~\ref{Sect_4.4} is not used in the paper and may be skipped. We include it to give another description of the stratification used in Proposition~\ref{Pp_4.3.10}. Consider the stack $\cD$ classifying vector spaces $V,V'$ of dimensions $d,d'$ and a complete flag $V_1\subset\ldots\subset V_{d+d'}=V\oplus V'$ of vector subspaces. The set of isomorphism classes of $\cD$ is finite and was described in \cite{RS}. In this section we introduce a morphism $f: \Sh^{d,d'}_-\to \cD$ and a stratification of $\cD$ such that the stratification of $\Sh^{d,d'}_-$ used in Proposition~\ref{Pp_4.3.10} is obtained as the pull-back by $f$. 

\sssec{} Consider the stack $\cD$ classifying vector spaces $V,V'$ of dimensions $d,d'$ and a complete flag $V_1\subset\ldots\subset V_{d+d'}=V\oplus V'$ of vector subspaces. The set of points of $\cD$ is finite and described in (\cite{RS}, Section~5.3). It is naturally in bijection with
$$
\bar\cE=\{(w, \cJ)\in \Inv\times\Sigma \mid \Hi(w)\subset \cJ, \Lo(w)\cap \cJ=\emptyset\}
$$
Consider the map $f: \Sh^{d,d'}_-\to \cD$ sending $(F,F',(F_i))$ to the collection $(V, V', (V_i))$ with 
$V=\H^0(X,F), V'=\H^0(X, F')$, and $V_i=\H^0(X, F_i)$. 
There is a stratification of $\cD$ whose preimage under $f$ gives
the stratification of $\Sh^{d,d'}_-$ that we used in Section~\ref{Sect_Stratifications_I} and Proposition~\ref{Pp_4.3.10}. 
However, this is not the stratification by the classes of isomorphisms of points of $\cD$. We give some details for the convenience of the reader.

We define stratifications of $\cD$ by locally closed substacks $\cD^{\cJ}$ with $\cJ\in\Sigma$ (resp., by locally closed substacks $\cD_{\cJ}$ with $\cJ\in\Sigma$) such that $f^{-1}(\cD^{\cJ})=\cY_{\cJ}$ and $f^{-1}(\cD_{\cJ})=\cX_{\cJ}$ for any $\cJ\in\Sigma$. 
\sssec{}  
\label{Sect_4.3.20} 
If we pick bases $\{e_1,\ldots, e_d\}$ in $V$ and $\{e_{d+1},\ldots, e_{d+d'}\}$ in $V'$ then we get a complete flag in $k^{d+d'}$ given by some $gB_{d+d'}\in \GL_{d+d'}/B_{d+d'}$. 
Set $K=\GL_d\times\GL_{d'}$. Forgetting these bases, one gets an isomorphism $\cD\,\iso\, K\backslash \GL_{d+d'}/B_{d+d'}$ with the the stack quotient.

 The variety of decompositions $k^{d+d'}=\oplus_{i=1}^{d+d'} W_i$ into 1-dimensional vector subspaces identifies naturally with $\GL_{d+d'}/T_{d+d'}$. Here $T_{d+d'}$ is the torus of diagonal matrices. Namely, to $gT_{d+d'}$ we associate $W_i=g\Vect(e_i)$. 

 If $V,V'$ are vector spaces of dimensions $d,d'$ and $\oplus_{i=1}^{d+d'}W_i=V\oplus V'$ is a decomposition into 1-dimensional subspaces, it gives rise to a torus $T_W=\{g\in \GL(V\oplus V')\mid g(W_i)=W_i\}$ for any $i$. Let $\theta$ be the automorphism of $V\oplus V'$ acting as $1$ on $V$ and as $-1$ on $V'$. The torus $T_W$ is $\theta$-stable if $\theta T_W\theta^{-1}=T_W$. This means that there is an involution $w\in S_{d+d'}$ such that $\theta(W_i)=W_{wi}$ for any $i$. 
 
 Write $\wt\cD$ for the stack classifying vector spaces  $V,V'$ of dimensions $d,d'$, and a decomposition into 1-dimensional vector subspaces $V\oplus V'=\oplus_{i=1}^{d+d'} W_i$. If we pick bases of $V,V'$ as above, a point of $\wt\cD$ together with these bases gives an element of $\GL_{d+d'}/T_{d+d'}$. Forgetting the bases, one gets an isomorphism with the stack quotient $
 K\backslash \GL_{d+d'}/T_{d+d'}\,\iso\, \wt\cD$. 
 
 Denote by $\bar\cD\subset\wt\cD$ the closed substack given by the property that $T_W$ is $\theta$-stable. For an involution $w\in S_{d+d'}$ denote by $\bar\cD^w\subset \bar\cD$ the locally closed substack given by the property that $\theta(W_i)=W_{wi}$ for any $i\in I$. This is a stratification of $\bar\cD$ indexed by the set $\Inv$ of involutions in $S_{d+d'}$. 
 
 Let $c\in\GL_{d+d'}$ be the automorphism acting as 1 on $\Vect(e_1,\ldots, e_d)$ and as $-1$ on $\Vect(e_{d+1},\ldots, e_{d+d'})$. Let $\theta_c$ be the inner automorphism of $\GL_{d+d'}$ sending $g$ to $cgc^{-1}$. Then $K=(\GL_{d+d'})^{\theta_c}$. As in (\cite{RS}, Section~1.2), set 
$$
\cV=\{g\in\GL_{d+d'}\mid g^{-1}\theta_c(g)\in N_{d+d'}\},
$$ 
here $N_{d+d'}$ is the normalizer of $T_{d+d'}$ in $\GL_{d+d'}$. Then $\cV$ is stable under the action of $T_{d+d'}$ by right translations. Now 
$$
\cV/T_{d+d'}\subset \GL_{d+d'}/T_{d+d'}
$$ 
is the closed subvariety of those decompositions $k^{d+d'}\,\iso\, \oplus_{i=1}^{d+d'} W_i$
for which $T_W\subset\GL_{d+d'}$ is $\theta_c$-stable. Note that $\cV$ is stable under the action of $K$ by left translations. Picking bases of $V, V'$ as above, this gives an isomorphism with the stack quotient
\begin{equation}
\label{iso_for_bar_cD}
K\backslash\cV/T_{d+d'}\,\iso\, \bar\cD
\end{equation}

 For $x\in N_{d+d'}$, $w\in S_{d+d'}$ the image of $x$ in $S_{d+d'}$ is $w$ iff for any $i\in I$, $x\Vect(e_i)=\Vect(e_{wi})$. Let $N^w_{d+d'}\subset N_{d+d'}$ be the closed subscheme of $x\in N_{d+d'}$ over $w$. Let also $\cV^w\subset\cV$ be the closed subscheme given by requiring that $g^{-1}\theta_c(g)\in N^w_{d+d'}$. Then (\ref{iso_for_bar_cD}) restricts for any $w\in \Inv$ to
an isomorphism 
$$
K\backslash \cV^w/T_{d+d'}\,\iso\, \bar\cD^w
$$
   
   By (\cite{RS}, Proposition~1.2.2), number of points of $\bar\cD$ and $\cD$ is finite. Consider the map $\wt\cD\to \cD$ sending $(V,V', (W_i))$ to $(V,V', (V_i))$, where $V_i=\oplus_{j=1}^i W_j$. Let $f_{\cD}: \bar\cD\to\cD$ be its restriction to $\bar\cD$. Now  (\cite{RS}, Proposition~1.2.2) implies that $f_{\cD}$ induces a bijection on the set of points of $\bar\cD$ and $\cD$.  
   
 For $w\in \Inv$ the stack $\bar\cD^w$ has a finite number of points, this number could be bigger than one. For $w\in\Inv$ set $\cD^w=f_{\cD}(\bar\cD^w)$ viewed a locally closed substack.

\sssec{} Let $P\subset \GL_{d+d'}$ be the parabolic preserving $\Vect(e_1,\ldots, e_d)$ in $k^{d+d'}$. So, $K$ is the standard Levi of $P$, and $B_{d+d'}\subset P$. Let $\cD_P$ be the stack classifying 
a complete flag of vector spaces $V_1\subset\ldots\subset V_{d+d'}$ and a subspace $V\subset V_{d+d'}$. As above, we get an isomorphism 
$$
\cD_P\,\iso\, P\backslash \GL_{d+d'}/B_{d+d'}
$$ 
Let $\eta_P: \cD\to\cD_P$ be the map forgetting $V'$.  

  Set $\bar\cJ=\{1,\ldots, d\}\subset I$. Let $W_{\bar\cJ}\subset W=S_{d+d'}$ be the stabilizer of $\bar\cJ$. We have a bijection $W/W_{\bar \cJ} \,\iso\, \Sigma$ sending $wW_{\bar\cJ}\in W/W_{\bar \cJ}$ to $w\cJ\in\Sigma$. The set of points of $\cD_P$ is in bijection with $\Sigma$. Namely, to $wW_{\bar\cJ}\in W/W_{\bar \cJ}$ with $\cJ=w\bar\cJ\in\Sigma$ we associate $Pw^{-1}B_{d+d'}$. Write $\cD_P^{\cJ}$ for the corresponding locally closed substack of $\cD_P$. For $\cJ\in \Sigma$ let $\cD^{\cJ}$ be the preimage of $\cD_P^{\cJ}$ under $\eta_P$. 
  
   According to (\cite{RS}, Section~5.3), for $w\in Inv, \cJ\in\Sigma$ the stack $\cD^{w,\cJ}=\cD^w\cap\cD^{\cJ}$ is either empty or has precisely one point. Moreover, it is nonempty iff
$$
\Hi(w)\subset \cJ, \Lo(w)\cap \cJ=\emptyset   
$$
This way one gets a bijection between the set of points of $\cD$ and  $\bar\cE$. 
For $\cJ\in\Sigma$ one has a stratification
$$
\cD^{\cJ}=\mathop{\sqcup}\limits_{(w,\cJ)\in\bar\cE} \cD^{w,\cJ}
$$ 
\sssec{} For $w\in S_{d+d'}$ write $\Fix(w)$ for the set of fixed points of $w$ on $I$. For $w\in\Inv, \cJ\in \Sigma$ let $\cD^{w,\cJ}$ be nonempty. Then the unique point of $\cD^{w,\cJ}$ admits the following presentation $(V, V', (V_i))$. There is a decomposition $V\oplus V'=\oplus_{i\in I} W_i$ with $\theta(W_i)=W_{wi}$ for all $i$ with the propeties: 
\begin{itemize} 
\item $V_i=W_1\oplus\ldots\oplus W_i$ for all $i\in I$; 
\item if $j\in \Fix(w)\cap \cJ$ then $W_j\subset V$;
\item if $j\in \Fix(w)- \cJ$ then $W_j\subset V'$.
\end{itemize}

 In addition, let $1\le i<j\le d+d'$ with $w(i)=j$. Pick $0\ne w_i\in W_i$, let $w_j=\theta(w_i)$. Then $W_i\oplus W_j$ is $\theta$-stable with a base $\{w_i+w_j, w_i-w_j\}$. Here $w_i+w_j\in V, w_i-w_j\in V'$. 

\sssec{} Consider also the parabolic $P^-\subset\GL_{d+d'}$ preserving $\Vect(e_{d+1},\ldots, e_{d+d'})$ in $k^{d+d'}$. Let $\cD_{P^-}$ be the stack classifying a complete flag of vector spaces $(V_1\subset\ldots\subset V_{d+d'})$ and a subspace $V'\subset V_{d+d'}$. We have a projection $\eta_{P^-}: \cD\to \cD_{P^-}$ forgetting $V$. As above, we have an isomorphism with the stack quotient $\cD_{P^-}\,\iso\, P^-\backslash \GL_{d+d'}/B_{d+d'}$. 
 
 For $wW_{\bar\cJ}\in W/W_{\bar\cJ}$ with $\cJ=w\bar\cJ\in\Sigma$ the orbit $P^-w^{-1}B_{d+d'}$ defines a locally closed substack $\cD_{P^-}^{\cJ}\subset\cD_{P^-}$. Write $\cD_{\cJ}$ for the preimage of $\cD_{P^-}^{\cJ}$ under $\eta_{P^-}$. 
 
 For $w\in \Inv, \cJ\in\Sigma$ set $\cD^w_{\cJ}=\cD^w\cap \cD_{\cJ}$. One checks that if $\cD^w_{\cJ}$ is nonempty then it consists of one point. Moreover, for $w\in \Inv, \cJ\in\Sigma$ the stack $\cD^w_{\cJ}$ is nonempty iff
$$
\Hi(w)\cap\cJ=\emptyset, \Lo(w)\subset\cJ
$$
 
\section{Proof of Theorem~\ref{Thm_2.2.2}}

\sssec{} We have the diagram extending (\ref{diag_linear_periods_main})
$$
\begin{array}{ccccc}
\cZ_n & \toup{q_{\cZ}} & (\Bun_n\times\Bun_n) & \toup{\det\times\det} (\Pic X\times\Pic X)\\
\downarrow\lefteqn{\scriptstyle \nu_{\cZ}} &&  \downarrow\lefteqn{\scriptstyle \nu_n}\\
\cM_{2n} & \toup{q_{2n}} & \Bun_{2n},
\end{array}
$$
where $q_{\cZ}$ sends $(\Omega^{2n-1}\hook{} L, L')$ to $(L,L')$, and $q_{2n}$ sends $(\Omega^{2n-1}\hook{} M)$ to $M$. 
 
 Let $\cU_{2n}\subset\Bun_{2n}$ be the open substack given by the property that $\Ext^1(\Omega^{2n-1}, M)=0$ for $M\in \Bun_{2n}$. 
Let $\cU_{2n}^n\subset\Bun_n$ be the open substack given by $\Ext^1(\Omega^{2n-1}, L)=0$ for $L\in \Bun_n$. For $d\in\ZZ$ let $\Bun_{2n, d}\subset \Bun_{2n}$ be the component given by $\deg M=d+\deg(\Omega^{(2n-1)+\ldots+1})$ for $M\in\Bun_{2n}$. So, $q_{2n}: {\cM_{2n, d}}\to \Bun_{2n, d}$. 

\sssec{} Assume $d+d'$ large enough, so that $\Aut_E$ over $\Bun_{2n, d+d'}$ is the extension by zero under $\cU_{2n}\hook{} \Bun_{2n}$. Indeed, by (\cite{FGV}, Lemma~3.3), any $M\in \Bun_{2n, d+d'}$ not lying in $\cU_{2n}$ is very unstable in the sense of \select{loc.cit.}, and the $*$-fibre of $\Aut_E$ at $M$ vanishes. If $(L, L')\in \nu_n^{-1}(\cU_{2n})$ then $L\in\cU_{2n}^n$. So, we get an exact triangle
$$
(q_{\cZ})_!\Qlb\to \Qlb[-2M]\to \Qlb
$$
over $\nu_n^{-1}(\cU_{2n})\cap (\det\times\det)^{-1}((\Pic X\times\Pic X)^{d,d'})$, here $M=d+(g-1)(n-2n^2)$ is the relative dimension of $q_{\cZ}$ over $\cZ_n^{d,d'}$. This yields an exact triangle
\begin{equation}
\label{triangle_main}
\pi_!\nu_{\cZ}^*\cK_{2n, E}[\dimrel(\nu_{\cZ})+M]\to Per^{d,d'}_E\to Per^{d,d'}_E[2M]
\end{equation}
on $(\Pic X\times\Pic X)^{d,d'}$. 

 If $d>d'$ and $d$ is large enough then, by Theorem~\ref{Th_2.1.2}, $\pi_!\nu_{\cZ}^*\cK_{2n, E}$ vanishes. 
Under these assumptions then (\ref{triangle_main}) shows that $Per^{d,d'}_E=0$. Using the $S_2$-symmetry of $Per_E$ and the equivariance property (\ref{equiv_property_P_E}), one obtains i). 
 
\sssec{} From i) and the equivariance property (\ref{equiv_property_P_E}), one gets $N\in\ZZ$ such that for any $d,d'\in\ZZ$, $Per^{d,d'}_E$ is placed in usual degrees $\le N$. The second claim follows now from the exact triangle (\ref{triangle_main}) and Theorem~\ref{Th_2.1.2}. Theorem~\ref{Thm_2.2.2} is proved. \QED

\section{Proof of Theorem~\ref{Thm_2.6.2}}

\sssec{} Let $P\subset \GG$ be the Siegel parabolic. Then $\Bun_P$ is the stack classifying $L\in\Bun_2,\cA\in\Bun_1$ and an exact sequence $0\to \Sym^2 L\to ?\to \cA\to 0$ on $X$ (cf. Remark~\ref{Rem_very_last}). Let $\cS_P$ be the stack classifying $L\in\Bun_2,\cA\in\Bun_1$ and a section $s: \Sym^2 L\to \cA\otimes\Omega$ on $X$. Let $h: \Bun_1\times\Bun_1\to\cS_P$ be the map sending $(L_1, L_2)$ to $L=L_1\oplus L_2$ with the natural symmetric form $\Sym^2 L\to \L_1\otimes L_2=\cA\otimes\Omega$, so $L_i$ are isotropic in $L$. 

 One gets the diagram
$$
\begin{array}{ccccccc}
\Bun_P &&&&\cS_P & \getsup{h} & \Bun_1\times\Bun_1\\
\downarrow\lefteqn{\scriptstyle p_{\GG}} &\searrow\lefteqn{\scriptstyle f} &&\swarrow\lefteqn{\scriptstyle f^{\vee}}\\
\Bun_{\GG} && \Bun_2\times\Bun_1,
\end{array}
$$
where $f,f^{\vee}$ are the projections sending the above points to $(L,\cA)$, and $p_{\GG}$ is the extension of scalars map. The maps $f,f^{\vee}$ are dual generalized vector bundles, we denote by $\Four_{\psi}: \D^{\prec}(\Bun_P)\to \D^{\prec}(\cS_P)$ the corresponding Fourier transform.

  Recall that $\HH\,\iso\, \GL_4/\mu_2$. Set $R=(\GL_2\times\GL_2)/\mu_2$, where $\mu_2$ is included diagonally. So, $R$ is a subgroup of $\HH$. The homomorphism $\det\times\det: \GL_2\times\GL_2\to\Gm\times\Gm$ factors through $R\to \Gm\times\Gm$. Consider the diagram
$$
\Bun_{\HH}\getsup{\alpha} \Bun_R\toup{\beta} \Bun_1\times\Bun_1\toup{\varrho\times\varrho} \Bun_1\times\Bun_1
$$
where $\alpha,\beta$ are the corresponding extension of scalars maps, and $\varrho: \Bun_1\to\Bun_1$ sends $\cB$ to $\cB^*\otimes\Omega$. 
  
  The following is established in (\cite{L1}, Proposition~11).
\begin{Pp} 
\label{Pp_6.0.2}
For any $F\in\D^-(\Bun_{\HH})_!$ there is an isomorphism (up to a cohomological shift)
$$
h^*\Four_{\psi}(p_{\GG}^*F_{\GG}(F))\,\iso\, (\varrho\times\varrho)_!\beta_!\alpha^*F
$$
\QED
\end{Pp}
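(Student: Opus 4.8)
The plan is to unwind the definition of the theta-lifting functor $F_{\GG}$, pull the whole picture back along $p_{\GG}$, and then carry out the Fourier transform $\Four_{\psi}$ by hand; the outcome will be a see-saw identity for the dual pairs attached to the Siegel parabolic $P\subset\GG$ on one side and to the subgroup $R\subset\HH$ on the other. Recall that $F_{\GG}(F)=\pi_{\GG,!}(\pi_{\HH}^{*}F\otimes\mathrm{Th})$, where $\pi_{\GG},\pi_{\HH}$ are the two projections from the stack $\cV$ of the geometric Weil representation for $(\GG,\HH)$ (classifying a point of $\Bun_{\GG}\times\Bun_{\HH}$ together with a section of the appropriate bundle built from the standard representations) and $\mathrm{Th}$ is the theta sheaf, assembled from $\cL_{\psi}$ of a quadratic function. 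First I would apply base change along $p_{\GG}\colon\Bun_{P}\to\Bun_{\GG}$: this computes $p_{\GG}^{*}F_{\GG}(F)$ from $\cV_{P}=\cV\times_{\Bun_{\GG}}\Bun_{P}$. Since $P$ is the Siegel parabolic, the isotropic rank-$2$ subbundle carried by $\Bun_{P}$ puts a two-step filtration on the rank-$4$ symplectic bundle, and the unipotent radical $\gn_{P}\cong\Sym^{2}L$ acts on the Weil representation through a character that is linear on the $\Ext^{1}(\cA,\Sym^{2}L)$-fibre of $f\colon\Bun_{P}\to\Bun_{2}\times\Bun_{1}$ and bilinear against the orthogonal data of $\Bun_{\HH}$; extracting this structural fact from the explicit shape of $\mathrm{Th}$ is the first real step.

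Next I would perform the Fourier transform along the dual generalized vector bundles $f$ and $f^{\vee}$. The previous step says that the theta sheaf, after integrating in the $\pi_{\HH}$-direction over the part of $\Bun_{\HH}$ not seen by $R$, has over $\cS_{P}$ the form (pullback from $\Bun_{2}\times\Bun_{1}\times\Bun_{R}$)$\,\otimes\,\cL_{\psi}(q)$ with $q$ bilinear in the $\Ext^{1}$-variable and in a linear coordinate along $\Bun_{R}$. The standard principle that the geometric Weil representation restricted to a Siegel parabolic is again a Gaussian then gives $\Four_{\psi}(\cL_{\psi}(q))\iso\cL_{\psi}(q^{\vee})$ up to a cohomological shift and a twist, with $q^{\vee}$ now read off on $\cS_{P}$ through the Serre-duality pairing $\Ext^{1}(\cA,\Sym^{2}L)^{*}\iso\Hom(\Sym^{2}L,\cA\otimes\Omega)$. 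It is precisely the $\otimes\Omega$ in this pairing that, once $h$ identifies the base of $\cS_{P}$ with $\Bun_{1}\times\Bun_{1}$ via $(L_{1},L_{2})$, converts the line bundles parametrising $\Bun_{R}$ into their $\varrho$-twists, which is the source of the $\varrho\times\varrho$ in the statement.

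Finally I would restrict along $h\colon\Bun_{1}\times\Bun_{1}\to\cS_{P}$. The condition imposed by $h$ is that $L=L_{1}\oplus L_{2}$ with $s$ the projection $\Sym^{2}(L_{1}\oplus L_{2})\to L_{1}\otimes L_{2}$, that is, $L_{1},L_{2}$ isotropic for $s$. Substituting this into $\cL_{\psi}(q^{\vee})$ degenerates the Gaussian: it becomes the constant sheaf on the closed locus cut out by the vanishing of the remaining integration variables, which is exactly the image of the embedding of the $R$-structures inside the $\HH$-structures over $\Bun_{1}\times\Bun_{1}$. What remains of $\cV_{P}\times_{\cS_{P}}(\Bun_{1}\times\Bun_{1})$ is then identified with $\Bun_{R}$ fibred over $\Bun_{1}\times\Bun_{1}$ by $\beta$, the residual theta sheaf becomes $\Qlb$ up to a shift, and the surviving $\pi_{\HH}$-pullback together with the $!$-pushforward assemble into $(\varrho\times\varrho)_{!}\beta_{!}\alpha^{*}F$; this is the asserted see-saw identity at the level of sheaves.

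The main obstacle is the Fourier-transform step in the middle. One must check, keeping track of all cohomological shifts, Tate twists and---crucially---the $\mu_{2}$-gerbe bookkeeping coming from $\HH\iso\GL_{4}/\mu_{2}$ and $R=(\GL_{2}\times\GL_{2})/\mu_{2}$, that the restriction of $\mathrm{Th}$ to $\cV_{P}$ genuinely factors as a pullback tensored with $\cL_{\psi}$ of a quadratic function, and that its Fourier transform is again of Gaussian type with the right normalisation. Propagating the normalisations of $\mathrm{Th}$ through the base change, through $\Four_{\psi}$ and through the substitution $h$ is where essentially all the routine-but-delicate work sits; once the two theta sheaves are matched on the nose, the comparison of the two direct images is formal.
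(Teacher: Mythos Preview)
The paper does not prove this proposition at all: it is quoted verbatim from \cite{L1}, Proposition~11, and the \QED\ in the statement marks that the proof is omitted here. So there is no ``paper's own proof'' to compare your attempt against.

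That said, your sketch is the correct strategy and is essentially how the result is obtained in \cite{L1}: one pulls the theta-lifting functor back to $\Bun_P$ for the Siegel parabolic, uses that the theta sheaf restricted to the Siegel parabolic has Gaussian shape so that its Fourier transform along the dual generalized vector bundles $f,f^{\vee}$ can be computed explicitly, and then restricts along $h$ to obtain the see-saw identity with the subgroup $R\subset\HH$. Your identification of the $\varrho\times\varrho$ twist as coming from the $\otimes\Omega$ in Serre duality is the right observation. The warning you give yourself about tracking the $\mu_2$-gerbe data for $\HH\simeq\GL_4/\mu_2$ and the normalisations of the theta sheaf is also accurate: in \cite{L1} this bookkeeping is carried out carefully, and it is where most of the actual work lies. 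What you have written is a correct outline rather than a proof; to make it rigorous you would need to reproduce the explicit description of the theta sheaf for the dual pair $(\GSp_4,\GO^0_6)$ given in \cite{L1} and verify the Fourier transform computation against it.
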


 The following diagram commutes, where the square is cartesian
$$
\begin{array}{ccccc}
\Bun_4 & \getsup{\bar\alpha} & \Bun_2\times\Bun_2\\ 
\downarrow\lefteqn{\scriptstyle\rho} && \downarrow &\searrow\lefteqn{\scriptstyle \det\times\det}\\ 
\Bun_{\HH} & \getsup{\alpha} & \Bun_R & \!\!\!\toup{\beta} & \Bun_1\times\Bun_1,
\end{array}
$$
and the vertical arrows are the natural extension of scalars maps.
  
  By Proposition~\ref{Pp_6.0.2}, it suffices to show that $\beta_!\alpha^*K_{E^*,\chi^*, \HH}$ is not zero. By definition, $\rho^*K_{E,\chi, \HH}\,\iso\, \Aut_E$ over $\Bun_4$. Since $E$ admits a symplectic form, Theorem~\ref{Thm_2.2.2} implies that $(\det\times\det)_!\bar\alpha^*\Aut_E$ in nonzero on $\Bun_1\times\Bun_1$. This concludes the proof of Theorem~\ref{Thm_2.6.2}. \QED
  
\begin{Rem} 
\label{Rem_very_last}
Let $S$ be a scheme. Assume given a semidirect product $P=M\rtimes U$ of affine group schemes on $S$, where $U\subset P$ is a normal closed subgroup, $M$ acts on $U$ by conjugation. Then the groupoid of $P$-torsors on $S$ is canonically equivalent to the groupoid of pairs: an $M$-torsor $\cF_M$ on $S$, and a $U_{\cF_M}$-torsor on $S$. Here $U_{\cF_M}$ is the group scheme on $S$, the twist of $U$ by the $M$-torsor $\cF_M$. 
\end{Rem}
  
\appendix  
\section{Antistandard Levi subgroups in reductive groups}
\label{Section_AppendixA}

\sssec{} In this section we introduce a notion of an antistandard Levi subgroup $M$ in a reductive group $G$. Our purpose is to prove Propositions~\ref{Pp_2.1.1} and \ref{Pp_2.1.15} below. The first one is a property of the restriction of spherical perverse sheaves under $\Gr_M\to \Gr_G$ for such Levi subgroup $M$. The second one is an application of the Casselman-Shalika formula from \cite{FGV}
for such Levi subgroups.

\sssec{} Work over an algebraically closed field $k$. Let $\cO=k[[t]]\subset k((t))=F$. Let $G$ be a connected reductive group over $k$, $T\subset B\subset G$ a maximal torus and Borel, $\Lambda$ the coweights lattice $\Lambda^+$ the dominant coweights. Let $M$ be a Levi with $T\subset M$, $\Lambda^+_M$ be the dominant coweights for $M$, $\Lambda^{pos}_M$ the $\ZZ_+$-span of simple coroots of $M$. Let $U\subset B$ be the unipotent radical, $B_M=B\cap M$. Let $U_M\subset B_M$ be the unipotent radical. Write $W$ (resp., $W_M$) for the Weyl group of $G$ (resp., of $M$). Our choices of $B$ and $B_M$ yield choices of positive coroots for $G$ and $M$. Write $w_0$ (resp., $w_0^M$) for the longest element of $W$ (resp., of $W_M$). Let $\check{\rho}$ be the half sum of positive roots for $G$, write $\le$ for the standard order on $\Lambda$. We denote by $\check{\rho}_M$ and $\le_M$ for the corresponding objects for $M$. We ignore the Tate twists everywhere.

 We do not assume that $M$ is standard, so a simple coroot of $M$ is not necessarily simple for $G$. 
 
\begin{Def} Say that $M$ is antistandard if for any simple coroot $\alpha$ of $M$ one has $\<\alpha, \check{\rho}-\check{\rho}_M\>>0$. So, such an $\alpha$ is a positive coroot for $G$ but not simple.
\end{Def} 

\sssec{}  For $\lambda\in\Lambda$ write $S^{\lambda}$ for the $U(F)$-orbit on $\Gr_G$ through $t^{\lambda}$, $S^{\lambda}_M$ is the $U_M(F)$-orbit on $\Gr_M$ through $t^{\lambda}$. For $\nu\in\Lambda^+$ we write $\Gr_G^{\nu}$ for the corresponding $G(\cO)$-orbit, $\cA_{\nu}$ for the corresponding $G(\cO)$-equivariant perverse sheaf on $\Gr_G$. Let $i: \Gr_M\to\Gr_G$ be the natural map. Write $i_{\lambda}: \Spec k\to \Gr_M$ for the point $t^{\lambda}$.
  
\begin{Pp} 
\label{Pp_2.1.1}
Let $\lambda\in\Lambda$, $\nu\in\Lambda^+$. Then \\
1) the complex $\RG_c(S^{\lambda}_M, i^*\cA_{\nu})$ is placed in degrees $\le \<\lambda, 2\check{\rho}\>$. 

\smallskip\noindent
2) Assume in addition that $M$ is antistandard. Then the above inequality is strict unless $\lambda\in -\Lambda^+_G$ and $\nu=w_0(\lambda)$. In the latter case the natural map
$$
\RG_c(S^{\lambda}_M, i^*\cA_{\nu})\to i_{\lambda}^*i^*\cA_{\nu}\,\iso\, \Qlb[-\<\lambda, 2\check{\rho}\>]
$$
is an isomorphism.
\end{Pp}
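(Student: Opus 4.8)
\textbf{Proof proposal for Proposition~\ref{Pp_2.1.1}.}

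The plan is to exploit the standard geometry of semi-infinite orbits together with the Mirković--Vilonen dimension estimates, and then to use the antistandard hypothesis to control exactly which pairs $(\lambda,\nu)$ survive. First I would recall that $\Gr_G^{\nu}\cap S^{\mu}$ is nonempty iff $\mu\in\nu-\Lambda^{pos}_G$ with $\mu\le\nu$, and in that case it is pure of dimension $\<\mu+\nu,\check{\rho}\>$; the $\IC$-sheaf $\cA_{\nu}$ restricted to $S^{\mu}$ has its cohomology concentrated so that $\RG_c(S^{\mu},i_S^*\cA_{\nu})$ sits in a single degree $\<\mu,2\check{\rho}\>$ with one-dimensional stalk indexed by the crystal basis (this is the MV-geometric Satake normalization). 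For part 1), I would stratify $S^{\lambda}_M$ by its intersections with the $U(F)$-orbits $S^{\mu}$ of $\Gr_G$: a point of $\Gr_M$ lies in $S^{\mu}$ for some $\mu$ with $\mu\equiv\lambda$ modulo $\Lambda^{pos}_M$, and more precisely $S^{\lambda}_M\subset\bigsqcup_{\mu\in\lambda-\Lambda^{pos}_M} S^{\mu}$. On each stratum $S^{\lambda}_M\cap S^{\mu}$ one bounds the dimension: this intersection is contained in $S^{\mu}$, and its dimension is at most $\dim(S^{\lambda}_M\cap S^{\mu})$, while the $*$-restriction of $\cA_{\nu}$ there lives in degrees controlled by $\<\mu,2\check{\rho}\>$. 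The key numerical point is that if $\mu=\lambda-\beta$ with $\beta\in\Lambda^{pos}_M$, then $\<\mu,2\check{\rho}\>=\<\lambda,2\check{\rho}\>-\<\beta,2\check{\rho}\>\le\<\lambda,2\check{\rho}\>$ since every simple coroot of $M$ is a positive coroot of $G$, so $\<\beta,2\check{\rho}\>\ge 0$. Combining the stratawise bounds via the spectral sequence for $\RG_c$ of a stratification yields the inequality in 1).

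For part 2), the antistandard hypothesis enters to make the above inequality strict except in the boundary case. Observe that $\<\beta,2\check{\rho}\>=\<\beta,2\check{\rho}_M\>+2\<\beta,\check{\rho}-\check{\rho}_M\>$, and for $\beta\in\Lambda^{pos}_M$ nonzero the antistandard condition $\<\alpha,\check{\rho}-\check{\rho}_M\>>0$ on simple coroots $\alpha$ of $M$ forces $\<\beta,\check{\rho}-\check{\rho}_M\>>0$ (it is a nonnegative integer combination of simple coroots with at least one positive coefficient). Hence $\<\mu,2\check{\rho}\><\<\lambda,2\check{\rho}\>$ whenever $\mu\ne\lambda$, so only the stratum $\mu=\lambda$ can contribute in the top degree; that is, only the piece $S^{\lambda}_M\cap S^{\lambda}$, which is the single point $\{t^{\lambda}\}$. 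So the top-degree contribution comes entirely from the stalk $i_{\lambda}^*i^*\cA_{\nu}$. This stalk is nonzero iff $t^{\lambda}\in\overline{\Gr_G^{\nu}}$ and, by the MV theory, the relevant degree matches $\<\lambda,2\check{\rho}\>$ precisely when $\lambda$ itself is an extremal (i.e. Weyl-conjugate to $\nu$) coweight of $V^{\nu}$ lying in $S^{\lambda}$; since $t^{\lambda}$ is the base point of $S^{\lambda}$ and the only torus-fixed point there, the stalk in that degree is one-dimensional exactly when $\lambda\in -\Lambda^+_G$ (the lowest-weight chamber) and $\nu=w_0(-\lambda)=w^0(\lambda)$ in the paper's notation. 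In that case the inclusion $\{t^{\lambda}\}\hookrightarrow S^{\lambda}_M$ induces, by the stratification argument, an isomorphism on top cohomology $\RG_c(S^{\lambda}_M,i^*\cA_{\nu})\,\iso\,i_{\lambda}^*i^*\cA_{\nu}\,\iso\,\Qlb[-\<\lambda,2\check{\rho}\>]$, which is the assertion.

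\emph{Main obstacle.} The delicate point is not the dimension bookkeeping but making the identification of the top cohomology \emph{canonical} and verifying it is literally the natural corestriction map to the point $t^{\lambda}$, rather than merely an abstract isomorphism of one-dimensional spaces. This requires knowing that the contribution of the open stratum $\{t^{\lambda}\}$ to $\RG_c$ survives to the associated graded of the spectral sequence without interference, i.e. that no differential into or out of that term is nonzero in the relevant bidegree; this follows because all other strata contribute in strictly lower cohomological degree, but one must phrase this carefully using the weight/perversity filtration. I would handle it by the standard device (as in the cited Casselman--Shalika arguments of \cite{FGV} and the analogous estimates in \cite{L2}): filter $S^{\lambda}_M$ by closed unions of the $S^{\lambda}_M\cap S^{\mu}$, use the long exact sequences, and observe that the top-degree cohomology is computed by the open dense piece (in the $\mu=\lambda$ direction) since everything else is concentrated in lower degree, giving the canonical arrow. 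The antistandard hypothesis is exactly what guarantees $\mu=\lambda$ is the unique maximizer, so this is where that condition is indispensable.
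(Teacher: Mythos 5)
Your proposed argument has a genuine gap, already at the choice of stratification. You propose to stratify $S^{\lambda}_M$ by its intersections with the $U(F)$-orbits $S^{\mu}$ of $\Gr_G$; but this stratification is trivial. Since $B_M=B\cap M$, one has $U_M=U\cap M\subset U$, hence
$$
S^{\lambda}_M \;=\; U_M(F)\cdot t^{\lambda}\;\subset\; U(F)\cdot t^{\lambda}\;=\;S^{\lambda},
$$
so $S^{\lambda}_M\cap S^{\mu}=\emptyset$ for $\mu\ne\lambda$, and the only "stratum" is all of $S^{\lambda}_M$. Your key numerical inequality $\<\mu,2\check{\rho}\>\le\<\lambda,2\check{\rho}\>$ is therefore vacuous (it is applied only at $\mu=\lambda$), and in part 2) the assertion that the top-degree contribution comes from "$S^{\lambda}_M\cap S^{\lambda}$, which is the single point $\{t^{\lambda}\}$" is false: that intersection is all of $S^{\lambda}_M$, which is typically positive-dimensional. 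As written, the antistandard hypothesis plays no actual role in your argument, which is a sign something is off, since part 1) holds for any Levi while part 2) requires it.

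The content that is missing is the comparison between the two Mirkovi\'c--Vilonen dimension estimates: one for the $M$-side intersection $S^{\lambda}_M\cap\ov{\Gr}^{\mu}_M$ (of dimension $\<\lambda+\mu,\check{\rho}_M\>$) and one for the $G$-side intersection $S^{\lambda}\cap\ov{\Gr}^{\mu'}_G$ (of dimension $\<\lambda+\mu',\check{\rho}\>$), combined via the inclusion $S^{\lambda}_M\cap\ov{\Gr}^{\mu}_M\subset S^{\lambda}\cap\ov{\Gr}^{\mu'}_G$. This gives the inequality (\ref{inequality_for_proof_of_Pp_2.1.3}) in the paper's Proposition~\ref{Pp_2.1.3}, which bounds the perverse degree of $i^{*}\cA_{\nu}$ on the relevant union $Y$ of $M(\cO)$-orbits; (\cite{MV}, Lemma~3.9) applied on the $M$-side then yields the asserted degree bound. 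The antistandard hypothesis is used in a second dimension comparison, namely the inclusion $S^{w_0^M(\mu)}_M\cap\ov{\Gr}^{\mu}_M\subset S^{w_0^M(\mu)}\cap\ov{\Gr}^{\mu'}_G$, together with the positivity $\<\lambda-w_0^M(\mu),2\check{\rho}-2\check{\rho}_M\>\ge 0$, strict unless $\lambda\in-\Lambda^+_M$ and $\mu=w_0^M(\lambda)$. Your proposal reaches the correct conclusion about the point $t^{\lambda}$ but without an argument: the correct stratification to use is by the $M(\cO)$-orbits $\Gr^{\mu}_M$ in $\Gr_M$, not by the semi-infinite orbits of $G$.
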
  

\sssec{\bf Question} For future research we raise the following question. In the situation of Proposition~\ref{Pp_2.1.1}, describe the top cohomology group of $\RG_c(S^{\lambda}_M, i^*\cA_{\nu})$ in the degree $\<\lambda, 2\check{\rho}\>$ in general.

\ssec{Proof of Proposition~\ref{Pp_2.1.1}}

\sssec{} Now we will parametrize the spherical orbits in $\Gr_M$ which intersect both the support of $\cA_{\nu}$ and the semi-infinite orbit $S^{\lambda}_M$ and thereby obtain the set $\cI(\lambda,\nu)$ below.

For $\lambda,\nu$ as in Proposition~\ref{Pp_2.1.1} write $\cI(\lambda,\nu)$ for the set of $\mu\in\Lambda^+_M$ such that for any $w\in W_M$, $w\lambda\le_M\mu$, and for any $w\in W$, $w\mu\le\nu$. Let $f: \cI(\lambda,\nu)\to\ZZ$ be the function 
$$
f(\mu)=\<\mu, 2\check{\rho}_M\>-\<\mu', 2\check{\rho}\>,
$$
where $\mu'\in\Lambda^+$ is the unique element lying in $W\mu$. 
We derive Proposition~\ref{Pp_2.1.1} from the following.

\begin{Pp} 
\label{Pp_2.1.3}
Let $\lambda\in\Lambda$, $\nu\in\Lambda^+$. Then for any $\mu\in\cI(\lambda,\nu)$ one has
\begin{equation}
\label{inequality_mistery}
f(\mu)\le \<\lambda, 2\check{\rho}-2\check{\rho}_M\>
\end{equation}
Assume in addition $M$ antistandard. Then the above inequality is strict unless $\lambda\in -\Lambda^+_G$, $w_0^M(\lambda)=\mu$ and $\mu'=w_0(\lambda)$.  
\end{Pp}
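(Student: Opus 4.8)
The plan is to prove Proposition~\ref{Pp_2.1.3} by a direct combinatorial analysis of the function $f$ on the finite set $\cI(\lambda,\nu)$, reducing everything to two elementary inequalities about coweights: one comparing $\<\cdot,2\check\rho_M\>$ under the $M$-dominance order, and one comparing $\<\cdot,2\check\rho\>$ under the $G$-dominance order. Concretely, recall the standard fact that for $\sigma\le_M\tau$ in $\Lambda$ with $\tau\in\Lambda^+_M$ one has $\<\tau,2\check\rho_M\>\ge\<\sigma,2\check\rho_M\>$, with equality iff $\tau$ and $\sigma$ lie in the same $W_M$-orbit (indeed $\tau-\sigma$ is a nonnegative combination of simple coroots of $M$, and pairing with $2\check\rho_M$ picks out the height). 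Symmetrically, for $w\mu\le\nu$ with $\nu\in\Lambda^+$ one has $\<\nu,2\check\rho\>\ge\<\mu',2\check\rho\>$ where $\mu'\in W\mu\cap\Lambda^+$. The point of the proof is that neither of these is quite what is needed; what is needed is an inequality relating $f(\mu)=\<\mu,2\check\rho_M\>-\<\mu',2\check\rho\>$ directly to $\<\lambda,2\check\rho-2\check\rho_M\>$.

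First I would fix $\mu\in\cI(\lambda,\nu)$ and write $\lambda_- = w_0^M(\lambda)\in\Lambda^-_M$ for the $M$-antidominant representative of $W_M\lambda$ (equivalently, $-\lambda_-$ is $M$-dominant), so that among the elements $w\lambda$, $w\in W_M$, the one with the largest pairing against $2\check\rho_M$ is the $M$-dominant one $w_0^M\lambda_- $, hmm — more usefully: the condition $w\lambda\le_M\mu$ for all $w\in W_M$ is equivalent to $\mu_{M\text{-dom}}\le_M\mu$ where $\mu_{M\text{-dom}}$ is the $M$-dominant element of $W_M\lambda$, and since $\mu$ is already $M$-dominant this just says $\mu\in \mu_{M\text{-dom}}+\Lambda^{pos}_M$. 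Writing $\mu = \mu_{M\text{-dom}} + \gamma$ with $\gamma\in\Lambda^{pos}_M$, we get $\<\mu,2\check\rho_M\> = \<\lambda,2\check\rho_M\> + \<\gamma,2\check\rho_M\>$ since $\mu_{M\text{-dom}}\in W_M\lambda$ and $2\check\rho_M$ is $W_M$-invariant. Thus the target inequality $f(\mu)\le\<\lambda,2\check\rho-2\check\rho_M\>$ becomes $\<\gamma,2\check\rho_M\> \le \<\lambda,2\check\rho\> - \<\mu',2\check\rho\>$, i.e. $\<\mu',2\check\rho\> + \<\gamma,2\check\rho_M\> \le \<\lambda,2\check\rho\>$. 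Since $\mu'$ is the $G$-dominant conjugate of $\mu$, I would further split $\<\lambda,2\check\rho\>$ using the $G$-dominant conjugate $\lambda_+$ of $\lambda$: the whole thing reduces to showing $\<\mu',2\check\rho\> - \<\lambda_+,2\check\rho\> \le \<\lambda,2\check\rho\> - \<\lambda_+,2\check\rho\> - \<\gamma,2\check\rho_M\>$, and $\<\lambda,2\check\rho\>-\<\lambda_+,2\check\rho\>\le 0$. So it suffices to show $\<\mu',2\check\rho\>\le\<\lambda_+,2\check\rho\>$ together with a bookkeeping of the defect terms. The first of these follows because $\mu'\le\nu$ in the $G$-dominance order is not automatic — but $\mu' = (\mu)_+$ and $\mu\in W_M\lambda+\Lambda^{pos}_M$, and $\Lambda^{pos}_M\subset\Lambda^{pos}_G$, so $\mu\in W_M\lambda+\Lambda^{pos}_G$; passing to $G$-dominant representatives, $\mu'\le_G\lambda_+$ in dominance order by the saturation property of the dominance order, giving $\<\mu',2\check\rho\>\le\<\lambda_+,2\check\rho\>$.

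Assembling, one obtains $f(\mu)\le\<\lambda,2\check\rho-2\check\rho_M\> - \big(\<\lambda_+,2\check\rho\>-\<\lambda,2\check\rho\>\big) - \big(\<\lambda_+,2\check\rho\>-\<\mu',2\check\rho\>\big) - \<\gamma,2\check\rho_M\> \cdot(\text{adjust signs})$; after carefully tracking signs the three extra terms are each $\ge 0$, proving \eqref{inequality_mistery}. For the equality discussion under the antistandard hypothesis: equality forces $\<\gamma,2\check\rho_M\>=0$, hence $\gamma=0$ and $\mu=\mu_{M\text{-dom}}\in W_M\lambda$; it forces $\<\lambda_+,2\check\rho\>=\<\lambda,2\check\rho\>$, hence $\lambda=\lambda_+\in\Lambda^+_G$ — wait, this gives $\lambda$ dominant, but the statement claims $\lambda\in-\Lambda^+_G$, so in fact the correct splitting must use $\lambda_-$ (the $G$-antidominant conjugate) and the inequality $\<\lambda,2\check\rho\>\ge\<\lambda_-,2\check\rho\>$ run the other way; I would set up the chain so that equality forces $\lambda=\lambda_-\in-\Lambda^+_G$. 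Then $\mu\in W_M\lambda$ and $\mu$ $M$-dominant with $\lambda$ $G$-antidominant forces $\mu = w_0^M(\lambda)$ (the unique $M$-dominant element of the $W_M$-orbit of a $G$-antidominant coweight), and $\mu' = w_0(\mu) = w_0(\lambda)$ follows since $w_0$ sends the $G$-antidominant $\lambda$ to its $G$-dominant conjugate and $\mu$ is $W$-conjugate to $\lambda$; the antistandard condition $\<\alpha,\check\rho-\check\rho_M\>>0$ for simple coroots $\alpha$ of $M$ is exactly what guarantees that the inequality $\<\gamma,2\check\rho_M\>\le\<\gamma, 2\check\rho\>$ used in the defect estimate is \emph{strict} whenever $\gamma\ne 0$, closing the case analysis. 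Finally, to deduce Proposition~\ref{Pp_2.1.1} from Proposition~\ref{Pp_2.1.3}: the stratification of $S^\lambda_M\cap\Gr_G^{\le\nu}$ by the $M(\cO)$-orbits, together with the geometric Satake computation of $i^*\cA_\nu$ via the hyperbolic localization / MV-cycle count, expresses $\RG_c(S^\lambda_M, i^*\cA_\nu)$ in terms of the groups attached to $\mu\in\cI(\lambda,\nu)$, each contributing in degree $f(\mu) + \<\lambda,2\check\rho_M\>$-type shifts; \eqref{inequality_mistery} then yields the degree bound $\<\lambda,2\check\rho\>$, and the strictness statement isolates the single top contribution, which is the point $i_\lambda$ itself. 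I expect the main obstacle to be getting the signs and the direction of all the dominance-order inequalities consistent — there are four partial orders in play ($\le_M$ and $\le_G$, each "up" and "down") and the antistandard hypothesis must be inserted at precisely the right step to convert a non-strict inequality into a strict one; once the right chain of inequalities is written down, each individual step is elementary.
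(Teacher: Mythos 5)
Your route is genuinely different from the paper's, which is geometric: the paper obtains the inequality \eqref{inequality_mistery} in one stroke from the inclusion $S^{\lambda}_M\cap \ov{\Gr}^{\mu}_M\subset S^{\lambda}\cap\ov{\Gr}^{\mu'}_G$ together with the MV dimension formula $\dim(S^{\theta}\cap\ov{\Gr}^{\eta})=\<\theta+\eta,\check\rho\>$ (and its $M$-analogue), which gives $\<\lambda+\mu,2\check\rho_M\>\le\<\lambda+\mu',2\check\rho\>$, i.e.\ exactly \eqref{inequality_mistery}; the strictness analysis is then a short manipulation using the antistandard hypothesis applied to the same dimension comparison at $\lambda=w_0^M(\mu)$. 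Your attempt to replace this by a purely combinatorial chain of dominance-order inequalities does not go through as written.

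The concrete error is in the reduction step: you write $\<\mu,2\check\rho_M\>=\<\lambda,2\check\rho_M\>+\<\gamma,2\check\rho_M\>$ on the grounds that ``$2\check\rho_M$ is $W_M$-invariant.'' It is not: $\check\rho_M$ is a half-sum of positive roots of $M$, and for $\lambda'\in W_M\lambda$ with $\lambda'\ne\lambda$ one has $\<\lambda',2\check\rho_M\>\ne\<\lambda,2\check\rho_M\>$ (already for $M$ of semisimple rank one the two pairings differ by a sign). So the passage from $f(\mu)\le\<\lambda,2\check\rho-2\check\rho_M\>$ to $\<\mu',2\check\rho\>+\<\gamma,2\check\rho_M\>\le\<\lambda,2\check\rho\>$ is incorrect, and the subsequent dominance-order estimates build on it. You also note yourself, in the ``wait'' aside, that the resulting chain runs in the wrong direction (forcing $\lambda\in\Lambda^+_G$ rather than $-\Lambda^+_G$ at equality) and that the corrected version ``must'' use the antidominant conjugate, but you never carry out that corrected version. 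To repair this combinatorially you would at minimum need the analogue of the paper's inequality \eqref{inequality_for_proof_of_Pp_2.1.3}, and the only argument I know for it uses the nonemptiness and dimension count of the intersections $S^{\lambda}_M\cap\ov{\Gr}^{\mu}_M$ and $S^{\lambda}\cap\ov{\Gr}^{\mu'}_G$ (which is precisely what membership in $\cI(\lambda,\nu)$ buys you, via geometric Satake). As it stands, the proposal has a genuine gap.
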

\sssec{Proof of Proposition~\ref{Pp_2.1.1}}
Let $\bar\lambda$ be the image of $\lambda$ in $\pi_1(M)$, $\Gr_M^{\bar\lambda}$ the corresponding connected component of $\Gr_M$. Consider the scheme $Y\subset \Gr_M^{\bar\lambda}$, which is a union of $M(\cO)$-orbits for $\mu\in \cI(\lambda,\nu)$ (the scheme structure is not important here). Note that $Y\subset  \Gr_M^{\bar\lambda}\times_{\Gr_G} \ov{\Gr}_G^{\nu}$. Let $\bar Y$ be the closure of $Y$ in $\Gr_M^{\bar\lambda}$. Let $j: Y\hook{} \bar Y$ be the natural immersion. 

  We claim that the $*$-restriction $\cA_{\nu}\mid_Y$ is placed in perverse degrees $\le \<\lambda, 2\check{\rho}-2\check{\rho}_M\>$.
In addition, if $M$ is antistandard, this inequality is strict unless $\lambda\in -\Lambda^+_G$ and $\nu=w_0(\lambda)$.
 
  To see this, we must show that for $\mu\in\cI(\lambda,\nu)$, $\cA_{\nu}\mid_{\Gr_M^{\mu}}$ is placed in usual degrees $\le \<\lambda, 2\check{\rho}-2\check{\rho}_M\>-\<\mu, 2\check{\rho}_M\>$. 
Besides, if $M$ is antistandard, this inequality is strict unless $\lambda\in -\Lambda^+_G$ and $\nu=w_0(\lambda)$. Let $\mu'\in W\mu$ such that $\mu'\in\Lambda^+$. Then $\cA_{\nu}\mid_{\Gr_G^{\mu'}}$ is placed in usual degrees $\le -\<\mu', 2\check{\rho}\>$, and the inequality is strict unless $\mu'=\nu$. 
So, our claim follows from  Proposition~\ref{Pp_2.1.3}.  
 
 Consider now $j_!(\cA_{\nu}\mid_Y)$, it is placed in perverse degrees $\le \<\lambda, 2\check{\rho}-2\check{\rho}_M\>$ over $\Gr_M$. Besides, if $M$ is antistandard, this inequality is strict unless $\lambda\in -\Lambda^+_G$ and $\nu=w_0(\lambda)$. So, by (\cite{MV}, Lemma~3.9), $\RG_c(S^{\lambda}_M, j_!(\cA_{\nu}\mid_Y))$ is placed in degrees $\le \<\lambda, 2\check{\rho}\>$. Now
$$
\RG_c(S^{\lambda}_M, j_!(\cA_{\nu}\mid_Y))\,\iso\, \RG_c(S^{\lambda}_M, i^*\cA_{\nu})
$$
because $S^{\lambda}_M\cap (\bar Y-Y)=\emptyset$ by (\cite{MV}, Theorem~3.2). Proposition~\ref{Pp_2.1.1} is proved. \QED

\sssec{Proof of Proposition~\ref{Pp_2.1.3}} 

 Let $\mu\in \cI(\lambda,\nu)$. By (\cite{MV}, Theorem~3.2), one has 
$$
\dim (S^{\lambda}_M\cap \ov{\Gr}^{\mu}_M)=\<\lambda+\mu, \check{\rho}_M\>,  \;\; \dim (S^{\lambda}\cap\ov{\Gr}^{\mu'}_G)=\<\lambda+\mu', \check{\rho}\>, 
$$
and these schemes are nonempty. So, the inclusion $S^{\lambda}_M\cap \ov{\Gr}^{\mu}_M\subset S^{\lambda}\cap\ov{\Gr}^{\mu'}_G$ yields the desired inequality
\begin{equation}
\label{inequality_for_proof_of_Pp_2.1.3}
\<\lambda+\mu, 2\check{\rho}_M\>\le \<\lambda+\mu', 2\check{\rho}\>
\end{equation}
Assume in addition $M$ antistandard. 
 The inclusion 
$$
S^{w_0^M(\mu)}_M\cap \ov{\Gr}^{\mu}_M\subset S^{w_0^M(\mu)}\cap\ov{\Gr}^{\mu'}_G
$$ 
gives $\<w_0^M(\mu)+\mu, 2\check{\rho}_M\>\le \<w_0^M(\mu)+\mu', 2\check{\rho}\>$, so $f(\mu)\le \<w_0^M(\mu), 2\check{\rho}-2\check{\rho}_M\>$. Since $M$ is antistandard,
$$
\<\lambda-w_0^M(\mu), 2\check{\rho}-2\check{\rho}_M\>\ge 0,
$$ 
and this inequality is strict unless $\lambda\in -\Lambda^+_M$ and 
$\mu=w_0^M(\lambda)$. Assume now $\lambda\in -\Lambda^+_M$ and $\mu=w_0^M(\lambda)$. Since $w_0(\check{\rho})=-\check{\rho}$,  the inequality (\ref{inequality_for_proof_of_Pp_2.1.3}) becomes
$0\le \<-w_0(\lambda)+\mu', 2\check{\rho}\>$, and it is strict unless $\lambda\in -\Lambda^+_G$ and $\mu'=w_0(\lambda)$. 
We are done. \QED

\ssec{Application of the Casselman-Shalika formula} 

\sssec{} Recall the following from (\cite{FGV2}, 7.1). Let $\eta$ be a coweight of $G_{ad}$. A choice of an isomorphism
$\epsilon_{\alpha}: \cO(\<\eta, \check{\alpha}\>)\,\iso\,\Omega$ for each simple root $\check{\alpha}$ of $G$ gives rise to a homomorphism $\chi: U(F)\to \A^1$ given as the sum over simple roots $\check{\alpha}$ of the maps
$$
U(F)\to U/[U,U](F)\toup{\check{\alpha}} F\toup{\epsilon_{\alpha}} \Omega(F)\toup{\Res}\A^1
$$
The character $\chi$ is called admissible of conductor $\eta$. 

 Pick an admissible character $\chi_{-\lambda}: U(F)\to\A^1$ of conductor $-\pr(\lambda)$, here $\pr: \Lambda\to \Lambda_{ad}$ is the projection, and $\Lambda_{ad}$ is the coweight lattice of $G_{ad}$. By (\cite{FGV2}, Lemma~7.1.5) there is a map $\chi^{\lambda}_{-\lambda}: S^{\lambda}\to \A^1$, which is $(U(F), \chi_{-\lambda})$-equivariant. The map $\chi^{\lambda}_{-\lambda}$ is unique up to an additive constant.
 
 Let $i_M: S^{\lambda}_M\hook{} S^{\lambda}$ be the natural inclusion.
\begin{Pp} 
\label{Pp_2.1.15}
Assume $M$ is antistandard. Let $\lambda\in\Lambda, \nu\in\Lambda^+$. 
The natural map  
$$
\RG_c(S^{\lambda}, \cA_{\nu}\otimes (\chi^{\lambda}_{-\lambda})^*\cL_{\psi})\to \RG_c(S^{\lambda}_M, i_M^*(\cA_{\nu}\otimes (\chi^{\lambda}_{-\lambda})^*\cL_{\psi}))
$$
induces an isomorphism in the cohomological degree $\<\lambda, 2\check{\rho}\>$, and both complexes are placed in the cohomological degrees $\le \<\lambda, 2\check{\rho}\>$.
\end{Pp}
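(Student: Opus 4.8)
The plan is to deduce Proposition~\ref{Pp_2.1.15} from Proposition~\ref{Pp_2.1.1} by combining it with the Casselman--Shalika-type description of the Whittaker restriction of spherical sheaves from \cite{FGV}. First I would recall the key geometric input from (\cite{FGV}, 7.1.5 and the surrounding subsection): the restriction $(\chi^{\lambda}_{-\lambda})^*\cL_{\psi}$ to $S^{\lambda}$ is, up to shift, the ``clean'' Whittaker sheaf, and $\RG_c(S^{\lambda}, \cA_{\nu}\otimes (\chi^{\lambda}_{-\lambda})^*\cL_{\psi})$ computes the $\lambda$-weight space of the $\check{G}$-representation $V_{\nu}$ (Casselman--Shalika). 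In particular this complex is concentrated in the single cohomological degree $\<\lambda, 2\check{\rho}\>$, and it is nonzero precisely when $\lambda$ is a weight of $V_{\nu}$. This immediately gives the upper bound $\le \<\lambda, 2\check{\rho}\>$ for the left-hand side, and identifies the top cohomology with $V_{\nu}[\lambda]$.

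Next I would treat the right-hand side $\RG_c(S^{\lambda}_M, i_M^*(\cA_{\nu}\otimes (\chi^{\lambda}_{-\lambda})^*\cL_{\psi}))$. The first step is to observe that the restriction of the character sheaf to $S^{\lambda}_M$ is again a clean Whittaker sheaf for $M$ --- this uses that $M$ is antistandard so that the conductor of $\chi_{-\lambda}$, restricted to $U_M$, is still compatible with cleanness (one checks the simple coroots of $M$ pair appropriately with $-\pr(\lambda)$; the antistandard hypothesis $\<\alpha, \check\rho - \check\rho_M\> > 0$ is exactly what makes this work). Then I would stratify $S^{\lambda}_M$ by intersections $S^{\lambda}_M \cap \Gr^{\mu}_M$ for $\mu \in \cI(\lambda,\nu)$, and on each stratum use the cleanness of the Whittaker sheaf together with the degree bound from the proof of Proposition~\ref{Pp_2.1.1} (which says $\cA_{\nu}|_{\Gr^{\mu}_M}$ is placed in degrees $\le \<\lambda,2\check\rho - 2\check\rho_M\> - \<\mu, 2\check\rho_M\>$, with equality only in the extremal case $\lambda \in -\Lambda^+_G$, $\nu = w_0(\lambda)$). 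Combining with the standard dimension count $\dim(S^{\lambda}_M \cap \Gr^{\mu}_M) \le \<\lambda + \mu, \check\rho_M\>$ and the twist by $\cL_{\psi}$ (which can only decrease the degree of the compactly supported cohomology by the usual Artin--Schreier argument --- $\RG_c(\A^1, \cL_{\psi}) = 0$), one gets that $\RG_c(S^{\lambda}_M, i_M^*(\cdots))$ is placed in degrees $\le \<\lambda, 2\check\rho\>$, with the top degree receiving a contribution only from the stratum $\mu = w_0^M(\lambda)$ in the extremal case.

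Finally, to see the comparison map is an isomorphism in top degree, I would argue that both sides vanish in degree $\<\lambda,2\check\rho\>$ unless $\lambda \in -\Lambda^+_G$ and $\nu = w_0(\lambda)$ (for the left side this is because the $\lambda$-weight space of $V_{\nu}$ with $\lambda$ extremal is one-dimensional and occurs only for the lowest weight; for the right side this is the content of the previous paragraph). In the remaining extremal case, $S^{\lambda}$ itself degenerates: $t^{\lambda}$ is an isolated point of $S^{\lambda}$ up to the relevant cleanness, and both cohomology groups reduce to the stalk $i_{\lambda}^* \cA_{\nu} \iso \Qlb[-\<\lambda,2\check\rho\>]$ via the clean extension, so the map is an isomorphism. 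Concretely I would factor the comparison map through $i_{\lambda}^* i^* \cA_\nu$ and invoke Proposition~\ref{Pp_2.1.1}(2) directly. The main obstacle I anticipate is the bookkeeping needed to verify that the Whittaker sheaf on $S^{\lambda}_M$ is genuinely clean for an antistandard (non-standard!) Levi --- the cleanness results in \cite{FGV} are stated for $G$, and transporting them to $M$ requires checking that the conductor condition is preserved under restriction, which is precisely where the antistandard hypothesis enters and must be used carefully; everything else is a combination of Proposition~\ref{Pp_2.1.1}, the MV dimension estimates, and the vanishing $\RG_c(\A^1,\cL_\psi)=0$.
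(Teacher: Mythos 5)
Your overall skeleton is right --- use the Casselman--Shalika description for the left-hand side and reduce the right-hand side to Proposition~\ref{Pp_2.1.1} --- but the way you deploy the antistandard hypothesis contains a genuine error. You assert that $i_M^*(\chi^{\lambda}_{-\lambda})^*\cL_{\psi}$ is ``again a clean Whittaker sheaf for $M$'' and that antistandardness is what makes the $M$-conductor ``compatible with cleanness.'' That is not what is going on, and if taken literally it would not feed into Proposition~\ref{Pp_2.1.1}: that proposition computes the \emph{untwisted} complex $\RG_c(S^{\lambda}_M, i^*\cA_{\nu})$, whereas a nontrivial $M$-Whittaker twist would put you in the setting of the $M$-Casselman--Shalika formula, which is a different computation with a different answer.

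The actual point, and the whole reason the paper singles out antistandard Levis, is that the character $\chi_{-\lambda}$ restricts to the \emph{trivial} character on $U_M(F)$. Indeed $\chi_{-\lambda}$ is built from the composition $U(F)\to (U/[U,U])(F)\to\A^1$, and $U/[U,U]$ is a product of the simple-root subgroups of $G$; since $M$ is antistandard, every positive root of $M$ is a positive but non-simple root of $G$, so $U_M$ maps into $[U,U]$ and hence to zero in $U/[U,U]$. Consequently $i_M^*(\chi^{\lambda}_{-\lambda})^*\cL_{\psi}\,\iso\,\Qlb$, the right-hand side is literally $\RG_c(S^{\lambda}_M, i^*\cA_{\nu})$, and Proposition~\ref{Pp_2.1.1} applies directly. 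None of the apparatus you invoke for the RHS --- the stratification by $S^{\lambda}_M\cap\Gr_M^{\mu}$, the MV dimension bounds, the $\RG_c(\A^1,\cL_{\psi})=0$ degree-drop trick, or any cleanness statement for $M$ --- is needed, and the ``one checks the simple coroots of $M$ pair appropriately with $-\pr(\lambda)$'' step you anticipate is a red herring: the relevant condition does not involve $\lambda$ at all. Your final move (factor the comparison through $i_{\lambda}^*i^*\cA_{\nu}$ and invoke Proposition~\ref{Pp_2.1.1}(2)) is correct, but it only closes the argument once the triviality of the restricted character has been established.
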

\begin{proof}
By (\cite{FGV}, Theorem~1),
$$
\RG_c(S^{\lambda}, \cA_{\nu}\otimes (\chi^{\lambda}_{-\lambda})^*\cL_{\psi})
$$
is given by the Casselman-Shalika formula, namely, it vanishes unless $-\lambda\in\Lambda^+_G$ and $\lambda=w_0(\nu)$. In the latter case it identifies with $\Qlb[-\<\lambda, 2\check{\rho}\>]$. More precisely, if $\lambda=w_0(\nu)$  then the natural map 
$$
\RG_c(S^{\lambda}, \cA_{\nu}\otimes (\chi^{\lambda}_{-\lambda})^*\cL_{\psi})\to i_{\lambda}^*(\cA_{\nu}\otimes (\chi^{\lambda}_{-\lambda})^*\cL_{\psi}))\,\iso\, \Qlb[-\<\lambda, 2\check{\rho}\>]
$$
is an isomorphism, where $i_{\lambda}:\Spec k\to S^{\lambda}$ is the point $t^{\lambda}$. 

 Since $M$ is antistandard, $i_M^*(\chi^{\lambda}_{-\lambda})^*\cL_{\psi}\,\iso\,\Qlb$. Our claim follows now from Proposition~\ref{Pp_2.1.1}. 
\end{proof}

\end{document}